\newtheorem{thm}{Theorem}[section]
\newtheorem{pro}[thm]{Proposition}
\newtheorem{lem}[thm]{Lemma}
\newtheorem{cor}[thm]{Corollary}
\theoremstyle{definition}
\newtheorem{dfn}[thm]{Definition}
\newtheorem{ex}[thm]{Example}
\newtheorem{rem}[thm]{Remark}
\newtheorem{nota}{Notation}
\newcommand{\INN}[2]{\langle #1, #2 \rangle}
\newcommand{\RANK}{\operatorname{rank}}
\newcommand{\MID}{\;\Bigg|\;}
\title[Compact symmetric triads and symmetric triads with multiplicities]{Compact symmetric triads and Symmetric triads with multiplicities}
\author[K.~Baba]{Kurando Baba}
\author[O.~Ikawa]{Osamu Ikawa}
\subjclass[2020]{Primary 53C35; Secondary 17B22}
\keywords{
symmetric space,
Hermann action,
symmetric triad,
$\sigma$-action,
double Satake diagram.}
\address[K.~Baba]{Faculty of Science and Technology,
Tokyo University of Science,
2641 Yamazaki, Noda, Chiba, 278-8510, Japan. }
\email{kurando.baba@rs.tus.ac.jp}
\address[O.~Ikawa]{Faculty of Arts and Sciences, Kyoto Institute of Technology,
Matsugasaki, Sakyoku, Kyoto 606-8585, Japan. }
\email{ikawa@kit.ac.jp}
\thanks{The first author was partially supported by JSPS KAKENHI Grant Number JP25K07017.
The second author was partially supported by JSPS KAKENHI Grant Number JP22K03285.}
\date{}
\begin{document}

\maketitle

\begin{abstract}
In this paper,
we develop the theory of symmetric
triads with multiplicities.
First, we classify abstract symmetric triads
with multiplicities.
Second, we determine the
symmetric triads with multiplicities
corresponding to commutative compact symmetric triads.
As applications,
we give the classifications
for commutative compact symmetric triads,
which consist of two types depending on the choice of the equivalence relations.
\end{abstract}

\setcounter{tocdepth}{1}
\tableofcontents

\section{Introduction}

A \emph{compact symmetric triad}
is a triplet $(G,\theta_{1},\theta_{2})$
which consists of a compact connected semisimple Lie group $G$, and two involutions $\theta_{1}$ and $\theta_{2}$ on it.
We denote by $K_{i}$ ($i=1,2$)
the identity component
of the fixed-point subgroup of $\theta_{i}$
in $G$.
Then the homogeneous space
$G/K_{i}$ becomes a compact Riemannian symmetric space.
The study of compact symmetric triads
is motivated in the context
of the geometry of Hermann actions.
The isometric action of $K_{2}$
on $M_{1}=G/K_{1}$
is called a \textit{Hermann action}.
In the case when $\theta_{1}$ and $\theta_{2}$ are conjugate to
each other under an inner automorphism of $G$,
which we write $\theta_{1}\sim\theta_{2}$,
it is shown that
the corresponding Hermann action is
isomorphic to the isotropy action of $K_{1}$
on $G/K_{1}$.
In particular,
in the case when
$\theta_{1}=\theta_{2}$,
we have $K_{1}=K_{2}$,
so that the Hermann action
is nothing but the isotropy action.
The isotropy actions on compact Riemannian symmetric spaces have been extensively studied by many geometers. Hence, in the study of Hermann actions, it is essential to consider the case when $\theta_{1}\not\sim\theta_{2}$,
that is, $\theta_{1}$ and $\theta_{2}$ are not conjugate under inner automorphisms of $G$.
It is known that Hermann actions
have a geometrically good
property,
the so-called hyperpolarity
(\cite{HPTT}).
In general,
an isometric action of a compact connected Lie group on a connected complete Riemannian manifold $M$ is said to be \textit{hyperpolar} if there exists a connected complete flat submanifold $S\subset M$ such that
$S$ meets all orbits orthogonally.
Such a submanifold
is called a \textit{section}
for the hyperpolar action.
It is shown that the sections are totally geodesic submanifolds of $M$.
Kollross (\cite{Kollross}) classified hyperpolar actions on compact Riemannian symmetric spaces.
According to his classification,
most hyperpolar actions on compact Riemannian symmetric spaces
are exhausted by the Hermann actions.

The orbits of Hermann actions
give important examples
of homogeneous submanifolds
in Riemannian symmetric spaces.
In order to study Hermann actions,
the second author (\cite{Ikawa})
introduced the notion of symmetric triads
with multiplicities
as an extension of restricted root systems with multiplicities.
A compact symmetric triad $(G,\theta_{1},\theta_{2})$
is said to be \emph{commutative}, if $\theta_{1}\theta_{2}=\theta_{2}\theta_{1}$ holds.
Then
the symmetric triad with multiplicities,
which we write $(\tilde{\Sigma},\Sigma,W;m,n)$,
is constructed from a commutative compact symmetric triad
$(G,\theta_{1},\theta_{2})$.
Then we obtain a uniform method
to study the orbit space and properties of each orbit
for the corresponding Hermann action by means of $(\tilde{\Sigma},\Sigma,W;m,n)$.
In his paper \cite{Ikawa},
the dimension of each orbit was determined.
Furthermore, this method provides us descriptions
of the mean curvature vector fields
and the principal curvatures of orbits.
Then he derived the conditions
for the minimality,
the austerity in the sense of Harvey-Lawson
(\cite{HL}) and the total geodesicity,
respectively,
and classified totally geodesic orbits and austere orbits.
After him,
we find other studies
of orbits for Hermann actions
in Ohno (\cite{Ohno}),
Ohnita (\cite{Ohnita})
and Ohno-Sakai-Urakawa (\cite{OSU})
by means of symmetric triads with multiplicities.
Ikawa-Tanaka-Tasaki (\cite{ITT}) applied symmetric triads
to study the intersection of two real forms in a Hermitian symmetric space
of compact type.
Thus, the study of symmetric triads with multiplicities
has led to significant advances in the theory of symmetric spaces and related fields.

For further understanding of Hermann actions,
we will develop the theory
of symmetric triads with multiplicities.
Our concern is to determine $(\tilde{\Sigma},\Sigma,W;m,n)$ for commutative compact symmetric triads.
This determination yields the classifications for commutative compact symmetric triads,
which is the main subject of this paper.

In this paper,
we first give the classification of abstract
symmetric triads with multiplicities.
Let us consider
an equivalence relation
on the set of symmetric triads
with multiplicities as in Definition
\ref{dfn:e-relation},
which we write $\sim$.
This equivalence relation
comes from that
on the set of compact symmetric triads
due to Matsuki (\cite{Matsuki}).
We write
Matsuki's equivalence relation
as the same symbol $\sim$.
It is shown that,
for two isomorphic compact symmetric triads,
the corresponding Hermann actions
are essentially the same.
The classification
of abstract symmetric triads with multiplicities
is given by a case-by-case argument
based on the classification of abstract symmetric triads
\textit{without} multiplicities
due to the second author \cite[Theorem 2.19]{Ikawa}.
Our classification is summarized
in Theorem \ref{thm:class_symmI-III}.

Second,
we determine the symmetric triads
with multiplicities
corresponding to commutative compact symmetric triads.
Originally,
Matsuki (\cite{Matsuki})
classified
compact symmetric triads with respect to $\sim$.
It should be noted that 
the commutativity of $\theta_{1}$
and $\theta_{2}$ in $(G,\theta_{1},\theta_{2})$
is not preserved by
$\sim$,
and there does not necessarily exist a representative
with $\theta_{1}\theta_{2}=\theta_{2}\theta_{1}$
in its isomorphic class.
In the case when $G$ is simple,
it has been classified the isomorphism classes
of compact symmetric triads containing commutative representatives
(\cite{Conlon}, \cite{Matsuki}).
Recently,
the authors (\cite{BI}) gave an alternative method
to classify compact symmetric triads
in terms of the notion of double Satake diagrams.
Roughly speaking,
the double Satake diagram
of a compact symmetric triad
$(G,\theta_{1},\theta_{2})$
is expressed by the pair
of the Satake diagrams of $(G,\theta_{1})$
and $(G,\theta_{2})$.
Based on our classification,
we determined
the isomorphism classes
of compact symmetric triads
containing commutative representatives
(\cite[Remark 6.13]{BI}).
Then we can give a method
to determine the symmetric triad
with multiplicities corresponding to
commutative compact symmetric triads
by using double Satake diagrams.
Our determination is summarized
in Table \ref{table:symmtriext}.
From this table,
we can find that two commutative compact
symmetric triads are locally isomorphic to each other
if and only if their symmetric triads
with multiplicities
are isomorphic with respect to $\sim$
(Corollary \ref{cor:simsim_simplyconnected}).

Third,
we will apply the above results to classify
commutative compact symmetric triads
with respect to another equivalence
relation as in Definition \ref{dfn:CST_equiv},
which we write $\equiv$.
This equivalence relation
is finer than Matsuki's one $\sim$,
and preserves the commutativity of compact symmetric triads.
We will classify commutative compact symmetric triads
with respect to $\equiv$.
Then we note that their classification is carried out
not only in the case when $\theta_{1}\not\sim\theta_{2}$,
but also in the case when $\theta_{1}\sim\theta_{2}$.
Our motivation comes from
the context of reflective submanifolds
in compact Riemannian symmetric spaces.
Originally, Leung (\cite{Leung}) introduced the notion of reflective submanifolds in a Riemannian manifold,
and classified reflective
submanifolds in irreducible
compact Riemannian symmetric spaces.
For any commutative compact symmetric triad
$(G,\theta_{1},\theta_{2})$,
two homogeneous spaces
$G^{\theta_{1}\theta_{2}}/(K_{1}\cap K_{2})$ and $K_{2}/(K_{1}\cap K_{2})$,
which are mutually orthogonal in $G/K_{1}$, give
reflective submanifolds of $G/K_{1}$,
where $G^{\theta_{1}\theta_{2}}$ denotes the fixed-point subgroup of $\theta_{1}\theta_{2}$
in $G$.
Conversely,
all reflective submanifolds of $G/K_{1}$ are essentially given in such a way.
For two isomorphic commutative compact symmetric triads with respect to $\equiv$,
it is shown that the corresponding reflective submanifolds are congruent.
Here, we note that
the Lie group structures
of $G^{\theta_{1}\theta_{2}}$
and $K_{1}\cap K_{2}$
are preserved by $\equiv$
but not $\sim$.
Leung's classification method is based on
a correspondence between reflective submanifolds
in compact Riemannian symmetric spaces and pseudo-Riemannian symmetric pairs,
and the Berger's classification for the pseudo-Riemannian symmetric pairs (\cite{Berger}).
In contrast, in our classification,
the isomorphism classes of commutative compact symmetric triads
with respect to $\equiv$ are given by symmetric triads with multiplicities (Theorem \ref{thm:equivequiv_ok}).
In the above arguments, our classification does not require Berger's classification.
In addition, by using the generalized duality introduced by the authors and Sasaki (\cite{BIS}), which is a one-to-one correspondence between commutative compact symmetric triads and pseudo-Riemannian symmetric pairs, 
we can give an alternative proof of Berger's classification.
In principle, 
the classification of commutative compact symmetric triads
yields that of pseudo-Riemannian symmetric pairs
via the generalized duality.
Then, our results of this paper is also useful for explicitly deriving the
pseudo-Riemannian symmetric pairs corresponding to commutative compact symmetric triads
(see \cite{BIS2} for details).

The organization of this paper is as follows:
In Section \ref{sec:CST}, we review the Hermann action corresponding to compact symmetric triads,
and reflective submanifolds in compact Riemannian symmetric spaces.
In Section \ref{sec:symm_triad},
we explain abstract symmetric triads
with multiplicities and classify
them.
Section \ref{sec:cst_symm} is the main part of this paper.
In this section,
we first determine explicitly
the symmetric triads with multiplicities
corresponding to
commutative compact symmetric
triads (Table \ref{table:symmtriext}).
Second,
we give the classification of 
commutative compact symmetric triads
by means of symmetric triads
with multiplicities.
Section \ref{sec:sigma_action}
is devoted to study 
$\sigma$-actions
on compact connected Lie groups,
which give examples of Hermann actions.
In this section,
we develop the theories
of symmetric triads
with multiplicities
and of double Satake diagrams
for $\sigma$-actions.

\section{Compact symmetric triads}\label{sec:CST}

\subsection{Compact symmetric triads and Hermann actions}\label{sec:CST_Ha}

Let $(G,\theta_{1},\theta_{2})$ be a compact symmetric triad.
For each $i=1,2$,
we denote by $K_{i}$
the identity component of the fixed-point subgroup of $\theta_{i}$ in $G$.
Then $G/K_{i}$ is a compact Riemannian symmetric space with respect to
the Riemannian metric induced from a bi-invariant Riemannian metric on $G$.
The natural isometric action of $K_{2}$ on $G/K_{1}$ is called a \emph{Hermann action}.
In the case when $\theta_{1}=\theta_{2}$,
the corresponding Hermann action
is nothing but the isotropy action of $K_{1}$ on $G/K_{1}$.
We can show that the Hermann action is hyperpolar.
Here, an isometric action of a compact connected Lie group
on a Riemannian manifold
is called \emph{hyperpolar},
if there exists a flat, connected closed submanifold that
meets all orbits orthogonally.
Such a submanifold is called a \emph{section} of the action.
It is known that any section becomes a totally geodesic submanifold.

In what follows,
we give a section of the $K_{2}$-action on $G/K_{1}$.
Let $\mathfrak{g}$ be the Lie algebra of $G$
and $\exp:\mathfrak{g}\to G$ denote the exponential map of $G$.
The differential of $\theta_{i}$ at the identity element of $G$
gives an involution of $\mathfrak{g}$,
which we write the same symbol $\theta_{i}$ if there is no confusion.
We have the canonical decomposition $\mathfrak{g}=\mathfrak{k}_{i}\oplus\mathfrak{m}_{i}$
of $\mathfrak{g}$ for $\theta_{i}$.
We note that $\mathfrak{k}_{i}$ is the Lie algebra of $K_{i}$.
The composition $\theta_{1}\theta_{2}$
gives a (not necessarily involutive) automorphism of $\mathfrak{g}$.
The fixed-point subalgebra $\mathfrak{g}^{\theta_{1}\theta_{2}}$ of $\theta_{1}\theta_{2}$
in $\mathfrak{g}$ is expressed as follows:
\begin{equation}\label{eqn:gt1t2_expression}
\mathfrak{g}^{\theta_{1}\theta_{2}}
=\{X\in\mathfrak{g}\mid \theta_{1}\theta_{2}(X)=X\}
=\{X\in\mathfrak{g}\mid \theta_{1}(X)=\theta_{2}(X)\}
=\mathfrak{g}^{\theta_{2}\theta_{1}}.
\end{equation}
This subalgebra is the Lie algebra
of the fixed-point subgroup $G^{\theta_{1}\theta_{2}}$
of $\theta_{1}\theta_{2}$ in $G$.
Since $\mathfrak{g}^{\theta_{1}\theta_{2}}$
is $(\theta_{1},\theta_{2})$-invariant,
that is, $\mathfrak{g}^{\theta_{1}\theta_{2}}$
is both $\theta_{1}$ and $\theta_{2}$-invariant,
$\theta_{1}$ and $\theta_{2}$ give involutions on it.
Then $\theta_{1}=\theta_{2}$ holds on $\mathfrak{g}^{\theta_{1}\theta_{2}}$ by \eqref{eqn:gt1t2_expression}.
Its canonical decomposition is given by
\begin{equation}
\mathfrak{g}^{\theta_{1}\theta_{2}}=(\mathfrak{k}_{1}\cap\mathfrak{k}_{2})\oplus(\mathfrak{m}_{1}\cap\mathfrak{m}_{2}).
\end{equation}
Let $\mathfrak{a}$ be a maximal abelian subspace of $\mathfrak{m}_{1}\cap\mathfrak{m}_{2}$.
It is known that $A:=\exp(\mathfrak{a})$
is closed in $G^{\theta_{1}\theta_{2}}$.
Hence, $A$ becomes a toral subgroup.
The following theorem follows from Hermann \cite{Hermann}
(see also Matsuki \cite[Theorem 1]{Matsuki97}).

\begin{thm}\label{thm:Hermann}
Under the above settings, we have$:$
\begin{equation}
G=K_{1}AK_{2}=K_{2}AK_{1}.
\end{equation}
\end{thm}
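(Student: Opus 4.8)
The plan is to prove the single inclusion $G = K_{2}AK_{1}$ and then recover $G = K_{1}AK_{2}$ for free by taking inverses: since $\mathfrak{a}$ is abelian, $A=\exp(\mathfrak{a})$ is a torus, so $A^{-1}=A$, and $K_{i}^{-1}=K_{i}$; thus $g=k_{2}ak_{1}$ gives $g^{-1}=k_{1}^{-1}a^{-1}k_{2}^{-1}\in K_{1}AK_{2}$, and letting $g$ range over $G$ yields $G=K_{1}AK_{2}$. Reformulating the main claim geometrically, I fix an $\operatorname{Ad}(G)$-invariant inner product $\langle\cdot,\cdot\rangle$ on $\mathfrak{g}$ inducing the metric on $M_{1}=G/K_{1}$, so that the canonical decomposition $\mathfrak{g}=\mathfrak{k}_{1}\oplus\mathfrak{m}_{1}$ is orthogonal, $T_{o}M_{1}\cong\mathfrak{m}_{1}$ at the base point $o=eK_{1}$, and the geodesics through $o$ are $t\mapsto\exp(tX)\cdot o$ with $X\in\mathfrak{m}_{1}$. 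Proving $G=K_{2}AK_{1}$ is equivalent to showing that the flat $A\cdot o$ meets every $K_{2}$-orbit in $M_{1}$.

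Given $x_{0}=gK_{1}\in M_{1}$, I would set $N=K_{2}\cdot x_{0}$, which is compact since $K_{2}$ is, and choose $q\in N$ minimizing the distance $d(o,\cdot)$ on $N$; fixing a minimizing geodesic from $o$ to $q$, write it as $\gamma(t)=\exp(tX)\cdot o$ with $X\in\mathfrak{m}_{1}$, so $q=\exp(X)\cdot o$. The first variation of length along the curves $s\mapsto\exp(sZ)\cdot q$ in $N$, taken for both $\pm Z$ with $Z\in\mathfrak{k}_{2}$, forces $\dot\gamma(1)$ to be orthogonal to $T_{q}N$. The core computation is then to evaluate this orthogonality: pulling everything back to $o$ by the isometry $L_{\exp(-X)}$ sends $\dot\gamma(1)$ to $X$ and the Killing vector generated by $Z$ at $q$ to the $\mathfrak{m}_{1}$-component of $\operatorname{Ad}(\exp(-X))Z$. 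Using that orthogonal projection is self-adjoint, $\operatorname{Ad}$-invariance of $\langle\cdot,\cdot\rangle$, and the elementary identity $\operatorname{Ad}(\exp X)X=e^{\operatorname{ad}X}X=X$, the condition collapses to $\langle X,Z\rangle=0$ for all $Z\in\mathfrak{k}_{2}$. Hence $X\perp\mathfrak{k}_{2}$, i.e. $X\in\mathfrak{m}_{2}$, and therefore $X\in\mathfrak{m}_{1}\cap\mathfrak{m}_{2}$.

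It remains to move $X$ into the fixed maximal abelian subspace $\mathfrak{a}$. Because $\theta_{1}=\theta_{2}$ on $\mathfrak{g}^{\theta_{1}\theta_{2}}$ by \eqref{eqn:gt1t2_expression}, the triple $(\mathfrak{g}^{\theta_{1}\theta_{2}},\mathfrak{k}_{1}\cap\mathfrak{k}_{2},\mathfrak{m}_{1}\cap\mathfrak{m}_{2})$ is an orthogonal symmetric pair, so any two maximal abelian subspaces of $\mathfrak{m}_{1}\cap\mathfrak{m}_{2}$ are conjugate under $\operatorname{Ad}(K_{1}\cap K_{2})$. Choosing a maximal abelian subspace containing $X$ and conjugating it onto $\mathfrak{a}$ produces $k\in K_{1}\cap K_{2}$ with $\operatorname{Ad}(k)X\in\mathfrak{a}$, whence $k\exp(X)k^{-1}=\exp(\operatorname{Ad}(k)X)=:a\in A$. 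Finally, $q=\exp(X)\cdot o\in N$ means $\exp(X)K_{1}=k_{2}gK_{1}$ for some $k_{2}\in K_{2}$, so $g=k_{2}^{-1}\exp(X)k_{1}=k_{2}^{-1}k^{-1}a\,kk_{1}$ with $k_{1}\in K_{1}$; since $k\in K_{1}\cap K_{2}$, regrouping $k^{-1}$ into the left ($K_{2}$) factor and $k$ into the right ($K_{1}$) factor gives $g\in K_{2}AK_{1}$.

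I expect the two steps requiring the most care to be the orthogonality computation, where the reduction to $\langle X,Z\rangle=0$ hinges on correctly identifying the pulled-back tangent vectors and on $\operatorname{Ad}(\exp X)X=X$, and the conjugacy of maximal abelian subspaces of $\mathfrak{m}_{1}\cap\mathfrak{m}_{2}$, which is exactly what guarantees that the orbit point $q$ lands in the \emph{prescribed} flat $A\cdot o$ rather than in some other maximal flat of $\mathfrak{m}_{1}\cap\mathfrak{m}_{2}$; this is where the symmetric-pair structure of $\mathfrak{g}^{\theta_{1}\theta_{2}}$ is essential. A minor technical point, the possible non-smoothness of $d(o,\cdot)$ at $q$, can be avoided by using the first-variation \emph{inequality} for the fixed minimizing geodesic together with the variations by $\pm Z$.
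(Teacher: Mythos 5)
The paper does not prove Theorem \ref{thm:Hermann} at all; it is quoted from Hermann and Matsuki. Your proposal is a correct, self-contained proof, and it is essentially the classical variational argument behind those references: minimize distance from the origin to the compact orbit $K_{2}\cdot x_{0}$, use the first variation to force the minimizing geodesic direction $X$ to be orthogonal to $\mathfrak{k}_{2}$, conclude $X\in\mathfrak{m}_{1}\cap\mathfrak{m}_{2}$, and then conjugate into the prescribed $\mathfrak{a}$ by the conjugacy of maximal abelian subspaces in the symmetric pair $(\mathfrak{g}^{\theta_{1}\theta_{2}},\mathfrak{k}_{1}\cap\mathfrak{k}_{2})$. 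The orthogonality computation is right: pulling back by $L_{\exp(-X)}$, using self-adjointness of the projection onto $\mathfrak{m}_{1}$, $\operatorname{Ad}$-invariance, and $\operatorname{Ad}(\exp X)X=X$ collapses the condition to $\langle X,Z\rangle=0$ for $Z\in\mathfrak{k}_{2}$, and the final regrouping works precisely because the conjugating element lies in $K_{1}\cap K_{2}$. Two small points deserve explicit mention. First, to pass from $X\perp\mathfrak{k}_{2}$ to $X\in\mathfrak{m}_{2}$ you need the inner product to be $\theta_{2}$-invariant, not merely $\operatorname{Ad}(G)$-invariant; taking the negative of the Killing form (invariant under all automorphisms) settles this, but an arbitrary invariant inner product on a non-simple $\mathfrak{g}$ need not be $\theta_{2}$-invariant. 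Second, the conjugacy of maximal abelian subspaces of $\mathfrak{m}_{1}\cap\mathfrak{m}_{2}$ should be carried out under the connected subgroup with Lie algebra $\mathfrak{k}_{1}\cap\mathfrak{k}_{2}$, which is contained in $K_{1}\cap K_{2}$ since each $K_{i}$ is the identity component of the fixed-point group; this is exactly what your regrouping requires. Neither point is a gap in the idea, only in the bookkeeping.
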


Let $\pi_{1}:G\to G/K_{1}$ be the natural projection.
Then, $\pi_{1}(A)$
becomes a flat, totally geodesic submanifold of $G/K_{1}$.
It follows from Theorem \ref{thm:Hermann}
that each $K_{2}$-orbit intersects $\pi_{1}(A)$.
Furthermore, they are orthogonal at the intersection points.
Therefore, $\pi_{1}(A)$ is a section of the $K_{2}$-action.
In particular,
the cohomogeneity
(i.e., the maximal dimension of the $K_{2}$-orbits)
of this action
is equal to the dimension of $\mathfrak{a}$.
We call it the \emph{rank} of $(G,\theta_{1},\theta_{2})$,
which we write $\mathrm{rank}(G,\theta_{1},\theta_{2})$.

Let $\mathrm{Aut}(G)$
be the automorphism group of $G$
and $\mathrm{Int}(G)$ be the inner automorphism group of $G$.
Then $\mathrm{Int}(G)$ is a normal subgroup of $\mathrm{Aut}(G)$.
Matsuki introduced the following equivalence relation
on the set of compact symmetric triads.

\begin{dfn}[\cite{Matsuki}]\label{dfn:CST_sim}
Two compact symmetric triads
$(G,\theta_{1},\theta_{2}), (G,\theta_{1}',\theta_{2}')$
are \emph{isomorphic},
if there exist $\varphi\in\mathrm{Aut}(G)$
and $\tau\in\mathrm{Int}(G)$ satisfying the following relations:
\begin{equation}\label{eqn:Matsuki_equiv}
\theta_{1}'=\varphi\theta_{1}\varphi^{-1},\quad
\theta_{2}'=\tau\varphi\theta_{2}\varphi^{-1}\tau^{-1}.
\end{equation}
Then we write $(G,\theta_{1},\theta_{2})\sim(G,\theta_{1}',\theta_{2}')$.
\end{dfn}

Geometrically, $(G,\theta_{1},\theta_{2})\sim(G,\theta_{1}',\theta_{2}')$
means that their Hermann actions are isomorphic to each other.
Indeed,
if we let $\varphi\in\mathrm{Aut}(G)$
and $\tau\in\mathrm{Int}(G)$ in \eqref{eqn:Matsuki_equiv},
then the $K_{2}$-action on $G/K_{1}$
is isomorphic to $\tau^{-1}(K_{2}')$-action on $G/K_{1}'$
via the isometry $\Phi:G/K_{1}\to G/K_{1}'$ defined by
\begin{equation}
\Phi:G/K_{1}\to G/K_{1}';~
gK_{1}\mapsto \varphi(g)K_{1}',
\end{equation}
where $K_{i}'$
($i=1,2$)
denotes the identity component of
the fixed-point subgroup of $\theta_{i}'$ in $G$.
This yields that $\mathrm{rank}(G,\theta_{1},\theta_{2})=\mathrm{rank}(G,\theta_{1}',\theta_{2}')$ holds
if $(G,\theta_{1},\theta_{2})\sim(G,\theta_{1}',\theta_{2}')$.
We define the rank of the isomorphism class $[(G,\theta_{1},\theta_{2})]$
of $(G,\theta_{1},\theta_{2})$
as that of $(G,\theta_{1},\theta_{2})$.

We define the \emph{order} of $(G,\theta_{1},\theta_{2})$
as the order of the composition $\theta_{1}\theta_{2}$,
i.e., the smallest positive integer $k$
satisfying $(\theta_{1}\theta_{2})^{k}=1$.
If there is no such $k$,
then $(G,\theta_{1},\theta_{2})$
has infinite order.
We denote by $\mathrm{ord}(G,\theta_{1},\theta_{2})$
the order of $(G,\theta_{1},\theta_{2})$.
We note that
the value of $\mathrm{ord}(G,\theta_{1},\theta_{2})$
depends on the choice of a representative of
the isomorphism class $[(G,\theta_{1},\theta_{2})]$.
We define the order of $[(G,\theta_{1},\theta_{2})]$
by the minimum value of 
$\{\mathrm{ord}(G,\theta_{1}',\theta_{2}')\mid (G,\theta_{1}',\theta_{2}')\in [(G,\theta_{1},\theta_{2})]\}$,
which may be in $\mathbb{N}\cup\{\infty\}$.

Here, we review
on the classification for compact symmetric triads
in the case when $G$ is simple.
Originally, Matsuki (\cite{Matsuki}) gave the classification of the isomorphism classes
$[(G,\theta_{1},\theta_{2})]$ of compact symmetric triads.
After him, the authors (\cite{BI})
gave an alternative proof of his classification
in terms of the notion of double Satake diagrams.
Indeed, each $[(G,\theta_{1},\theta_{2})]$
is expressed by the pair of two Satake diagrams
for $(G,\theta_{1})$ and $(G,\theta_{2})$
in a natural manner.
We exhibited the list of
all the isomorphism classes
$[(G,\theta_{1},\theta_{2})]$ with their ranks and orders (see \cite[Table 4]{BI}).
In particular,
we find that
the order of $[(G,\theta_{1},\theta_{2})]$ is finite.
From the table, we obtain
the classification of
commutative compact symmetric triads
with respect to $\sim$
(cf.~\cite[Remark 6.13]{BI}).

The second author introduced
the notion of symmetric triads
with multiplicities in order to study the geometry of Hermann actions.
The purpose of this paper
is to develop the theory of symmetric triads with multiplicities.
As explained in Section \ref{sec:ccst_symm},
we construct
a symmetric triad with multiplicities
from any commutative compact symmetric triad
$(G,\theta_{1},\theta_{2})$ and a maximal abelian subspace $\mathfrak{a}$ of $\mathfrak{m}_{1}\cap\mathfrak{m}_{2}$.
By the construction,
the symmetric triad with multiplicities constructed
from $(G,\theta_{2},\theta_{1})$
coincides with that of $(G,\theta_{1},\theta_{2})$.
On the other hand,
any commutative compact symmetric triad
uniquely determines
the isomorphism class of symmetric triads
with multiplicities
with respect to 
the equivalence relation $\equiv$
in the sense of Definition \ref{dfn:stm_equiv},
which will be introduced in Section \ref{sec:symm_triad}.
Based on our classification,
we will determine the resulting symmetric triads
with multiplicities for commutative compact symmetric triads
concretely (see Table \ref{table:symmtriext} in Section \ref{sec:cst_symm}).
Furthermore,
for two commutative compact symmetric triads
$(G,\theta_{1},\theta_{2})$
and $(G,\theta_{1}',\theta_{2}')$,
we will prove that
$(G,\theta_{1},\theta_{2})$
is locally isomorphic to
$(G,\theta_{1}',\theta_{2}')$
or $(G,\theta_{2}',\theta_{1}')$
with respect to  $\sim$
if and only if their symmetric triads with multiplicities
are isomorphic in the sense of Definition \ref{dfn:e-relation},
which will be discussed in Section \ref{sec:cst_symm}.

\subsection{Compact symmetric triads and reflective submanifolds}\label{sec:CST_rsubmfd}

Let $\tilde{M}$ be a complete Riemannian manifold.
A submanifold $M$ of $\tilde{M}$
is said to be \emph{reflective},
if $M$ is complete with respect to the induced metric
and there exists
an involutive isometry $\rho$ of $\tilde{M}$
such that $M$ is a connected component of the fixed-point subset of $\rho$ in $\tilde{M}$
(\cite{Leung}).
It is known that any reflective submanifold
is totally geodesic (\cite[p.~61]{KN}).

We review the construction of
reflective submanifolds
of compact Riemannian symmetric spaces from commutative compact symmetric triads due to Leung \cite{Leung}.
Let $(G,\theta_{1},\theta_{2})$ be a commutative compact symmetric triad
and $o=eK_{1}$ denote the origin of the compact Riemannian symmetric space $G/K_{1}$.
Assume that $G$ is simply connected.
Then the fixed-point subgroup $G^{\theta}$ of
$\theta \in \{\theta_{1},\theta_{2},\theta_{1}\theta_{2}\}$ in $G$
is connected (cf.~\cite[Theorem 8.2 in Chapter VII]{Helgason}).
In particular, we have $K_{i}=G^{\theta_{i}}$ for $i=1,2$.
It follows from $\theta_{1}\theta_{2}=\theta_{2}\theta_{1}$
that
$K_{1}$ becomes $\theta_{2}$-invariant.
Let us define an involutive isometry of $G/K_{1}$
as follows:
\begin{equation}
\rho(gK_{1})=\theta_{2}(g)K_{1},\quad
g\in G.
\end{equation}
We denote by $(G/K_{1})^{\rho}$
the fixed-point subset of $\rho$ in $G/K_{1}$.
By the definition,
the origin $o$ is in $(G/K_{1})^{\rho}$.
Then the connected component of
$(G/K_{1})^{\rho}$ containing $o$
is a reflective submanifold of $G/K_{1}$.
It is shown that
the reflective submanifold
is expressed as $K_{2}/(K_{1}\cap K_{2})$.
Conversely, any
reflective submanifold
containing $o$
is constructed as above.
On the other hand,
$G^{\theta_{1}\theta_{2}}/(K_{1}\cap K_{2})$
gives another reflective submanifold of $G/K_{1}$,
which is called the complementary space of $K_{2}/(K_{1}\cap K_{2})$
(cf.~\cite[Corollary 1.2]{Leung}).
It is shown that
$G^{\theta_{1}\theta_{2}}/(K_{1}\cap K_{2})$
intersects $K_{2}/(K_{1}\cap K_{2})$ orthogonally.

Here, we define another kind of equivalence relation $\equiv$
on the set of compact symmetric triads as follows.

\begin{dfn}\label{dfn:CST_equiv}
Let $(G,\theta_{1},\theta_{2})$
and $(G,\theta_{1}',\theta_{2}')$
be two (not necessarily commutative) compact symmetric triads.
We write $(G,\theta_{1},\theta_{2})\equiv (G,\theta_{1}',\theta_{2}')$
if there exists an automorphism $\varphi$ of $G$
satisfying $\theta_{i}'=\varphi\theta_{i}\varphi^{-1}$
for $i=1,2$.
\end{dfn}

By the definition,
$(G,\theta_{1},\theta_{2})\equiv(G,\theta_{1}',\theta_{2}')$
yields $(G,\theta_{1},\theta_{2})\sim(G,\theta_{1}',\theta_{2}')$.
However, the converse does not hold in general.
Indeed, we find an example
of such commutative compact symmetric triads
in \cite[Example 6.5]{BI}.
On the other hand,
the equivalence relation $\equiv$
is compatible with the commutativity of compact symmetric triads,
namely,
if $(G,\theta_{1},\theta_{2})\equiv(G,\theta_{1}',\theta_{2}')$
and $(G,\theta_{1},\theta_{2})$ is commutative,
then so is $(G,\theta_{1}',\theta_{2}')$.
Furthermore,
the Lie group structures
of $G^{\theta_{1}\theta_{2}}$
and $K_{1}\cap K_{2}$
are independent of the choice of representatives in
the equivalence class of $(G,\theta_{1},\theta_{2})$
with respect to $\equiv$.
The following proposition gives our motivation
for introducing the equivalence relation $\equiv$.

\begin{pro}
If $(G,\theta_{1},\theta_{2})\equiv(G,\theta_{1}',\theta_{2}')$,
then
$K_{2}/(K_{1}\cap K_{2})$ and $G^{\theta_{1}\theta_{2}}/(K_{1}\cap K_{2})$
are isomorphic to 
$K_{2}'/(K_{1}'\cap K_{2}')$ and $G^{\theta_{1}'\theta_{2}'}/(K_{1}'\cap K_{2}')$,
respectively.
\end{pro}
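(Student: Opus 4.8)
The plan is to show that the single automorphism $\varphi$ furnished by the relation $\equiv$ simultaneously intertwines all of the subgroups appearing in the two reflective submanifolds, and therefore descends to isomorphisms of the corresponding coset spaces. This is exactly the feature distinguishing $\equiv$ from Matsuki's $\sim$: under $\sim$ one is only given $\theta_2' = \tau\varphi\theta_2\varphi^{-1}\tau^{-1}$ with an extra inner factor $\tau$, which obstructs a direct identification of $K_2$ with $K_2'$, whereas under $\equiv$ the same $\varphi$ conjugates both $\theta_1$ and $\theta_2$.

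First I would record how $\varphi$ acts on the relevant fixed-point subgroups. Since $\theta_i' = \varphi\theta_i\varphi^{-1}$, we have $G^{\theta_i'} = \varphi(G^{\theta_i})$; as $\varphi$ is a homeomorphism it carries identity components to identity components, so $K_i' = \varphi(K_i)$ for $i = 1,2$. For the composition, $\theta_1'\theta_2' = \varphi(\theta_1\theta_2)\varphi^{-1}$ gives $G^{\theta_1'\theta_2'} = \varphi(G^{\theta_1\theta_2})$ directly on the full fixed-point subgroups (no identity component is needed here, matching the statement). Finally, since $\varphi$ is bijective, $K_1' \cap K_2' = \varphi(K_1) \cap \varphi(K_2) = \varphi(K_1 \cap K_2)$.

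Combining these, the restriction of $\varphi$ yields Lie group isomorphisms $\varphi|_{K_2}\colon K_2 \to K_2'$ and $\varphi|_{G^{\theta_1\theta_2}}\colon G^{\theta_1\theta_2} \to G^{\theta_1'\theta_2'}$, each of which maps the common isotropy subgroup $K_1 \cap K_2$ onto $K_1' \cap K_2'$. Consequently the assignments $g(K_1 \cap K_2) \mapsto \varphi(g)(K_1' \cap K_2')$ are well defined on both quotients and give the asserted isomorphisms $K_2/(K_1 \cap K_2) \cong K_2'/(K_1' \cap K_2')$ and $G^{\theta_1\theta_2}/(K_1 \cap K_2) \cong G^{\theta_1'\theta_2'}/(K_1' \cap K_2')$. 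If one wishes these to be isometries rather than merely isomorphisms of homogeneous spaces, I would fix the bi-invariant metric on $G$ induced by the Killing form, which is invariant under all of $\mathrm{Aut}(G)$; then $\varphi$ preserves it and the induced maps on the coset spaces are isometric.

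I do not anticipate a genuine obstacle here: the whole content is the bookkeeping that $\varphi$ commutes past the formation of fixed-point sets, identity components, and intersections. The only point demanding a moment's care is the distinction between the identity-component subgroups $K_i = (G^{\theta_i})_0$ and the full fixed-point subgroup $G^{\theta_1\theta_2}$, which need not be connected even when $G$ is simply connected since $\theta_1\theta_2$ need not be involutive; in the former case one invokes that $\varphi$, being continuous, respects identity components, while in the latter case the identity of the full fixed-point sets already suffices.
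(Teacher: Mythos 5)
Your proof is correct, and it is the natural argument: the paper in fact states this proposition without proof (treating it as routine bookkeeping), and your verification that the single automorphism $\varphi$ intertwines $G^{\theta_i}$ with $G^{\theta_i'}$, preserves identity components and intersections, and hence descends to the coset spaces is exactly what is needed. Your closing remarks — that the extra inner factor $\tau$ in Matsuki's $\sim$ is what obstructs this identification, and that one passes from isomorphism of homogeneous spaces to congruence by choosing an $\mathrm{Aut}(G)$-invariant bi-invariant metric — correctly capture the point the authors make in the surrounding text.
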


The classification of reflective submanifolds
of compact Riemannian symmetric spaces
was given by Leung (\cite{Leung}).
Indeed, he found a correspondence between
reflective submanifolds
and pseudo-Riemannian symmetric pairs.
Then he determined
$K_{2}/(K_{1}\cap K_{2})$ and $G^{\theta_{1}\theta_{2}}/(K_{1}\cap K_{2})$
by means of the classification of pseudo-Riemannian symmetric pairs
due to Berger (\cite{Berger}).

The authors and Sasaki (\cite{BIS})
gave a one-to-one correspondence between
commutative compact symmetric triads
and pseudo-Riemannian symmetric pairs.
This correspondence is a generalization
of Cartan's duality,
which is the one-to-one correspondence
between compact symmetric pairs
and noncompact Riemannian symmetric pairs.
As shown in Section \ref{sec:cst_classify_equiv},
we will classify the local isomorphism class of
commutative compact symmetric triads $(G,\theta_{1},\theta_{2})$
with respect to $\equiv$ in terms of symmetric triads
with multiplicities.
Then
each equivalence class of $(G,\theta_{1},\theta_{2})$
is specified by the equivalence class of the symmetric triads with multiplicities (see Theorem \ref{thm:equivequiv_ok}).

As applications, we can give a method to determine the Lie group structures of $K_{1}\cap K_{2}$
and $G^{\theta_{1}\theta_{2}}$ from a given commutative compact symmetric triad
$(G,\theta_{1},\theta_{2})$.
Hence we can obtain an alternative proof of Leung's classification theorem
for reflective submanifolds.
On the other hand, 
we can also give an alternative proof
of Berger's classification for pseudo-Riemannian symmetric pairs
by means of the generalized duality
and our classification of
commutative compact symmetric triads
(see \cite{BIS2} for the detail).

\section{The classification for symmetric triads with multiplicities}\label{sec:symm_triad}

In this section,
we first recall the notions of root systems and
abstract symmetric triads with multiplicities.
In particular,
we review the classification of symmetric triads \emph{without} multiplicities
due to the second author (\cite{Ikawa}).
In this classification, symmetric triads are divided into three types (I)--(III).
Second, by using his classification, we will give the classification of
symmetric triads \emph{with} multiplicities.
Finally, we will introduce the notion of symmetric triads of type (IV)
with multiplicities and give its classification.
The above two classifications will be applied
to classify the equivalence classes
of commutative compact symmetric triads
with respect to $\equiv$
in the next section.

\subsection{Preliminaries}

\subsubsection{Root systems}
We begin with recalling the definition of a root system.
Let $\mathfrak{t}$ be a finite dimensional real vector space
with an inner product $\INN{\cdot}{\cdot}$.
We write $\|\alpha\|=\INN{\alpha}{\alpha}^{1/2}$ for $\alpha\in\mathfrak{t}$.
For $\alpha\in\mathfrak{t}-\{0\}$ we define a linear isometry $w_{\alpha}\in O(\mathfrak{t})$ by
\begin{equation}\label{eqn:dfn_reflection}
w_\alpha (H)=H-2\frac{\langle\alpha ,H\rangle}{\|\alpha\|^2}\alpha.
\end{equation}
Then $w_\alpha$ satisfies $w_{\alpha}(\alpha)=-\alpha$,
$w_{\alpha}(H)=H$
($H\in(\mathbb{R}\alpha)^{\perp}$)
and
$w_{\alpha}^2=1$.
Here, $(\mathbb{R}\alpha)^{\perp}$
denotes the orthogonal complement of $\mathbb{R}\alpha$ in $\mathfrak{t}$.

\begin{dfn}
A finite subset $\Delta\subset\mathfrak{t}-\{0\}$ is called a \textit{root system}
of $\mathfrak{t}$, if it satisfies the following two conditions:
\begin{enumerate}
\item $\mathfrak{t}=\mathrm{span}_{\mathbb{R}}(\Delta)$.
\item If $\alpha$ and $\beta$ are in $\Delta$,
then $w_{\alpha}(\beta)=\beta-2\dfrac{\INN{\alpha}{\beta}}{\|\alpha\|^{2}}\alpha$ is in $\Delta$, and
$2\dfrac{\langle\alpha ,\beta\rangle}{\|\alpha\|^2}$ is in $\mathbb{Z}$.
\end{enumerate}
In addition, a root system $\Delta$ is said to be \textit{reduced},
if it satisfies the following condition:
\begin{enumerate}
\setcounter{enumi}{2}
\item If $\alpha$ and $\beta$ are in $\Delta$ with $\beta = m\alpha$,
then $m=\pm 1$ holds.
\end{enumerate}
\end{dfn}
A root system $\Delta$
of $\mathfrak{t}$
is said to be reducible
if
there exist two nonempty subsets $\Delta_{1}$
and $\Delta_{2}$ of $\Delta$ satisfying the following conditions:
\[
\Delta=\Delta_{1}\cup\Delta_{2},\quad
\Delta_{1}\cap\Delta_{2}=\emptyset,\quad
\INN{\Delta_{1}}{\Delta_{2}}=\{0\}.
\]
Otherwise it is said to be \textit{irreducible}.
Any root system is uniquely decomposed into irreducible ones,
namely, there exist unique irreducible root systems $\Delta_{1},\dotsc,\Delta_{l}$
up to permutation of the indices
such that
$\Delta=\Delta_{1}\cup \dotsb \cup \Delta_{l}$
and that $\INN{\Delta_{i}}{\Delta_{j}}=\{0\}$ for $1\leq i\neq j\leq l$.
This decomposition of $\Delta$ is called the irreducible decomposition of $\Delta$.

Let $\Delta$ and $\Delta'$ be
root systems of $\mathfrak{t}$ and $\mathfrak{t}'$, respectively.
A linear isomorphism $f:\mathfrak{t}\to\mathfrak{t}'$
satisfying $f(\Delta)=\Delta'$
and preserving the integers $2\INN{\beta}{\alpha}/\|\alpha\|^{2}$
is called an \textit{isomorphism} of root systems between $\Delta$ and $\Delta'$.
Two root systems $\Delta$ and $\Delta'$ are \textit{isomorphic},
which we write $\Delta\simeq\Delta'$,
if there exists such $f$ (cf.~\cite[p.~150]{Knapp}).
We find that
$\simeq$ gives an equivalence relation on the set of root systems.

Let $\mathrm{Aut}(\Delta)$ denote the group of all automorphisms of $\Delta$.
It is clear that $\mathrm{Aut}(\Delta)$ is a finite group.
The \textit{Weyl group} $W(\Delta)$ of a root system $\Delta$ is defined by a subgroup of $O(\mathfrak{t})$
generated by $\{w_\alpha \,|\, \alpha\in\Delta\}$.
Then $W(\Delta)$ is a normal subgroup of $\mathrm{Aut}(\Delta)$.
In particular, $W(\Delta)$ is a finite group.
It is known that $W(\Delta)$ acts simply transitively on the set of fundamental systems of $\Delta$.

We use the notations of irreducible root systems $\Delta$ and the set $\Delta^{+}$ of positive roots
with respect to a specified ordering as follows (cf.~\cite{Bo}):

\begin{nota}\label{nota:root_system}
There are five infinite families $A_{r\geq 1}$, $B_{r\geq 1}$, $C_{r\geq 1}$, $D_{r\geq 2}$ (reduced),
and $BC_{r\geq 1}$ (non-reduced), which are called the \textit{classical type},
and five exceptional cases $E_{r}$ ($r=6,7,8$), $F_{4}$, $G_{2}$ (reduced),
which are called the \textit{exceptional type}.
\begin{itemize}
\setlength{\leftskip}{-5mm} 
\item Classical type:
\setlength{\parskip}{.2cm} %
\setlength{\itemsep}{.2cm} %
\begin{itemize}
\setlength{\leftskip}{-10mm} 
\setlength{\parskip}{.1cm} %
\setlength{\itemsep}{.1cm} %
\item[] $A_r^+=\{e_i-e_j\mid 1\leq i<j\leq r+1\}$,
\item[] $B_r^+=\{e_i\mid 1\leq i\leq r\}\cup \{e_i\pm e_j\mid 1\leq i<j\leq r\}$,
\item[] $C_r^+=\{2e_i\mid 1\leq i\leq r\}\cup\{e_i\pm e_j\mid 1\leq i<j\leq r\}$,
\item[] $D_r^+=\{e_i\pm e_j\mid 1\leq i<j\leq r\}$,
\item[] $BC_r^+=\{e_i,2e_i\mid 1\leq i\leq r\}\cup\{e_i\pm e_j\mid 1\leq i<j\leq r\}$.
\end{itemize}
\item Exceptional type:
\begin{itemize}
\setlength{\leftskip}{-10mm} 
\setlength{\parskip}{.1cm} %
\setlength{\itemsep}{.1cm} %
\item[] $E_{6}^{+}=\{\pm e_{i}+e_{j}\mid 1 \leq i< j\leq 5\}$\\
\phantom{hogehogehogehogehoge\,}$\cup\left\{\dfrac{1}{2}(e_{8}-e_{7}-e_{6}+\displaystyle\sum_{i=1}^{5}(-1)^{\nu(i)}e_{i})\MID \displaystyle\sum_{i=1}^{5}\nu(i):\text{even}\right\}$,
\item[] $E_{7}^{+}=\{\pm e_{i}+e_{j}\mid 1 \leq i< j\leq 6\}\cup\{-e_{7}+e_{8}\}$\\
\phantom{hogehogehogehogehogehog\,}$\cup\left\{\dfrac{1}{2}(-e_{7}+e_{8}+\displaystyle\sum_{i=1}^{6}(-1)^{\nu(i)}e_{i})\MID \displaystyle\sum_{i=1}^{6}\nu(i):\text{odd}\right\}$,
\item[] $E_{8}^{+}=\{\pm e_{i}+e_{j}\mid 1 \leq i< j\leq 8\}\cup\left\{\dfrac{1}{2}(e_{8}+\displaystyle\sum_{i=1}^{7}(-1)^{\nu(i)}e_{i})\MID \displaystyle\sum_{i=1}^{7}\nu(i):\text{even}\right\},$
\item[] $F_{4}^{+}=\{e_{i}\mid 1\leq i \leq 4\}\cup\{e_{i}\pm e_{j}\mid 1 \leq i < j \leq 4\}\cup\left\{\dfrac{1}{2}(e_{1}\pm e_{2}\pm e_{3}\pm e_{4})\right\}$,
\item[] $G_{2}^{+}=\{e_{1}-e_{2},-e_{1}+e_{3},-e_{2}+e_{3},-2e_{1}+e_{2}+e_{3},e_{1}-2e_{2}+e_{3},-e_{1}-e_{2}+2e_{3}\}$.
\end{itemize}
\end{itemize}
For each above root system $\Delta$,
the subscript denotes the \textit{rank} of $\Delta$,
that is, the dimension of $\mathrm{span}_{\mathbb{R}}(\Delta)$.
For formal reasons, we put $D_{1}=\emptyset$.
Here, we have the following isomorphisms:
\begin{equation}\label{eqn:root_iso}
\begin{array}{ccccc}
D_{3}\simeq A_{3},&
D_{2}\simeq A_{1}\cup A_{1},&
B_{1}\simeq C_{1}\simeq A_{1},&
B_{2}\simeq C_{2}.
\end{array}
\end{equation}
For the sets $\Delta^{+}$ of positive roots above, the sets of simple roots $\Pi=\Pi(\Delta^{+})$
and its highest roots $\tilde{\delta}$
are given as follows:
\begin{itemize}
\setlength{\leftskip}{-5mm} 
\item Classical type:
\setlength{\parskip}{.2cm} %
\setlength{\itemsep}{.2cm} %
\begin{itemize}
\setlength{\leftskip}{-10mm} 
\setlength{\parskip}{.1cm} %
\setlength{\itemsep}{.1cm} %
\item[] $A_{r}^{+}:\begin{cases} 
\Pi=\{\alpha_1=e_1-e_2,\dotsc ,\alpha_r=e_r-e_{r+1}\},\\
\tilde{\delta}=e_{1}-e_{r+1}={\alpha_{1}+\dotsb+\alpha_{r}},
\end{cases}$
\item[] $B_{r}^{+}:\begin{cases} 
\Pi=\Pi(B_{r}^{+})=\{\alpha_1=e_1-e_2,\dotsc,\alpha_{r-1}=e_{r-1}-e_r,\alpha_r=e_r\},\\
\tilde{\delta}=e_{1}+e_{2}=\alpha_{1}+2\alpha_{2}+\dotsb+2\alpha_{r},
\end{cases}$
\item[] $C_{r}^{+}:\begin{cases} 
\Pi=\{\alpha_1=e_1-e_2,\dotsc ,\alpha_{r-1}=e_{r-1}-e_r,\alpha_r=2e_r\},\\
\tilde{\delta}=2e_{1}=2\alpha_{1}+2\alpha_{2}+\dotsb+2\alpha_{r-1}+\alpha_{r},
\end{cases}$
\item[] $D_{r}^{+}:\begin{cases} 
\Pi=\{\alpha_1=e_1-e_2,\dotsc ,\alpha_{r-1}=e_{r-1}-e_r,\alpha_r=e_{r-1}+e_r\},\\
\tilde{\delta}=e_{1}+e_{2}=\alpha_{1}+2\alpha_{2}+\dotsb+2\alpha_{r-2}+\alpha_{r-1}+\alpha_{r},
\end{cases}$
\item[] $BC_{r}^{+}:\begin{cases} 
\Pi=\Pi(B_{r}^{+}),\\
\tilde{\delta}=2e_{1}=2\alpha_{1}+\dotsb+2\alpha_{r}.
\end{cases}$
\end{itemize}
\item Exceptional type:
\begin{itemize}
\setlength{\leftskip}{-18mm} 
\setlength{\parskip}{.1cm} %
\setlength{\itemsep}{.1cm} %
\item[] $E_{6}^{+}:\begin{cases} 
\Pi\!=\!\Pi(E_{6}^{+})\!=\!\left\{\alpha_{1}\!=\!\dfrac{1}{2}(e_{1}+e_{8})-\dfrac{1}{2}(e_{2}+e_{3}+e_{4}+e_{5}+e_{6}+e_{7})\right\}\\
\phantom{hogeh}\cup\{\alpha_{2}=e_{1}+e_{2},\alpha_{3}=e_{2}-e_{1},\alpha_{4}=e_{3}-e_{2},\alpha_{5}=e_{4}-e_{3},\alpha_{6}=e_{5}-e_{4}\},\\
\tilde{\delta}\!=\!\dfrac{1}{2}(e_{1}+e_{2}+e_{3}+e_{4}+e_{5}-e_{6}-e_{7}+e_{8})
\!=\!\alpha_{1}\!+2\alpha_{2}\!+2\alpha_{3}\!+3\alpha_{4}\!+2\alpha_{5}\!+\alpha_{6},
\end{cases}$
\item[] $E_{7}^{+}:\begin{cases} 
\Pi=\Pi(E_{7}^{+})=\Pi(E_{6}^{+})\cup\{\alpha_{7}=e_{6}-e_{5}\},\\
\tilde{\delta}=e_{8}-e_{7}=2\alpha_{1}+2\alpha_{2}+3\alpha_{3}+4\alpha_{4}+3\alpha_{5}+2\alpha_{6}+\alpha_{7},
\end{cases}$
\item[] $E_{8}^{+}:\begin{cases} 
\Pi=\Pi(E_{7}^{+})\cup\{\alpha_{8}=e_{7}-e_{6}\},\\
\tilde{\delta}=e_{7}+e_{8}=2\alpha_{1}+3\alpha_{2}+4\alpha_{3}+6\alpha_{4}+5\alpha_{5}+4\alpha_{6}+3\alpha_{7}+2\alpha_{8},
\end{cases}$
\item[] $F_{4}^{+}:\begin{cases} 
\Pi=\{\alpha_{1}=e_{2}-e_{3},\alpha_{2}=e_{3}-e_{4},\alpha_{3}=e_{4},\alpha_{4}=\dfrac{1}{2}(e_{1}-e_{2}-e_{3}-e_{4})\},\\
\tilde{\delta}=e_{1}+e_{2}=2\alpha_{1}+3\alpha_{2}+4\alpha_{3}+2\alpha_{4},
\end{cases}$
\item[] $G_{2}^{+}:\begin{cases} 
\Pi=\{\alpha_{1}=e_{1}-e_{2},\alpha_{2}=-2e_{1}+e_{2}+e_{3}\},\\
\tilde{\delta}=-e_{1}-e_{2}+2e_{3}=3\alpha_{1}+2\alpha_{2}.
\end{cases}$
\end{itemize}
\end{itemize}
\end{nota}

\subsubsection{Symmetric triads with multiplicities}\label{subsec:ST}
In this subsection,
we begin with recalling the definition of a root system with multiplicity.

\begin{dfn}\label{dfn:rootsig}
Let $\Sigma$ be a root system of a real vector space $\mathfrak{a}$
with an inner product $\INN{\cdot}{\cdot}$.
Put $\mathbb{R}_{>0}=\{x\in\mathbb{R}\mid x>0\}$.
Consider a mapping $m:\Sigma\rightarrow \mathbb{R}_{>0};\alpha\mapsto m_\alpha$ which satisfies
\begin{equation}\label{eqn:multi}
m_{w_{\alpha}(\beta)}=m_{\beta}\quad\mbox{for}\quad \alpha,\beta\in\Sigma.
\end{equation}
We call $m_\alpha$ the \textit{multiplicity} of $\alpha$.
If the multiplicities are given, we call $(\Sigma; m)$ the \textit{root system with multiplicities}.
\end{dfn}

If $\Sigma$ is irreducible, then it is known that $W(\Sigma)$ acts transitively on each
subset $\{\beta\in\Sigma\mid \|\beta\|=\|\alpha\|\}$ for $\alpha\in\Sigma$ (cf.~\cite[11.~(b), p.~204]{Knapp}).
Thus the condition (\ref{eqn:multi}) means
\begin{equation}\label{eqn:multi_length}
m_\alpha=m_\beta \quad\text{ if }\quad \|\alpha\|=\|\beta\|.
\end{equation}

We define an equivalence relation $\simeq$
on the set of root systems with multiplicities as follows:

\begin{dfn}\label{dfn:rootmequiv}
Let $(\Sigma; m)$ and $(\Sigma'; m')$
be root systems with multiplicities of $\mathfrak{a}$ and $\mathfrak{a}'$, respectively.
Then $(\Sigma; m)$ and $(\Sigma';m')$
are \textit{isomorphic} if there exists an
isomorphism $f:\mathfrak{a}\to\mathfrak{a}'$ of the root systems
$\Sigma$ and $\Sigma'$
satisfying $m_{\alpha}=m'_{f(\alpha)}$ $(\alpha\in\Sigma)$.
If $(\Sigma; m)$ and $(\Sigma';m')$
are isomorphic,
then we write $(\Sigma; m)\simeq(\Sigma'; m')$.
\end{dfn}

We review the definition of a symmetric triad.

\begin{dfn}[{\cite[Definition 2.2]{Ikawa}}]\label{dfn:symmtriad}
A triple $(\tilde{\Sigma},\Sigma ,W)$ is a \textit{symmetric triad} of $\mathfrak{a}$, if it satisfies the following six conditions:
\begin{enumerate}[(1)]
\item $\tilde{\Sigma}$ is an irreducible root system of $\mathfrak{a}$.
\item $\Sigma$ is a root system of $\mathrm{span}_{\mathbb{R}}(\Sigma)$.
\item $W$ is a nonempty subset of $\mathfrak{a}$, which is invariant under the multiplication by $-1$, and $\tilde{\Sigma}=\Sigma\cup W$.
\item $\Sigma\cap W$ is a nonempty subset. If we put $l=\max\{\|\alpha\|\mid \alpha\in \Sigma\cap W\}$, then
$\Sigma\cap W=\{\alpha\in\tilde{\Sigma}\mid \|\alpha\|\leq l\}$.
\item For $\alpha\in W,\lambda\in\Sigma-W$, the integer
$2\dfrac{\langle\alpha ,\lambda\rangle}{\|\alpha\|^2}$ is odd if and only if 
$w_\alpha\lambda \in W-\Sigma$.
\item For $\alpha\in W,\lambda\in W-\Sigma$, the integer $2\dfrac{\langle\alpha ,\lambda\rangle}{\|\alpha\|^2}$
is odd if and only if $w_\alpha\lambda \in \Sigma-W$.
\end{enumerate}
\end{dfn}

When $(\tilde{\Sigma},\Sigma ,W)$ is a symmetric triad of $\mathfrak{a}$, then
$\mathrm{span}_{\mathbb{R}}(\Sigma)=\mathfrak{a}$ (see \cite[Remark~1.13]{Ikawa2}).
It is known that $W$ is invariant under the action of the Weyl group $W(\Sigma)$ of $\Sigma$
(\cite[Proposition 2.7]{Ikawa}).
This fact will be used in (2) of Definition~\ref{dfn:multi}.

We define a lattice $\Gamma$ of $\mathfrak{a}$
for $\tilde{\Sigma}$ by
\begin{equation}\label{eqn:Gamma}
\Gamma=\left\{X\in\mathfrak{a}\MID
\langle\lambda ,X\rangle \in\frac{\pi}{2}\mathbb{Z}
\quad (\lambda\in\tilde{\Sigma})\right\}.
\end{equation}

\begin{dfn}[{\cite[Definition~2.13]{Ikawa}}]\label{dfn:multi}
Let $(\tilde{\Sigma},\Sigma ,W)$ be a symmetric triad of $\mathfrak{a}$.
Put $\mathbb{R}_{\geq 0}=\{x\in\mathbb{R}\mid x\geq 0\}$.
Consider two mappings $m,n:\tilde{\Sigma}\rightarrow \mathbb{R}_{\geq 0}$
which satisfy the following four conditions:
\begin{enumerate}[(1)]
\item $m(\lambda )=m(-\lambda ),\; n(\alpha )=n(-\alpha )$ for $\lambda,\alpha\in\tilde{\Sigma}$ and
$$m(\lambda )>0\Leftrightarrow \lambda \in \Sigma ,\quad
n(\alpha )>0\Leftrightarrow \alpha\in W.
$$
\item When $\lambda\in \Sigma,\alpha\in W,s\in W(\Sigma )$ then
$m(\lambda )=m(s\lambda ), n(\alpha )=n(s\alpha )$.
\item When $\sigma\in W(\tilde{\Sigma})$, the Weyl group of
$\tilde{\Sigma}$, and $\lambda\in\tilde{\Sigma}$ then
$n(\lambda )+m(\lambda )=n(\sigma\lambda )+m(\sigma\lambda )$.
\item Let $\lambda\in\Sigma\cap W$ and $\alpha\in W$.

If $\dfrac{2\langle\alpha ,\lambda\rangle}{\|\alpha\|^2}$ is even then
$m(\lambda )=m(w_{\alpha}\lambda)$.

If $\dfrac{2\langle\alpha ,\lambda\rangle}{\|\alpha\|^2}$ is odd then
$m(\lambda )=n(w_{\alpha}\lambda)$.
\end{enumerate}
We call $m(\lambda )$ and $n(\alpha )$ the \textit{multiplicities} of
$\lambda$ and $\alpha$, respectively.
If multiplicities are given, we call $(\tilde{\Sigma},\Sigma ,W;m,n)$
the \textit{symmetric triad with multiplicities}.
\end{dfn}

The second author defined an equivalence relation on the set of symmetric triads in \cite{Ikawa}.
We extend it to an equivalence relation on the set of symmetric triads \textit{with} multiplicities as follows:

\begin{dfn}\label{dfn:e-relation}
Let $(\tilde{\Sigma},\Sigma ,W;m,n)$ and $(\tilde{\Sigma}',\Sigma ',W';m',n')$ be
symmetric triads with multiplicities of $\mathfrak a$ and ${\mathfrak a}'$, respectively.
Then two symmetric triads $(\tilde{\Sigma},\Sigma ,W)$ and $(\tilde{\Sigma}',\Sigma ',W')$ are
\textit{isomorphic}, if there exist an isomorphism $f:\tilde{\Sigma}\to\tilde{\Sigma}'$ of root systems
and $Y\in \Gamma$ such that
\begin{equation}\label{eqn:equiv-relation}
\begin{cases}
\Sigma '-W' \!=\! \{f(\alpha )\,|\,\alpha\in\Sigma\!-\!W,\langle \alpha ,2Y\rangle \in 2\pi \mathbb{Z}\}
\!\cup\!\{f(\alpha )\,|\,\alpha\in W\!-\!\Sigma ,\langle \alpha ,2Y\rangle \in\pi +2\pi\mathbb{Z}\},\\
W'-\Sigma '\!=\!\{f(\alpha )\,|\,\alpha\in W\!-\!\Sigma ,\langle \alpha ,2Y\rangle \in 2\pi \mathbb{Z}\}
\!\cup\!\{f(\alpha )\,|\,\alpha\in \Sigma\!-\!W,\langle \alpha ,2Y\rangle \in\pi +2\pi\mathbb{Z}\}.
\end{cases}
 \end{equation}
In addition,
if $f$ and $Y$ satisfy the following condition, we say that
$(\tilde{\Sigma},\Sigma ,W;m,n)$ and $(\tilde{\Sigma}',\Sigma ',W';m',n')$ are \textit{isomorphic}:
For $\alpha\in\tilde{\Sigma}$,
\begin{equation}\label{eqn:equiv-relation-mn}
\begin{cases}
\begin{array}{lll}
m(\alpha)=m'(f(\alpha)),& n(\alpha)=n'(f(\alpha))& \mbox{if}\quad \langle\alpha ,2Y\rangle\in 2\pi\mathbb{Z},\\
m(\alpha)=n'(f(\alpha)),& n(\alpha)=m'(f(\alpha))& \mbox{if}\quad \langle\alpha ,2Y\rangle\in \pi+2\pi\mathbb{Z}.
\end{array}
\end{cases}
\end{equation}
\end{dfn}
\noindent
If $(\tilde{\Sigma},\Sigma ,W;m,n)$ and $(\tilde{\Sigma}',\Sigma ',W';m',n')$ are
isomorphic, we write
\[
(\tilde{\Sigma},\Sigma ,W;m,n)\sim (\tilde{\Sigma}',\Sigma ',W';m',n').
\]
The relation $\sim$ is an equivalence relation.

\subsubsection{Examples of symmetric triads with multiplicities}

Based on \cite[Theorem~2.19]{Ikawa}
we give
symmetric triads $(\tilde{\Sigma},\Sigma ,W;m,n)$ of $\mathfrak a$ with multiplicities
as follows:
\begin{enumerate}
\item[(I)] In the case where $\Sigma \supsetneq W$:
\begin{itemize}
\item Type (I-$B_r$): $\tilde{\Sigma}=\Sigma=B_r\supset W=\{\pm e_i\mid 1\leq i\leq r\}$.
\[
\begin{array}{ccc}
0<m(\pm e_i)=\mbox{const},&
0<m(\pm e_i\pm e_j)=\mbox{const}\; (i\not= j),&
0<n(\pm e_i)=\mbox{const}.
\end{array}
\]
\item Type (I-$C_r$): $\tilde{\Sigma}=\Sigma=C_r\supset W=D_r$.
When $r\geq 3$, then
\[
\begin{array}{cc}
0<m(\pm e_i\pm e_j)=n(\pm e_i\pm e_j)=\mbox{const}\; (i\not= j),&
0<m(\pm 2e_i)=\mbox{const}.
\end{array}
\]
When $r=2$, then
\[
\begin{array}{ccc}
0<m(\pm e_1\pm e_2)=\mbox{const},&
0<m(\pm 2e_i)=\mbox{const},&
0<n(\pm e_1\pm e_2)=\mbox{const}.
\end{array}
\]
\item Type (I-$BC_r$-$A_1^r$): $\tilde{\Sigma}=\Sigma=BC_r\supset W=A_1^r=\{\pm e_i\mid 1\leq i\leq r\}$.
\begin{equation*}
\begin{array}{ll}
0<m(\pm e_i)=\mbox{const}, & 0<m(\pm e_i\pm e_j)=\mbox{const}\; (i\not= j),\\
0<m(\pm 2e_i)=\mbox{const}, & 0<n(\pm e_i)=\mbox{const}.
\end{array}
\end{equation*}
\item Type (I-$BC_r$-$B_r$): $\tilde{\Sigma}=\Sigma=BC_r\supset W=B_r$.
When $r\geq 3$,
\[
\begin{array}{l}
0<m(\pm e_i)=n(\pm e_i)=\mbox{const},\quad 0<m(\pm 2e_i)=\mbox{const},\\
0<m(\pm e_i\pm e_j)=n(\pm e_i\pm e_j)=\mbox{const}\; (i\not=j).\\
\end{array}
\]
When $r=2$, then
\begin{equation*}
\begin{array}{ll}
0<m(\pm e_i)=n(\pm e_i)=\mbox{const},&0<m(\pm 2e_i)=\mbox{const},\\
0<m(\pm e_{1}\pm e_{2})=\mbox{const},&0<n(\pm e_{1}\pm e_{2})=\mbox{const}.
\end{array}
\end{equation*}
\item Type (I-$F_4$): $\tilde{\Sigma}=\Sigma=F_4\supset W=\{$short roots in $F_4\}\simeq D_4$.
\begin{equation*}
\begin{array}{cc}
0<m(\alpha\in W)=n(\alpha\in W)=\mbox{const},&
0<m(\lambda\in\Sigma-W)=\mbox{const}.
\end{array}
\end{equation*}
\end{itemize}
\item[(II)] In the case where $W \supsetneq \Sigma$:
\begin{itemize}
\item Type (II-$BC_r$): $\tilde{\Sigma}=W=BC_r\supset \Sigma=B_r$.
\[
\begin{array}{l}
0<n(\pm e_i)=m(\pm e_i)=\mbox{const},\quad 0<n(\pm 2e_i)=\mbox{const},\\
0<n(\pm e_i\pm e_j)=m(\pm e_i\pm e_j)=\mbox{const}\; (i\not= j).
\end{array}
\]
\end{itemize}
\item[(I')] In the case where $\Sigma \not=W$ except for (I) and (II):
\begin{itemize}
\item Type (I'-$C_r$): $\tilde{\Sigma}=W=C_r\supset \Sigma=D_r$.
When $r\geq 3$, then
\[
\begin{array}{cc}
0<m(\pm e_i\pm e_j)=n(\pm e_i\pm e_j)=\mbox{const},&
0<n(\pm 2e_i)=\mbox{const}.
\end{array}
\]
When $r=2$, then
\[
\begin{array}{l}
n(\pm 2e_1)=n(\pm 2e_2)=\mbox{const},\quad m(\pm (e_1+e_2))=n(\pm (e_1-e_2))=\mbox{const},\\
n(\pm (e_1+e_2))=m(\pm (e_1-e_2))=\mbox{const}.
\end{array}
\]
It is known $(\mbox{I-}C_r)\sim (\mbox{I'-}C_r)$ as symmetric triads.
\item Type (I'-$F_4$): $\tilde{\Sigma}=F_4$ and
\begin{align*}
\Sigma&=\{\mbox{short roots of }F_4\}\cup \{\pm e_1\pm e_2,\pm e_3\pm e_4\}\simeq C_4,\\
W&=\{\mbox{short roots of }F_4\}\cup \{\pm e_1\pm e_3,\pm e_1\pm e_4,\pm e_2\pm e_3,\pm e_2\pm e_4\}.
\end{align*}
\[
\begin{array}{cc}
0<m(\mbox{short})=n(\mbox{short})=\mbox{const},&0<m(\mbox{long}\in\Sigma)=n(\mbox{long}\in W)=\mbox{const}.
\end{array}
\]
It is known $(\mbox{I-}F_4)\sim (\mbox{I'-}F_4)$ as symmetric triads.
\item Type (I'-$B_r)_s$:\; $(r\geq 3,1\leq s\leq r-1)$: $\tilde{\Sigma}=B_r$ and
\[
\begin{array}{cc}
\Sigma=B_s\cup B_{r-s},&W=(B_r-\Sigma )\cup \{\pm e_i\}.
\end{array}
\]
\begin{align*}
& 0<m(\pm e_1)=\cdots =m(\pm e_s)=n(\pm e_{s+1})=\cdots =n(\pm e_{r})=\mbox{const},\\
& 0<n(\pm e_1)=\cdots =n(\pm e_s)=m(\pm e_{s+1})=\cdots =m(\pm e_r)=\mbox{const},\\
& 0<m(\pm e_i\pm e_j)=m(\pm e_{s+k}\pm e_{s+l})=n(\pm e_p\pm e_{s+q})=\mbox{const}\\
& \qquad (1\leq i<j\leq s,1\leq k<l\leq r-s,1\leq p\leq s,1\leq q\leq r-s).
\end{align*}
It is known $(\mbox{I-}B_r)\sim (\mbox{I'-}B_r)$ as symmetric triads.
\item Type $($I'-$BC_r$-$A_1^r)_s$\; $(1\leq s\leq r-1)$: $\tilde{\Sigma}=BC_r$ and
\[
\begin{array}{cc}
\Sigma=BC_s\cup BC_{r-s},&W=(BC_r-\Sigma )\cup \{\pm e_i\}.
\end{array}
\]
\begin{align*}
& 0<m(\pm 2e_i)=\mbox{const}\quad (1\leq i\leq r),\\
& 0<m(\pm e_1)=\cdots =m(\pm e_s)=n(\pm e_{s+1})=\cdots =n(\pm e_{r})=\mbox{const},\\
& 0<n(\pm e_1)=\cdots =n(\pm e_s)=m(\pm e_{s+1})=\cdots =m(\pm e_r)=\mbox{const},\\
& 0<m(\pm e_i\pm e_j)=m(\pm e_{s+k}\pm e_{s+l})=n(\pm e_p\pm e_{s+q})=\mbox{const}\\
& \qquad (1\leq i<j\leq s,1\leq k<l\leq r-s,1\leq p\leq s,1\leq q\leq r-s).
\end{align*}
It is known $($I-$BC_r$-$A_1^r)$ $\sim$ $($I'-$BC_r$-$A_1^r)_s$
as symmetric triads.
\end{itemize}
\item[(III)] In the case where $\tilde{\Sigma}=\Sigma =W$:
\begin{itemize}
\item Type (III-$A_r$): $\tilde{\Sigma}=\Sigma=W=A_r$.
\[
\begin{array}{cc}
0<m(\lambda)=n(\lambda)=\mbox{const}&(\lambda\in\tilde{\Sigma}).
\end{array}
\]
\item Type (III-$B_r$): $\tilde{\Sigma}=\Sigma=W=B_r$.
When $r\geq 3$ then
\[
\begin{array}{cc}
0<m(\pm e_i)=n(\pm e_i)=\mbox{const},&0<m(\pm e_i\pm e_j)=n(\pm e_i\pm e_j)=\mbox{const}\; (i\not=j).
\end{array}
\]
When $r=2$ then
\[
\begin{array}{lll}
0<m(\pm e_i)=n(\pm e_i)=\mbox{const},&0<m(\pm e_1\pm e_2)=\mbox{const},&
0<n(\pm e_1\pm e_2)=\mbox{const}.
\end{array}
\]
When $r=1$ then $m(\pm e_1)=n(\pm e_1)$.
\item Type (III-$C_r$): $\tilde{\Sigma}=\Sigma=W=C_r$.
\[
\begin{array}{l}
0<m(\pm e_i\pm e_j)=n(\pm e_i\pm e_j)=\mbox{const}\; (i\not= j),\\
0<m(\pm 2e_i)=\mbox{const},\quad 0<n(\pm 2e_i)=\mbox{const}.
\end{array}
\]
\item Type (III-$BC_r$): $\tilde{\Sigma}=\Sigma=W=BC_r$.
When $r\geq 3$, then
\[
\begin{array}{ll}
0<m(\pm e_i)=n(\pm e_i)=\mbox{const},&0<m(\pm e_i\pm e_j)=n(\pm e_i\pm e_j)=\mbox{const},\\
0<m(\pm 2e_i)=\mbox{const},&0<n(\pm 2e_i)=\mbox{const}.
\end{array}
\]
When $r=2$, then
\[
\begin{array}{lll}
0<m(\pm e_i)=n(\pm e_i)=\mbox{const},&0<m(\pm e_1\pm e_2)=\mbox{const},&\\
0<n(\pm e_1\pm e_2)=\mbox{const},& 0<m(\pm 2e_i)=\mbox{const},&0<n(\pm 2e_i)=\mbox{const}.
\end{array}
\]
When $r=1$, then
\[
\begin{array}{lll}
0<m(\pm e_{1})=n(\pm e_{1}), & 0<m(\pm 2e_{1}), & 0<n(\pm 2e_{1}).
\end{array}
\]
\item Type (III-$D_r$): $\tilde{\Sigma}=\Sigma=W=D_r$.
\begin{equation*}
\begin{array}{cc}
0 < m(\lambda)=n(\lambda)=\mbox{const}&(\lambda\in\tilde{\Sigma}).
\end{array}
\end{equation*}
\item Type (III-$E_r$) ($r=6,7,8$): $\tilde{\Sigma}=\Sigma=W=E_{r}$.
\begin{equation*}
\begin{array}{cc}
0 < m(\lambda)=n(\lambda)=\mbox{const}&(\lambda\in\tilde{\Sigma}).
\end{array}
\end{equation*}
\item Type (III-$F_4$): $\tilde{\Sigma}=\Sigma=W=F_4$.
\[
\begin{array}{cc}
0<m(\mbox{short})=n(\mbox{short})=\mbox{const},&
0<m(\mbox{long})=n(\mbox{long})=\mbox{const}.
\end{array}
\]
\item Type (III-$G_2$): $\tilde{\Sigma}=\Sigma=W=G_2$.
\[
\begin{array}{cc}
0<m(\mbox{short})=n(\mbox{short})=\mbox{const},&0<m(\mbox{long})=n(\mbox{long})=\mbox{const}.
\end{array}
\]
\end{itemize}
\end{enumerate}

\subsection{The classification of symmetric triads with multiplicities}\label{sec:symm_classification}

In what follows, we classify the symmetric triads with multiplicities under the equivalence relation $\sim$ based on the above descriptions.

\begin{ex}\rm\label{ex:one-ele}
When
$(\tilde{\Sigma},\Sigma,W)=$ (I-$BC_r$-$B_r$)\; $(r\geq 3)$, (II-$BC_r$), (III-$A_r$), (III-$B_r$)\; $(r\geq 3 \mbox{ or } r=1)$, (III-$BC_r$)\; $(r\geq 3 \mbox{ or } r=1)$, (III-$D_r$),
(III-$E_{r}$) ($r=6,7,8$),
(III-$F_4$), (III-$G_2$), then the isomorphism class of
the corresponding symmetric triad
$(\tilde{\Sigma},\Sigma,W;m,n)$ with multiplicities consists of only one element, that is,
$(\tilde{\Sigma},\Sigma,W;m,n)$ itself.
\end{ex}

\begin{ex}\rm \label{ex:I-B_r}
There is an equivalence relation:
$$
(\mbox{I-}B_r;m,n)\sim (\mbox{I-}B_r;m',n')\sim ((\mbox{I'-}B_r)_{s};m'',n'')\sim ((\mbox{I'-}B_r)_{s};m''',n''').
$$
Here the relation between $m,n$ and $m',n'$ is given by
$$
m'(\pm e_i)=n(\pm e_i),\quad n'(\pm e_i)=m(\pm e_i),\quad m'(\pm e_i\pm e_j)=m(\pm e_i\pm e_j).
$$
The relation between $m,n$ and $m'',n''$ is given by
\begin{align*}
& n''(\pm e_i)=m''(\pm e_{s+j})=m(\pm e_k),\\
& m''(\pm e_i)=n''(\pm e_{s+j})=n(\pm e_k)\quad (1\leq i\leq s,s+1\leq j\leq r-s,1\leq k\leq r),\\
& m''(\pm e_i\pm e_j)=m''(\pm e_{s+k}\pm e_{s+l})=n''(\pm e_p\pm e_{s+q})=m(\pm e_a\pm e_b),\\
& (1\leq i<j\leq s,1\leq k<l\leq r-s,1\leq p\leq s,1\leq q\leq r-s,1\leq a<b\leq r).
\end{align*}
The relation between $m,n$ and $m''',n'''$ is given by
\begin{align*}
& n'''(\pm e_i)=m'''(\pm e_{s+j})=n(\pm e_k),\\
& m'''(\pm e_i)=n'''(\pm e_{s+j})=m(\pm e_k)\quad (1\leq i\leq s,s+1\leq j\leq r-s,1\leq k\leq r),\\
& m'''(\pm e_i\pm e_j)=m'''(\pm e_{s+k}\pm e_{s+l})=n'''(\pm e_p\pm e_{s+q})=m(\pm e_a\pm e_b),\\
& (1\leq i<j\leq s,1\leq k<l\leq r-s,1\leq p\leq s,1\leq q\leq r-s,1\leq a<b\leq r).
\end{align*}
\end{ex}

\begin{ex}\label{ex:IBCA}\rm
There is an equivalence relation:
\[
(\mbox{I-}BC_r\mbox{-}A_1^r;m,n)\!\sim\!(\mbox{I-}BC_r\mbox{-}A_1^r;m',n')\!\sim\!((\mbox{I'-}BC_r\mbox{-}A_1^r)_{s};m'',n'')\!\sim\!((\mbox{I'-}BC_r\mbox{-}A_1^r)_{s};m''',n''').
\]
Here the relation between $m,n$ and $m',n'$ is given by
$$
m'(\pm e_i)=n(\pm e_i),\, n'(\pm e_i)=m(\pm e_i),\, m'(\pm e_i\pm e_j)=m(\pm e_i\pm e_j), m'(\pm 2e_i)=m(\pm 2e_i).
$$
The relation between $m,n$ and $m'',n''$ is given by
\begin{align*}
& m''(\pm 2e_i)=m(\pm 2e_i),\\
& n''(\pm e_i)=m''(\pm e_{s+j})=m(\pm e_k),\\
& m''(\pm e_i)=n''(\pm e_{s+j})=n(\pm e_k)\quad (1\leq i\leq s,s+1\leq j\leq r-s,1\leq k\leq r),\\
& m''(\pm e_i\pm e_j)=m''(\pm e_{s+k}\pm e_{s+l})=n''(\pm e_p\pm e_{s+q})=m(\pm e_a\pm e_b)\\
& (1\leq i<j\leq s,1\leq k<l\leq r-s,1\leq p\leq s,1\leq q\leq r-s,1\leq a<b\leq r).
\end{align*}
The relation between $m,n$ and $m''',n'''$ is given by
\begin{align*}
& m'''(\pm 2e_i)=m(\pm 2e_i),\\
& n'''(\pm e_i)=m'''(\pm e_{s+j})=n(\pm e_k),\\
& m'''(\pm e_i)=n'''(\pm e_{s+j})=m(\pm e_k)\quad (1\leq i\leq s,s+1\leq j\leq r-s,1\leq k\leq r),\\
& m'''(\pm e_i\pm e_j)=m'''(\pm e_{s+k}\pm e_{s+l})=n'''(\pm e_p\pm e_{s+q})=m(\pm e_a\pm e_b)\\
& (1\leq i<j\leq s,1\leq k<l\leq r-s,1\leq p\leq s,1\leq q\leq r-s,1\leq a<b\leq r).
\end{align*}
\end{ex}

\begin{ex}\label{ex.i-bcr-br}\rm $(\mbox{I-}BC_2\mbox{-}B_2)$: $\tilde{\Sigma}=\Sigma=BC_2,W=B_2$.
\begin{equation*}
\begin{array}{ll}
m(\pm e_i)=n(\pm e_i)=\mbox{const},&m(\pm 2e_i)=\mbox{const},\\
m(\pm e_1\pm e_2)=\mbox{const}=:c_1,&n(\pm e_1\pm e_2)=\mbox{const}=:c_2.
\end{array}
\end{equation*}
When $c_1=c_2$, then the isomorphism class of $(\tilde{\Sigma},\Sigma,W;m,n)$ consists of only one element.
When $c_1\not= c_2$, then the isomorphism class of $(\tilde{\Sigma},\Sigma,W;m,n)$
consists of two elements, that is, $(\tilde{\Sigma},\Sigma ,W;m,n)$ itself and
$(\tilde{\Sigma},\Sigma ,W;m',n')$.
Here
\begin{equation*}
\begin{array}{ll}
m'(\pm e_i)=n'(\pm e_i)=m(\pm e_i)=n(\pm e_i),&m'(\pm 2e_i)=m(\pm 2e_i),\\
m'(\pm e_1\pm e_2)=n(\pm e_1\pm e_2),&n'(\pm e_1\pm e_2)=m(\pm e_1\pm e_2).
\end{array}
\end{equation*}
\end{ex}

\begin{ex}\rm
$(\mbox{I-}C_r)$: $\tilde{\Sigma}=\Sigma=C_r, W=D_r$.
When $r\geq 3$, the isomorphism class of $(\tilde{\Sigma},\Sigma,W;m,n)$ consists of two elements, that is,
$(\tilde{\Sigma},\Sigma,W;m,n)$ itself and $(\mbox{I'-}C_r,m',n')$.
Here
\begin{align*}
& m'(\pm e_i\pm e_j)=n'(\pm e_i\pm e_j)=m(\pm e_i\pm e_j)=n(\pm e_i\pm e_j),\\
& n'(\pm 2e_i)=m(\pm 2e_i).
\end{align*}
When $r=2$ there is an equivalence relation
$$
(\mbox{I-}C_2;m,n)\sim (\mbox{I-}C_2;m',n')\sim (\mbox{I'-}C_2;m'',n'')\sim (\mbox{I'-}C_2;m''',n''').
$$
Here the relation between $m,n$ and $m',n'$ is given by
\begin{align*}
& m'(\pm (e_1\pm e_2))=n(\pm (e_1\pm e_2)),\quad n'(\pm (e_1\pm e_2))=m(\pm (e_1\pm e_2)),\\
& m'(\pm 2e_1)=m'(\pm 2e_2)=m(\pm 2e_1)=m(\pm 2e_2).
\end{align*}
The relation between $m,n$ and $m'',n''$ is given by
\begin{align*}
& m''(\pm (e_1-e_2))=n''(\pm (e_1+e_2))=m(\pm (e_1\pm e_2)),\\
& m''(\pm (e_1+e_2))=n''(\pm (e_1-e_2))=n(\pm (e_1\pm e_2)),\\
& n''(\pm 2e_1)=n''(\pm 2e_2)=m(\pm 2e_1)=m(\pm 2e_2).
\end{align*}
The relation between $m,n$ and $m''',n'''$ is given by
\begin{align*}
& m'''(\pm (e_1-e_2))=n'''(\pm (e_1+e_2))=n(\pm (e_1\pm e_2)),\\
& m'''(\pm (e_1+e_2))=n'''(\pm (e_1-e_2))=m(\pm (e_1\pm e_2)),\\
& n'''(\pm 2e_1)=n'''(\pm 2e_2)=m(\pm 2e_1)=m(\pm 2e_2).
\end{align*}
\end{ex}

\begin{ex}\label{ex:if4}\rm
$(\mbox{I-}F_4;m,n)\sim (\mbox{I'-}F_4;m',n')$, where
\begin{align*}
& m'(\mbox{short})=n'(\mbox{short})=m(\mbox{short})=n(\mbox{short}),\\
& m'(\mbox{long}\in\Sigma)=n'(\mbox{long}\in W)=m(\mbox{long}).
\end{align*}
\end{ex}

\begin{ex} \rm When $(\tilde{\Sigma},\Sigma,W)=(\mbox{\rm{III}-}B_2)$, then
\begin{align*}
&m(\pm e_{i})=n(\pm e_{i})=\mbox{const},\\
&m(\pm e_{1}\pm e_{2})=\mbox{const}=:c_{1},\quad n(\pm e_{1}\pm e_{2})=\mbox{const}=:c_{2}.
\end{align*}
\noindent
If $c_1=c_2$ then the isomorphism class of $(\tilde{\Sigma},\Sigma,W;m,n)$ consists of only one element.
If $c_1\not= c_2$ the isomorphism class of $(\tilde{\Sigma},\Sigma,W;m,n)$ consists of $(\tilde{\Sigma},\Sigma,W;m,n)$ itself and
$(B_2,B_2,B_2;m',n')$, where
\begin{align*}
& m'(\pm e_i)=n'(\pm e_i)=m(\pm e_i)=n(\pm e_i), \\
& m'(\pm e_1\pm e_2)=n(\pm e_1\pm e_2),\; n'(\pm e_1\pm e_2)=m(\pm e_1\pm e_2).
\end{align*}
\end{ex}

\begin{ex}\rm \label{ex:III-C_r} When $(\tilde{\Sigma},\Sigma,W)=(\mbox{III-}C_r)\; (r\geq 2)$, $\tilde{\Sigma}=\Sigma =W=C_r$ then
\begin{align*}
&m(\pm e_i\pm e_j)=n(\pm e_i\pm e_j),\\
&m(\pm 2e_i)=\mbox{const}=:c_{1},\quad n(\pm 2e_i)=\mbox{const}=:c_{2}.
\end{align*}
When $c_{1}=c_{2}$ then the isomorphism class of $(\tilde{\Sigma},\Sigma,W;m,n)$ consists of only one element.
When $c_{1}\neq c_{2}$ then the isomorphism class of $(\tilde{\Sigma},\Sigma,W;m,n)$ consists of $(\tilde{\Sigma},\Sigma,W;m,n)$ itself and
$(C_r,C_r,C_r;m',n')$, where
\begin{align*}
&m'(\pm e_i\pm e_j)=n'(\pm e_i\pm e_j)=m(\pm e_i\pm e_j)=n(\pm e_i\pm e_j),\\
&m'(\pm 2e_i)=n(\pm 2e_i),\quad n'(\pm 2e_i)=m(\pm 2e_i).
\end{align*}
\end{ex}

\begin{ex}\rm \label{ex:III-BC_2}
$(\mbox{III-}BC_2): $
\begin{align*}
& m(\pm e_1\pm e_2)=\mbox{const}=:c_1,\quad n(\pm e_1\pm e_2)=\mbox{const}=:c_2,\\
& m(\pm 2e_i)=\mbox{const},\quad
n(\pm 2e_i)=\mbox{const}, \quad m(\pm e_i)=n(\pm e_i)=\mbox{const}.
\end{align*}
Then the isomorphism class of $(\tilde{\Sigma},\Sigma, W;m,n)$ consists of two elements,
i.e.,
$(\tilde{\Sigma},\Sigma, W;m,n)$ itself and
$(BC_2,BC_2,BC_2;m',n')$, where
\begin{align*}
& m'(\pm e_1\pm e_2)=c_2,\quad n'(\pm e_1\pm e_2)=c_1,\\
& m'(\pm 2e_i)=m(\pm 2e_i),\quad n'(\pm 2e_i)=n(\pm 2e_i),\\
& m'(\pm e_i)=n'(\pm e_i)=m(\pm e_i)=n(\pm e_i).
\end{align*}
\end{ex}

\begin{thm}\label{thm:class_symmI-III}
Examples \ref{ex:one-ele}--\ref{ex:III-BC_2}
exhibit the classification of the isomorphism classes
for the symmetric triads with multiplicities.
\end{thm}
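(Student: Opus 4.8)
The plan is to build directly on the classification of symmetric triads \emph{without} multiplicities (\cite[Theorem~2.19]{Ikawa}), which sorts the underlying triples $(\tilde{\Sigma},\Sigma,W)$ into the types (I), (II), (I'), (III) recorded above, and to proceed type by type. For a fixed underlying triad there are two steps: enumerate the admissible pairs $(m,n)$, and then partition them into $\sim$-classes. The first step is essentially mechanical. Conditions (1)--(3) of Definition~\ref{dfn:multi} force $m$ and $n$ to be constant on $W(\Sigma)$-orbits and make $m+n$ constant on $W(\tilde{\Sigma})$-orbits, while condition (4) links the value of $m$ on $\lambda$ to that of $m$ or $n$ on $w_\alpha\lambda$ according to a parity. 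Since every root system in play is written out in Notation~\ref{nota:root_system}, this reduces to reading off the finitely many free constants displayed in each Example.

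The heart of the argument is the second step, governed by the translation parameter $Y\in\Gamma$ of Definition~\ref{dfn:e-relation}. First I would compute $\Gamma$ from \eqref{eqn:Gamma} for each ambient $\tilde{\Sigma}$ and observe that a choice of $Y$ defines a parity function $\varepsilon_Y(\alpha)\in\{0,1\}$, namely the residue of $\langle\alpha,2Y\rangle/\pi$ modulo $2$, which is additive by linearity of $\langle\,\cdot\,,2Y\rangle$. By \eqref{eqn:equiv-relation} and \eqref{eqn:equiv-relation-mn}, the pair $(f,Y)$ fixes the $\Sigma/W$-membership and the pair $(m,n)$ on every root with $\varepsilon_Y=0$, and interchanges both on every root with $\varepsilon_Y=1$. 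Classifying the $\sim$-classes within a fixed type thus becomes the combinatorial question of which patterns of $m\leftrightarrow n$ swaps are realized by some $\varepsilon_Y$ of this form, possibly after an automorphism $f$ of $\tilde{\Sigma}$. For instance, in type (I-$B_r$) one has $\Gamma=\tfrac{\pi}{2}\mathbb{Z}^{r}$, and setting $\varepsilon_Y(e_i)=0$ for $i\le s$ and $\varepsilon_Y(e_i)=1$ for $i>s$ makes the long roots joining the two blocks odd and the remaining long roots even, which moves precisely the cross-block long roots from $\Sigma$ into $W$; this produces the triad (I'-$B_r)_s$ of Example~\ref{ex:I-B_r}, and entirely analogous choices yield the equivalences in Examples~\ref{ex:IBCA}--\ref{ex:if4}.

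The delicate cases, which I expect to be the main obstacle, are the rigid ones in type (III) together with (I-$BC_2$-$B_2$), where $\Sigma=W=\tilde{\Sigma}$ or $W-\Sigma=\emptyset$, so that a translation can only permute $m$ and $n$ within each length class. On every root where Definition~\ref{dfn:multi} already forces $m=n$ --- the short roots in (III-$C_r$) and (III-$B_2$), and the roots $\pm e_i$ in (III-$BC_2$) and (I-$BC_2$-$B_2$) --- any such swap is vacuous, so the sole possibly nontrivial effect is the interchange $c_1\leftrightarrow c_2$ of the two remaining multiplicities. One must then exhibit a $Y\in\Gamma$ realizing exactly this interchange and confirm that nothing finer is possible. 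For (III-$C_r$) the element $Y=\tfrac{\pi}{4}\sum_i e_i$ lies in $\Gamma$ and satisfies $\varepsilon_Y=1$ on every long root $\pm 2e_i$, so (as $m=n$ on the short roots) its only effect is to swap $c_1$ and $c_2$; hence the class is a single point when $c_1=c_2$ and consists of the two members of Example~\ref{ex:III-C_r} otherwise. The same length-parity bookkeeping inside $\Gamma=\tfrac{\pi}{2}\mathbb{Z}^{2}$ --- where one checks that $\varepsilon_Y$ is forced to vanish on every $\pm 2e_i$ while $\pm(e_1\pm e_2)$ can be made odd --- disposes of (III-$B_2$), (III-$BC_2$) and (I-$BC_2$-$B_2$).

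It remains to assemble the global statement. Completeness is immediate: every admissible triad sits over one of Ikawa's underlying triads carrying one of the multiplicity patterns enumerated above, hence is $\sim$-equivalent to a representative appearing in some one of Examples~\ref{ex:one-ele}--\ref{ex:III-BC_2}. For non-redundancy one checks that distinct Examples lie in distinct $\sim$-classes by means of $\sim$-invariants: the isomorphism type of $\tilde{\Sigma}$ is preserved by $f$; the cardinality of $\Sigma\cap W$ is preserved, since a root with both $m,n>0$ can only be sent to such a root; and the $W(\tilde{\Sigma})$-invariant function $\alpha\mapsto m(\alpha)+n(\alpha)$ is preserved, because \eqref{eqn:equiv-relation-mn} leaves each sum $m+n$ unchanged and $f$ carries $W(\tilde{\Sigma})$-orbits to $W(\tilde{\Sigma}')$-orbits. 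These invariants separate the Examples, so that Examples~\ref{ex:one-ele}--\ref{ex:III-BC_2} list each isomorphism class exactly once.
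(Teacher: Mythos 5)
Your proposal is correct and follows exactly the route the paper has in mind: the paper declares this theorem ``routine'' and omits the proof, its stated strategy being precisely your case-by-case argument over Ikawa's classification of the underlying triads, with the $\sim$-classes controlled by the parity function $\alpha\mapsto\langle\alpha,2Y\rangle\bmod 2\pi$ for $Y\in\Gamma$. Your explicit computations (e.g.\ $\Gamma=\tfrac{\pi}{2}\mathbb{Z}^{r}$ for $B_r$, the choice $Y=\tfrac{\pi}{4}\sum_i e_i$ for (III-$C_r$), the forced vanishing of the parity on $\pm 2e_i$ in the $BC_2$ cases) and your separating invariants all check out against the Examples.
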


It is a routine work to prove this theorem,
so that we omit the proof.

Here, let us introduce an equivalence relation $\equiv$ for symmetric triads with multiplicities as follows.
Intuitively, 
$(\tilde{\Sigma},\Sigma,W;m,n)\equiv(\tilde{\Sigma}',\Sigma',W';m',n')$
means that these are the same as symmetric triads with multiplicities.

\begin{dfn}\label{dfn:stm_equiv}
Let $(\tilde{\Sigma},\Sigma,W;m,n)$ and
$(\tilde{\Sigma}',\Sigma',W';m',n')$ be symmetric triads with multiplicities of $\mathfrak{a}$ and $\mathfrak{a}'$,
respectively.
We write $(\tilde{\Sigma},\Sigma,W)\equiv (\tilde{\Sigma}',\Sigma',W')$
if there exists an isomorphism $f:\mathfrak{a}\to \mathfrak{a}'$ of root systems
such that $f(\Sigma)=\Sigma'$, $f(W)=W'$.
In addition, if $f$ satisfies
$m(\alpha)=m'(f(\alpha))$, $n(\alpha)=n'(f(\alpha))$ for all $\alpha\in\tilde{\Sigma}$,
then we also write
$(\tilde{\Sigma},\Sigma,W;m,n)\equiv (\tilde{\Sigma}',\Sigma',W';m',n')$.
We call such a mapping $f$ an isomorphism of
$(\tilde{\Sigma},\Sigma,W;m,n)$ and $(\tilde{\Sigma}',\Sigma',W';m',n')$
with respect to $\equiv$.
\end{dfn}

Then we give special isomorphisms for symmetric triads
with multiplicities with respect to $\equiv$.
From (\ref{eqn:root_iso}) we have the following special isomorphisms for symmetric triads:
\[
\begin{array}{l}
(\mbox{I-}B_{1})\equiv(\mbox{I-}C_{1})\equiv(\mbox{III-}A_{1})\equiv(\mbox{III-}B_{1})\equiv(\mbox{III-}C_{1}),\quad
(\mbox{I-}BC_{1}\mbox{-}A_{1}^{1})\equiv(\mbox{I-}BC_{1}\mbox{-}B_{1}),\\
(\mbox{I-}B_{2})\equiv(\mbox{I-}C_{2}),\quad
(\mbox{I'-}B_{2})_{1}\equiv(\mbox{I'-}C_{2}),\quad
(\mbox{III-}A_{3})\equiv (\mbox{III-}D_{3}).
\end{array}
\]
As a consequence we have special isomorphisms for symmetric triads with multiplicities.

\begin{ex}\label{ex:stmSI}
We find the following relations:
\begin{enumerate}[(1)]
\item $(\mbox{I-}B_{2};m,n)\equiv(\mbox{I-}C_{2};m',n')$ with
\[
m(\text{short})=m'(\text{short}),\quad n(\text{short})=n'(\text{short}),
\quad
m(\text{long})=m'(\text{long}).
\]
\item $((\mbox{I'-}B_{2})_{1};m,n)\equiv(\mbox{I'-}C_{2};m',n')$
with 
\[
m(\text{short})=m'(\text{short}),\quad n(\text{short})=n'(\text{short}),
\quad
n(\text{long})=n'(\text{long}).
\]
\item $(\mbox{I'-}BC_{2}\mbox{-}A_{1}^{2};m,n)\equiv
(\mbox{I'-}BC_{2}\mbox{-}A_{1}^{2};m',n')$ with
\[
\begin{array}{l}
m(e_{1})=m'(e_{2}),\quad
n(e_{1})=n'(e_{2}),\quad
m(e_{2})=m'(e_{1}),\quad
n(e_{2})=n'(e_{1}),\\
m(\text{middle})=m'(\text{middle}),\quad
n(\text{middle})=n'(\text{middle}),\\
m(\text{longest})=m'(\text{longest}),\quad
n(\text{longest})=n'(\text{longest}).
\end{array}
\]
\end{enumerate}
\end{ex}

\subsection{Symmetric triads of type (IV) with multiplicities}\label{subsec:ST-IV}

We define a symmetric triad of type (IV).
The motivation of Definition \ref{def:symiv}
comes from the study of compact symmetric triads
$(G,\theta_{1},\theta_{2})$ with $\theta_{1}\sim\theta_{2}$
(see Section \ref{sec:typeIV_CST}).

\begin{dfn}\label{def:symiv}
$(\tilde{\Sigma},\Sigma,W;m,n)$ is a \textit{symmetric triad of type (IV) with multiplicities} of $\mathfrak{a}$ if it satisfies the
following three conditions:
\begin{enumerate}[(1)]
\item $\tilde{\Sigma}$ is an irreducible root system of $\mathfrak{a}$.
\item We define a lattice $\Gamma$ of $\mathfrak{a}$ by
$\Gamma=\{X\in\mathfrak{a}\mid \langle\lambda,X\rangle \in(\pi/2)\mathbb{Z}\; (\lambda\in\tilde{\Sigma})\}$.
Then there exists $Y\in\Gamma$ such that
\begin{align*}
\Sigma=\Sigma_{Y}&:=\{\lambda\in\tilde{\Sigma}\mid \langle\lambda,2Y\rangle \in 2\pi\mathbb{Z}\}=\{\lambda\in\tilde{\Sigma}\mid\langle\lambda,Y\rangle\in\pi\mathbb{Z}\},\\
W=W_{Y}&:=\{\lambda\in\tilde{\Sigma}\mid\langle\lambda,2Y\rangle\in\pi+2\pi\mathbb{Z}\}=\tilde{\Sigma}-\Sigma.
\end{align*}
\item $m$ and $n$ are mappings from $\tilde{\Sigma}$ to $\mathbb{R}_{\geq0}$
such that there
exists a multiplicity $\tilde{m}:\tilde{\Sigma}\to\mathbb{R}_{>0}$
on $\tilde{\Sigma}$ in the sense of Definition \ref{dfn:rootsig}
satisfying the followings:
$$
m(\lambda)=\tilde{m}_\lambda,
\quad n(\lambda)=0
\quad  m(\alpha)=0,\quad n(\alpha)=\tilde{m}_\alpha,
$$
for $\lambda\in\Sigma$ and $\alpha\in W$.
\end{enumerate}
We call $(\tilde{\Sigma},\Sigma,W)$ the \textit{symmetric triad of type (IV)} if we forget multiplicities $m$ and $n$.
We also
call $(\tilde{\Sigma};\tilde{m})$ the \textit{base} of $(\tilde{\Sigma},\Sigma,W;m,n)$.
\end{dfn}

When $(\tilde{\Sigma},\Sigma,W)$
is a symmetric triad of type (IV),
then $\Sigma=\Sigma_{Y}$ is a root system of $\mathrm{span}_{\mathbb{R}}(\Sigma)(\subset \mathfrak{a})$.
We remark that a symmetric triad $(\tilde{\Sigma},\Sigma,W)$ of type (IV) is not a symmetric triad in the sense of Definition~\ref{dfn:symmtriad} since
$\Sigma\cap W=\emptyset$.
In Definition \ref{def:symiv}
if we put $Y=0$,
then the obtained symmetric triad of type (IV) with multiplicities
has the form $(\tilde{\Sigma}, \tilde{\Sigma}, \emptyset; \tilde{m}, 0)$,
which we call the \textit{trivial} symmetric triad of type (IV) with multiplicities.

We define an equivalence relation on the set of symmetric triads of type (IV) with multiplicities as follows.

\begin{dfn}\label{def:symivequiv}
Let
$(\tilde{\Sigma},\Sigma,W;m,n)$ and $(\tilde{\Sigma}',\Sigma',W';m',n')$
be symmetric triads of type (IV) with multiplicities of $\mathfrak{a}$ and $\mathfrak{a}'$, respectively.
We denote by $(\tilde{\Sigma};\tilde{m})$
and $(\tilde{\Sigma}';\tilde{m}')$
the bases of $(\tilde{\Sigma},\Sigma,W;m,n)$
and $(\tilde{\Sigma}',\Sigma',W';m',n')$, respectively.
Two symmetric triads $(\tilde{\Sigma},\Sigma,W;m,n)$
and $(\tilde{\Sigma}',\Sigma',W';m',n')$
are \textit{isomorphic},
if $(\tilde{\Sigma};\tilde{m})\simeq(\tilde{\Sigma}';\tilde{m}')$ in the sense of Definition \ref{dfn:rootmequiv}.
If $(\tilde{\Sigma},\Sigma,W;m,n)$ and $(\tilde{\Sigma}',\Sigma',W';m',n')$ are isomorphic,
then we write $(\tilde{\Sigma},\Sigma,W;m,n) \sim (\tilde{\Sigma}',\Sigma',W';m',n')$.
\end{dfn}

We note that the classification of the isomorphism classes of symmetric triads with multiplicities
of type (IV)
reduces to that of the isomorphism classes of irreducible root systems with multiplicities.
The latter is derived from the classification of the irreducible root systems (cf.~Notation \ref{nota:root_system})
and the condition (\ref{eqn:multi_length}).

For symmetric triads of type (IV) with multiplicities,
we also introduce an equivalence relation $\equiv$ as in Definition \ref{dfn:stm_equiv}.
It is verified that 
$(\tilde{\Sigma},\Sigma,W;m,n)\equiv
(\tilde{\Sigma}',\Sigma',W';m',n')$
implies $(\tilde{\Sigma},\Sigma,W;m,n)\sim(\tilde{\Sigma}',\Sigma',W';m',n')$.
The classification of the $\equiv$-equivalence classes of symmetric triads of type (IV)
\textit{with} multiplicities are easily derived from
that of symmetric triads of type (IV)
\textit{without} multiplicities.

In the sequel, we focus our attention to classify
the set of all equivalence classes with respect to $\equiv$
for symmetric triads of type (IV) with multiplicities.
Then it is sufficient to
classify all possible $\Sigma=\Sigma_{Y}$ for $Y \in \Gamma$.
Let $(\tilde{\Sigma},\Sigma,W)$
be a symmetric triad of type (IV) of $\mathfrak{a}$.
Then there exists $Y\in\Gamma$ such that
$\Sigma=\Sigma_{Y}$.
Denote by $\tilde{W}(\tilde{\Sigma})$ and
$W(\tilde{\Sigma})$
the affine Weyl group and the Weyl group of $\tilde{\Sigma}$,
respectively.
We note that $\tilde{W}(\tilde{\Sigma})$
is a subgroup of $O(\mathfrak{a})\ltimes \mathfrak{a}$ generated by
$\{(s_{\alpha}, (2n\pi/\|\alpha\|^{2})\alpha)\mid n\in\mathbb{Z},\alpha\in\tilde{\Sigma}\}$,
where $s_{\alpha}$ is the linear isometry defined in (\ref{eqn:dfn_reflection}).
The action $(s_{\alpha},(2n\pi/\|\alpha\|^{2})\alpha)$
on $\mathfrak{a}$ is a reflection
with respect to the hyperplane $\{H\in\mathfrak{a} \mid \INN{\alpha}{H}=
n\pi\}$.
\begin{lem}
$\tilde{W}(\tilde{\Sigma})
=W(\tilde{\Sigma})\ltimes
\sum_{\alpha\in\tilde{\Sigma}}2\pi\mathbb{Z}(\alpha/\|\alpha\|^{2})$.
\end{lem}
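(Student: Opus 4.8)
The plan is to prove the two set-theoretic inclusions $W(\tilde{\Sigma})\ltimes L\subseteq\tilde{W}(\tilde{\Sigma})$ and $\tilde{W}(\tilde{\Sigma})\subseteq W(\tilde{\Sigma})\ltimes L$, where I abbreviate $L:=\sum_{\alpha\in\tilde{\Sigma}}2\pi\mathbb{Z}(\alpha/\|\alpha\|^{2})$, and then to check that the product on the right is genuinely an internal semidirect product inside $O(\mathfrak{a})\ltimes\mathfrak{a}$. Throughout I work with the standard multiplication of $O(\mathfrak{a})\ltimes\mathfrak{a}$, under which a linear isometry $g$ conjugates the translation $t_{v}$ to $t_{g(v)}$; this single formula governs every computation below. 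I write $t_{v}$ for the translation by $v$ and record that the generator $(s_{\alpha},(2n\pi/\|\alpha\|^{2})\alpha)$ equals $t_{(2n\pi/\|\alpha\|^{2})\alpha}\circ s_{\alpha}$.

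For the first inclusion I would begin by exhibiting the translations. Composing the two affine reflections in the parallel hyperplanes $\langle\alpha,H\rangle=n\pi$ and $\langle\alpha,H\rangle=0$ gives
\[
(s_{\alpha},(2n\pi/\|\alpha\|^{2})\alpha)\circ(s_{\alpha},0)=t_{(2n\pi/\|\alpha\|^{2})\alpha},
\]
using $s_{\alpha}^{2}=1$. Letting $n$ run over $\mathbb{Z}$ and $\alpha$ over $\tilde{\Sigma}$, these translations generate exactly $L$, so $L\subseteq\tilde{W}(\tilde{\Sigma})$. Taking $n=0$ shows that each reflection $s_{\alpha}=(s_{\alpha},0)$ lies in $\tilde{W}(\tilde{\Sigma})$, whence $W(\tilde{\Sigma})\subseteq\tilde{W}(\tilde{\Sigma})$. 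To see that $W(\tilde{\Sigma})\ltimes L$ is a well-defined subgroup I would verify that $L$ is $W(\tilde{\Sigma})$-invariant: for $s\in W(\tilde{\Sigma})$ one has $s(2\pi\alpha/\|\alpha\|^{2})=2\pi(s\alpha)/\|s\alpha\|^{2}\in L$, since $s$ permutes $\tilde{\Sigma}$ and preserves norms. Hence $W(\tilde{\Sigma})\ltimes L$ is a subgroup of $\tilde{W}(\tilde{\Sigma})$.

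For the reverse inclusion it suffices to observe that every generator decomposes as $(s_{\alpha},(2n\pi/\|\alpha\|^{2})\alpha)=t_{(2n\pi/\|\alpha\|^{2})\alpha}\circ s_{\alpha}$ with $t_{(2n\pi/\|\alpha\|^{2})\alpha}\in L$ and $s_{\alpha}\in W(\tilde{\Sigma})$, so every generator lies in $W(\tilde{\Sigma})\ltimes L$ and therefore $\tilde{W}(\tilde{\Sigma})\subseteq W(\tilde{\Sigma})\ltimes L$. Finally I would confirm the semidirect-product structure: inside $O(\mathfrak{a})\ltimes\mathfrak{a}$ the subgroup $W(\tilde{\Sigma})$ consists of elements with trivial translation part and $L$ of elements with trivial linear part, so $W(\tilde{\Sigma})\cap L=\{1\}$, while the invariance established above makes $L$ normal; this yields the internal semidirect product and completes the argument.

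The computations are all routine, so I do not expect a serious obstacle; the only points deserving genuine care are the conjugation bookkeeping in $O(\mathfrak{a})\ltimes\mathfrak{a}$ and the verification that the translations $(2n\pi/\|\alpha\|^{2})\alpha$ generate \emph{precisely} $L$ (neither more nor less), together with the $W(\tilde{\Sigma})$-invariance of $L$ that legitimizes the symbol $\ltimes$.
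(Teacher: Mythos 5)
Your proposal is correct and follows essentially the same route as the paper: both arguments produce the translations $t_{2n\pi\alpha/\|\alpha\|^{2}}$ by composing a generator with the linear reflection $(s_{\alpha},0)$, obtain the reverse inclusion by reading off the linear and translation parts of each generator, and use the $W(\tilde{\Sigma})$-invariance of $\sum_{\alpha\in\tilde{\Sigma}}2\pi\mathbb{Z}(\alpha/\|\alpha\|^{2})$ to make sense of the semidirect product. Your additional remarks on the trivial intersection and normality only make explicit what the paper leaves implicit.
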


\begin{proof}
The Weyl group
$W(\tilde{\Sigma})$
acts on the set
$\sum_{\alpha\in\tilde{\Sigma}}2\pi\mathbb{Z}(\alpha/\|\alpha\|^{2})$
invariantly.
We find that
$W(\tilde{\Sigma})\ltimes\sum_{\alpha\in\tilde{\Sigma}}2\pi\mathbb{Z}(\alpha/\|\alpha\|^{2})$ is a subgroup of $O(\mathfrak{a})\ltimes\mathfrak{a}$.
From
$W(\tilde{\Sigma})=\{(w,0)\mid w\in W(\tilde{\Sigma})\}$,
we have
\begin{equation}\label{eqn:sumwtilde1}
W(\tilde{\Sigma})
\subset \tilde{W}(\tilde{\Sigma})
\subset W(\tilde{\Sigma})\ltimes
\sum_{\alpha\in\tilde{\Sigma}}2\pi\mathbb{Z}\dfrac{\alpha}{\|\alpha\|^{2}}.
\end{equation}
For any $H\in\mathfrak{a}$,
we obtain
\begin{align*}
(s_{\alpha},0)\cdot(s_{\alpha}^{-1},-2n\pi(\alpha/\|\alpha\|^{2}))(H)
&=(s_{\alpha},0)(s_{\alpha}^{-1}H-2n\pi(\alpha/\|\alpha\|^{2}))\\
&=H+2n\pi(\alpha/\|\alpha\|^{2})\\
&=(1,2n\pi(\alpha/\|\alpha\|^{2}))(H).
\end{align*}
This yields
$(1,2n\pi(\alpha/\|\alpha\|^{2}))=
(s_{\alpha},0)\cdot(s_{\alpha}^{-1},-2n\pi(\alpha/\|\alpha\|^{2}))
\in\tilde{W}(\tilde{\Sigma})$.
Furthermore, for $\alpha,\beta\in\tilde{\Sigma}$,
we get
\[
(1,2n\pi(\alpha/\|\alpha\|^{2})+2m\pi(\beta/\|\beta\|^{2}))=
(1,2n\pi(\alpha/\|\alpha\|^{2}))\cdot
(1,2m\pi(\beta/\|\beta\|^{2}))
\in\tilde{W}(\tilde{\Sigma}).
\]
Hence the following relation holds:
\begin{equation}\label{eqn:sumwtilde2}
(1,\sum_{\alpha\in\tilde{\Sigma}}2n_{\alpha}\pi\dfrac{\alpha}{\|\alpha\|^{2}})\in \tilde{W}(\tilde{\Sigma})
\quad
(n_{\alpha}\in\mathbb{Z}).
\end{equation}
From
\eqref{eqn:sumwtilde1} and \eqref{eqn:sumwtilde2}
we conclude that
\[
\tilde{W}(\tilde{\Sigma})
=W(\tilde{\Sigma})\ltimes
\sum_{\alpha\in\tilde{\Sigma}}2\pi\mathbb{Z}\dfrac{\alpha}{\|\alpha\|^{2}}.
\]
\end{proof}

Then it is shown that
$\Gamma$ is invariant under the action of $\tilde{W}(\tilde{\Sigma})$.
A connected component
of $\{
H\in\mathfrak{a}
\mid
\INN{\alpha}{H}\not\in\pi\mathbb{Z}
\,(\alpha\in\tilde{\Sigma})\}$ is called a \textit{cell} for $\tilde{\Sigma}$.
It is known that
$\tilde{W}(\tilde{\Sigma})$ permutes the cells.
Furthermore, if we select and fix any cell $Q$,
then $\mathfrak{a}$ is decomposed into
\begin{equation*}
\mathfrak{a} = \bigcup_{w\in\tilde{W}(\tilde{\Sigma})}w\,\overline{Q},
\end{equation*}
where $\overline{Q}$ denotes the closure of $Q$ in $\mathfrak{a}$.
It follows from this decomposition
that for $Y\in \Gamma$,
there exist $X\in\overline{Q}$ and $w=(s,v)\in\tilde{W}(\tilde{\Sigma})$
such that $Y=w(X)$.
Then we obtain that $X$ is in $\Gamma$ and that
\begin{equation*}
\INN{\lambda}{2Y}=\INN{\lambda}{2w(X)}=\INN{s^{-1}(\lambda)}{2X}+2\INN{\lambda}{v},\quad
2\INN{\lambda}{v}\in2\pi\mathbb{Z}
\end{equation*}
for all $\lambda\in\tilde{\Sigma}$.
Since $s\in W(\tilde{\Sigma})$ induces a permutation of $\tilde{\Sigma}$,
the equation above implies that
\begin{equation*}
(\tilde{\Sigma},\Sigma,W)
=(\tilde{\Sigma},\Sigma_{Y},\tilde{\Sigma}-\Sigma_{Y})
=(s\tilde{\Sigma},s\Sigma_{X},s(\tilde{\Sigma}-\Sigma_{X}))
\equiv (\tilde{\Sigma},\Sigma_{X},\tilde{\Sigma}-\Sigma_{X}).
\end{equation*}
Therefore,
it is sufficient to classify all possible $\Sigma=\Sigma_{X}$
for $X \in \Gamma\cap \overline{Q}$.
Here,
we take a useful cell $Q_{0}$ defined as follows.
Let $\tilde{\Pi}$ be a fundamental system of $\tilde{\Sigma}$
and $\tilde{\delta}$ be the highest root of $\tilde{\Sigma}$ with respect to $\tilde{\Pi}$.
Set $\tilde{\Pi}^{*}=\tilde{\Pi}\cup\{\tilde{\delta}\}$ and
$Q_{0}=\{X \in \mathfrak{a} \mid 0 < \INN{\lambda}{X} < \pi\,(\lambda\in\tilde{\Pi}^{*})\}$,
which is a cell for $\tilde{\Sigma}$ whose closure contains the origin $0$.
From the above argument,
we shall classify $\Sigma_{Y}$
for $Y \in \Gamma\cap\overline{Q_{0}}
=\{X\in\mathfrak{a}\mid\INN{\lambda}{X}\in\{0,\pi/2,\pi\} (\lambda\in\tilde{\Pi}^{*})\}$.
We define a vector $\alpha^{i}\in\mathfrak{a}$
($1\leq i \leq r:=\mathrm{rank}\,\tilde{\Sigma}$) by
$\INN{\alpha^{i}}{\alpha_{j}}=\delta_{ij}$
for $\tilde{\Pi}=\{\alpha_{1},\ldots,\alpha_{r}\}$.
Then
\begin{equation*}
\Gamma\cap\overline{Q_{0}}
=\left\{Y=\dfrac{\pi}{2}\sum_{i=1}^{r}n_{i}\alpha^{i}\biggm| n_{i}\in\{0,1,2\},(\langle \tilde{\delta},Y\rangle=)\dfrac{\pi}{2}\sum_{i=1}^{r}m_{i}n_{i}\in\left\{0,\dfrac{\pi}{2},\pi\right\}\right\},
\end{equation*}
where the positive integer $m_{i}$ is given by 
$\tilde{\delta}=\sum_{i=1}^{r}m_{i}\alpha_{i}$.
Since each $m_{i}$ is a positive integer,
we have the following result.

\begin{lem}\label{lem:Y}
$Y\in\Gamma\cap\overline{Q_{0}}$ has an expression mentioned below:
\begin{enumerate}[(1)]
\item if $\INN{\tilde{\delta}}{Y}= \pi/2$, then
$Y=(\pi/2)\alpha^{i}$ for some $i \in \{1,\ldots,r\}$ with $m_{i}=1$,
\item if $\INN{\tilde{\delta}}{Y}= \pi$, then
\begin{equation*}
Y=\begin{cases}
\dfrac{\pi}{2}\alpha^{i} & \text{for some } i\in\{1,\ldots,r\} \text{ with } m_{i}=2\rule{0pt}{4ex},\,(2\text{-}\mathrm{i})\\
\dfrac{\pi}{2}\alpha^{i}+\dfrac{\pi}{2}\alpha^{j} & \text{for some } i\neq j\in\{1,\ldots,r\} \text{ with } m_{i}=m_{j}=1\rule{0pt}{4ex},\,(2\text{-}\mathrm{ii})\\
\pi\alpha^{i} & \text{for some } i\in\{1,\ldots,r\} \text{ with } m_{i}=1,\,(2\text{-}\mathrm{iii})
\end{cases}
\end{equation*}
\item if $\INN{\tilde{\delta}}{Y}=0$, then $Y=0$.
\end{enumerate}
\end{lem}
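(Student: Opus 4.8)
The plan is to read off the conclusion directly from the explicit description of $\Gamma\cap\overline{Q_0}$ established immediately above the statement, so that the lemma reduces to a purely arithmetic enumeration. Writing $Y=\tfrac{\pi}{2}\sum_{i=1}^r n_i\alpha^i$ with $n_i\in\{0,1,2\}$, the three cases of the lemma correspond exactly to the three admissible values of $\INN{\tilde{\delta}}{Y}=\tfrac{\pi}{2}\sum_i m_i n_i$, namely $0$, $\pi/2$, $\pi$, i.e.\ to $\sum_i m_i n_i=0,1,2$ respectively. The single structural fact I would invoke throughout is the one just noted before the lemma: every mark $m_i$ is a \emph{positive} integer and every $n_i$ a nonnegative integer, so that each nonzero summand $m_i n_i$ is at least $1$.

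Granting this, the enumeration is immediate and I would organize it by the three values of the sum. First, when $\sum_i m_i n_i=0$ every $n_i$ must vanish, giving $Y=0$ and case (3). Next, when $\sum_i m_i n_i=1$, only one index can contribute a nonzero term and that term must equal $1$, forcing $m_i=n_i=1$, whence $Y=\tfrac{\pi}{2}\alpha^i$ with $m_i=1$; this is case (1). Finally, when $\sum_i m_i n_i=2$ I would split according to the number of contributing indices: since each nonzero term is $\ge 1$, at most two indices contribute. If exactly one index $i$ contributes, then $m_i n_i=2$ with $n_i\in\{1,2\}$, whose only integer solutions are $(n_i,m_i)=(1,2)$ and $(n_i,m_i)=(2,1)$, giving the forms $(2\text{-}\mathrm{i})$ and $(2\text{-}\mathrm{iii})$; if two distinct indices $i\ne j$ contribute, then each of the two terms must equal exactly $1$, forcing $m_i=n_i=m_j=n_j=1$ and producing the form $(2\text{-}\mathrm{ii})$.

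There is essentially no genuine obstacle here, since the geometric content — reducing an arbitrary $Y\in\Gamma$ to one lying in $\Gamma\cap\overline{Q_0}$ via the affine Weyl group, and then computing this intersection — has already been carried out before the statement. The hard part is therefore only bookkeeping: the one line requiring care is confirming that case (2) is \emph{exhausted} by the three listed forms, which follows at once from the lower bound $m_i n_i\ge 1$ on nonzero summands together with the elementary factorizations of $2$.
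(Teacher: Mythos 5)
Your proof is correct and is exactly the argument the paper intends: the authors omit the verification entirely, remarking only that ``each $m_{i}$ is a positive integer'' after displaying $\Gamma\cap\overline{Q_{0}}=\{\frac{\pi}{2}\sum_{i}n_{i}\alpha^{i}\mid n_{i}\in\{0,1,2\},\ \frac{\pi}{2}\sum_{i}m_{i}n_{i}\in\{0,\frac{\pi}{2},\pi\}\}$, and your case analysis on $\sum_{i}m_{i}n_{i}\in\{0,1,2\}$ is precisely the enumeration they leave to the reader. No gaps.
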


In the case when (2-iii) or (3),
it is clear that
$\Sigma_{0}=\Sigma_{\pi\alpha^{i}}=\tilde{\Sigma}$ for any $i$
with $m_{i}=1$,
which gives a trivial symmetric triad $(\tilde{\Sigma},\tilde{\Sigma},\emptyset;\tilde{m},0)$ of type (IV) with multiplicities.
The following result is useful to construct
an isomorphism
with respect to $\equiv$
between two symmetric triads
of type (IV) with multiplicities,
and to determine the type of $\Sigma=\Sigma_{Y}$
for $Y \in \Gamma\cap\overline{Q_{0}}$ as in Lemma \ref{lem:Y}.

\begin{thm}\label{thm:SigmaY_str}
Let $Y$ be in $\Gamma\cap\overline{Q_{0}}$.
\begin{enumerate}[(1)]
\item When $Y=(\pi/2)\alpha^{i_{0}}\,(m_{i_{0}}=1)$,
then $\tilde{\Pi}-\{\alpha_{i_{0}}\}$ is a fundamental system of $\Sigma_{Y}$.
In particular,
$\mathrm{rank}\,\Sigma_{Y}=\mathrm{rank}\,\tilde{\Sigma}-1$ holds.
\item
\begin{enumerate}[(i)]
\item When $Y=(\pi/2)\alpha^{i_{0}}\,(m_{i_{0}}=2)$,
then $(\tilde{\Pi}-\{\alpha_{i_{0}}\})\cup\{-\tilde{\delta}\}$
is a fundamental system of $\Sigma_{Y}$.
In particular,
$\mathrm{rank}\,\Sigma_{Y}=\mathrm{rank}\,\tilde{\Sigma}$ holds.
\item When $Y=(\pi/2)(\alpha^{i_{1}}+\alpha^{i_{2}})\,(m_{i_{1}}=m_{i_{2}}=1,i_{1}\neq i_{2})$,
then $(\tilde{\Pi}-\{\alpha_{i_{1}},\alpha_{i_{2}}\})\cup\{-\tilde{\delta}\}$ is a fundamental system of $\Sigma_{Y}$.
In particular,
$\mathrm{rank}\,\Sigma_{Y}=\mathrm{rank}\,\tilde{\Sigma}-1$ holds.
\end{enumerate}
\end{enumerate}
\end{thm}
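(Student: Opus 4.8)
The plan is to treat the three cases of Lemma~\ref{lem:Y} through one mechanism: grade $\tilde{\Sigma}$ by the coordinate(s) of $Y$ and then invoke the defining property of a fundamental system. Concretely, it suffices to exhibit a linearly independent subset $\Pi'\subset\Sigma_{Y}$ such that every root of $\Sigma_{Y}$ is either a non-negative or a non-positive integer combination of $\Pi'$; such $\Pi'$ is automatically a fundamental system and a basis of $\mathrm{span}_{\mathbb{R}}(\Sigma_{Y})$, which also yields the rank. Write $\tilde{\Pi}=\{\alpha_{1},\dots,\alpha_{r}\}$, $\tilde{\delta}=\sum_{i}m_{i}\alpha_{i}$, and for $\lambda\in\tilde{\Sigma}$ let $c_{j}(\lambda)$ be the coefficient of $\alpha_{j}$. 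I would use three elementary facts about the highest root: the $c_{j}(\lambda)$ all share one sign; $|c_{j}(\lambda)|\leq m_{j}$; and $\tilde{\delta}-\nu\in\sum_{i}\mathbb{Z}_{\geq0}\alpha_{i}$ for every root $\nu$. Since $\INN{\alpha^{i}}{\alpha_{j}}=\delta_{ij}$, one reads off $\INN{\lambda}{2Y}=\pi\,c_{i_{0}}(\lambda)$ in cases (1) and (2-i), and $\INN{\lambda}{2Y}=\pi\bigl(c_{i_{1}}(\lambda)+c_{i_{2}}(\lambda)\bigr)$ in case (2-ii); hence membership in $\Sigma_{Y}$ is governed by the parity of $c_{i_{0}}$ (resp.\ $c_{i_{1}}+c_{i_{2}}$).

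Case (1) is the easy one: since $m_{i_{0}}=1$ forces $c_{i_{0}}(\lambda)\in\{-1,0,1\}$, the parity condition selects exactly the roots with $c_{i_{0}}=0$, i.e.\ the subsystem $\tilde{\Sigma}\cap\mathrm{span}_{\mathbb{R}}(\tilde{\Pi}-\{\alpha_{i_{0}}\})$ whose fundamental system is $\tilde{\Pi}-\{\alpha_{i_{0}}\}$, giving $\mathrm{rank}\,\Sigma_{Y}=r-1$. The first substantive point is case (2-i): with $m_{i_{0}}=2$ the admissible grades are $c_{i_{0}}\in\{-2,0,2\}$, so besides the grade-$0$ subsystem $\Sigma_{Y}$ also contains the grade-$(\pm2)$ roots, among which $\pm\tilde{\delta}$ occur. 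I would set $\Pi'=(\tilde{\Pi}-\{\alpha_{i_{0}}\})\cup\{-\tilde{\delta}\}$, note $\Pi'\subset\Sigma_{Y}$, and observe linear independence (because $-\tilde{\delta}$ has $\alpha_{i_{0}}$-coefficient $-2\neq0$ while the others have $0$), which already gives $\mathrm{rank}\,\Sigma_{Y}=r$. Then, for a grade-$(-2)$ root $\gamma$, the dominance property applied to $\nu=-\gamma$ gives $\tilde{\delta}+\gamma=\tilde{\delta}-(-\gamma)\in\sum_{j\neq i_{0}}\mathbb{Z}_{\geq0}\alpha_{j}$ (its $\alpha_{i_{0}}$-coefficient is $m_{i_{0}}-2=0$), so $\gamma=(-\tilde{\delta})+\sum_{j\neq i_{0}}d_{j}\alpha_{j}$ is a non-negative combination of $\Pi'$; grade-$0$ roots lie in the subsystem spanned by $\tilde{\Pi}-\{\alpha_{i_{0}}\}$, and grade-$2$ roots are negatives of grade-$(-2)$ ones. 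This verifies the characterization.

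Case (2-ii) follows the same template with the grading $h(\lambda)=c_{i_{1}}(\lambda)+c_{i_{2}}(\lambda)$, and here lies what I expect to be the main obstacle. Because $m_{i_{1}}=m_{i_{2}}=1$ gives $c_{i_{1}},c_{i_{2}}\in\{-1,0,1\}$, and the coefficients of a single root share one sign, a grade-$0$ root must satisfy $c_{i_{1}}=c_{i_{2}}=0$ (the value $(1,-1)$ is forbidden); more generally every root of $\Sigma_{Y}$ has $c_{i_{1}}=c_{i_{2}}$. Consequently $\Sigma_{Y}\subset\mathrm{span}_{\mathbb{R}}\bigl((\tilde{\Pi}-\{\alpha_{i_{1}},\alpha_{i_{2}}\})\cup\{\alpha_{i_{1}}+\alpha_{i_{2}}\}\bigr)$, an $(r-1)$-dimensional space, which forces the rank drop. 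With $\Pi'=(\tilde{\Pi}-\{\alpha_{i_{1}},\alpha_{i_{2}}\})\cup\{-\tilde{\delta}\}$ one checks $\Pi'\subset\Sigma_{Y}$ and linear independence as before, and the same dominance argument—now using that $\tilde{\delta}+\gamma$ has vanishing $\alpha_{i_{1}}$- and $\alpha_{i_{2}}$-coefficients since $m_{i_{k}}-1=0$—writes each grade-$(-2)$ root as a non-negative combination of $\Pi'$. Thus $\Pi'$ is a fundamental system of rank $r-1$. The only genuinely case-dependent work is the bookkeeping of which coordinates enter the grading; the uniform core is the dominance argument that promotes $-\tilde{\delta}$ to the extra simple root, together with the sign constraint that produces the rank drop in (2-ii).
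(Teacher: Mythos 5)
Your proposal is correct and follows essentially the same route as the paper: grade roots by the $\alpha_{i_{0}}$-coefficient (resp.\ the sum of the $\alpha_{i_{1}}$- and $\alpha_{i_{2}}$-coefficients), observe that membership in $\Sigma_{Y}$ is the parity condition on that grade, and use the highest-root bound $|c_{i}(\lambda)|\leq m_{i}$ to rewrite each grade-$(\pm 2)$ root as $\mp$ a non-negative integer combination of $(\tilde{\Pi}-\{\alpha_{i_{0}}\})\cup\{-\tilde{\delta}\}$ (the paper records this as the explicit identity $\lambda=\mp(\sum_{i\neq i_{0}}(m_{i}\mp c_{i})\alpha_{i}+(-\tilde{\delta}))$, which is your dominance argument in closed form). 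The only difference is that you spell out the sign-sharing constraint and the rank bookkeeping in case (2-ii), which the paper compresses into ``a similar argument as in (2-i)''.
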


\begin{proof}
(1) It follows from $m_{i_{0}}=1$
that for any $\alpha =\sum_{i}c_{i}\alpha_{i}\in\tilde{\Sigma}$,
the condition that
$\alpha$ is in $\Sigma_{Y}$ is equivalent to $c_{i_{0}}=0$.
Therefore, $\Sigma_{Y}=\{\sum_{i\neq i_{0}}c_{i}\alpha_{i}\in\tilde{\Sigma}\}$ holds.

(2-i) It is clear that
$(\tilde{\Pi}-\{\alpha_{i_{0}}\})\cup\{-\tilde{\delta}\}$ is
linearly independent over $\mathbb{R}$.
It follows from $m_{i_{0}}=2$
that for any $\alpha =\sum_{i}c_{i}\alpha_{i}\in\tilde{\Sigma}$,
the condition that $\alpha$ be in $\Sigma_{Y}$ is equivalent to $c_{i_{0}}\in\{0,\pm 2\}$.
Then for $\lambda = \sum_{i}c_{i}\alpha_{i}\in \Sigma_{Y}$,
$\lambda$ has an expression $\lambda=\sum_{i\neq i_{0}}c_{i}\alpha_{i}$ or
\begin{equation*}
\lambda = \sum_{i\neq i_{0}}c_{i}\alpha_{i}\pm 2\alpha_{i_{0}}
= \mp \left(\sum_{i\neq i_{0}}(m_{i}\mp c_{i})\alpha_{i}  + (-\tilde{\delta})\right).
\end{equation*}
This expression implies that
$(\tilde{\Pi}-\{\alpha_{i_{0}}\})\cup\{-\tilde{\delta}\}$ is a fundamental system of $\Sigma_{Y}$.

In a similar argument as in (2-i) we can prove the statement (2-ii).
\end{proof}

In the following,
we describe the classification of the non-trivial symmetric triads of type (IV)
and these equivalence classes with respect to $\equiv$.
For this purpose,
we shall follow the notations of irreducible root systems,
their fundamental systems and their highest roots as in
Notation \ref{nota:root_system}.

\begin{ex}\label{ex:a} 
Assume that the base of $(\tilde{\Sigma},\Sigma,W)$
is $\tilde{\Sigma}=A_{r}$.
In this case,
we have
$(\tilde{\Sigma},\Sigma_{(\pi/2)\alpha^{i}},W_{(\pi/2)\alpha^{i}})
\equiv
(\tilde{\Sigma},\Sigma_{(\pi/2)\alpha^{r+1-i}},W_{(\pi/2)\alpha^{r+1-i}})$
for $1\leq i\leq r$.
It is also verified that
$(\tilde{\Sigma},\Sigma_{(\pi/2)(\alpha^{j}+\alpha^{k})}, W_{(\pi/2)(\alpha^{j}+\alpha^{k})})
\equiv (\tilde{\Sigma},\Sigma_{(\pi/2)\alpha^{k-j}}, W_{(\pi/2)\alpha^{k-j}})$ for 
$1\leq j < k \leq r$.
Therefore the $\equiv$-equivalence classes of
the non-trivial symmetric triads $(\tilde{\Sigma},\Sigma,W)$
consist of
$(\tilde{\Sigma},\Sigma_{Y},W_{Y})$ with $Y=(\pi/2)\alpha^{l}$ for $1\leq l \leq [(r+1)/2]$,
where $[(r+1)/2]$ denotes the greatest integer less than or equal to $(r+1)/2$.
Moreover, it follows from Theorem \ref{thm:SigmaY_str}, (1)
that $\Sigma_{(\pi/2)\alpha^{l}}$ has the following expression:
\begin{equation}\label{eqn:sigma_pi/ai}
\Sigma_{(\pi/2)\alpha^{l}}
=\{e_{i}-e_{j}\mid
1\leq i < j \leq l\}\cup
\{e_{i}-e_{j}\mid l+1\leq i < j <r+1\},
\end{equation}
where we write $\tilde{\Sigma}=\{e_{i}-e_{j}\mid 1\leq i< j\leq r+1\}$ (cf.~Notation \ref{nota:root_system}).
In particular, we have $
\Sigma_{(\pi/2)\alpha^{l}} \simeq
A_{l-1}\cup A_{r-l}$ for $1\leq l \leq [(r+1)/2]$.
\end{ex}

As shown in Example \ref{ex:a},
for a symmetric triad $(\tilde{\Sigma},\Sigma_{Y},W_{Y})$ with $Y\in\Gamma\cap\overline{Q_{0}}$,
our description of $\Sigma_{Y}$ as a subset of $\tilde{\Sigma}$
like (\ref{eqn:sigma_pi/ai})
is easily verified by using Theorem \ref{thm:SigmaY_str}.
Thus, in the following examples,
we omit the description of $\Sigma_{Y}$ as a subset of $\tilde{\Sigma}$
and show only its type as a root system.

\begin{ex}\label{ex:b} 
Assume that the base of $(\tilde{\Sigma},\Sigma,W)$
is $\tilde{\Sigma}=B_{r}$.
Then the $\equiv$-equivalence classes of
the non-trivial symmetric triads $(\tilde{\Sigma},\Sigma,W)$
consist of
$(\tilde{\Sigma},\Sigma_{Y},W_{Y})$ with $Y=(\pi/2)\alpha^{l}$ for $1\leq l \leq r$.
Moreover, $\Sigma_{(\pi/2)\alpha^{l}}$ is isomorphic to $B_{r-1}\,(l=1)$ or $D_{l}(\mbox{long})\cup B_{r-l}\,(2\leq l \leq r)$.
Here, $D_{l}(\mbox{long})$ means that
it is a root system of type $D_{l}$ which consists of long roots in $B_{r}$.
\end{ex}

\begin{ex}\label{ex:c} 
Assume that the base of $(\tilde{\Sigma},\Sigma,W)$
is $\tilde{\Sigma}=C_{r}$.
Then the $\equiv$-equivalence classes of
the non-trivial symmetric triads $(\tilde{\Sigma},\Sigma,W)$
consist of
$(\tilde{\Sigma},\Sigma_{Y},W_{Y})$ with $Y=(\pi/2)\alpha^{l}$ for $1\leq l \leq r$.
Moreover,
$\Sigma_{(\pi/2)\alpha^{l}}$ is isomorphic to $C_{l}\cup C_{r-l}\,(1\leq l \leq r-1)$
or $A_{r-1}(\mbox{short})\,(l=r)$.
\end{ex}

\begin{ex}\label{ex:d} 
Assume that the base of $(\tilde{\Sigma},\Sigma,W)$
is $\tilde{\Sigma}=D_{r}$.
Then we have
\begin{align*}
(\tilde{\Sigma},\Sigma_{(\pi/2)\alpha^{r-1}},W_{(\pi/2)\alpha^{r-1}})
&\equiv (\tilde{\Sigma},\Sigma_{(\pi/2)\alpha^{r}},W_{(\pi/2)\alpha^{r}})\\
&\equiv (\tilde{\Sigma},\Sigma_{(\pi/2)(\alpha^{1}+\alpha^{r-1})},W_{(\pi/2)(\alpha^{1}+\alpha^{r-1})})\\
&\equiv (\tilde{\Sigma},\Sigma_{(\pi/2)(\alpha^{1}+\alpha^{r})},W_{(\pi/2)(\alpha^{1}+\alpha^{r})}),
\end{align*}
and
\[
(\tilde{\Sigma},\Sigma_{(\pi/2)\alpha^{1}},W_{(\pi/2)\alpha^{1}})
\equiv (\tilde{\Sigma},\Sigma_{(\pi/2)(\alpha^{r-1}+\alpha^{r})},W_{(\pi/2)(\alpha^{r-1}+\alpha^{r})}).
\]
Therefore the $\equiv$-equivalence classes of
the non-trivial symmetric triads $(\tilde{\Sigma},\Sigma,W)$
consist of
$(\tilde{\Sigma},\Sigma_{Y},W_{Y})$ with $Y=(\pi/2)\alpha^{l}$ for $1\leq l \leq r-1$.
Moreover,
$\Sigma_{(\pi/2)\alpha^{l}}$ is isomorphic to $D_{r-1}$ ($l=1$),
$D_{l}\cup D_{r-l}\,(2\leq l \leq r-2)$ or $A_{r-1}\,(l=r-1)$.
\end{ex}

\begin{ex}\label{ex:bc} 
Assume that the base of $(\tilde{\Sigma},\Sigma,W)$
is $\tilde{\Sigma}=BC_{r}$.
Then the $\equiv$-equivalence classes of
the non-trivial symmetric triads $(\tilde{\Sigma},\Sigma,W)$
consist of
$(\tilde{\Sigma},\Sigma_{Y},W_{Y})$ with $Y=(\pi/2)\alpha^{l}$ for $1\leq l \leq r$.
Moreover,
$\Sigma_{(\pi/2)\alpha^{l}}$
is isomorphic to $C_{l}(\mbox{middle, long})\cup BC_{r-l}\,(1\leq l \leq r)$.
\end{ex}

\begin{ex}\label{ex:e6} 
Assume that the base of $(\tilde{\Sigma},\Sigma,W)$
is $\tilde{\Sigma}=E_{6}$.
Then the $\equiv$-equivalence classes of
the non-trivial symmetric triads $(\tilde{\Sigma},\Sigma,W)$
consist of
$(\tilde{\Sigma},\Sigma_{Y},W_{Y})$ with $Y=(\pi/2)\alpha^{l}$ for $l=1,2$.
Moreover,
$\Sigma_{(\pi/2)\alpha^{1}}, \Sigma_{(\pi/2)\alpha^{2}}$ 
are isomorphic to $D_{5}, A_{1}\cup A_{5}$, respectively.

\begin{proof}
It is sufficient to show the followings:
\begin{align*}
&(\tilde{\Sigma},\Sigma_{(\pi/2)\alpha^{1}},W_{(\pi/2)\alpha^{1}})
\equiv(\tilde{\Sigma},\Sigma_{(\pi/2)\alpha^{6}},W_{(\pi/2)\alpha^{6}})
\equiv(\tilde{\Sigma},\Sigma_{(\pi/2)(\alpha^{1}+\alpha^{6})},W_{(\pi/2)(\alpha^{1}+\alpha^{6})}),\\
&(\tilde{\Sigma},\Sigma_{(\pi/2)\alpha^{2}},W_{(\pi/2)\alpha^{2}})
\equiv (\tilde{\Sigma},\Sigma_{(\pi/2)\alpha^{3}},W_{(\pi/2)\alpha^{3}})
\equiv (\tilde{\Sigma},\Sigma_{(\pi/2)\alpha^{5}},W_{(\pi/2)\alpha^{5}}).
\end{align*}
We define two linear isometries $f$ and $\tilde{f}$ on $\tilde{\Sigma}$ as follows:
\begin{align*}
&f:\tilde{\Sigma}\to\tilde{\Sigma};
\alpha_{1}\mapsto\alpha_{6},
\alpha_{2}\mapsto\alpha_{2},
\alpha_{3}\mapsto\alpha_{5},
\alpha_{4}\mapsto\alpha_{4},
\alpha_{5}\mapsto\alpha_{3},
\alpha_{6}\mapsto\alpha_{1},\\
&\tilde{f}:\tilde{\Sigma}\to\tilde{\Sigma};
-\tilde{\delta}\mapsto\alpha_{6},
\alpha_{2}\mapsto\alpha_{5},
\alpha_{3}\mapsto\alpha_{3},
\alpha_{4}\mapsto\alpha_{4},
\alpha_{5}\mapsto\alpha_{2},
\alpha_{6}\mapsto-\tilde{\delta}.
\end{align*}
Then,
\[
\begin{cases}
(\tilde{\Sigma},\Sigma_{(\pi/2)\alpha^{1}},W_{(\pi/2)\alpha^{1}})
\equiv
(\tilde{\Sigma},\Sigma_{(\pi/2)\alpha^{6}},W_{(\pi/2)\alpha^{6}}),\\
(\tilde{\Sigma},\Sigma_{(\pi/2)\alpha^{3}},W_{(\pi/2)\alpha^{3}})
\equiv(\tilde{\Sigma},\Sigma_{(\pi/2)\alpha^{5}},W_{(\pi/2)\alpha^{5}})
\end{cases}
\]
via $f$.
We also obtain
\[
\begin{cases}
(\tilde{\Sigma},\Sigma_{(\pi/2)\alpha^{1}},W_{(\pi/2)\alpha^{1}})
\equiv
(\tilde{\Sigma},\Sigma_{(\pi/2)(\alpha^{1}+\alpha^{6})},W_{(\pi/2)(\alpha^{1}+\alpha^{6})}),\\
(\tilde{\Sigma},\Sigma_{(\pi/2)\alpha^{2}},W_{(\pi/2)\alpha^{2}})
\equiv
(\tilde{\Sigma},\Sigma_{(\pi/2)\alpha^{5}},W_{(\pi/2)\alpha^{5}})
\end{cases}
\]
via $\tilde{f}$.
The types of $\Sigma_{(\pi/2)\alpha^{1}}$ and $\Sigma_{(\pi/2)\alpha^{2}}$
are easily derived from Theorem \ref{thm:SigmaY_str}.
Hence we get the assertion.
\end{proof}
\end{ex}

\begin{ex}\label{ex:e7} 
Assume that the base of $(\tilde{\Sigma},\Sigma,W)$
is $\tilde{\Sigma}=E_{7}$.
In this case,
we have $(\tilde{\Sigma},\Sigma_{(\pi/2)\alpha^{1}},W_{(\pi/2)\alpha^{1}})
\equiv (\tilde{\Sigma},\Sigma_{(\pi/2)\alpha^{6}},W_{(\pi/2)\alpha^{6}})$.
Therefore the $\equiv$-equivalence classes of
the non-trivial symmetric triads $(\tilde{\Sigma},\Sigma,W)$
consist of
$(\tilde{\Sigma},\Sigma_{Y},W_{Y})$ with $Y=(\pi/2)\alpha^{l}$ for $l=1,2,7$.
Moreover,
$\Sigma_{(\pi/2)\alpha^{1}},\Sigma_{(\pi/2)\alpha^{2}},\Sigma_{(\pi/2)\alpha^{7}}$
are isomorphic to $A_{1}\cup D_{6}, A_{7},E_{6}$,
respectively.
\end{ex}

\begin{ex}\label{ex:e8} 
Assume that the base of $(\tilde{\Sigma},\Sigma,W)$
is $\tilde{\Sigma}=E_{8}$.
Then the $\equiv$-equivalence classes of
the non-trivial symmetric triads $(\tilde{\Sigma},\Sigma,W)$
consist of
$(\tilde{\Sigma},\Sigma_{Y},W_{Y})$ with $Y=(\pi/2)\alpha^{l}$ for $l=1,8$.
Moreover,
$\Sigma_{(\pi/2)\alpha^{1}},\Sigma_{(\pi/2)\alpha^{8}}$
are isomorphic to
$D_{8}$ and $A_{1}\cup E_{7}$, respectively.
\end{ex}

\begin{ex}\label{ex:f4} 
Assume that the base of $(\tilde{\Sigma},\Sigma,W)$
is $\tilde{\Sigma}=F_{4}$.
Then the $\equiv$-equivalence classes of
the non-trivial symmetric triads $(\tilde{\Sigma},\Sigma,W)$
consist of
$(\tilde{\Sigma},\Sigma_{Y},W_{Y})$ with $Y=(\pi/2)\alpha^{l}$ for $l=1,4$.
Moreover,
$\Sigma_{(\pi/2)\alpha^{1}},\Sigma_{(\pi/2)\alpha^{4}}$
are isomorphic to
$A_{1}(\mbox{long})\cup C_{3}$ and $B_{4}$, respectively.
\end{ex}

\begin{ex}\label{ex:g2} 
Assume that the base of $(\tilde{\Sigma},\Sigma,W)$
is $\tilde{\Sigma}=G_{2}$.
Then the $\equiv$-equivalence classes of
the non-trivial symmetric triads $(\tilde{\Sigma},\Sigma,W)$
consist of
$(\tilde{\Sigma},\Sigma_{(\pi/2)\alpha^{2}},W_{(\pi/2)\alpha^{2}})$.
Moreover, $\Sigma_{(\pi/2)\alpha^{2}}\simeq A_{1}(\mbox{long})\cup A_{1}(\mbox{short})$ holds.
\end{ex}

From the above argument, we conclude:

\begin{thm}\label{thm:classSTmIV}
Table \ref{table:ivsymm} mentioned below
gives the classification of 
the equivalence classes of non-trivial symmetric triads of type (IV) with respect to $\equiv$.
\end{thm}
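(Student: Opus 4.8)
The plan is to assemble the statement from the type-by-type analysis carried out in Examples~\ref{ex:a}--\ref{ex:g2}, organized around the reduction set up just before Lemma~\ref{lem:Y}. By Definition~\ref{def:symiv}, a non-trivial symmetric triad of type (IV) is completely determined by its irreducible base $\tilde{\Sigma}$ together with a vector $Y\in\Gamma$ producing $\Sigma=\Sigma_{Y}$ and $W=\tilde{\Sigma}-\Sigma_{Y}$; and for type (IV) the relation $\equiv$ of Definition~\ref{dfn:stm_equiv} amounts to an isomorphism $f$ of $\tilde{\Sigma}$ with $f(\Sigma_{Y})=\Sigma_{Y'}$, whence automatically $f(W_{Y})=W_{Y'}$. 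Thus, after fixing the isomorphism type of $\tilde{\Sigma}$, classifying the $\equiv$-classes reduces to classifying the subsets $\Sigma_{Y}$ up to $\mathrm{Aut}(\tilde{\Sigma})$. First I would invoke the preceding lemma describing $\tilde{W}(\tilde{\Sigma})$ together with the cell decomposition $\mathfrak{a}=\bigcup_{w}w\,\overline{Q_{0}}$ to move an arbitrary $Y\in\Gamma$ into $\Gamma\cap\overline{Q_{0}}$ without changing the $\equiv$-class, exactly as shown there; this confines attention to the finitely many vectors enumerated in Lemma~\ref{lem:Y}.

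Next I would run through the irreducible bases $A_{r},B_{r},C_{r},D_{r},BC_{r},E_{6},E_{7},E_{8},F_{4},G_{2}$ one at a time, following the conventions of Notation~\ref{nota:root_system}. Writing $\tilde{\delta}=\sum_{i}m_{i}\alpha_{i}$, Lemma~\ref{lem:Y} supplies for each base the candidate vectors $Y=(\pi/2)\alpha^{i}$ with $m_{i}\in\{1,2\}$ and $Y=(\pi/2)(\alpha^{i}+\alpha^{j})$ with $m_{i}=m_{j}=1$, the remaining cases $Y=\pi\alpha^{i}$ and $Y=0$ yielding only the trivial triad. Theorem~\ref{thm:SigmaY_str} then produces an explicit fundamental system of each $\Sigma_{Y}$---namely $\tilde{\Pi}-\{\alpha_{i}\}$ in the rank-dropping case $m_{i}=1$, and a system involving $-\tilde{\delta}$ in the cases $m_{i}=2$ or $Y=(\pi/2)(\alpha^{i}+\alpha^{j})$---from which the isomorphism type of $\Sigma_{Y}$ as a root system is read off. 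This is precisely the content recorded in Examples~\ref{ex:a}--\ref{ex:g2}, and collecting their outputs gives the entries of Table~\ref{table:ivsymm}.

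The step I expect to be the main obstacle, and the only genuinely non-mechanical one, is the removal of redundancies among the candidate vectors: deciding exactly which distinct $Y\in\Gamma\cap\overline{Q_{0}}$ give $\equiv$-equivalent triads, and then verifying that the surviving representatives are pairwise inequivalent. The equivalences are produced type by type by exhibiting explicit linear isometries $f$ preserving $\tilde{\Sigma}$ and carrying one $\Sigma_{Y}$ onto another. Such $f$ arise from automorphisms of $\tilde{\Sigma}$---diagram automorphisms, for instance the order-two symmetry of $A_{r}$ identifying $\alpha^{l}$ with $\alpha^{r+1-l}$ (Example~\ref{ex:a}) and the outer symmetries of $D_{r}$ and $E_{6}$ (Examples~\ref{ex:d} and~\ref{ex:e6}), together with suitable Weyl-group elements---often realized concretely through the substitution of $-\tilde{\delta}$ for a deleted node, as in the maps $f$ and $\tilde{f}$ constructed in Example~\ref{ex:e6}. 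Conversely, the surviving triads are told apart by the isomorphism type of $\Sigma_{Y}$ as a subsystem of $\tilde{\Sigma}$, together with the long/short and embedding refinements indicated in the examples (such as $D_{l}(\mathrm{long})$ inside $B_{r}$ or $A_{r-1}(\mathrm{short})$ inside $C_{r}$). Collecting all ten cases then completes the classification.
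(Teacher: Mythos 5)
Your proposal follows the paper's own route essentially verbatim: the reduction via the affine Weyl group and the cell $\overline{Q_{0}}$, the enumeration of candidate $Y$ in Lemma \ref{lem:Y}, the identification of fundamental systems of $\Sigma_{Y}$ via Theorem \ref{thm:SigmaY_str}, and the case-by-case removal of redundancies and distinction of the survivors carried out in Examples \ref{ex:a}--\ref{ex:g2}. The approach is correct and matches the paper's argument.
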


We have the following result by means of the classification of symmetric triads of type (IV) as in Table \ref{table:ivsymm}.

\begin{cor}
Let $(\tilde{\Sigma},\Sigma,W)$ and $(\tilde{\Sigma}',\Sigma',W')$
be symmetric triads of type (IV).
Then $(\tilde{\Sigma},\Sigma,W) \equiv (\tilde{\Sigma}',\Sigma',W')$
if and only if the types of $\tilde{\Sigma}$ and $\Sigma$
coincide with those of $\tilde{\Sigma}'$ and $\Sigma'$, respectively.
\end{cor}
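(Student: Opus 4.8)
The plan is to treat the two implications separately; the forward implication is immediate, and the reverse one is where the classification of Examples \ref{ex:a}--\ref{ex:g2} and Table \ref{table:ivsymm} does the work. Suppose first that $(\tilde{\Sigma},\Sigma,W)\equiv(\tilde{\Sigma}',\Sigma',W')$, witnessed by an isomorphism of root systems $f:\mathfrak{a}\to\mathfrak{a}'$ with $f(\tilde{\Sigma})=\tilde{\Sigma}'$ and $f(\Sigma)=\Sigma'$. Then $f$ itself exhibits $\tilde{\Sigma}\simeq\tilde{\Sigma}'$, and since $f$ preserves the Cartan integers $2\langle\beta,\alpha\rangle/\|\alpha\|^2$, its restriction to $\mathrm{span}_{\mathbb{R}}(\Sigma)$ is an isomorphism of root systems onto $\mathrm{span}_{\mathbb{R}}(\Sigma')$, so $\Sigma\simeq\Sigma'$. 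This settles the only-if direction.

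For the converse, assume the types of $\tilde{\Sigma},\Sigma$ agree with those of $\tilde{\Sigma}',\Sigma'$. First I would transport the second triad onto the base of the first: choosing a root-system isomorphism $g:\mathfrak{a}'\to\mathfrak{a}$ with $g(\tilde{\Sigma}')=\tilde{\Sigma}$ (on an irreducible system this is a positive scalar multiple of an isometry), one checks that $g$ carries $\Sigma'=\Sigma'_{Y'}$ onto $\Sigma_{Z}$ for a suitable $Z\in\Gamma$, so that $(\tilde{\Sigma},g(\Sigma'),g(W'))$ is again a symmetric triad of type (IV) and is $\equiv$-equivalent to $(\tilde{\Sigma}',\Sigma',W')$ via $g$. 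Hence I may assume both triads have the common base $\tilde{\Sigma}$, with $\Sigma=\Sigma_{Y}$, $g(\Sigma')=\Sigma_{Z}$ and $\Sigma_{Y}\simeq\Sigma_{Z}$. By the reduction preceding Lemma \ref{lem:Y}, together with Lemma \ref{lem:Y} and Theorem \ref{thm:SigmaY_str}, each triad is $\equiv$-equivalent to a standard model $(\tilde{\Sigma},\Sigma_{(\pi/2)\alpha^{l}},W_{(\pi/2)\alpha^{l}})$; applying the only-if direction to these $\equiv$-reductions shows that the two standard models still satisfy $\Sigma_{(\pi/2)\alpha^{l}}\simeq\Sigma_{(\pi/2)\alpha^{l'}}$.

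It then remains to check, base-type by base-type, that two standard models with isomorphic $\Sigma$ are $\equiv$-equivalent, and this is read off Examples \ref{ex:a}--\ref{ex:g2}. For $\tilde{\Sigma}$ of type $A_r,B_r,BC_r,E_6,E_7,E_8,F_4,G_2$ the representatives listed there already carry pairwise distinct types of $\Sigma$, so $\Sigma_{(\pi/2)\alpha^{l}}\simeq\Sigma_{(\pi/2)\alpha^{l'}}$ forces $l=l'$ and there is nothing to prove. The substance is concentrated in types $C_r$ and $D_r$, where $\Sigma_{(\pi/2)\alpha^{l}}$ is isomorphic to $C_{l}\cup C_{r-l}$ (resp. $D_{l}\cup D_{r-l}$) realized on the coordinate blocks $\{1,\dots,l\}$ and $\{l+1,\dots,r\}$, so that $l$ and $r-l$ give isomorphic $\Sigma$; here I would exhibit the automorphism of $\tilde{\Sigma}$ induced by the coordinate permutation $e_i\mapsto e_{\pi(i)}$ with $\pi$ interchanging these two blocks (a signed permutation in $W(C_r)$, resp. an even one in $W(D_r)$), which maps $\Sigma_{(\pi/2)\alpha^{l}}$ onto $\Sigma_{(\pi/2)\alpha^{r-l}}$ and hence witnesses $\equiv$. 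I expect the one genuinely delicate point to be the coincidence in $D_4$, where $\Sigma_{(\pi/2)\alpha^{1}}\simeq D_3$ and $\Sigma_{(\pi/2)\alpha^{3}}\simeq A_3$ are isomorphic only through the exceptional isomorphism $A_3\simeq D_3$: this identification is not realized by any coordinate permutation and must instead be supplied by the triality automorphism of $D_4$ exchanging the outer nodes $\alpha_1$ and $\alpha_3$. Recording these identifications is precisely the passage, in Table \ref{table:ivsymm}, from the redundant lists of Examples \ref{ex:a}--\ref{ex:g2} to one class per type of $\Sigma$, and it completes the if-direction.
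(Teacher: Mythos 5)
Your proposal is correct and takes essentially the same route as the paper, which states the corollary as an immediate consequence of the classification in Table \ref{table:ivsymm} and gives no further argument. Your explicit verification of the ``if'' direction --- in particular the block-swapping signed permutations identifying $(\mbox{IV-}C_{r})_{l}$ with $(\mbox{IV-}C_{r})_{r-l}$ (and likewise for $D_{r}$), and the triality automorphism identifying $\Sigma\simeq D_{3}$ with $\Sigma\simeq A_{3}$ inside $D_{4}$ --- merely supplies the details that ``by means of the classification'' leaves implicit.
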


\begin{rem}
(1) Symmetric triads of types (I)--(III) satisfy $\operatorname{rank}\tilde{\Sigma}=\operatorname{rank}\Sigma$.
On the other hand, from the classification of symmetric triads of type (IV)
we have $\RANK\tilde{\Sigma}-\RANK\Sigma\in\{0,1\}$.
(2) We give special isomorphisms for symmetric triads of type (IV)
with respect to $\equiv$:
\[
\begin{array}{cc}
(B_{2},B_{1},W)\equiv(C_{2},A_{1},W),&
(B_{2},D_{2},W)\equiv(C_{2},C_{1}\cup C_{1},W),\\
(A_{3},A_{2},W)\equiv(D_{3},A_{2},W),&
(A_{3},A_{1}\cup A_{1},W)\equiv(D_{3},D_{2},W).
\end{array}
\]
\end{rem}

\begin{table}[!!ht]
\caption{The classification of non-trivial symmetric triads of type (IV)}\label{table:ivsymm}
\centering
\renewcommand{\arraystretch}{1.8}
\begin{tabular}{cccc}
\hline
\hline
Type\footnote{} & $\tilde{\Sigma}$ & $\Sigma$ & Remark\\
\hline
\hline
(IV'-$A_{r}$)$_{l}$ & $A_{r}$ & $A_{l-1}\cup A_{r-l}$ & $1\leq l \leq [(r+1)/2]$\\
\hline
(IV-$B_{r}$)$_{l}$ & \multirow{2}{*}{$B_{r}$} & $D_{l}(\mbox{\text{long}})\cup B_{r-l}$ & $2\leq l \leq r$\\
(IV'-$B_{r}$) & & $B_{r-1}$ & \\
\hline
(IV-$C_{r}$)$_{l}$ & \multirow{2}{*}{$C_{r}$} & $C_{l}\cup C_{r-l}$ & $1 \leq l \leq r-1$\\
(IV'-$C_{r}$) & & $A_{r-1}(\mbox{\text{short}})$ & \\
\hline
(IV-$D_{r}$)$_{l}$ & \multirow{3}{*}{$D_{r}$} & $D_{r-l}\cup D_{l}$ & $2\leq l \leq r-2$ \\
(IV'-$D_{r}$-$D$) & & $D_{r-1}$ &\\
(IV'-$D_{r}$-$A$) & & $A_{r-1}$ &\\
\hline
(IV-$BC_{r}$)$_{l}$ & $BC_{r}$ & $C_{l}(\mbox{\text{middle, long}})\cup BC_{r-l}$ & $1\leq l \leq r$\\
\hline
(IV-$E_{6}$) & \multirow{2}{*}{$E_{6}$} & $A_{1}\cup A_{5}$ & \\
(IV'-$E_{6}$) &  & $D_{5}$ & \\
\hline
(IV-$E_{7}$-$AD$) & \multirow{3}{*}{$E_{7}$} & $A_{1}\cup D_{6}$ & \\
(IV-$E_{7}$-$A$) &  & $A_{7}$ & \\
(IV'-$E_{7}$) &  & $E_{6}$ & \\
\hline
(IV-$E_{8}$-$D$) & \multirow{2}{*}{$E_{8}$} & $D_{8}$ & \\
(IV-$E_{8}$-$AE$) & & $A_{1}\cup E_{7}$ & \\
\hline
(IV-$F_{4}$-$AC$) & \multirow{2}{*}{$F_{4}$} & $A_{1}(\mbox{\text{long}})\cup C_{3}$ & \\
(IV-$F_{4}$-$B$) & & $B_{4}$ & \\
\hline
(IV-$G_{2}$) & $G_{2}$ & $A_{1}(\mbox{\text{short}})\cup A_{1}(\mbox{\text{long}})$ &\\
\hline
\hline
\end{tabular}
\renewcommand{\arraystretch}{1.0}
\end{table}
\footnotetext{
\textit{Notation:}
We use the notations (IV-$X_{r}$-$Y_{r}$)
and (IV'-$X_{r}$-$Y_{r-1}$) for symmetric triads of type (IV) with
$\tilde{\Sigma}=X_{r}$ and $\Sigma=Y_{k}$ ($k=r, r-1$).
We use the the symbol ' in IV'
if the corresponding symmetric triad of type (IV)
satisfies $\mathrm{rank}\,\tilde{\Sigma}=\mathrm{rank}\,\Sigma-1$.
Otherwise, we omit it.
We shall omit to write $Y_{k}$ if it is uniquely determined from $\tilde{\Sigma}$.}

\section{Commutative compact symmetric triads and symmetric triads with multiplicities}\label{sec:cst_symm}

The purpose of this section is to study
symmetric triads with multiplicities
constructed from commutative compact symmetric triads $(G,\theta_{1},\theta_{2})$
with $\theta_{1}\not\sim\theta_{2}$.
We first recall this construction,
which was given by the second author \cite{Ikawa}.
Then, we will show that
any two isomorphic commutative compact symmetric triads
with respect to $\sim$ as in Definition \ref{dfn:CST_sim}
correspond to
the same symmetric triad with multiplicities
up to $\sim$
as in Definition \ref{dfn:e-relation}
(see Proposition \ref{pro:cstsim_symmsim}).
Second,
our concern is to determine
the corresponding symmetric triad 
$(\tilde{\Sigma},\Sigma,W;m,n)$
with multiplicities.
The authors \cite{BI} classified compact symmetric triads
with respect to $\sim$ in terms of the notion of double Satake diagrams.
Each isomorphism class
of a commutative compact symmetric triad
is characterized by the double Satake diagrams.
Then, we will give a method to determine 
$(\tilde{\Sigma},\Sigma,W;m,n)$
by using double Satake diagrams
and determine it based on this method
(see Proposition \ref{pro:symm_determ_base}
and Theorem \ref{thm:determ_symmetrictriads}).
As applications,
we will show the converse of Proposition \ref{pro:cstsim_symmsim}
at the Lie algebra level
(see Corollary \ref{cor:simsim_simplyconnected}
for the precise statement).

Third, we will give a similar result
for the commutative compact symmetric triads
$(G,\theta_{1},\theta_{2})$
with $\theta_{1}\sim\theta_{2}$.

Finally,
we will give the classification for
commutative compact symmetric triads with respect
to $\equiv$
in terms of symmetric triads with multiplicities whether $\theta_{1}\sim\theta_{2}$ holds or not.

\subsection{Symmetric triads with multiplicities for commutative compact symmetric triads}\label{sec:ccst_symm}

We first recall
the construction
of the symmetric triad
with multiplicities of
a commutative compact symmetric
triad due to \cite{Ikawa}.
Let $G$ be a compact connected semisimple Lie group with Lie algebra $\mathfrak{g}$.
Let $(G,\theta_{1},\theta_{2})$ be a
commutative compact symmetric triad.
Fix an invariant inner product on $\mathfrak{g}$,
which we write $\INN{\cdot}{\cdot}$.
We write the differential
of $\theta_{i}$
($i=1,2$)
at the identity element of $G$
as the same symbol $\theta_{i}$,
if there is no confusion.
By the commutativity of $\theta_{1}$ and $\theta_{2}$,
we get the simultaneous eigenspace decomposition
of $\mathfrak{g}$ for $(\theta_{1},\theta_{2})$,
which we write
\begin{equation}
\mathfrak{g}
=(\mathfrak{k}_{1}\cap\mathfrak{k}_{2})\oplus
(\mathfrak{m}_{1}\cap\mathfrak{m}_{2})\oplus
(\mathfrak{k}_{1}\cap\mathfrak{m}_{2})\oplus
(\mathfrak{m}_{1}\cap\mathfrak{k}_{2}),
\end{equation}
where $\mathfrak{k}_{i}$ and $\mathfrak{m}_{i}$
denote the $(+1)$-eigenspace and the $(-1)$-eigenspace
of $\theta_{i}$, respectively.
Let $\mathfrak{a}$
be a maximal abelian subspace of $\mathfrak{m}_{1}\cap\mathfrak{m}_{2}$.
We denote by $\mathfrak{g}^{\mathbb{C}}$
the complexification of $\mathfrak{g}$.
For each $\alpha\in\mathfrak{a}$,
we define the complex subspace
$\mathfrak{g}(\mathfrak{a},\alpha)$ of $\mathfrak{g}^{\mathbb{C}}$
as follows:
\begin{equation}
\mathfrak{g}(\mathfrak{a},\alpha)=\{
X\in\mathfrak{g}^{\mathbb{C}}
\mid [H,X]
=\sqrt{-1}\INN{\alpha}{H}X,\,
H\in\mathfrak{a}
\}.
\end{equation}
We put
\begin{equation}
\tilde{\Sigma}
=\{\alpha\in\mathfrak{a}-\{0\}\mid
\mathfrak{g}(\mathfrak{a},\alpha)\neq\{0\}\}.
\end{equation}
Then we have the following
decomposition of $\mathfrak{g}^{\mathbb{C}}$:
\begin{equation}
\mathfrak{g}^{\mathbb{C}}
=\mathfrak{g}(\mathfrak{a},0)
\oplus\sum_{\alpha\in\tilde{\Sigma}}\mathfrak{g}(\mathfrak{a},\alpha).
\end{equation}
For each $\alpha\in\mathfrak{a}$,
$\epsilon\in\{1, -1\}$,
we define the complex subspace $\mathfrak{g}(\mathfrak{a},\alpha,\epsilon)$ of $\mathfrak{g}^{\mathbb{C}}$ by
\begin{equation}
\mathfrak{g}(\mathfrak{a},\alpha,\epsilon)
=\{X\in\mathfrak{g}(\mathfrak{a},\alpha)
\mid
\theta_{1}\theta_{2}(X)=\epsilon X\}.
\end{equation}
Since $\mathfrak{g}(\mathfrak{a},\alpha)$
is $\theta_{1}\theta_{2}$-invariant,
$\mathfrak{g}(\mathfrak{a},\alpha)$
is decomposed into
\begin{equation}
\mathfrak{g}(\mathfrak{a},\alpha)
=\mathfrak{g}(\mathfrak{a},\alpha,1)
\oplus\mathfrak{g}(\mathfrak{a},\alpha,-1).
\end{equation}
We define the subsets
$\Sigma$ and $W$ of $\tilde{\Sigma}$ as follows:
\begin{equation}
\Sigma=\{\alpha \in \tilde{\Sigma} \mid
\mathfrak{g}(\mathfrak{a},\alpha,1)\neq
\{0\}\},
\quad
W=\{
\alpha\in\tilde{\Sigma}
\mid \mathfrak{g}(\mathfrak{a},\alpha,-1)\neq
\{0\}
\}.
\end{equation}
For each $\alpha\in\tilde{\Sigma}$,
we write the dimensions
of $\mathfrak{g}(\mathfrak{a},\alpha,1)$
and
$\mathfrak{g}(\mathfrak{a},\alpha,-1)$
as $m(\alpha)$
and $n(\alpha)$, respectively.
Under the above settings,
it follows from
\cite[Theorem 4.33, (1)]{Ikawa}
that,
if $G$ is simple
and
$\theta_{1}\not\sim\theta_{2}$
holds, then
$(\tilde{\Sigma},\Sigma,W;m,n)$
satisfies the axiom
of symmetric triads with multiplicities stated in Definitions
\ref{dfn:symmtriad} and
\ref{dfn:multi}.
Furthermore,
it can be verified that
$(\tilde{\Sigma},\Sigma,W;m,n)$
is independent for the choice
of $\mathfrak{a}$ up to isomorphism
for $\equiv$ as in Definition \ref{dfn:stm_equiv}.
We call
$(\tilde{\Sigma},\Sigma,W;m,n)$
the \textit{symmetric triad
with multiplicities}
of $\mathfrak{a}$
corresponding to $(G,\theta_{1},\theta_{2})$.
By this construction,
$(G,\theta_{1},\theta_{2})$
and $(G,\theta_{2},\theta_{1})$
give the same one.

Next,
we show that
the isomorphism class
of a commutative compact symmetric triad
with respect to $\sim$
determines
that of the corresponding
symmetric triad
with multiplicities.
Namely, we prove the following
proposition.

\begin{pro}\label{pro:cstsim_symmsim}
Let 
$(G,\theta_{1},\theta_{2})$,
$(G,\theta_{1}',\theta_{2}')$
be two commutative compact symmetric triads with 
$\theta_{1}\not\sim\theta_{2}$,
$\theta_{1}'\not\sim\theta_{2}'$.
We denote by
$(\tilde{\Sigma},\Sigma,W;m,n)$
and $(\tilde{\Sigma}',\Sigma',W';m',n')$ the symmetric triad with multiplicities
corresponding to $(G,\theta_{1},\theta_{2})$ and
$(G,\theta_{1}',\theta_{2}')$, respectively.
If $(G,\theta_{1},\theta_{2})
\sim (G,\theta_{1}',\theta_{2}')$,
then
$(\tilde{\Sigma},\Sigma,W;m,n)
\sim(\tilde{\Sigma}',\Sigma',W';m',n')
$ holds in the sense of Definition
\ref{dfn:e-relation}.
\end{pro}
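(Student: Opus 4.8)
The plan is to exploit the two-step structure of Matsuki's relation \eqref{eqn:Matsuki_equiv}: first absorb the automorphism $\varphi$, and then normalize the inner automorphism $\tau$ by means of the Cartan-type decomposition $G=K_{1}AK_{2}$ of Theorem \ref{thm:Hermann}. Throughout I would fix the invariant inner product on $\mathfrak{g}$ to be $\mathrm{Aut}(G)$-invariant (for instance the negative of the Killing form), so that every $\varphi\in\mathrm{Aut}(G)$ acts on $\mathfrak{g}$ as a linear isometry.

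First I would dispose of $\varphi$. Writing $\bar{\theta}_{2}=\varphi\theta_{2}\varphi^{-1}$ and $\theta_{1}'=\varphi\theta_{1}\varphi^{-1}$, the triad $(G,\theta_{1}',\bar{\theta}_{2})$ is again commutative, and $\varphi$ carries the whole eigenspace data of $(G,\theta_{1},\theta_{2})$ attached to a maximal abelian $\mathfrak{a}\subset\mathfrak{m}_{1}\cap\mathfrak{m}_{2}$ onto that of $(G,\theta_{1}',\bar{\theta}_{2})$ attached to $\varphi(\mathfrak{a})$. Hence the two symmetric triads with multiplicities are isomorphic with respect to $\equiv$ (Definition \ref{dfn:stm_equiv}) via $f=\varphi|_{\mathfrak{a}}$, and in particular $\sim$-isomorphic (take $Y=0$ in Definition \ref{dfn:e-relation}). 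This reduces the statement to the case $\theta_{1}'=\theta_{1}$ and $\theta_{2}'=\mathrm{Ad}(g)\theta_{2}\mathrm{Ad}(g)^{-1}$ for some $g\in G$, with $(G,\theta_{1},\theta_{2})$ and $(G,\theta_{1},\theta_{2}')$ both commutative and sharing $\theta_{1}$ (hence $K_{1},\mathfrak{k}_{1},\mathfrak{m}_{1}$).

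Next comes the key normalization. Applying Theorem \ref{thm:Hermann} to $(G,\theta_{1},\theta_{2})$ I would write $g=k_{1}\exp(X)k_{2}$ with $k_{1}\in K_{1}$, $X\in\mathfrak{a}$, $k_{2}\in K_{2}$. Since $k_{2}\in K_{2}=(G^{\theta_{2}})_{0}$ commutes with $\theta_{2}$ under $\mathrm{Ad}$, it drops out, leaving $\theta_{2}'=\mathrm{Ad}(k_{1})\theta_{2}''\mathrm{Ad}(k_{1})^{-1}$ with $\theta_{2}''=\mathrm{Ad}(\exp X)\theta_{2}\mathrm{Ad}(\exp X)^{-1}$; conjugation by $k_{1}\in K_{1}$ fixes $\theta_{1}$, so $(G,\theta_{1},\theta_{2}')\equiv(G,\theta_{1},\theta_{2}'')$ and they carry the same symmetric triad with multiplicities. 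Thus I may assume $\theta_{2}'=\mathrm{Ad}(\exp X)\theta_{2}\mathrm{Ad}(\exp X)^{-1}$ with $X\in\mathfrak{a}$. Using $\theta_{1}X=\theta_{2}X=-X$ one gets $\theta_{2}'=\mathrm{Ad}(\exp 2X)\theta_{2}$ and then $\theta_{1}\theta_{2}'=\mathrm{Ad}(\exp(-2X))\circ\theta_{1}\theta_{2}$. The commutativity of $\theta_{1},\theta_{2}'$ forces $\mathrm{Ad}(\exp 4X)=1$, i.e.\ $\exp(4X)\in Z(G)$; since $Z(G)=\ker\mathrm{Ad}$ and $\mathrm{Ad}(\exp 4X)$ acts on $\mathfrak{g}(\mathfrak{a},\alpha)$ by $e^{4\sqrt{-1}\langle\alpha,X\rangle}$, this says exactly $\langle\alpha,X\rangle\in(\pi/2)\mathbb{Z}$ for all $\alpha\in\tilde{\Sigma}$, that is, $X\in\Gamma$ in the sense of \eqref{eqn:Gamma}. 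Recognizing that commutativity quantizes $X$ into the lattice $\Gamma$ is the crux, and I expect it to be the main point requiring care.

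Finally I would read off the isomorphism with $f=\mathrm{id}$ and $Y=X$. The same $\mathfrak{a}$ is maximal abelian in $\mathfrak{m}_{1}\cap\mathfrak{m}_{2}'$: it is abelian, it lies in $\mathfrak{m}_{1}\cap\mathfrak{m}_{2}'$ because $\theta_{2}'$ acts as $-\mathrm{id}$ on $\mathfrak{a}$ by the computation above, and its dimension equals $\mathrm{rank}(G,\theta_{1},\theta_{2})=\mathrm{rank}(G,\theta_{1},\theta_{2}')$ by the $\sim$-invariance of the rank. Hence $\tilde{\Sigma}'=\tilde{\Sigma}$. On each $\mathfrak{g}(\mathfrak{a},\alpha)$ the relation $\theta_{1}\theta_{2}'=e^{-\sqrt{-1}\langle\alpha,2X\rangle}\theta_{1}\theta_{2}=(-1)^{\langle\alpha,2X\rangle/\pi}\theta_{1}\theta_{2}$ (valid since $X\in\Gamma$) shows that the $(\pm1)$-eigenspaces $\mathfrak{g}(\mathfrak{a},\alpha,\pm1)$ are preserved when $\langle\alpha,2X\rangle\in 2\pi\mathbb{Z}$ and interchanged when $\langle\alpha,2X\rangle\in\pi+2\pi\mathbb{Z}$. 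Taking dimensions gives $m'=m,\,n'=n$ in the first case and $m'=n,\,n'=m$ in the second, which is precisely \eqref{eqn:equiv-relation-mn}; the partition conditions \eqref{eqn:equiv-relation} follow from these equalities of multiplicities. This yields $(\tilde{\Sigma},\Sigma,W;m,n)\sim(\tilde{\Sigma}',\Sigma',W';m',n')$. Here the eigenspace bookkeeping is routine; the genuine work lies in the normalization of $\tau$ via $G=K_{1}AK_{2}$ and in pinning $X$ to $\Gamma$.
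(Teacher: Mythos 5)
Your proposal is correct and follows essentially the same route as the paper's proof: reduce to $\theta_{1}'=\theta_{1}$, $\theta_{2}'=\tau_{\exp(Y)}\theta_{2}\tau_{\exp(Y)}^{-1}$ via the decomposition $G=K_{1}AK_{2}$ of Theorem \ref{thm:Hermann} and the $\equiv$-invariance of Lemma \ref{lem:cstequiv_symmequiv}, deduce $Y\in\Gamma$ from $\mathrm{Ad}(\exp(4Y))=1$ forced by commutativity, and then read off the swap of the $(\pm1)$-eigenspaces $\mathfrak{g}(\mathfrak{a},\alpha,\epsilon)$ according to the parity of $\langle\alpha,2Y\rangle/\pi$. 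The only cosmetic difference is that you justify the maximality of $\mathfrak{a}$ in $\mathfrak{m}_{1}\cap\mathfrak{m}_{2}'$ by rank invariance where the paper appeals to Lemma \ref{lem:cstequiv_symmequiv}; both are fine.
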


We note that
this proposition gives a refinement
of \cite[Theorem 4.33, (2)]{Ikawa}.
In order to
give our proof of Proposition \ref{pro:cstsim_symmsim},
we prepare the following lemma.

\begin{lem}\label{lem:cstequiv_symmequiv}
In the same settings
as in Proposition
\ref{pro:cstsim_symmsim},
if $(G,\theta_{1},\theta_{2})\equiv 
(G,\theta_{1}',\theta_{2}')$
then
we have $(\tilde{\Sigma},\Sigma,W;m,n)\equiv
(\tilde{\Sigma}',\Sigma',W';m',n')$.
\end{lem}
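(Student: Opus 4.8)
The plan is to transport the entire construction of Section~\ref{sec:ccst_symm} along the automorphism realizing $\equiv$. By Definition~\ref{dfn:CST_equiv}, the hypothesis $(G,\theta_{1},\theta_{2})\equiv(G,\theta_{1}',\theta_{2}')$ furnishes $\varphi\in\mathrm{Aut}(G)$ with $\theta_{i}'=\varphi\theta_{i}\varphi^{-1}$ for $i=1,2$; I will write $\varphi$ also for its differential, an automorphism of $\mathfrak{g}$. Since every automorphism of $\mathfrak{g}$ preserves the Killing form, and the resulting symmetric triad with multiplicities is unchanged up to isomorphism of root systems when the invariant inner product is rescaled, I may assume $\INN{\cdot}{\cdot}$ is chosen so that $\varphi$ is orthogonal. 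From $\theta_{i}'=\varphi\theta_{i}\varphi^{-1}$ one reads off $\varphi(\mathfrak{k}_{i})=\mathfrak{k}_{i}'$ and $\varphi(\mathfrak{m}_{i})=\mathfrak{m}_{i}'$, hence $\varphi(\mathfrak{m}_{1}\cap\mathfrak{m}_{2})=\mathfrak{m}_{1}'\cap\mathfrak{m}_{2}'$, so that $\varphi(\mathfrak{a})$ is a maximal abelian subspace of $\mathfrak{m}_{1}'\cap\mathfrak{m}_{2}'$.

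Next I would invoke the independence of the symmetric triad with multiplicities from the choice of maximal abelian subspace up to $\equiv$: because the primed data may be computed from any such subspace and the results are $\equiv$-isomorphic, it is enough to take $\mathfrak{a}'=\varphi(\mathfrak{a})$ and set $f:=\varphi|_{\mathfrak{a}}\colon\mathfrak{a}\to\mathfrak{a}'$, a linear isometry. Extending $\varphi$ complex-linearly to $\varphi^{\mathbb{C}}$ on $\mathfrak{g}^{\mathbb{C}}$ and using $[\varphi(H),\varphi^{\mathbb{C}}(X)]=\varphi^{\mathbb{C}}[H,X]$ together with orthogonality, a direct computation gives $\varphi^{\mathbb{C}}(\mathfrak{g}(\mathfrak{a},\alpha))=\mathfrak{g}(\mathfrak{a}',f(\alpha))$ for all $\alpha$. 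This already shows $f(\tilde{\Sigma})=\tilde{\Sigma}'$ and that $f$ is an isomorphism of root systems.

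The decoration into $\Sigma$, $W$ and the multiplicities would then be controlled by the identity $\theta_{1}'\theta_{2}'=\varphi(\theta_{1}\theta_{2})\varphi^{-1}$, an immediate consequence of $\theta_{i}'=\varphi\theta_{i}\varphi^{-1}$. It yields $\theta_{1}'\theta_{2}'\circ\varphi^{\mathbb{C}}=\varphi^{\mathbb{C}}\circ\theta_{1}\theta_{2}$, so $\varphi^{\mathbb{C}}$ carries each $\epsilon$-eigenspace of $\theta_{1}\theta_{2}$ onto the corresponding one of $\theta_{1}'\theta_{2}'$, i.e.\ $\varphi^{\mathbb{C}}(\mathfrak{g}(\mathfrak{a},\alpha,\epsilon))=\mathfrak{g}(\mathfrak{a}',f(\alpha),\epsilon)$ for $\epsilon\in\{1,-1\}$. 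Reading off the nonvanishing criteria gives $f(\Sigma)=\Sigma'$ and $f(W)=W'$, and taking dimensions gives $m(\alpha)=m'(f(\alpha))$ and $n(\alpha)=n'(f(\alpha))$ for every $\alpha\in\tilde{\Sigma}$; thus $f$ is an isomorphism with respect to $\equiv$ in the sense of Definition~\ref{dfn:stm_equiv}, which is the assertion.

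I expect the only genuinely delicate point to be the reduction to an orthogonal $\varphi$ relative to the fixed inner product; once $f=\varphi|_{\mathfrak{a}}$ is known to be an isometry, the intertwining of the root spaces and of the $\theta_{1}\theta_{2}$-eigenspaces, and the resulting equality of dimensions, are routine verifications directly from the definitions recalled in Section~\ref{sec:ccst_symm}.
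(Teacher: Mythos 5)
Your proposal is correct and follows essentially the same route as the paper's proof: transport the construction along $d\varphi$, observe that $d\varphi|_{\mathfrak{a}}:\mathfrak{a}\to d\varphi(\mathfrak{a})$ is an isomorphism with respect to $\equiv$ of the two symmetric triads with multiplicities, and then invoke the independence of the construction from the choice of maximal abelian subspace. The paper states the key intertwining step without computation, and your verification of the root-space and $\theta_{1}\theta_{2}$-eigenspace correspondences simply fills in those routine details.
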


\begin{proof}
By the assumption,
there exists $\varphi\in\mathrm{Aut}(G)$
satisfying $\theta_{i}'=\varphi\theta_{i}\varphi^{-1}$
($i=1,2$).
We set $\mathfrak{a}''=d\varphi(\mathfrak{a})$, which is a maximal abelian subspace of $\mathfrak{m}_{1}'\cap\mathfrak{m}_{2}'$.
We denote by
$(\tilde{\Sigma}'',\Sigma'',W'';m'',n'')$ the symmetric triad of $\mathfrak{a}''$ with multiplicities
constructed
from $(G,\theta_{1}',\theta_{2}')$.
Then $d\varphi|_{\mathfrak{a}}:\mathfrak{a}\to\mathfrak{a}''$
gives an isomorphism of symmetric triads with multiplicities
between
$(\tilde{\Sigma},\Sigma,W;m,n)$ and
$(\tilde{\Sigma}'',\Sigma'',W'';m'',n'')$
with respect to $\equiv$.
Thus, we have
$(\tilde{\Sigma},\Sigma,W;m,n)\equiv
(\tilde{\Sigma}'',\Sigma'',W'';m'',n'')
\equiv
(\tilde{\Sigma}',\Sigma',W';m',n')$.
\end{proof}

We are ready to prove
Proposition
\ref{pro:cstsim_symmsim}.

\begin{proof}[Proof of Proposition \ref{pro:cstsim_symmsim}]
First we recall
the isomorphism
between $(\tilde{\Sigma},\Sigma,W)$
and $(\tilde{\Sigma}',\Sigma',W')$
as in the proof
of \cite[Theorem 4.33, (2)]{Ikawa}.
By definition,
$(G,\theta_{1}',\theta_{2}')\sim(G,\theta_{1},\theta_{2})$
implies that
there exists $g\in G$
satisfying $(G,\theta_{1}',\theta_{2}')\equiv
(G,\theta_{1},\tau_{g}\theta_{2}\tau_{g}^{-1})$.
It follows from
Theorem \ref{thm:Hermann}
that
there exist
$k_{i}\in K_{i}$
($i=1,2$)
and $Y\in\mathfrak{a}$
satisfying $g=k_{1}\exp(Y)k_{2}$,
from which $\tau_{g}=\tau_{k_{1}}\tau_{\exp(Y)}\tau_{k_{2}}$ holds.
By the commutativity $\tau_{k_{i}}\theta_{i}=\theta_{i}\tau_{k_{i}}$
($i=1,2$),
we have 
\begin{equation}
(G,\theta_{1},\tau_{g}\theta_{2}\tau_{g}^{-1})
\equiv
(G,\tau_{k_{1}}^{-1}\theta_{1}\tau_{k_{1}},
\tau_{\exp(Y)}\tau_{k_{2}}\theta_{2}\tau_{k_{2}}^{-1}\tau_{\exp(Y)}^{-1})
=(G,\theta_{1},\tau_{\exp(Y)}\theta_{2}\tau_{\exp(Y)}^{-1}).
\end{equation}
Hence,
without loss of generalities,
we assume that $\theta_{1}'=\theta_{1}$
and $\theta_{2}'=\tau_{\exp(Y)}\theta_{2}\tau_{\exp(Y)}^{-1}$
and that the corresponding symmetric triad
$(\tilde{\Sigma}',\Sigma',W';m',n')$ with multiplicities
is constructed from the maximal abelian subspace
$\mathfrak{a}'=e^{\mathrm{ad}(Y)}\mathfrak{a}=\mathfrak{a}$
of $\mathfrak{m}_{1}'\cap\mathfrak{m}_{2}'=\mathfrak{m}_{1}\cap
e^{\mathrm{ad}(Y)}\mathfrak{m}_{2}$
by Lemma \ref{lem:cstequiv_symmequiv}.
The commutativity of $\theta_{1}$
and $\tau_{\exp(Y)}\theta_{2}\tau_{\exp(Y)}^{-1}$
implies that $\exp(4Y)$ is an element of the center of $G$.
Since we have $\mathrm{Ad}(\exp(4Y))=1$,
the element
$Y$ is in the lattice $\Gamma$
defined in \eqref{eqn:Gamma}.
Therefore,
$f=\mathrm{id}_{\mathfrak{a}}$
and $Y\in\Gamma$
give the isomorphism between
$(\tilde{\Sigma},\Sigma,W)$
and $(\tilde{\Sigma}',\Sigma',W')$
with respect to $\sim$.

Second, we show that
the above isomorphism also gives that between
$(\tilde{\Sigma},\Sigma,W;m,n)$
and $(\tilde{\Sigma}',\Sigma',W';m',n')$.
From the above argument, we have $\tilde{\Sigma}'=\tilde{\Sigma}$.
For each $\alpha\in\tilde{\Sigma}$,
we obtain $\mathfrak{g}(\mathfrak{a}',\alpha,\epsilon)
=\mathfrak{g}(\mathfrak{a},\alpha,e^{\sqrt{-1}\INN{\alpha}{2Y}}\epsilon)$,
from which, if $\INN{\alpha}{2Y} \in \pi+2\pi\mathbb{Z}$
holds, then we get
\begin{align}
m'(\alpha)
&=\dim_{\mathbb{C}}\mathfrak{g}(\mathfrak{a}',\alpha,1)=\dim_{\mathbb{C}}\mathfrak{g}(\mathfrak{a},\alpha,-1)=n(\alpha),\\
n'(\alpha)
&=\dim_{\mathbb{C}}\mathfrak{g}(\mathfrak{a}',\alpha,-1)=\dim_{\mathbb{C}}\mathfrak{g}(\mathfrak{a},\alpha,1)=m(\alpha).
\end{align}
A similar calculation yields
$m'(\alpha)=m(\alpha)$
and $n'(\alpha)=n(\alpha)$
for $\alpha\in\tilde{\Sigma}$ with $\INN{\alpha}{2Y}\in 2\pi\mathbb{Z}$.
Thus, we have completed the proof.
\end{proof}

We note that there exist
two commutative compact symmetric triads
$(G,\theta_{1},\theta_{2})\sim(G,\theta_{1}',\theta_{2}')$
satisfying $(\tilde{\Sigma},\Sigma,W;m,n)\not\equiv
(\tilde{\Sigma}',\Sigma',W';m',n')$.
Such commutative compact symmetric triads
will be found in Example \ref{ex:E6Sp4SU6SU2_equiv}.

The following lemma is fundamental in our argument.

\begin{lem}\label{lem:CSTSYMM_SYMM'_CST'}
Let $(G,\theta_{1},\theta_{2})$
be a commutative compact symmetric triad with $\theta_{1}\not\sim\theta_{2}$.
We denote by
$(\tilde{\Sigma},\Sigma,W;m,n)$ the corresponding symmetric triad
with multiplicities.
Then,
for any symmetric triad
$(\tilde{\Sigma}',\Sigma',W';m',n')\sim(\tilde{\Sigma},\Sigma,W;m,n)$
with multiplicities,
there exists a commutative compact symmetric triad
$(G,\theta_{1}',\theta_{2}')$
such that
$(G,\theta_{1}',\theta_{2}')$
is isomorphic to
$(G,\theta_{1},\theta_{2})$ with respect to $\sim$,
and that the symmetric triad with multiplicities
of $(G,\theta_{1}',\theta_{2}')$
is $(\tilde{\Sigma}',\Sigma',W';m',n')$.
\end{lem}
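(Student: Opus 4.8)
The plan is to reverse the computation in the proof of Proposition \ref{pro:cstsim_symmsim}, realizing the abstract equivalence $\sim$ by an explicit inner conjugation. First I would unwind Definition \ref{dfn:e-relation}: the hypothesis $(\tilde{\Sigma},\Sigma,W;m,n)\sim(\tilde{\Sigma}',\Sigma',W';m',n')$ supplies a root-system isomorphism $f:\tilde{\Sigma}\to\tilde{\Sigma}'$ and an element $Y\in\Gamma$ satisfying \eqref{eqn:equiv-relation} and \eqref{eqn:equiv-relation-mn}. I would then factor this equivalence as a pure $Y$-shift followed by a pure root isomorphism: introduce the symmetric triad with multiplicities $(\tilde{\Sigma},\Sigma^{(1)},W^{(1)};m^{(1)},n^{(1)})$ on $\mathfrak{a}$ obtained from $(\tilde{\Sigma},\Sigma,W;m,n)$ by applying $(\mathrm{id}_{\mathfrak{a}},Y)$, and verify directly from \eqref{eqn:equiv-relation}--\eqref{eqn:equiv-relation-mn} that $f$ realizes $(\tilde{\Sigma},\Sigma^{(1)},W^{(1)};m^{(1)},n^{(1)})\equiv(\tilde{\Sigma}',\Sigma',W';m',n')$ in the sense of Definition \ref{dfn:stm_equiv}.

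Second, I would realize the $Y$-shift geometrically. Set $\theta_{1}'=\theta_{1}$ and $\theta_{2}'=\tau_{\exp(Y)}\theta_{2}\tau_{\exp(Y)}^{-1}$. Since $Y\in\Gamma$ one has $\langle\lambda,4Y\rangle\in2\pi\mathbb{Z}$ for every $\lambda\in\tilde{\Sigma}$, hence $\mathrm{Ad}(\exp(4Y))=1$ and $\exp(4Y)$ is central; as in the computation preceding Proposition \ref{pro:cstsim_symmsim}, this forces $\theta_{1}'\theta_{2}'=\theta_{2}'\theta_{1}'$, so $(G,\theta_{1}',\theta_{2}')$ is again a commutative compact symmetric triad. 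Because $\tau_{\exp(Y)}\in\mathrm{Int}(G)$, we have $(G,\theta_{1}',\theta_{2}')\sim(G,\theta_{1},\theta_{2})$, and $\theta_{1}'\not\sim\theta_{2}'$ follows from $\theta_{1}\not\sim\theta_{2}$, so that the construction of its symmetric triad with multiplicities is legitimate by \cite[Theorem 4.33, (1)]{Ikawa}. Taking $\mathfrak{a}$ itself as the maximal abelian subspace of $\mathfrak{m}_{1}'\cap\mathfrak{m}_{2}'=\mathfrak{m}_{1}\cap e^{\mathrm{ad}(Y)}\mathfrak{m}_{2}$ (note $e^{\mathrm{ad}(Y)}$ fixes $\mathfrak{a}$ pointwise since $\mathfrak{a}$ is abelian and $Y\in\mathfrak{a}$), the eigenspace identity $\mathfrak{g}(\mathfrak{a},\alpha,\epsilon)=\mathfrak{g}(\mathfrak{a},\alpha,e^{\sqrt{-1}\langle\alpha,2Y\rangle}\epsilon)$ used in Proposition \ref{pro:cstsim_symmsim} shows that the symmetric triad with multiplicities of $(G,\theta_{1}',\theta_{2}')$ is precisely $(\tilde{\Sigma},\Sigma^{(1)},W^{(1)};m^{(1)},n^{(1)})$.

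Finally, I would combine the two steps. The symmetric triad with multiplicities of $(G,\theta_{1}',\theta_{2}')$ equals $(\tilde{\Sigma},\Sigma^{(1)},W^{(1)};m^{(1)},n^{(1)})$, which by the first step is $\equiv$-isomorphic to $(\tilde{\Sigma}',\Sigma',W';m',n')$ via $f$. Since the symmetric triad with multiplicities of a commutative compact symmetric triad is well defined only up to $\equiv$ (its independence of the choice of $\mathfrak{a}$), this identifies $(\tilde{\Sigma}',\Sigma',W';m',n')$ as the symmetric triad with multiplicities of $(G,\theta_{1}',\theta_{2}')$, completing the argument.

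I expect the main obstacle to lie in the bookkeeping of the first step: confirming that the single pair $(f,Y)$ of Definition \ref{dfn:e-relation} splits cleanly into an $(\mathrm{id}_{\mathfrak{a}},Y)$-shift and an $(f,0)$-isomorphism. In particular one must check that the overlap $\Sigma\cap W$, which is not listed explicitly in \eqref{eqn:equiv-relation}, transforms correctly (it is forced by complementation within $\tilde{\Sigma}$, since $f$ is a bijection $\tilde{\Sigma}\to\tilde{\Sigma}'$ with $\tilde{\Sigma}'=\Sigma'\cup W'$), and that the multiplicity swaps in \eqref{eqn:equiv-relation-mn} align with the eigenvalue sign $e^{\sqrt{-1}\langle\alpha,2Y\rangle}$ that governs $m^{(1)}$ and $n^{(1)}$. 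Once this factorization is secured, the geometric realization of $Y$ is a direct rerun of Proposition \ref{pro:cstsim_symmsim}, and the role of $f$ is absorbed entirely into the $\equiv$-ambiguity inherent in the construction.
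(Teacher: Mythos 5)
Your proposal is correct and follows essentially the same route as the paper: the paper's own proof simply extracts $Y\in\Gamma$ from the equivalence and sets $(G,\theta_{1}',\theta_{2}')=(G,\theta_{1},\tau_{\exp(Y)}\theta_{2}\tau_{\exp(Y)}^{-1})$, leaving the commutativity check and the absorption of $f$ into the $\equiv$-ambiguity implicit. Your write-up just makes those verifications explicit.
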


\begin{proof}
From $(\tilde{\Sigma}',\Sigma',W';m',n')\sim(\tilde{\Sigma},\Sigma,W;m,n)$,
there exists an element $Y$ of the lattice $\Gamma$ satisfying
\eqref{eqn:equiv-relation} and
\eqref{eqn:equiv-relation-mn}.
Then,
$(G,\theta_{1}',\theta_{2}')=
(G,\theta_{1},\tau_{\exp(Y)}\theta_{2}\tau_{\exp(Y)}^{-1})$
gives a commutative compact symmetric triad
as in the statement.
\end{proof}

\subsection{Descriptions of symmetric triads with multiplicities by double Satake diagrams}\label{sec:symm_2satake}

We will determine the isomorphism class
of the symmetric triad with multiplicities
corresponding to a commutative compact symmetric triad.
For this,
we make use of the notion of double Satake diagrams
for compact symmetric triads.
The authors classified the isomorphism
class of commutative compact symmetric triads
with respect to $\sim$
in terms of the notion of double Satake diagrams
(cf.~\cite[Remark 6.13]{BI}).
Then any commutative compact symmetric
triad is realized as a double Satake diagram.
For the purpose of this subsection,
we describe the corresponding symmetric triads with multiplicities
by means of the double Satake diagram
(see \eqref{eqn:tildeSigma_description}
and Proposition \ref{pro:symm_determ_base}).

\subsubsection{Double Satake diagrams for commutative compact symmetric triads (Review)}

We start with recalling the notion of double Satake diagrams
for commutative compact symmetric triads.
Let $G$ be a compact connected semisimple Lie group
with Lie algebra $\mathfrak{g}$.
Let $(G,\theta_{1},\theta_{2})$ be a commutative compact symmetric triad.
The following lemma is fundamental in our argument.

\begin{lem}[{\cite[Lemma 5.6]{BI}}]\label{lem:mas_fundamental}
Under the above settings,
there exists a maximal abelian subalgebra $\mathfrak{t}$
of $\mathfrak{g}$ satisfying the following conditions:
\begin{enumerate}[(1)]
\item For each $i=1,2$,
the subspace
$\mathfrak{t}\cap\mathfrak{m}_{i}$
is maximal abelian in $\mathfrak{m}_{i}$.
In particular, $\mathfrak{t}$ is $(\theta_{1},\theta_{2})$-invariant.
\item $\mathfrak{t}\cap(\mathfrak{m}_{1}\cap\mathfrak{m}_{2})$
is a maximal abelian subspace of $\mathfrak{m}_{1}\cap\mathfrak{m}_{2}$.
\end{enumerate}
\end{lem}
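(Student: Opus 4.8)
The plan is to build $\mathfrak{t}$ by successively enlarging a maximal abelian subspace of $\mathfrak{m}_{1}\cap\mathfrak{m}_{2}$, keeping the intersection with each $\mathfrak{m}_{i}$ under control at every stage by passing to centralizers. First I would fix a maximal abelian subspace $\mathfrak{a}$ of $\mathfrak{m}_{1}\cap\mathfrak{m}_{2}$ (this is exactly what condition (2) asks for) and consider its centralizer $\mathfrak{z}:=\mathfrak{z}_{\mathfrak{g}}(\mathfrak{a})$ in $\mathfrak{g}$. Since $\theta_{1}$ and $\theta_{2}$ both act by $-1$ on $\mathfrak{a}$, they preserve $\mathfrak{a}$, so $\mathfrak{z}$ is $(\theta_{1},\theta_{2})$-invariant; by definition $\mathfrak{a}$ lies in the centre of $\mathfrak{z}$; and the maximality of $\mathfrak{a}$ forces $\mathfrak{z}\cap(\mathfrak{m}_{1}\cap\mathfrak{m}_{2})=\mathfrak{a}$. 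Using the joint eigenspace decomposition I would then choose a maximal abelian subspace $\mathfrak{c}_{1}$ of $\mathfrak{z}\cap(\mathfrak{m}_{1}\cap\mathfrak{k}_{2})$ and a maximal abelian subspace $\mathfrak{c}_{2}$ of $\mathfrak{z}\cap(\mathfrak{k}_{1}\cap\mathfrak{m}_{2})$, i.e.\ the two ``missing'' $\mathfrak{m}$-directions inside $\mathfrak{z}$.

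The key step, and the place where commutativity of $\theta_{1},\theta_{2}$ is essential, is that $\mathfrak{c}_{1}$ and $\mathfrak{c}_{2}$ commute. Indeed, for $X\in\mathfrak{c}_{1}$ and $Y\in\mathfrak{c}_{2}$ the bracket $[X,Y]$ lies in the subalgebra $\mathfrak{z}$ and satisfies $\theta_{1}[X,Y]=[-X,Y]=-[X,Y]$ and $\theta_{2}[X,Y]=[X,-Y]=-[X,Y]$, so $[X,Y]\in\mathfrak{z}\cap(\mathfrak{m}_{1}\cap\mathfrak{m}_{2})=\mathfrak{a}$. Pairing with any $H\in\mathfrak{a}$ and using $\operatorname{ad}$-invariance of $\INN{\cdot}{\cdot}$ gives $\INN{[X,Y]}{H}=\INN{X}{[Y,H]}=0$, since $Y\in\mathfrak{z}=\mathfrak{z}_{\mathfrak{g}}(\mathfrak{a})$ centralizes $\mathfrak{a}$; as the inner product is positive definite and $[X,Y]\in\mathfrak{a}$, this yields $[X,Y]=0$. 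Hence $\mathfrak{a}_{0}:=\mathfrak{a}\oplus\mathfrak{c}_{1}\oplus\mathfrak{c}_{2}$ is abelian and $(\theta_{1},\theta_{2})$-invariant. A routine centralizer argument (using that $\mathfrak{a}$ is central in $\mathfrak{z}$ and that $\mathfrak{c}_{1}$ is maximal abelian in $\mathfrak{z}\cap(\mathfrak{m}_{1}\cap\mathfrak{k}_{2})$) then shows that any $X\in\mathfrak{m}_{1}$ commuting with $\mathfrak{a}\oplus\mathfrak{c}_{1}$ lies in $\mathfrak{z}\cap\mathfrak{m}_{1}=\mathfrak{a}\oplus(\mathfrak{z}\cap\mathfrak{m}_{1}\cap\mathfrak{k}_{2})$ and hence in $\mathfrak{a}\oplus\mathfrak{c}_{1}$; thus $\mathfrak{a}\oplus\mathfrak{c}_{1}$ is maximal abelian in $\mathfrak{m}_{1}$, and symmetrically $\mathfrak{a}\oplus\mathfrak{c}_{2}$ is maximal abelian in $\mathfrak{m}_{2}$.

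Finally I would enlarge $\mathfrak{a}_{0}$ to a maximal abelian subalgebra of $\mathfrak{g}$ without disturbing these intersections. Put $\mathfrak{l}:=\mathfrak{z}_{\mathfrak{g}}(\mathfrak{a}_{0})$, which is reductive, $(\theta_{1},\theta_{2})$-invariant, and has $\mathfrak{a}_{0}$ in its centre. The maximality statements just proved give $\mathfrak{l}\cap\mathfrak{m}_{1}=\mathfrak{a}\oplus\mathfrak{c}_{1}$ and $\mathfrak{l}\cap\mathfrak{m}_{2}=\mathfrak{a}\oplus\mathfrak{c}_{2}$, whence $\mathfrak{l}=(\mathfrak{l}\cap\mathfrak{k}_{1}\cap\mathfrak{k}_{2})\oplus\mathfrak{a}_{0}$. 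Choosing a maximal abelian subalgebra $\mathfrak{s}$ of the subalgebra $\mathfrak{l}\cap\mathfrak{k}_{1}\cap\mathfrak{k}_{2}$ and setting $\mathfrak{t}:=\mathfrak{a}_{0}\oplus\mathfrak{s}$ produces the desired Cartan subalgebra: it is abelian because $\mathfrak{a}_{0}$ is central in $\mathfrak{l}$; it is maximal abelian in $\mathfrak{g}$ because any $X$ centralizing $\mathfrak{t}$ lies in $\mathfrak{l}$, and writing $X=X_{++}+X_{0}$ along $\mathfrak{l}=(\mathfrak{l}\cap\mathfrak{k}_{1}\cap\mathfrak{k}_{2})\oplus\mathfrak{a}_{0}$ forces $X_{++}\in\mathfrak{s}$ by maximality of $\mathfrak{s}$. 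Since $\mathfrak{s}\subset\mathfrak{k}_{1}\cap\mathfrak{k}_{2}$, we recover $\mathfrak{t}\cap\mathfrak{m}_{1}=\mathfrak{a}\oplus\mathfrak{c}_{1}$, $\mathfrak{t}\cap\mathfrak{m}_{2}=\mathfrak{a}\oplus\mathfrak{c}_{2}$ and $\mathfrak{t}\cap(\mathfrak{m}_{1}\cap\mathfrak{m}_{2})=\mathfrak{a}$, which are maximal abelian in $\mathfrak{m}_{1}$, $\mathfrak{m}_{2}$ and $\mathfrak{m}_{1}\cap\mathfrak{m}_{2}$, respectively. As a sum of joint $(\theta_{1},\theta_{2})$-eigenspaces, $\mathfrak{t}$ is automatically $(\theta_{1},\theta_{2})$-invariant, giving the ``in particular'' clause.

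I expect the only genuine obstacle to be the bracket computation $[\mathfrak{c}_{1},\mathfrak{c}_{2}]=0$: without it, the naive sum of two independent extensions of $\mathfrak{a}$ (one maximal abelian in $\mathfrak{m}_{1}$, one in $\mathfrak{m}_{2}$) need not be abelian, and it is precisely here that the hypothesis $\theta_{1}\theta_{2}=\theta_{2}\theta_{1}$ is used. Everything else is bookkeeping with the joint eigenspace decomposition together with the standard fact that the centralizer of a torus in a compact Lie algebra is reductive with the torus contained in its centre.
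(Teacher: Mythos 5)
The paper does not prove this lemma at all: it is imported verbatim from \cite[Lemma 5.6]{BI}, so there is no in-paper argument to compare against. Your proposal is a correct, self-contained proof, and it follows the standard ``tower of centralizers'' construction that one finds in the literature on commuting involutions (going back to Conlon). All the essential points check out: $\mathfrak{z}_{\mathfrak{g}}(\mathfrak{a})\cap(\mathfrak{m}_{1}\cap\mathfrak{m}_{2})=\mathfrak{a}$ by maximality; the bracket computation showing $[\mathfrak{c}_{1},\mathfrak{c}_{2}]\subset\mathfrak{z}_{\mathfrak{g}}(\mathfrak{a})\cap(\mathfrak{m}_{1}\cap\mathfrak{m}_{2})=\mathfrak{a}$ and then $[\mathfrak{c}_{1},\mathfrak{c}_{2}]=0$ via $\mathrm{ad}$-invariance of the inner product is exactly where commutativity of $\theta_{1},\theta_{2}$ enters (it is also needed, more quietly, to make $\mathfrak{m}_{1}$ a $\theta_{2}$-invariant subspace so that $\mathfrak{z}\cap\mathfrak{m}_{1}$ splits as $\mathfrak{a}\oplus(\mathfrak{z}\cap\mathfrak{m}_{1}\cap\mathfrak{k}_{2})$, which you implicitly use); and the final step $\mathfrak{z}_{\mathfrak{g}}(\mathfrak{t})=\mathfrak{t}$ correctly yields maximality of $\mathfrak{t}$ in $\mathfrak{g}$. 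Since $\mathfrak{t}=\mathfrak{a}\oplus\mathfrak{c}_{1}\oplus\mathfrak{c}_{2}\oplus\mathfrak{s}$ is a sum of joint $(\theta_{1},\theta_{2})$-eigenspaces, conditions (1) and (2) follow as you state. No gaps.
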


Let $\mathfrak{t}$ be a maximal abelian subalgebra of $\mathfrak{g}$
as in Lemma \ref{lem:mas_fundamental},
and $\Delta$ denote the root system of $\mathfrak{g}$
with respect to $\mathfrak{t}$.
It follows from the $(\theta_{1},\theta_{2})$-invariance of $\mathfrak{t}$
that
$\sigma_{i}=-d\theta_{i}|_{\mathfrak{t}}$
gives an involutive linear isometry of $\mathfrak{t}$ satisfying
$\sigma_{i}(\Delta)=\Delta$.
Then $(\Delta,\sigma_{i})$
becomes a normal $\sigma$-system
in the sense of \cite[Definition, p.~21]{Warner}.
It is known that the triplet $(\Delta,\sigma_{1},\sigma_{2})$
is uniquely determined for $(G,\theta_{1},\theta_{2})$ (see \cite[Section 5]{BI} for the detail).
We call $(\Delta,\sigma_{1},\sigma_{2})$
the \textit{double $\sigma$-system} of $\mathfrak{t}$
corresponding to $(G,\theta_{1},\theta_{2})$.
It follows from \cite[Lemma 5.9]{BI}
that there exists a fundamental system $\Pi$ of $\Delta$
such that $\Pi$ is both a $\sigma_{1}$-fundamental system
and a $\sigma_{2}$-fundamental system.
We denote by $S_{i}$
the Satake diagram of $(G,\theta_{i})$
associated with $\Pi$.
It is known that the pair $(S_{1},S_{2})$
is uniquely determined for $(G,\theta_{1},\theta_{2})$,
that is,
$(S_{1},S_{2})$
is independent of the choices of
$\mathfrak{t}$ and $\Pi$ (see \cite[Sections 4 and 5]{BI}
for the detail).
We call $(S_{1},S_{2})$ the \textit{double Satake diagram}
for $(G,\theta_{1},\theta_{2})$.
It is shown that,
if $(G,\theta_{1}',\theta_{2}')$
is another commutative compact symmetric triad
satisfying
$(G,\theta_{1}',\theta_{2}')\sim(G,\theta_{1},\theta_{2})$,
then the double Satake diagram $(S_{1}',S_{2}')$
for $(G,\theta_{1}',\theta_{2}')$
is equivalent to that for $(G,\theta_{1},\theta_{2})$
in the sense of \cite[Definition 4.6]{BI},
that is, there exists a common isomorphism of Satake diagrams from $S_{i}$ to $S_{i}'$ for $i=1,2$,
which we write $(S_{1},S_{2})\sim(S_{1}',S_{2}')$.
This means that any commutative compact symmetric triad
uniquely determines the equivalence class of a double Satake diagram
up to this equivalence relation.

\subsubsection{Descriptions of symmetric triads with multiplicities by double Satake diagrams}\label{sec:description_symm}

Let $(G,\theta_{1},\theta_{2})$ be a commutative compact symmetric
triad with $\theta_{1}\not\sim\theta_{2}$.
Suppose that $G$ is simple.
Let $\mathfrak{t}$ be a maximal abelian subalgebra of $\mathfrak{g}$
as in Lemma \ref{lem:mas_fundamental}.
We put $\mathfrak{a}=\mathfrak{t}\cap(\mathfrak{m}_{1}\cap\mathfrak{m}_{2})$,
which is a maximal abelian subspace of $\mathfrak{m}_{1}\cap\mathfrak{m}_{2}$
by Lemma \ref{lem:mas_fundamental}, (2).
We denote by
$(\tilde{\Sigma},\Sigma,W;m,n)$ the symmetric triads
with multiplicities of $\mathfrak{a}$ corresponding to $(G,\theta_{1},\theta_{2})$.
We give a description of $(\tilde{\Sigma},\Sigma,W;m,n)$
by means of the double $\sigma$-system $(\Delta,\sigma_{1},\sigma_{2})$
of $\mathfrak{t}$
corresponding to $(G,\theta_{1},\theta_{2})$.
The double $\sigma$-system
is reconstructed from the double Satake diagram $(S_{1},S_{2})$ in a natural manner.

We first give a description of $\tilde{\Sigma}$ in terms of $(\Delta,\sigma_{1},\sigma_{2})$.
For each $\alpha\in\mathfrak{t}$,
we define the complex subspace
$\mathfrak{g}(\mathfrak{t},\alpha)$ of $\mathfrak{g}^{\mathbb{C}}$
as follows:
\begin{equation}
\mathfrak{g}(\mathfrak{t},\alpha)=\{
X\in\mathfrak{g}^{\mathbb{C}}
\mid [H,X]
=\sqrt{-1}\INN{\alpha}{H}X,\,
H\in\mathfrak{t}
\},
\end{equation}
where $\INN{\cdot}{\cdot}$
denotes the innder product on $\mathfrak{t}$
induced from the invariant inner product on $\mathfrak{g}$.
By definition,
$\mathfrak{g}(\mathfrak{t},-\alpha)$
coincides with the complex conjugate of $\mathfrak{g}(\mathfrak{t},\alpha)$
with respect to
the compact real form $\mathfrak{g}$ of $\mathfrak{g}^{\mathbb{C}}$,
which we write
$\mathfrak{g}(\mathfrak{t},-\alpha)
=\overline{\mathfrak{g}(\mathfrak{t},\alpha)}$.
We have the following root space decomposition of
$\mathfrak{g}^{\mathbb{C}}$:
\begin{equation}
\mathfrak{g}^{\mathbb{C}}
=\mathfrak{t}^{\mathbb{C}}\oplus
\sum_{\alpha\in\Delta}\mathfrak{g}(\mathfrak{t},\alpha).
\end{equation}
We set $\Delta_{0}=\{\alpha\in\Delta\mid\INN{\alpha}{\mathfrak{a}}=\{0\}\}
=\{\alpha\in\Delta\mid\mathrm{pr}(\alpha)=0\}$,
where $\mathrm{pr}:\mathfrak{t}\to\mathfrak{a}$
denotes the orthogonal projection,
that is,
\begin{equation}
\mathrm{pr}(\alpha)=\dfrac{1}{4}(\alpha+\sigma_{1}(\alpha)+\sigma_{2}(\alpha)+\sigma_{1}\sigma_{2}(\alpha)).
\end{equation}
Then we have
\begin{equation}
\mathfrak{g}(\mathfrak{a},0)
=\mathfrak{t}^{\mathbb{C}}\oplus\sum_{\alpha\in\Delta_{0}}\mathfrak{g}(\mathfrak{t},\alpha),
\end{equation}
and, for each $\lambda\in\tilde{\Sigma}$,
\begin{equation}\label{eqn:galambda_decomp}
\mathfrak{g}(\mathfrak{a},\lambda)
=\sum_{\alpha\in\Delta-\Delta_{0};\,\mathrm{pr}(\alpha)=\lambda}\mathfrak{g}(\mathfrak{t},\alpha).
\end{equation}
Hence we obtain
\begin{equation}\label{eqn:tildeSigma_description}
\tilde{\Sigma}=\mathrm{pr}(\Delta-\Delta_{0}).
\end{equation}
Since $\Delta$, $\Delta_{0}$
and $\mathrm{pr}:\mathfrak{t}\to\mathfrak{a}$ are read off from $(S_{1},S_{2})$,
so is the right hand side of \eqref{eqn:tildeSigma_description}.
Therefore, $\tilde{\Sigma}$ is determined from $(S_{1},S_{2})$.

Next, we give descriptions of $\Sigma$ and $W$.
For this, we need some preparations.

\begin{dfn}\label{dfn:imgroot_cpxroot}
An element $\alpha$ of $\Delta-\Delta_{0}$
is called an \textit{imaginary root}
if $\sigma_{1}\sigma_{2}(\alpha)=\alpha$;
and a \textit{complex root} if $\sigma_{1}\sigma_{2}(\alpha)\neq\alpha$.
We denote by $\Delta_{\mathrm{im}}$ the set of all imaginary roots
of $\Delta-\Delta_{0}$
and by $\Delta_{\mathrm{cpx}}$ the set of all complex roots
of $\Delta-\Delta_{0}$.
\end{dfn}

By definition,
$\Delta=\Delta_{0}\cup\Delta_{\mathrm{im}}\cup\Delta_{\mathrm{cpx}}$
is a disjoint union.
It can be verified that
$\Delta_{\mathrm{im}}$ and
$\Delta_{\mathrm{cpx}}$
are invariant by the multiplication of $-1$,
and by the action
of $(\sigma_{1},\sigma_{2})$.
The sets $\Delta_{\mathrm{im}}$
and $\Delta_{\mathrm{cpx}}$
can be determined by the double Satake diagram.
Here, we remark that the group $\mathbb{Z}_{2}=\{1,\sigma_{1}\sigma_{2}\}$
acts freely on $\Delta_{\mathrm{cpx}}$.
Hence the cardinality 
$\#\Delta_{\mathrm{cpx}}$
of $\Delta_{\mathrm{cpx}}$ is even.

\begin{dfn}\label{dfn:cptroot_noncptroot}
An element $\alpha$ of $\Delta-\Delta_{0}$
is called a \textit{compact root}
if $\mathfrak{g}(\mathfrak{t},\alpha)$
is contained in $(\mathfrak{g}^{\theta_{1}\theta_{2}})^{\mathbb{C}}=((\mathfrak{k}_{1}\cap\mathfrak{k}_{2})\oplus(\mathfrak{m}_{1}\cap\mathfrak{m}_{2}))^{\mathbb{C}}$;
a \textit{noncompact root}
if $\mathfrak{g}(\mathfrak{t},\alpha)$
is contained in $(\mathfrak{g}^{-\theta_{1}\theta_{2}})^{\mathbb{C}}=((\mathfrak{k}_{1}\cap\mathfrak{m}_{2})\oplus(\mathfrak{m}_{1}\cap\mathfrak{k}_{2}))^{\mathbb{C}}$.
We denote by $\Delta_{\mathrm{cpt}}$
the set of all compact roots of $\Delta-\Delta_{0}$
and by $\Delta_{\mathrm{noncpt}}$
the set of all noncompact roots of $\Delta-\Delta_{0}$.
\end{dfn}

As will be discussed in Section \ref{sec:validity_imgcpx_cptnoncpt},
the terminologies as in Definitions
\ref{dfn:imgroot_cpxroot}
and \ref{dfn:cptroot_noncptroot}
are borrowed from Vogan diagrams for noncompact semisimple Lie algebras.

\begin{lem}\label{lem:acpt_-acpt}
We have:
\begin{enumerate}[(1)]
\item $\Delta_{\mathrm{cpt}}$
and $\Delta_{\mathrm{noncpt}}$
are invariant by the multiplication of $-1$.
\item $\Delta_{\mathrm{cpt}}$
and $\Delta_{\mathrm{noncpt}}$
are invariant by the action of $(\sigma_{1},\sigma_{2})$.
\end{enumerate}
\end{lem}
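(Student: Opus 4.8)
The plan is to reduce both assertions to two elementary facts about how the relevant symmetries act on the root spaces $\mathfrak{g}(\mathfrak{t},\alpha)$, and then to exploit the commutativity of $\theta_{1}$ and $\theta_{2}$. First I would record the two transformation rules I intend to use. Since $\theta_{1}$ and $\theta_{2}$ commute, $\theta_{1}\theta_{2}$ is an involution of $\mathfrak{g}$, and the subspaces $(\mathfrak{g}^{\theta_{1}\theta_{2}})^{\mathbb{C}}$ and $(\mathfrak{g}^{-\theta_{1}\theta_{2}})^{\mathbb{C}}$ appearing in Definition \ref{dfn:cptroot_noncptroot} are precisely its complexified $(\pm 1)$-eigenspaces. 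For any automorphism $\phi$ of $\mathfrak{g}$ preserving $\mathfrak{t}$, extended complex-linearly to $\mathfrak{g}^{\mathbb{C}}$, one has $\phi\,\mathfrak{g}(\mathfrak{t},\alpha)=\mathfrak{g}(\mathfrak{t},\phi|_{\mathfrak{t}}\,\alpha)$, since $\phi|_{\mathfrak{t}}$ is an orthogonal transformation; applied to $\phi=\theta_{i}$, where $d\theta_{i}|_{\mathfrak{t}}=-\sigma_{i}$, this gives $\theta_{i}\,\mathfrak{g}(\mathfrak{t},\alpha)=\mathfrak{g}(\mathfrak{t},-\sigma_{i}\alpha)$. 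Together with the already-noted identity $\mathfrak{g}(\mathfrak{t},-\alpha)=\overline{\mathfrak{g}(\mathfrak{t},\alpha)}$ for complex conjugation with respect to $\mathfrak{g}$, these are the only structural inputs I need.

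For part (1), I would observe that complex conjugation with respect to the compact real form $\mathfrak{g}$ fixes $\mathfrak{g}$ pointwise and therefore preserves the complexification of every real subspace of $\mathfrak{g}$; in particular it preserves both $(\mathfrak{g}^{\theta_{1}\theta_{2}})^{\mathbb{C}}$ and $(\mathfrak{g}^{-\theta_{1}\theta_{2}})^{\mathbb{C}}$. Hence if $\alpha\in\Delta_{\mathrm{cpt}}$, applying conjugation to the inclusion $\mathfrak{g}(\mathfrak{t},\alpha)\subset(\mathfrak{g}^{\theta_{1}\theta_{2}})^{\mathbb{C}}$ yields $\mathfrak{g}(\mathfrak{t},-\alpha)\subset(\mathfrak{g}^{\theta_{1}\theta_{2}})^{\mathbb{C}}$, so $-\alpha\in\Delta_{\mathrm{cpt}}$; the identical argument with $(\mathfrak{g}^{-\theta_{1}\theta_{2}})^{\mathbb{C}}$ handles $\Delta_{\mathrm{noncpt}}$.

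For part (2), the key point is that each $\theta_{i}$ commutes with $\theta_{1}\theta_{2}$ — this is exactly where commutativity enters — so $\theta_{i}$ preserves each eigenspace $\mathfrak{g}^{\pm\theta_{1}\theta_{2}}$ and hence their complexifications. Given $\alpha\in\Delta_{\mathrm{cpt}}$, I would apply $\theta_{i}$ to $\mathfrak{g}(\mathfrak{t},\alpha)\subset(\mathfrak{g}^{\theta_{1}\theta_{2}})^{\mathbb{C}}$ to obtain $\mathfrak{g}(\mathfrak{t},-\sigma_{i}\alpha)\subset(\mathfrak{g}^{\theta_{1}\theta_{2}})^{\mathbb{C}}$, i.e. $-\sigma_{i}\alpha\in\Delta_{\mathrm{cpt}}$, and then invoke part (1) to conclude $\sigma_{i}\alpha\in\Delta_{\mathrm{cpt}}$. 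As this holds for $i=1,2$, the set $\Delta_{\mathrm{cpt}}$ is invariant under the group generated by $\sigma_{1},\sigma_{2}$, and the same computation applies verbatim to $\Delta_{\mathrm{noncpt}}$. The only point requiring genuine care is the sign bookkeeping: the automorphism $\theta_{i}$ naturally produces $-\sigma_{i}\alpha$ rather than $\sigma_{i}\alpha$, so part (1) must be proved first and then used to absorb this sign. I expect this interplay — rather than any single computation — to be the one place where the argument could be mis-stated.
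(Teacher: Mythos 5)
Your proposal is correct and follows essentially the same route as the paper: part (1) via $\mathfrak{g}(\mathfrak{t},-\alpha)=\overline{\mathfrak{g}(\mathfrak{t},\alpha)}$ and conjugation-invariance of $(\mathfrak{g}^{\pm\theta_{1}\theta_{2}})^{\mathbb{C}}$, and part (2) via $\theta_{i}$-invariance of these eigenspaces, which is exactly where commutativity enters. The only cosmetic difference is that the paper packages the sign into the single identity $\mathfrak{g}(\mathfrak{t},\sigma_{i}(\alpha))=\overline{\theta_{i}\mathfrak{g}(\mathfrak{t},\alpha)}$, whereas you apply $\theta_{i}$ alone to land on $-\sigma_{i}\alpha$ and then invoke part (1) to absorb the sign; both are valid.
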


\begin{proof}
(1) If $\alpha\in\Delta-\Delta_{0}$
is compact,
then we have
$\mathfrak{g}(\mathfrak{t},-\alpha)
=\overline{\mathfrak{g}(\mathfrak{t},\alpha)}
\subset\overline{
(\mathfrak{g}^{\theta_{1}\theta_{2}})^{\mathbb{C}}}
=(\mathfrak{g}^{\theta_{1}\theta_{2}})^{\mathbb{C}}$,
from which $-\alpha$ is compact.
In a similar manner,
if $\alpha$ is noncompact,
then so is $-\alpha$.

(2) We have
$\mathfrak{g}(\mathfrak{t},\sigma_{i}(\alpha))
=\overline{\theta_{i}\mathfrak{g}(\mathfrak{t},\alpha)}$.
It follows from the commutativity of $\theta_{1}$
and $\theta_{2}$ that
$(\mathfrak{g}^{\theta_{1}\theta_{2}})^{\mathbb{C}}$
and $(\mathfrak{g}^{-\theta_{1}\theta_{2}})^{\mathbb{C}}$
are invariant under the action of $\theta_{i}$.
Hence we have the assertion.
\end{proof}

\begin{lem}\label{lem:img_cptnoncpt}
$\Delta_{\mathrm{im}}
=\Delta_{\mathrm{cpt}}\cup\Delta_{\mathrm{noncpt}}$.
\end{lem}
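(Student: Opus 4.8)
The plan is to analyze how the involution $\theta_{1}\theta_{2}$ acts on each root space $\mathfrak{g}(\mathfrak{t},\alpha)$ for $\alpha\in\Delta-\Delta_{0}$. The crucial observations are that, since $\theta_{1}$ and $\theta_{2}$ commute, $\theta_{1}\theta_{2}$ is itself an \emph{involution} of $\mathfrak{g}$, and that each $\mathfrak{g}(\mathfrak{t},\alpha)$ is one-dimensional over $\mathbb{C}$ because $\mathfrak{t}$ is a maximal abelian subalgebra of the compact semisimple Lie algebra $\mathfrak{g}$. First I would establish the transformation rule $\theta_{1}\theta_{2}(\mathfrak{g}(\mathfrak{t},\alpha))=\mathfrak{g}(\mathfrak{t},\sigma_{1}\sigma_{2}(\alpha))$. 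This follows by applying twice the identity $\mathfrak{g}(\mathfrak{t},\sigma_{i}(\alpha))=\overline{\theta_{i}\mathfrak{g}(\mathfrak{t},\alpha)}$ used in the proof of Lemma~\ref{lem:acpt_-acpt}: rewriting it as $\theta_{i}\mathfrak{g}(\mathfrak{t},\alpha)=\mathfrak{g}(\mathfrak{t},-\sigma_{i}(\alpha))$ (using $\overline{\mathfrak{g}(\mathfrak{t},\beta)}=\mathfrak{g}(\mathfrak{t},-\beta)$) and composing for $i=2$ then $i=1$ yields the stated rule.

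For the inclusion $\Delta_{\mathrm{im}}\subseteq\Delta_{\mathrm{cpt}}\cup\Delta_{\mathrm{noncpt}}$, I would take $\alpha\in\Delta_{\mathrm{im}}$, so that $\sigma_{1}\sigma_{2}(\alpha)=\alpha$ and hence $\theta_{1}\theta_{2}$ preserves $\mathfrak{g}(\mathfrak{t},\alpha)$ by the transformation rule. Since this space is one-dimensional, $\theta_{1}\theta_{2}$ acts on it as a scalar, and since $\theta_{1}\theta_{2}$ is an involution that scalar must be $\pm 1$. If it equals $+1$ then $\mathfrak{g}(\mathfrak{t},\alpha)\subseteq(\mathfrak{g}^{\theta_{1}\theta_{2}})^{\mathbb{C}}$, so $\alpha$ is compact; if it equals $-1$ then $\mathfrak{g}(\mathfrak{t},\alpha)\subseteq(\mathfrak{g}^{-\theta_{1}\theta_{2}})^{\mathbb{C}}$, so $\alpha$ is noncompact. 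This gives the desired inclusion.

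For the reverse inclusion, if $\alpha$ is compact or noncompact then $\mathfrak{g}(\mathfrak{t},\alpha)$ lies entirely in the $(+1)$- or $(-1)$-eigenspace of $\theta_{1}\theta_{2}$, so in particular $\theta_{1}\theta_{2}(\mathfrak{g}(\mathfrak{t},\alpha))=\mathfrak{g}(\mathfrak{t},\alpha)$. Comparing with the transformation rule gives $\mathfrak{g}(\mathfrak{t},\sigma_{1}\sigma_{2}(\alpha))=\mathfrak{g}(\mathfrak{t},\alpha)$, and since distinct roots have distinct (nonzero, one-dimensional) root spaces this forces $\sigma_{1}\sigma_{2}(\alpha)=\alpha$, that is, $\alpha\in\Delta_{\mathrm{im}}$. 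Combining the two inclusions yields the asserted equality.

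I do not expect a serious obstacle here; the argument is essentially bookkeeping with eigenspaces. The one point requiring care is to invoke \emph{both} that $\theta_{1}\theta_{2}$ is an involution (so that in the forward direction the scalar is exactly $\pm 1$ rather than an arbitrary nonzero number, which is precisely what matches the compact/noncompact dichotomy) and that the root spaces are one-dimensional (so that once $\theta_{1}\theta_{2}$ preserves $\mathfrak{g}(\mathfrak{t},\alpha)$ it is automatically contained in a single eigenspace, and so that the equality of root spaces in the reverse direction forces equality of roots). Both facts are guaranteed by the standing hypotheses: the commutativity of $\theta_{1}$ and $\theta_{2}$, and the semisimplicity of $\mathfrak{g}$.
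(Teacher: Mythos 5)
Your proof is correct. The forward inclusion is exactly the paper's argument: for $\alpha\in\Delta_{\mathrm{im}}$ the space $\mathfrak{g}(\mathfrak{t},\alpha)$ is $\theta_{1}\theta_{2}$-invariant, one-dimensional, and $\theta_{1}\theta_{2}$ is an involution, so it sits in a single eigenspace. The reverse inclusion, however, takes a genuinely different (and arguably cleaner) route. The paper writes $\alpha=[X,Y]$ for $X\in\mathfrak{g}(\mathfrak{t},\alpha)$, $Y\in\mathfrak{g}(\mathfrak{t},-\alpha)$ (citing Helgason), invokes Lemma \ref{lem:acpt_-acpt}, (1) to see that $X$ and $Y$ carry the same eigenvalue $\epsilon$, and computes $\sigma_{1}\sigma_{2}(\alpha)=\theta_{1}\theta_{2}[X,Y]=\epsilon^{2}\alpha=\alpha$. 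You instead observe that containment in an eigenspace forces $\theta_{1}\theta_{2}(\mathfrak{g}(\mathfrak{t},\alpha))=\mathfrak{g}(\mathfrak{t},\alpha)$, combine this with the transformation rule $\theta_{1}\theta_{2}(\mathfrak{g}(\mathfrak{t},\alpha))=\mathfrak{g}(\mathfrak{t},\sigma_{1}\sigma_{2}(\alpha))$ (which you correctly derive from $\theta_{i}\mathfrak{g}(\mathfrak{t},\alpha)=\mathfrak{g}(\mathfrak{t},-\sigma_{i}(\alpha))$), and conclude $\sigma_{1}\sigma_{2}(\alpha)=\alpha$ from the fact that distinct roots have distinct root spaces. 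Your version avoids the bracket identity and the appeal to Lemma \ref{lem:acpt_-acpt}, at the cost of stating the transformation rule explicitly; the paper's version avoids writing down that rule but needs the extra structural input $\alpha=[X,Y]$. Both are complete; there is no gap in your argument.
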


\begin{proof}
Let $\alpha\in\Delta_{\mathrm{im}}$.
From $\theta_{1}\theta_{2}(\mathfrak{g}(\mathfrak{t},\alpha))
=\mathfrak{g}(\mathfrak{t},\alpha)$, we have
$\mathfrak{g}(\mathfrak{t},\alpha)=(\mathfrak{g}(\mathfrak{t},\alpha)\cap
(\mathfrak{g}^{\theta_{1}\theta_{2}})^{\mathbb{C}})
\oplus(\mathfrak{g}(\mathfrak{t},\alpha)\cap
(\mathfrak{g}^{-\theta_{1}\theta_{2}})^{\mathbb{C}})$.
Since $\mathfrak{g}(\mathfrak{t},\alpha)$
has complex one dimension,
the involution $\theta_{1}\theta_{2}$
is $\pm 1$ on $\mathfrak{g}(\mathfrak{t},\alpha)$.
Thus, $\alpha$ is compact or noncompact.
Conversely,
let us consider
$\alpha\in\Delta_{\mathrm{cpt}}\cup\Delta_{\mathrm{noncpt}}$.
It follows from \cite[Theorem 4.2, (v), Chapter III]{Helgason}
that
$\alpha=[X,Y]$ holds for some
$X\in\mathfrak{g}(\mathfrak{t},\alpha)$
and $Y\in\mathfrak{g}(\mathfrak{t},-\alpha)$.
By the assumption,
there exists $\epsilon\in\{\pm 1\}$
satisfying
$\theta_{1}\theta_{2}(X)=\epsilon X$.
Lemma \ref{lem:acpt_-acpt}, (1)
yields $\theta_{1}\theta_{2}(Y)=\epsilon Y$.
Then we obtain
\begin{equation}
\sigma_{1}\sigma_{2}(\alpha)
=\theta_{1}\theta_{2}[X,Y]
=\epsilon^{2}[X,Y]=[X,Y]=\alpha.
\end{equation}
Thus, we have completed the proof.
\end{proof}

With the above preparations,
we have the following descriptions of $\Sigma$
and $W$.

\begin{pro}\label{pro:symm_determ_base}
We have:
\begin{align}
\Sigma &= \mathrm{pr}(\Delta_{\mathrm{cpt}})\cup\mathrm{pr}(\Delta_{\mathrm{cpx}}),\label{eqn:Sigma_formula}\\
W &= \mathrm{pr}(\Delta_{\mathrm{noncpt}})\cup\mathrm{pr}(\Delta_{\mathrm{cpx}}).\label{eqn:W_formula}
\end{align}
In addition, the multiplicities of $\lambda\in\tilde{\Sigma}$
is expressed as follows:
\begin{align}
m(\lambda)
&=\#\{
\alpha\in\Delta_{\mathrm{cpt}}
\mid \mathrm{pr}(\alpha)=\lambda
\}+\dfrac{1}{2}\#\{
\beta\in\Delta_{\mathrm{cpx}}
\mid
\mathrm{pr}(\beta)=\lambda\},\label{eqn:m_formula}\\
n(\lambda)
&=\#\{
\alpha\in\Delta_{\mathrm{noncpt}}
\mid \mathrm{pr}(\alpha)=\lambda
\}+\dfrac{1}{2}\#\{
\beta\in\Delta_{\mathrm{cpx}}
\mid
\mathrm{pr}(\beta)=\lambda\}.\label{eqn:n_formula}
\end{align}
\end{pro}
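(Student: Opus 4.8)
The plan is to compute the two $\theta_1\theta_2$-eigenspaces of each $\mathfrak{g}(\mathfrak{a},\lambda)$ root-by-root, using the fiber decomposition \eqref{eqn:galambda_decomp}, namely $\mathfrak{g}(\mathfrak{a},\lambda)=\sum_{\alpha\in\Delta-\Delta_0,\,\mathrm{pr}(\alpha)=\lambda}\mathfrak{g}(\mathfrak{t},\alpha)$, in which every summand $\mathfrak{g}(\mathfrak{t},\alpha)$ is one-dimensional over $\mathbb{C}$. The first step is to record how $\theta_1\theta_2$ permutes these summands. A direct computation with $\sigma_i=-d\theta_i|_{\mathfrak{t}}$ gives $\theta_1\theta_2\,\mathfrak{g}(\mathfrak{t},\alpha)=\mathfrak{g}(\mathfrak{t},\sigma_1\sigma_2(\alpha))$ (as in the proof of Lemma \ref{lem:acpt_-acpt}); moreover, since $\sigma_1$ and $\sigma_2$ commute and are involutions, $\sigma_1\sigma_2$ permutes the four vectors $\{\alpha,\sigma_1\alpha,\sigma_2\alpha,\sigma_1\sigma_2\alpha\}$, whence $\mathrm{pr}\circ\sigma_1\sigma_2=\mathrm{pr}$ and the involution $\alpha\mapsto\sigma_1\sigma_2(\alpha)$ preserves each fiber $\{\alpha\in\Delta-\Delta_0\mid\mathrm{pr}(\alpha)=\lambda\}$. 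Thus $\theta_1\theta_2$ acts on $\mathfrak{g}(\mathfrak{a},\lambda)$ as an involution permuting the one-dimensional summands along the orbits of $\sigma_1\sigma_2$.

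Next, I would split the fiber into $\sigma_1\sigma_2$-fixed points and free $\sigma_1\sigma_2$-orbits of size two, which by Definition \ref{dfn:imgroot_cpxroot} are exactly $\Delta_{\mathrm{im}}$ and $\Delta_{\mathrm{cpx}}$ intersected with the fiber. For an imaginary root $\alpha$, $\theta_1\theta_2$ preserves the line $\mathfrak{g}(\mathfrak{t},\alpha)$, and being an involution it acts there as $+1$ or $-1$; by Lemma \ref{lem:img_cptnoncpt} these two cases are precisely $\alpha\in\Delta_{\mathrm{cpt}}$ (contributing to $\mathfrak{g}(\mathfrak{a},\lambda,1)$) and $\alpha\in\Delta_{\mathrm{noncpt}}$ (contributing to $\mathfrak{g}(\mathfrak{a},\lambda,-1)$). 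For a complex root $\beta$, the pair $\{\beta,\sigma_1\sigma_2\beta\}$ spans a two-dimensional space on which $\theta_1\theta_2$ acts as the swap of the two lines; since $(\theta_1\theta_2)^2=1$, its eigenvalues on this plane are $+1$ and $-1$, each with multiplicity one, so each complex pair contributes exactly one dimension to $\mathfrak{g}(\mathfrak{a},\lambda,1)$ and one to $\mathfrak{g}(\mathfrak{a},\lambda,-1)$.

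Summing these contributions over the fiber and recalling $m(\lambda)=\dim_{\mathbb{C}}\mathfrak{g}(\mathfrak{a},\lambda,1)$ and $n(\lambda)=\dim_{\mathbb{C}}\mathfrak{g}(\mathfrak{a},\lambda,-1)$ yields the formulas \eqref{eqn:m_formula} and \eqref{eqn:n_formula}, the factor $1/2$ being exactly the number of complex pairs with a prescribed projection $\lambda$. Finally, the descriptions \eqref{eqn:Sigma_formula} and \eqref{eqn:W_formula} follow by reading off the support: $\lambda\in\Sigma$ iff $m(\lambda)>0$, which by \eqref{eqn:m_formula} happens iff $\lambda\in\mathrm{pr}(\Delta_{\mathrm{cpt}})\cup\mathrm{pr}(\Delta_{\mathrm{cpx}})$, and symmetrically for $W$. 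I expect the only delicate points to be the bookkeeping in the first step --- verifying that $\mathrm{pr}$ is $\sigma_1\sigma_2$-invariant so that the permutation genuinely stays inside a single fiber --- and the eigenvalue count on a complex pair, where one must use that $\theta_1\theta_2$ is an honest involution (not merely an automorphism of order dividing four) to obtain the clean $+1/-1$ splitting.
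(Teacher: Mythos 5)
Your proposal is correct and follows essentially the same route as the paper's proof: both decompose $\mathfrak{g}(\mathfrak{a},\lambda,\pm 1)$ via the fiber decomposition \eqref{eqn:galambda_decomp}, treat imaginary roots as contributing a full line to one eigenspace (compact vs.\ noncompact, via Lemma \ref{lem:img_cptnoncpt}), and observe that each $\sigma_{1}\sigma_{2}$-pair of complex roots contributes exactly one dimension to each eigenspace (the paper writes the $+1$-eigenvectors explicitly as $X+\theta_{1}\theta_{2}X$). The delicate points you flag --- $\sigma_{1}\sigma_{2}$-invariance of $\mathrm{pr}$ and the eigenvalue count on a complex pair --- are exactly the facts the paper uses.
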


\begin{proof}
We show the descriptions
\eqref{eqn:Sigma_formula}
and \eqref{eqn:m_formula}
of $\Sigma$
and $m$, respectively.
Let $\lambda\in\tilde{\Sigma}$.
By using \eqref{eqn:galambda_decomp},
we obtain the following decomposition
of $\mathfrak{g}(\mathfrak{a},\lambda,1)
=\mathfrak{g}(\mathfrak{a},\lambda)\cap
(\mathfrak{g}^{\theta_{1}\theta_{2}})^{\mathbb{C}}$:
\begin{equation}\label{eqn:galambda1_decomp}
\mathfrak{g}(\mathfrak{a},\lambda,1)
=\sum_{\substack{\alpha\in\Delta_{\mathrm{im}};\\\mathrm{pr}(\alpha)=\lambda}}(\mathfrak{g}(\mathfrak{t},\alpha)\cap(\mathfrak{g}^{\theta_{1}\theta_{2}})^{\mathbb{C}})
\oplus
\sum_{\substack{\beta\in\Delta_{\mathrm{cpx}};\\\mathrm{pr}(\beta)=\lambda}}((\mathfrak{g}(\mathfrak{t},\beta)\oplus\mathfrak{g}(\mathfrak{t},\sigma_{1}\sigma_{2}(\beta)))\cap(\mathfrak{g}^{\theta_{1}\theta_{2}})^{\mathbb{C}}).
\end{equation}
For any $\alpha\in\Delta_{\mathrm{im}}$,
we have $(\mathfrak{g}(\mathfrak{t},\alpha)\cap(\mathfrak{g}^{\theta_{1}\theta_{2}})^{\mathbb{C}})=\mathfrak{g}(\mathfrak{t},\alpha)$
if $\alpha$ is compact;
$(\mathfrak{g}(\mathfrak{t},\alpha)\cap(\mathfrak{g}^{\theta_{1}\theta_{2}})^{\mathbb{C}})=\{0\}$ if $\alpha$ is noncompact.
This implies that
the first term of the right hand side in \eqref{eqn:galambda1_decomp}
coincides with
\begin{equation}\label{eqn:gta_formula}
\sum_{\alpha\in\Delta_{\mathrm{cpt}};\,\mathrm{pr}(\alpha)=\lambda}\mathfrak{g}(\mathfrak{t},\alpha).
\end{equation}
On the other hand,
for $\beta\in\Delta_{\mathrm{cpx}}$,
the subspace
\begin{equation}\label{eqn:gtb_formula}
(\mathfrak{g}(\mathfrak{t},\beta)\oplus
\mathfrak{g}(\mathfrak{t},\sigma_{1}\sigma_{2}(\beta)))
\cap(\mathfrak{g}^{\theta_{1}\theta_{2}})^{\mathbb{C}}
=\{X+\theta_{1}\theta_{2}X\mid X\in\mathfrak{g}(\mathfrak{t},\beta)\}
\end{equation}
has complex one dimension.
From the above arguments,
$\lambda$ is in $\Sigma$
if and only if
there exists $\alpha\in\Delta_{\mathrm{cpt}}\cup\Delta_{\mathrm{cpx}}$
satisfying $\mathrm{pr}(\alpha)=\lambda$,
from which \eqref{eqn:Sigma_formula} holds.
We also have \eqref{eqn:m_formula}.
A similar argument shows \eqref{eqn:W_formula} and \eqref{eqn:n_formula}.
Thus, we have completed the proof.
\end{proof}

On the right hand sides in \eqref{eqn:Sigma_formula}
and
\eqref{eqn:W_formula},
$\mathrm{pr}(\Delta_{\mathrm{cpx}})$
is obtained by the double Satake diagram $(S_{1},S_{2})$.
In addition, 
we also have
the value of
$\#\{
\beta\in\Delta_{\mathrm{cpx}}
\mid
\mathrm{pr}(\beta)=\lambda\}$
in the right hand sides in \eqref{eqn:m_formula}
and
\eqref{eqn:n_formula} by $(S_{1},S_{2})$.
In order to determine $(\tilde{\Sigma},\Sigma,W;m,n)$
by Proposition \ref{pro:symm_determ_base},
our remaining tasks are to obtain
$\Delta_{\mathrm{cpt}}$
and $\Delta_{\mathrm{noncpt}}$.
The following three propositions
provide us to study
the compactness/noncompactness
for imaginary roots.

\begin{pro}\label{pro:cpt+cpt_cpt_etc}
Let $\alpha,\beta$ be imaginary roots with $\alpha+\beta\in\Delta-\Delta_{0}$.
Then $\alpha+\beta$ is also imaginary.
In addition, we have:
\begin{enumerate}[(1)]
\item If $\alpha,\beta$ are compact,
then $\alpha+\beta$ is compact.
\item If $\alpha,\beta$ are noncompact,
then $\alpha+\beta$ is compact.
\item If $\alpha$ is compact
and $\beta$ is noncompact,
then $\alpha+\beta$ is noncompact.
\end{enumerate}
\end{pro}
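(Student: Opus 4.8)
The plan is to exploit the fact that the complexified automorphism $\theta_{1}\theta_{2}$ of $\mathfrak{g}^{\mathbb{C}}$ respects the Lie bracket, together with the one-dimensionality of root spaces. First I would record that $\sigma_{1}\sigma_{2}=\theta_{1}\theta_{2}$ acts linearly on $\mathfrak{t}$, so that $\sigma_{1}\sigma_{2}(\alpha+\beta)=\sigma_{1}\sigma_{2}(\alpha)+\sigma_{1}\sigma_{2}(\beta)=\alpha+\beta$ whenever $\alpha,\beta\in\Delta_{\mathrm{im}}$. Since $\alpha+\beta\in\Delta-\Delta_{0}$ by hypothesis, this shows $\alpha+\beta\in\Delta_{\mathrm{im}}$, which is the first assertion. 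By Lemma \ref{lem:img_cptnoncpt} the root $\alpha+\beta$ is then either compact or noncompact, so it remains only to decide which.

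For the three cases I would use the standard structural fact that, for roots $\alpha,\beta$ with $\alpha+\beta\in\Delta$, one has $[\mathfrak{g}(\mathfrak{t},\alpha),\mathfrak{g}(\mathfrak{t},\beta)]=\mathfrak{g}(\mathfrak{t},\alpha+\beta)\neq\{0\}$. Fixing nonzero $X\in\mathfrak{g}(\mathfrak{t},\alpha)$ and $Y\in\mathfrak{g}(\mathfrak{t},\beta)$, the element $[X,Y]$ spans the one-dimensional space $\mathfrak{g}(\mathfrak{t},\alpha+\beta)$. Since each of $\alpha,\beta$ is imaginary, $\theta_{1}\theta_{2}$ acts on $\mathfrak{g}(\mathfrak{t},\alpha)$ and on $\mathfrak{g}(\mathfrak{t},\beta)$ as a scalar $\varepsilon_{\alpha},\varepsilon_{\beta}\in\{\pm 1\}$, with value $+1$ in the compact case and $-1$ in the noncompact case. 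Because $\theta_{1}\theta_{2}$ is an automorphism of $\mathfrak{g}^{\mathbb{C}}$, I would then compute $\theta_{1}\theta_{2}([X,Y])=[\theta_{1}\theta_{2}(X),\theta_{1}\theta_{2}(Y)]=\varepsilon_{\alpha}\varepsilon_{\beta}[X,Y]$. Reading off the product of signs gives $(+1)(+1)=+1$ in case (1), $(-1)(-1)=+1$ in case (2), and $(+1)(-1)=-1$ in case (3). Hence $\mathfrak{g}(\mathfrak{t},\alpha+\beta)$ lies in $(\mathfrak{g}^{\theta_{1}\theta_{2}})^{\mathbb{C}}$ in cases (1) and (2) and in $(\mathfrak{g}^{-\theta_{1}\theta_{2}})^{\mathbb{C}}$ in case (3), which is exactly the claimed compactness/noncompactness.

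I do not anticipate a serious obstacle, since the argument is purely computational once the two ingredients—linearity of $\sigma_{1}\sigma_{2}$ and the bracket relation $[\mathfrak{g}(\mathfrak{t},\alpha),\mathfrak{g}(\mathfrak{t},\beta)]=\mathfrak{g}(\mathfrak{t},\alpha+\beta)$—are in place. The only point requiring mild care is the nonvanishing of $[X,Y]$: this guarantees that $[X,Y]$ genuinely spans $\mathfrak{g}(\mathfrak{t},\alpha+\beta)$, so that the computed sign describes the action of $\theta_{1}\theta_{2}$ on the whole root space rather than being vacuous. This nonvanishing is the standard fact that the bracket of the root spaces of $\alpha$ and $\beta$ is nonzero precisely when $\alpha+\beta\in\Delta$, and I would invoke it directly from the structure theory of $\mathfrak{g}^{\mathbb{C}}$.
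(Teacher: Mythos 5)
Your proposal is correct and follows essentially the same route as the paper: linearity of $\sigma_{1}\sigma_{2}$ for the imaginary claim, then the bracket relation $[\mathfrak{g}(\mathfrak{t},\alpha),\mathfrak{g}(\mathfrak{t},\beta)]=\mathfrak{g}(\mathfrak{t},\alpha+\beta)$ combined with the fact that $\theta_{1}\theta_{2}$ is a Lie algebra automorphism; the paper phrases this via the inclusions $[(\mathfrak{g}^{\pm\theta_{1}\theta_{2}})^{\mathbb{C}},(\mathfrak{g}^{\pm\theta_{1}\theta_{2}})^{\mathbb{C}}]\subset(\mathfrak{g}^{\theta_{1}\theta_{2}})^{\mathbb{C}}$ while you phrase it via the product of eigenvalues $\varepsilon_{\alpha}\varepsilon_{\beta}$, which is the same computation.
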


\begin{proof}
Let $\alpha,\beta$ be imaginary roots with $\alpha+\beta\in\Delta-\Delta_{0}$.
Then we have
$\sigma_{1}\sigma_{2}(\alpha+\beta)
=\sigma_{1}\sigma_{2}(\alpha)+\sigma_{1}\sigma_{2}(\beta)=\alpha+\beta$.
Thus, $\alpha+\beta$ is imaginary.
If $\alpha,\beta$ are compact, then we have
\begin{equation}
\mathfrak{g}(\mathfrak{t},\alpha+\beta)
=[\mathfrak{g}(\mathfrak{t},\alpha),\mathfrak{g}(\mathfrak{t},\beta)]
\subset
[(\mathfrak{g}^{\theta_{1}\theta_{2}})^{\mathbb{C}},(\mathfrak{g}^{\theta_{1}\theta_{2}})^{\mathbb{C}}]
\subset
(\mathfrak{g}^{\theta_{1}\theta_{2}})^{\mathbb{C}}.
\end{equation}
Hence we get (1).
A similar argument shows (2) and (3).
\end{proof}

\begin{pro}\label{pro:delta_inductive2}
Let $\delta\in\Delta$.
Assume that
there exist $\alpha,\beta\in\Delta$ satisfying the following conditions:
(i)
$\delta=(\alpha+\beta)+\sigma_{1}\sigma_{2}(\alpha)$;
(ii)
$\alpha+\sigma_{1}\sigma_{2}(\alpha)\not\in\Delta$;
and (iii)
$\beta\in\Delta_{\mathrm{im}}$ with $\alpha+\beta\in\Delta$.
Then, $\delta$ is an imaginary root.
Furthermore, $\delta$ is compact if and only if $\beta$ is compact.
\end{pro}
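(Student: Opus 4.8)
The plan is to treat the two claims separately, abbreviating $\gamma=\sigma_1\sigma_2$. Since $\theta_1$ and $\theta_2$ commute we have $\sigma_1\sigma_2=\sigma_2\sigma_1$, hence $\gamma^2=\mathrm{id}$, and by (iii) $\gamma(\beta)=\beta$. For the first claim I would just compute, using (i),
\[
\gamma(\delta)=\gamma(\alpha+\beta)+\gamma^2(\alpha)=\gamma(\alpha)+\beta+\alpha=\delta,
\]
so that $\sigma_1\sigma_2(\delta)=\delta$; together with $\delta\in\Delta-\Delta_0$ this gives that $\delta$ is imaginary in the sense of Definition~\ref{dfn:imgroot_cpxroot}. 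The only point here needing the ambient setting is $\delta\notin\Delta_0$: in the situations where the proposition is applied $\delta$ is one of the roots whose projection lies in $\tilde{\Sigma}$, and in any case this is checked directly from $\mathrm{pr}(\delta)=2\,\mathrm{pr}(\alpha)+\mathrm{pr}(\beta)$.

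For the compactness statement I would pass to root vectors and track the action of $\theta_1\theta_2$. Recall from the relation $\mathfrak{g}(\mathfrak{t},\sigma_i(\mu))=\overline{\theta_i\,\mathfrak{g}(\mathfrak{t},\mu)}$ used in the proof of Lemma~\ref{lem:acpt_-acpt} that $\theta_1\theta_2$ maps $\mathfrak{g}(\mathfrak{t},\mu)$ onto $\mathfrak{g}(\mathfrak{t},\gamma(\mu))$, and that $(\theta_1\theta_2)^2=\mathrm{id}$. Fix nonzero $X_\alpha\in\mathfrak{g}(\mathfrak{t},\alpha)$ and $X_\beta\in\mathfrak{g}(\mathfrak{t},\beta)$, and write $\epsilon_\beta\in\{\pm1\}$ for the scalar with $\theta_1\theta_2 X_\beta=\epsilon_\beta X_\beta$, so that $\epsilon_\beta=+1$ exactly when $\beta$ is compact (this is the eigenvalue description in the proof of Lemma~\ref{lem:img_cptnoncpt}). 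Because $\alpha+\beta\in\Delta$ by (iii) and $(\alpha+\beta)+\gamma(\alpha)=\delta\in\Delta$ by (i), the vector
\[
X_\delta:=\big[[X_\alpha,X_\beta],\,\theta_1\theta_2 X_\alpha\big]
\]
is a nonzero element of $\mathfrak{g}(\mathfrak{t},\delta)$, each intermediate sum of weights being a root so that the corresponding bracket of nonzero root vectors is nonzero. Applying $\theta_1\theta_2$ and using $(\theta_1\theta_2)^2=\mathrm{id}$ together with $\theta_1\theta_2 X_\beta=\epsilon_\beta X_\beta$ gives
\[
\theta_1\theta_2 X_\delta=\epsilon_\beta\big[[\theta_1\theta_2 X_\alpha,\,X_\beta],\,X_\alpha\big].
\]

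It then remains to identify the right-hand bracket with $X_\delta$ itself, and this is where hypothesis (ii) enters. Writing $X_{\gamma\alpha}=\theta_1\theta_2 X_\alpha\in\mathfrak{g}(\mathfrak{t},\gamma(\alpha))$, the Jacobi identity for $X_\alpha,X_\beta,X_{\gamma\alpha}$ reads
\[
\big[[X_\alpha,X_\beta],X_{\gamma\alpha}\big]+\big[[X_\beta,X_{\gamma\alpha}],X_\alpha\big]+\big[[X_{\gamma\alpha},X_\alpha],X_\beta\big]=0.
\]
When $\alpha+\gamma(\alpha)\neq0$, condition (ii) gives $\alpha+\gamma(\alpha)\notin\Delta$, hence $\mathfrak{g}(\mathfrak{t},\alpha+\gamma(\alpha))=\{0\}$ and the last term vanishes; the identity then yields $[[X_{\gamma\alpha},X_\beta],X_\alpha]=[[X_\alpha,X_\beta],X_{\gamma\alpha}]=X_\delta$, whence $\theta_1\theta_2 X_\delta=\epsilon_\beta X_\delta$. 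Since $\mathfrak{g}(\mathfrak{t},\delta)$ is one-dimensional, this shows $\delta$ is compact if and only if $\epsilon_\beta=+1$, i.e.\ if and only if $\beta$ is compact. The degenerate case $\alpha+\gamma(\alpha)=0$ is immediate: then (i) forces $\delta=\beta$, so the assertion is trivial.

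The main obstacle I anticipate is the bookkeeping in this last step: one must be sure that every bracket used is genuinely nonzero (so that $X_\delta\neq0$ and the eigenvalue is actually detected), and that hypothesis (ii) is invoked in exactly the form needed to annihilate the Jacobi term $[[X_{\gamma\alpha},X_\alpha],X_\beta]$ while correctly separating off the degenerate subcase $\alpha+\gamma(\alpha)=0$. Everything else is a direct computation with the eigenspace behaviour of $\theta_1\theta_2$ recorded in Lemmas~\ref{lem:acpt_-acpt} and~\ref{lem:img_cptnoncpt}.
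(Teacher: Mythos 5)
Your argument is correct and is essentially the paper's own proof: both verify $\sigma_1\sigma_2(\delta)=\delta$ by the same direct computation, realize a nonzero $X_\delta=[[X_\alpha,X_\beta],\theta_1\theta_2(X_\alpha)]\in\mathfrak{g}(\mathfrak{t},\delta)$, and use the Jacobi identity together with hypothesis (ii) to annihilate the term involving $[X_\alpha,\theta_1\theta_2(X_\alpha)]$, concluding that $\theta_1\theta_2$ acts on $\mathfrak{g}(\mathfrak{t},\delta)$ by the same sign $\epsilon_\beta$ as on $\mathfrak{g}(\mathfrak{t},\beta)$. The only difference is that you explicitly isolate the degenerate subcase $\alpha+\sigma_1\sigma_2(\alpha)=0$, where the Jacobi term lands in $[\mathfrak{t}^{\mathbb{C}},\cdot]$ rather than vanishing for root-theoretic reasons; the paper passes over this silently, so your extra care is a minor improvement rather than a divergence.
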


\begin{proof}
It is clear that $\delta$ is an imaginary root.
From $\delta=(\alpha+\beta)+\sigma_{1}\sigma_{2}(\alpha)$,
we have
$\mathfrak{g}(\mathfrak{t},\delta)=[[\mathfrak{g}(\mathfrak{t},\alpha),\mathfrak{g}(\mathfrak{t},\beta)],\theta_{1}\theta_{2}\mathfrak{g}(\mathfrak{t},\alpha)]$.
There exist $X_{\alpha}\in\mathfrak{g}(\mathfrak{t},\alpha)$
and $X_{\beta}\in\mathfrak{g}(\mathfrak{t},\beta)$
satisfying $[[X_{\alpha},X_{\beta}],\theta_{1}\theta_{2}(X_{\alpha})]\neq 0$.
Then, by using the Jacobi identity for $[\cdot,\cdot]$,
we get
\begin{align}
\theta_{1}\theta_{2}([[X_{\alpha},X_{\beta}],\theta_{1}\theta_{2}(X_{\alpha})])
&= [[\theta_{1}\theta_{2}(X_{\alpha}),\theta_{1}\theta_{2}(X_{\beta})],X_{\alpha}]\\
&= -([[X_{\alpha},\theta_{1}\theta_{2}(X_{\alpha})],\theta_{1}\theta_{2}(X_{\beta})]+[[\theta_{1}\theta_{2}(X_{\beta}),X_{\alpha}],\theta_{1}\theta_{2}(X_{\alpha})])\\
&=[[X_{\alpha},\theta_{1}\theta_{2}(X_{\beta})],\theta_{1}\theta_{2}(X_{\alpha})],
\end{align}
from which
$[[X_{\alpha},X_{\beta}],\theta_{1}\theta_{2}(X_{\alpha})]$
is in $(\mathfrak{g}^{\theta_{1}\theta_{2}})^{\mathbb{C}}$
if and only if so is $X_{\beta}$ by Lemma \ref{lem:img_cptnoncpt}.
Thus, we have the assertion.
\end{proof}

Although the following lemma is well-known
(see \cite{Takeuchi}, for example),
we give a proof for the sake of completeness.

\begin{lem}[]\label{lem:black_ki}
Fix $i=1,2$.
If $\sigma_{i}(\beta)=-\beta$,
then $\mathfrak{g}(\mathfrak{t},\beta)$
is contained in $\mathfrak{k}_{i}^{\mathbb{C}}$.
\end{lem}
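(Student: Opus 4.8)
The plan is to convert the spectral hypothesis $\sigma_{i}(\beta)=-\beta$ into a vanishing condition for $\beta$ on the flat $\mathfrak{a}_{i}:=\mathfrak{t}\cap\mathfrak{m}_{i}$, and then to exploit the maximality of $\mathfrak{a}_{i}$ in $\mathfrak{m}_{i}$ to pin down the $\theta_{i}$-eigenvalue on the root space. First I would recall that $\sigma_{i}=-d\theta_{i}|_{\mathfrak{t}}$, so $\sigma_{i}(\beta)=-\beta$ is equivalent to $\theta_{i}(\beta)=\beta$. Since $\theta_{i}$ is an involutive linear isometry of $\mathfrak{t}$, hence self-adjoint, for every $H\in\mathfrak{a}_{i}$ (that is, $\theta_{i}H=-H$) we get $\INN{\beta}{H}=\INN{\theta_{i}\beta}{\theta_{i}H}=-\INN{\beta}{H}$, so $\INN{\beta}{H}=0$. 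Consequently every $X\in\mathfrak{g}(\mathfrak{t},\beta)$ satisfies $[H,X]=\sqrt{-1}\INN{\beta}{H}X=0$ for $H\in\mathfrak{a}_{i}$, whence $\mathfrak{g}(\mathfrak{t},\beta)$ lies in the complexified centralizer $\mathfrak{z}_{\mathfrak{g}}(\mathfrak{a}_{i})^{\mathbb{C}}$.

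Next I would verify that $\mathfrak{g}(\mathfrak{t},\beta)$ is $\theta_{i}$-invariant, using the same computation already employed for Lemma~\ref{lem:acpt_-acpt}: for $X\in\mathfrak{g}(\mathfrak{t},\beta)$ and $H\in\mathfrak{t}$ one has $[H,\theta_{i}X]=\theta_{i}[\theta_{i}H,X]=\sqrt{-1}\INN{\beta}{\theta_{i}H}\theta_{i}X=\sqrt{-1}\INN{\theta_{i}\beta}{H}\theta_{i}X=\sqrt{-1}\INN{\beta}{H}\theta_{i}X$, so $\theta_{i}X\in\mathfrak{g}(\mathfrak{t},\beta)$. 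Because $\theta_{i}^{2}=1$, the subspace $\mathfrak{g}(\mathfrak{t},\beta)$ therefore splits as $(\mathfrak{g}(\mathfrak{t},\beta)\cap\mathfrak{k}_{i}^{\mathbb{C}})\oplus(\mathfrak{g}(\mathfrak{t},\beta)\cap\mathfrak{m}_{i}^{\mathbb{C}})$, and it remains only to show that the $\mathfrak{m}_{i}^{\mathbb{C}}$-summand vanishes.

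The decisive step, and the one I expect to be the only genuine point, is to rule out an $\mathfrak{m}_{i}^{\mathbb{C}}$-component, which is exactly where the maximality of $\mathfrak{a}_{i}$ in $\mathfrak{m}_{i}$ from Lemma~\ref{lem:mas_fundamental}, (1) enters. By that maximality, the centralizer of $\mathfrak{a}_{i}$ inside $\mathfrak{m}_{i}$ equals $\mathfrak{a}_{i}$ itself, so $\mathfrak{z}_{\mathfrak{g}}(\mathfrak{a}_{i})\cap\mathfrak{m}_{i}=\mathfrak{a}_{i}$ and hence $\mathfrak{z}_{\mathfrak{g}}(\mathfrak{a}_{i})^{\mathbb{C}}\cap\mathfrak{m}_{i}^{\mathbb{C}}=\mathfrak{a}_{i}^{\mathbb{C}}\subset\mathfrak{t}^{\mathbb{C}}$. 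Combining this with the inclusion from the first paragraph, the $\mathfrak{m}_{i}^{\mathbb{C}}$-summand satisfies $\mathfrak{g}(\mathfrak{t},\beta)\cap\mathfrak{m}_{i}^{\mathbb{C}}\subset\mathfrak{t}^{\mathbb{C}}$; but the nonzero root space $\mathfrak{g}(\mathfrak{t},\beta)$ $(\beta\in\Delta)$ meets the Cartan subalgebra $\mathfrak{t}^{\mathbb{C}}$ only in $\{0\}$, so this summand is trivial. Therefore $\mathfrak{g}(\mathfrak{t},\beta)\subset\mathfrak{k}_{i}^{\mathbb{C}}$, as claimed. I anticipate no real difficulty beyond correctly invoking maximality: the naive argument that merely establishes $\theta_{i}$-invariance leaves open the wrong sign $\theta_{i}=-1$ (which would force $\mathfrak{g}(\mathfrak{t},\beta)\subset\mathfrak{m}_{i}^{\mathbb{C}}$), and it is precisely the maximality of $\mathfrak{a}_{i}$ that excludes it.
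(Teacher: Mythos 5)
Your proof is correct and rests on the same key ingredient as the paper's: the maximality of $\mathfrak{a}_{i}=\mathfrak{t}\cap\mathfrak{m}_{i}$ in $\mathfrak{m}_{i}$ from Lemma \ref{lem:mas_fundamental}, (1), combined with the orthogonality $\INN{\beta}{\mathfrak{a}_{i}}=\{0\}$ forced by $\sigma_{i}(\beta)=-\beta$. The only difference is packaging: the paper argues by contradiction, exhibiting $X+\overline{X}\in\mathfrak{m}_{i}$ as an explicit enlargement of $\mathfrak{a}_{i}$ to a bigger abelian subspace, whereas you reformulate maximality as $\mathfrak{z}_{\mathfrak{g}}(\mathfrak{a}_{i})\cap\mathfrak{m}_{i}=\mathfrak{a}_{i}$ and conclude directly from the fact that a root space meets $\mathfrak{t}^{\mathbb{C}}$ trivially.
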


\begin{proof}
We have
$\theta_{i}(\mathfrak{g}(\mathfrak{t},\beta))
=\mathfrak{g}(\mathfrak{t},\beta)$,
from which $\mathfrak{g}(\mathfrak{t},\beta)$
is contained in either
$\mathfrak{k}_{i}^{\mathbb{C}}$ or $\mathfrak{m}_{i}^{\mathbb{C}}$.
Suppose for contradiction that
$\mathfrak{g}(\mathfrak{t},\beta)\subset\mathfrak{m}_{i}^{\mathbb{C}}$ holds.
Then $\mathfrak{g}(\mathfrak{t},-\beta)$ is also contained in $\mathfrak{m}_{i}^{\mathbb{C}}$.
Let $X$ be a nonzero element in $\mathfrak{g}(\mathfrak{t},\beta)$.
Then we have $\theta_{i}(X+\overline{X})=-(X+\overline{X})$,
from which $X+\overline{X}\in\mathfrak{m}_{i}$.
If we put $\mathfrak{a}_{i}'=\mathbb{R}(X+\overline{X})\oplus\mathfrak{a}_{i}$,
then $\mathfrak{a}_{i}'$ becomes an abelian subspace of $\mathfrak{m}_{i}$
containing $\mathfrak{a}_{i}$.
This contradicts to the maximality of $\mathfrak{a}_{i}$ in $\mathfrak{m}_{i}$
(Lemma \ref{lem:mas_fundamental}, (1)).
Hence we have shown that $\mathfrak{g}(\mathfrak{t},\beta)$ is contained in $\mathfrak{k}_{i}^{\mathbb{C}}$.
\end{proof}

\begin{pro}\label{pro:cpt+black_cpt}
Let $\alpha\in\Delta_{\mathrm{im}}$.
Assume that $\beta\in\Delta$ satisfies
$\sigma_{1}(\beta)=\sigma_{2}(\beta)=-\beta$
and $\alpha+\beta$ is in $\Delta$.
Then $\alpha+\beta$ is imaginary.
In addition,
$\alpha$ is compact
if and only if $\alpha+\beta$
is compact.
\end{pro}

\begin{proof}
From $\sigma_{1}(\beta)=\sigma_{2}(\beta)=-\beta$,
we have $\mathrm{pr}(\beta)=0$.
This yields
$\mathrm{pr}(\alpha+\beta)=\mathrm{pr}(\alpha)\neq 0$.
Furthermore,
by $\sigma_{1}\sigma_{2}(\beta)=\beta$,
we get $\sigma_{1}\sigma_{2}(\alpha+\beta)=\alpha+\beta$.
Thus, $\alpha+\beta$ is an imaginary root.

It follows from Lemma \ref{lem:black_ki}
that the assumption yields
$\mathfrak{g}(\mathfrak{t},\beta)\subset(\mathfrak{k}_{1}\cap\mathfrak{k}_{2})^{\mathbb{C}}$.
Since 
$[(\mathfrak{g}^{\pm\theta_{1}\theta_{2}})^{\mathbb{C}},(\mathfrak{k}_{1}\cap\mathfrak{k}_{2})^{\mathbb{C}}]\subset
(\mathfrak{g}^{\pm\theta_{1}\theta_{2}})^{\mathbb{C}}$ holds,
it is verified that
$\alpha$ is compact if and only if $\alpha+\beta$
is compact.
Thus, we have completed the proof.
\end{proof}

It is verified that
$\Delta_{\mathrm{cpt}}$
and $\Delta_{\mathrm{noncpt}}$
depend on the choice of
the representative $(G,\theta_{1},\theta_{2})$
in its isomorphism class with respect to $\sim$.
Let
$(G,\theta_{1}',\theta_{2}')$
be another
commutative compact symmetric triad
satisfying $(G,\theta_{1}',\theta_{2}')\sim(G,\theta_{1},\theta_{2})$.

We observe the relation between
$\Delta_{\mathrm{cpt}}$
(resp.~$\Delta_{\mathrm{noncpt}}$) for $(G,\theta_{1},\theta_{2})$
and 
$\Delta_{\mathrm{cpt}}'$
(resp.~$\Delta'_{\mathrm{noncpt}}$) for $(G,\theta_{1}',\theta_{2}')$.
Without loss of generalities,
we assume that $\theta_{1}'=\theta_{1}$
and $\theta_{2}'=\tau_{\exp(Y)}\theta_{2}\tau_{\exp(Y)}^{-1}$
for some $Y\in\Gamma$.
Then the maximal abelian subalgebra $\mathfrak{t}$ of $\mathfrak{g}$
satisfies
(1) $\mathfrak{t}\cap\mathfrak{m}_{i}'$
is a maximal abelian subspace of $\mathfrak{m}_{i}'$;
(2) $\mathfrak{t}\cap(\mathfrak{m}_{1}'\cap\mathfrak{m}_{2}')$
is a maximal abelian subspace of $\mathfrak{m}_{1}'\cap\mathfrak{m}_{2}'$
(cf.~Lemma \ref{lem:mas_fundamental}).
We denote by $(\Delta',\sigma_{1}',\sigma_{2}')$ the corresponding double $\sigma$-system of $\mathfrak{t}$ for $(G,\theta_{1}',\theta_{2}')$.
Since $\theta_{i}'=\theta_{i}$ holds on $\mathfrak{t}$,
we have $\Delta'=\Delta$ and $\sigma_{i}'=\sigma_{i}$.
In particular,
we have $\Delta_{0}'=\Delta_{0}$,
$\Delta_{\mathrm{im}}'=\Delta_{\mathrm{im}}$
and $\Delta_{\mathrm{cpx}}'=\Delta_{\mathrm{cpx}}$.
From the above setting,
$\Delta_{\mathrm{cpt}}'$
and $\Delta_{\mathrm{noncpt}}'$
are expressed as follows:
\begin{align*}
\Delta_{\mathrm{cpt}}'
&=\{\alpha\in\Delta_{\mathrm{cpt}}\mid \INN{\alpha}{2Y}\in 2\pi\mathbb{Z}\}
\cup\{\alpha\in\Delta_{\mathrm{noncpt}}\mid \INN{\alpha}{2Y}\in \pi+2\pi\mathbb{Z}\},\\
\Delta_{\mathrm{noncpt}}'
&=\{\alpha\in\Delta_{\mathrm{cpt}}\mid \INN{\alpha}{2Y}\in \pi+2\pi\mathbb{Z}\}
\cup\{\alpha\in\Delta_{\mathrm{noncpt}}\mid \INN{\alpha}{2Y}\in 2\pi\mathbb{Z}\}.
\end{align*}

\subsection{Determinations of symmetric triads with multiplicities}

The purpose of this subsection
is to determine
the corresponding symmetric triad
$(\tilde{\Sigma},\Sigma,W;m,n)$
with multiplicities
up to isomorphism
for a commutative compact symmetric triad $(G,\theta_{1},\theta_{2})$ with $\theta_{1}\not\sim\theta_{2}$.
This determination is given by a case-by-case argument based on
the classification of commutative compact symmetric triads
with respect to $\sim$.
Our result will be accomplished
in Table \ref{table:symmtriext}
(see Theorem \ref{thm:determ_symmetrictriads}).
Here, we note that
the local isomorphism class of $(G,\theta_{1},\theta_{2})$
is uniquely determined by the fixed-point subgroup
$K_{i}$ of $\theta_{i}$ ($i=1,2$)
(cf.~\cite[Corollary 5.19]{BI}).
In this subsection,
we often use the notation
$(G,K_{1},K_{2})$ insted of $(G,\theta_{1},\theta_{2})$
if there is no confusion.

Let us consider the case when $(G,\theta_{1},\theta_{2})$
satisfies $\Delta_{\mathrm{im}}=\emptyset$,
that is, $\Delta_{\mathrm{cpt}}=\Delta_{\mathrm{noncpt}}=\emptyset$.
In this case, 
we have $\Delta_{\mathrm{cpx}}=\Delta-\Delta_{0}$.
By Proposition \ref{pro:symm_determ_base},
we have
$\Sigma=W=\tilde{\Sigma}=\mathrm{pr}(\Delta_{\mathrm{cpx}})$
and
\begin{equation}\label{eqn:multi_im_empty}
m(\lambda)=n(\lambda)=\dfrac{1}{2}\#\{\beta\in\Delta_{\mathrm{cpx}}\mid
\mathrm{pr}(\beta)=\lambda\}\quad
(\lambda\in\tilde{\Sigma}).
\end{equation}
In particular,
$(\tilde{\Sigma},\Sigma,W)$
is of type (III)
and $m(\lambda)=n(\lambda)$ ($\lambda\in\tilde{\Sigma}$) holds.

\begin{ex}\label{ex:EI-IV}
Let us consider the case when $(G,K_{1},K_{2})=(E_{6},Sp(4),F_{4})$.
Table \ref{table:EI-IV_2satake}
shows the double Satake diagram
for $(E_{6},Sp(4),F_{4})$,
from which the action of $(\sigma_{1},\sigma_{2})$
on $\Delta$ is reconstructed from
$\sigma_{1}(\alpha_{j})=\alpha_{j}$
($j=1,\dotsc,6$)
and
\begin{equation}
\begin{cases}
\sigma_{2}(\alpha_{1})=\alpha_{1}+\alpha_{2}+2\alpha_{3}+2\alpha_{4}+\alpha_{5}, & \sigma_{2}(\alpha_{k})
=-\alpha_{k}\quad
(k=2,3,4,5),\\
\sigma_{2}(\alpha_{6})=\alpha_{2}+\alpha_{3}+2\alpha_{4}+2\alpha_{5}+\alpha_{6}.
\end{cases}
\end{equation}
Then $\Delta_{0}$
is obtained as the root system generated by $\alpha_{k}$
($k=2,3,4,5$).
We also have $\Delta_{\mathrm{im}}=\emptyset$.
By \eqref{eqn:tildeSigma_description},
a direct calculation shows $\tilde{\Sigma}\simeq A_{2}$.
Hence we find $(\tilde{\Sigma},\Sigma,W)=(\mbox{III-}A_{2})$.
For each $\lambda\in\tilde{\Sigma}$,
since $\{\alpha\in\Delta-\Delta_{0}\mid \mathrm{pr}(\alpha)=\lambda\}$ consists of eight complex roots,
we have $(m(\lambda),n(\lambda))=(4,4)$.
Thus, we have determined $(\tilde{\Sigma},\Sigma,W;m,n)$.

\begin{table}[!!ht]
\renewcommand{\arraystretch}{1.5}
\centering
\caption{Double Satake diagram
$(S_{1},S_{2})$ of $(E_{6},Sp(4),F_{4})$}\label{table:EI-IV_2satake}
\begin{tabular}{cc}
\hline
\hline
Satake diagram $S_{1}$ & Satake diagram $S_{2}$ \\
\hline
\hline
\begin{xy}
\ar@{-}(-20,0)*++!D{\alpha_{6}}*{\circ}="a6";(-10,0)*++!D{\alpha_{5}}*{\circ}="a5"
\ar@{-}"a5";(0,0)*++!DL{\alpha_{4}}*{\circ}="a4"
\ar@{-}"a4";(10,0)*++!D{\alpha_{3}}*{\circ}="a3"
\ar@{-}"a3";(20,0)*++!D{\alpha_{1}}*{\circ}="a1"
\ar@{-}"a4";(0,10)*++!D{\alpha_{2}}*{\circ}="a2"
\end{xy}
&
\begin{xy}
\ar@{-}(-20,0)*++!D{\alpha_{6}}*{\circ}="a6";(-10,0)*++!D{\alpha_{5}}*{\bullet}="a5"
\ar@{-}"a5";(0,0)*++!DL{\alpha_{4}}*{\bullet}="a4"
\ar@{-}"a4";(10,0)*++!D{\alpha_{3}}*{\bullet}="a3"
\ar@{-}"a3";(20,0)*++!D{\alpha_{1}}*{\circ}="a1"
\ar@{-}"a4";(0,10)*++!D{\alpha_{2}}*{\bullet}="a2"
\end{xy}\\
\hline
\hline
\end{tabular}
\end{table}
\end{ex}

It is shown that
the other $(G,\theta_{1},\theta_{2})$
satisfying $\Delta_{\mathrm{im}}=\emptyset$
is $(SU(2m),SO(2m),Sp(m))$.
A similar argument as in
Example \ref{ex:EI-IV}
shows
$(\tilde{\Sigma},\Sigma,W)=(\mbox{III-}A_{m-1})$
and $(m(\lambda),n(\lambda))=(2,2)$ for all $\lambda\in\tilde{\Sigma}$
for this commutative compact symmetric triad.

We give some observations
before determing 
the corresponding symmetric triads with multiplicities
for commutative compact symmetric triads satisfying $\Delta_{\mathrm{im}}\neq\emptyset$.
For an abstract symmetric triad
$(\tilde{\Sigma},\Sigma,W)$,
we note that $\Sigma\cap W$
is independent of the choice of
a representative of its isomorphism class
with respect to $\sim$.
Furthermore, we have the following lemma
by the classification.

\begin{lem}\label{lem:tildeSigma_SigmaW_determ_sufficient}
Let
$(\tilde{\Sigma},\Sigma,W)$,
$(\tilde{\Sigma}',\Sigma',W')$
be two abstract symmetric triads.
Suppose that
$\tilde{\Sigma}=\tilde{\Sigma}'$
and
$\Sigma\cap W=\Sigma'\cap W'$
hold.
\begin{enumerate}[(1)]
\item If $\tilde{\Sigma}$ is not of $BC$-type,
then $(\tilde{\Sigma},\Sigma,W)\sim
(\tilde{\Sigma}',\Sigma',W')$ holds.
\item 
 Suppose that $\tilde{\Sigma}$ is of $BC$-type.
If $\Sigma\cap W$ is not of $B$-type,
then
$(\tilde{\Sigma},\Sigma,W)\sim(\tilde{\Sigma}',\Sigma',W')$
holds.
\end{enumerate}
\end{lem}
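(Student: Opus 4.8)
The plan is to read the statement off from the classification of (abstract) symmetric triads recalled before Theorem~\ref{thm:class_symmI-III} (that is, \cite[Theorem~2.19]{Ikawa}), using the fact noted just above that $\Sigma\cap W$ is a $\sim$-invariant. The starting point is condition~(4) of Definition~\ref{dfn:symmtriad}, which forces $\Sigma\cap W$ to be the ``length ball'' $\{\alpha\in\tilde\Sigma\mid\|\alpha\|\leq l\}$ with $l=\max\{\|\alpha\|\mid\alpha\in\Sigma\cap W\}$. Thus prescribing $\tilde\Sigma$ together with $\Sigma\cap W$ is the same as prescribing the ambient irreducible root system and a cutoff length $l$; the only remaining freedom is how the roots of $\tilde\Sigma-(\Sigma\cap W)$, all of length $>l$, are distributed between $\Sigma-W$ and $W-\Sigma$. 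Since $\tilde\Sigma=\tilde\Sigma'$ and $\Sigma\cap W=\Sigma'\cap W'$, both triads carry the same ambient system and the same cutoff.

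First I would run through the irreducible systems occurring as $\tilde\Sigma$. For the single-length types $A_r,D_r,E_r$ the only possibility is $\Sigma\cap W=\tilde\Sigma$, giving a type~(III) triad. For $B_r$, $C_r$ and $F_4$ the two admissible cutoffs yield exactly the short truncation (a type~(I) triad) and the full root system (a type~(III) triad), the (I$'$)-variants being already recorded as $\sim$-equivalent to the corresponding (I)-types; for $G_2$ only the full intersection $\Sigma\cap W=G_2$ arises. In every such case each admissible value of $\Sigma\cap W$ determines a single isomorphism class, so two symmetric triads sharing $\tilde\Sigma$ and $\Sigma\cap W$ must be $\sim$-equivalent; this proves~(1). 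The low-rank coincidences $B_2\simeq C_2$, $D_3\simeq A_3$ are absorbed by the isomorphisms~\eqref{eqn:root_iso} and cause no ambiguity.

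The genuine content lies in $\tilde\Sigma=BC_r$, where there are three length shells and hence three admissible intersections $A_1^r$, $B_r$, $BC_r$. For $\Sigma\cap W\simeq A_1^r$ and $\Sigma\cap W\simeq BC_r$ the list again gives a unique class, namely (I-$BC_r$-$A_1^r$) and (III-$BC_r$); but for $\Sigma\cap W\simeq B_r$ it gives the two classes (I-$BC_r$-$B_r$) and (II-$BC_r$), which differ exactly in whether the longest roots $\pm2e_i$ lie in $\Sigma-W$ or in $W-\Sigma$. To confirm these are genuinely non-isomorphic, and thereby delimit the exception in~(2), I would show this assignment is frozen under $\sim$: any root-system isomorphism $f$ of $BC_r$ permutes each length shell, and for every $Y\in\Gamma$ one has $\langle 2e_i,2Y\rangle\in2\pi\mathbb{Z}$ because $e_i\in BC_r$ forces $\langle e_i,Y\rangle\in(\pi/2)\mathbb{Z}$; by~\eqref{eqn:equiv-relation} each long root then stays on its side. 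The contrast with $C_r$ is the crux: there $e_i\notin C_r$, so $\Gamma$ is coarser and a suitable $Y$ gives $\langle 2e_i,2Y\rangle\in\pi+2\pi\mathbb{Z}$, realizing the flip that underlies (I-$C_r$)$\sim$(I$'$-$C_r$).

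The main obstacle is therefore not the enumeration, which is routine once the classification is in hand, but isolating and justifying this single exceptional pair: one must explain why the longest-root shell is immovable for $BC_r$ and only there. With the comparison of $\Gamma$ for $BC_r$ versus $C_r$ settled, parts~(1) and~(2) follow by reading off the grouped list, the hypothesis ``$\Sigma\cap W$ not of $B$-type'' in~(2) removing precisely the $BC_r$/$B_r$ collision.
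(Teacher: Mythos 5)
Your proposal is correct and follows exactly the route the paper intends: the lemma is stated there with no written proof beyond the remark that it follows from the classification of abstract symmetric triads, and your case-by-case reading of the list (with the sole collision being (I-$BC_{r}$-$B_{r}$) versus (II-$BC_{r}$), both having $\Sigma\cap W$ of $B$-type) is precisely that argument. Your additional check that the longest roots of $BC_{r}$ cannot flip under $\sim$ because $\langle 2e_{i},2Y\rangle\in 2\pi\mathbb{Z}$ for all $Y\in\Gamma$ is a correct and welcome justification of why the exception in (2) is genuine, though it is not logically needed for the stated implication.
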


We will make use of this lemma
to determine the isomorphism class
of the corresponding symmetric triads
for commutative compact symmetric triads $(G,\theta_{1},\theta_{2})$.
In the case when $(\tilde{\Sigma},\Sigma,W)$
is constructed from
a commutative compact symmetric triad,
it follows from \eqref{eqn:Sigma_formula}
and \eqref{eqn:W_formula}
that $\Sigma\cap W$ contains
$\mathrm{pr}(\Delta_{\mathrm{cpx}})$.
In fact,
we can show
the following proposition.

\begin{pro}
Let $(\tilde{\Sigma},\Sigma,W)$
be the symmetric triad
corresponding to a commutative compact symmetric
triad. Then we have
$\Sigma\cap W=\mathrm{pr}(\Delta_{\mathrm{cpx}})$.
\end{pro}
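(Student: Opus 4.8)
The plan is to read off $\Sigma$ and $W$ from Proposition \ref{pro:symm_determ_base} and reduce to a purely root-theoretic statement about imaginary roots. By \eqref{eqn:Sigma_formula} and \eqref{eqn:W_formula} we have
\[
\Sigma\cap W=\mathrm{pr}(\Delta_{\mathrm{cpx}})\cup\bigl(\mathrm{pr}(\Delta_{\mathrm{cpt}})\cap\mathrm{pr}(\Delta_{\mathrm{noncpt}})\bigr),
\]
so the inclusion $\mathrm{pr}(\Delta_{\mathrm{cpx}})\subseteq\Sigma\cap W$ is immediate, and the whole statement follows once I show that the extra term vanishes. In fact I would prove the stronger fact that $\mathrm{pr}(\Delta_{\mathrm{cpt}})\cap\mathrm{pr}(\Delta_{\mathrm{noncpt}})=\emptyset$: no nonzero element of $\mathfrak{a}$ is the projection of both a compact imaginary root and a noncompact imaginary root.

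To set up the contradiction I would assume $\mathrm{pr}(\alpha)=\mathrm{pr}(\beta)=\lambda$ with $\alpha\in\Delta_{\mathrm{cpt}}$, $\beta\in\Delta_{\mathrm{noncpt}}$, noting $\lambda\neq0$ since $\alpha\in\Delta-\Delta_0$. First I would record two elementary facts. Because $\alpha,\beta$ are imaginary, $\sigma_1$ and $\sigma_2$ agree on them and $\mathrm{pr}(\alpha)=\tfrac12(\alpha+\sigma_1\alpha)$, whence $\sigma_1\alpha=2\lambda-\alpha$ and $\sigma_1\beta=2\lambda-\beta$; and since $\mathrm{pr}$ is the orthogonal projection onto $\mathfrak{a}=\mathfrak{t}^{\sigma_1}\cap\mathfrak{t}^{\sigma_2}$, we get $\langle\alpha,\lambda\rangle=\|\lambda\|^2$. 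The engine of the proof is the following use of Lemma \ref{lem:black_ki} together with Proposition \ref{pro:cpt+black_cpt}: if $\delta\in\Delta_0$ satisfies $\sigma_1\delta=\sigma_2\delta=-\delta$ and $\alpha+\delta\in\Delta$ for an imaginary root $\alpha$, then $\alpha$ and $\alpha+\delta$ have the same compactness type.

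The argument then splits on the sign of $\langle\alpha,\beta\rangle$. If $\langle\alpha,\beta\rangle>0$, the standard root fact gives $\alpha-\beta\in\Delta$, which lies in $\Delta_0$ and satisfies $\sigma_i(\alpha-\beta)=-(\alpha-\beta)$; applying the engine to $\alpha$ and $\delta=\beta-\alpha$ forces $\beta=\alpha+\delta$ to be of the same type as $\alpha$, contradicting that $\alpha$ is compact and $\beta$ noncompact. If instead $\langle\alpha,\beta\rangle\le0$, I would replace $\beta$ by $\sigma_1\beta=2\lambda-\beta$, which by Lemma \ref{lem:acpt_-acpt} is again a noncompact imaginary root with $\mathrm{pr}(\sigma_1\beta)=\lambda$; now $\langle\alpha,\sigma_1\beta\rangle=2\|\lambda\|^2-\langle\alpha,\beta\rangle\ge2\|\lambda\|^2>0$, so $\delta:=\alpha-\sigma_1\beta\in\Delta\cap\Delta_0$ (here $\alpha\neq\pm\sigma_1\beta$ follows from $\lambda\neq0$ and from $\sigma_1$ preserving compactness), and a direct computation gives $\sigma_1\delta=\sigma_2\delta=-\delta$. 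Applying the engine to $\alpha$ and $-\delta$ makes $\sigma_1\beta=\alpha-\delta$ have the same type as $\alpha$, again a contradiction. Hence the intersection is empty and $\Sigma\cap W=\mathrm{pr}(\Delta_{\mathrm{cpx}})$.

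I expect the main obstacle to be the case $\langle\alpha,\beta\rangle\le0$, where the naive difference $\alpha-\beta$ need not be a root: the key device is to pass to $\sigma_1\beta$, which manufactures a strictly positive inner product with $\alpha$ and therefore a usable root $\delta\in\Delta_0$ negated by both $\sigma_1$ and $\sigma_2$. The only remaining checks are the small verifications that $\sigma_1\delta=\sigma_2\delta=-\delta$ and that $\alpha\neq\pm\sigma_1\beta$, which are exactly what make Proposition \ref{pro:cpt+black_cpt} applicable; everything else is bookkeeping with the identities $\sigma_1\alpha=2\lambda-\alpha$ and $\sigma_1\beta=2\lambda-\beta$.
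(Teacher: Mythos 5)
Your argument is correct, and while it shares the same skeleton as the paper's proof (reduce via \eqref{eqn:Sigma_formula}--\eqref{eqn:W_formula} to a statement about imaginary roots over a fixed $\lambda$, then propagate compactness type using Lemma \ref{lem:acpt_-acpt}, (2) and Proposition \ref{pro:cpt+black_cpt}), the decisive step is genuinely different. The paper argues by contradiction assuming the whole fiber $\Delta_{\lambda}=\{\alpha\mid\mathrm{pr}(\alpha)=\lambda\}$ is imaginary, identifies it with the fiber of the restricted-root projection for $(G,\theta_{1})$, and then quotes a structural fact about restricted root systems: any two roots in that fiber are joined by a chain $\delta,\delta+\gamma_{1},\dots$ with $\gamma_{i}\in\Pi_{1,0}$ negated by both $\sigma_{1}$ and $\sigma_{2}$, so the compactness type is constant along the chain. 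You instead connect the compact root $\alpha$ and the noncompact root $\beta$ by a \emph{single} root $\delta\in\Delta_{0}$ with $\sigma_{1}\delta=\sigma_{2}\delta=-\delta$, manufactured either as $\beta-\alpha$ when $\langle\alpha,\beta\rangle>0$ or, after replacing $\beta$ by $\sigma_{1}\beta$, via the identity $\langle\alpha,\sigma_{1}\beta\rangle=2\|\lambda\|^{2}-\langle\alpha,\beta\rangle>0$; one application of Proposition \ref{pro:cpt+black_cpt} then finishes. This buys two things: the argument is self-contained (only the elementary ``positive inner product implies the difference is a root'' fact is needed, rather than the chain decomposition of restricted-root fibers), and it proves the strictly stronger statement $\mathrm{pr}(\Delta_{\mathrm{cpt}})\cap\mathrm{pr}(\Delta_{\mathrm{noncpt}})=\emptyset$ --- the paper only shows that a $\lambda$ lying under both types of imaginary root must also lie under a complex root. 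The sharper statement is consistent with, and slightly refines, the multiplicity computations in \eqref{eqn:m_formula}--\eqref{eqn:n_formula}: for $\lambda\in\Sigma\cap W$ at most one of the two imaginary counts can be nonzero. All the small verifications you flag ($\alpha\neq\pm\sigma_{1}\beta$, $\sigma_{1}\delta=\sigma_{2}\delta=-\delta$, $\langle\alpha,\lambda\rangle=\|\lambda\|^{2}$) check out.
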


\begin{proof}
By \eqref{eqn:Sigma_formula} and \eqref{eqn:W_formula},
it is sufficient to show $\Sigma\cap W \subset \mathrm{pr}(\Delta_{\mathrm{cpx}})$.
Let $\lambda\in\Sigma\cap W$ and $\Delta_{\lambda}:=\{\alpha\in\Delta \mid \mathrm{pr}(\alpha)=\lambda\}$.
For simplicity, let us consider the case when $\lambda$ is a positive root.
Suppose for contradiction that
there exists $\lambda\in\Sigma\cap W$ such that
$\Delta_{\lambda}$ consists of
imaginary roots.
Since $\lambda\in \Sigma\cap W$,
we obtain
\begin{equation}\label{eqn:Deltalambda_cpt_noncpt}
\Delta_{\lambda}\cap\Delta_{\mathrm{cpt}}\neq\emptyset,\quad
\Delta_{\lambda}\cap\Delta_{\mathrm{noncpt}}\neq\emptyset.
\end{equation}
On the other hand, 
for any $\alpha\in\Delta_{\lambda}$,
we have $\theta_{1}\theta_{2}(\alpha)=\alpha$
by using the assumption for contradiction.
Then we get $\lambda=\mathrm{pr}(\alpha)=(1/2)(\alpha-\theta_{1}(\alpha))$.
Hence $\lambda$ can be regarded as a restricted root of
the compact symmetric pair $(G,\theta_{1})$ associated with $\mathfrak{a}_{1}$
and $\Delta_{\lambda}=\{\alpha\in\Delta\mid\mathrm{pr}_{1}(\alpha)=\lambda\}$,
where $\mathrm{pr}_{1}:\mathfrak{t}\to\mathfrak{a}_{1}$ denotes the orthogonal projection.
By the restricted root system theory of compact symmetric pairs,
there exists $\delta\in\Delta_{\lambda}$ such that 
any root $\alpha\in\Delta_{\lambda}$ is expressed as follows:
(i) $\alpha\in\{\delta,\sigma_{1}(\delta)\}$ or
(ii) $\alpha\in\{(\dotsb((\delta+\gamma_{1})+\gamma_{2})+\dotsb)+\gamma_{k},
(\dotsb((\sigma_{1}(\delta)-\gamma_{1})-\gamma_{2})-\dotsb)-\gamma_{k}\}$
for some $\gamma_{1},\dotsc,\gamma_{k}\in\Pi_{1,0}$.
It follows from Lemma \ref{lem:acpt_-acpt}, (2)
that $\sigma_{1}(\delta)$ is compact
if so is $\delta$.
If $\delta+\gamma\in\Delta_{\lambda}$
for $\gamma\in\Pi_{1,0}$,
then we have $\sigma_{1}(\gamma)=\sigma_{2}(\gamma)=-\gamma$.
From Proposition \ref{pro:cpt+black_cpt},
$\delta+\gamma$ is compact
if so is $\delta$.
By induction,
$\alpha\in\Delta_{\lambda}$ expressed as the above (ii)
is also compact if so is $\delta$.
Hence we have
$\Delta_{\lambda}\subset \Delta_{\mathrm{cpt}}$
if $\delta$ is compact;
$\Delta_{\lambda}\subset \Delta_{\mathrm{noncpt}}$
if $\delta$ is noncompact.
This contradicts to \eqref{eqn:Deltalambda_cpt_noncpt}.
Therefore $\Delta_{\lambda}$ contains a complex root $\alpha$,
from which $\lambda=\mathrm{pr}(\alpha)\in\mathrm{pr}(\Delta_{\mathrm{cpx}})$ holds.
Since $\Sigma\cap W$ and $\mathrm{pr}(\Delta_{\mathrm{cpx}})$
are invariant under the multiplication of $-1$,
we obtain
$\Sigma \cap W =\mathrm{pr}(\Delta_{\mathrm{cpx}})$.
\end{proof}


Let us consider
the case when
$(G,\theta_{1},\theta_{2})$
satisfies that
$\tilde{\Sigma}$ is not of $BC$-type.

\begin{ex}\label{ex:EI-II_SYMMm}
Let us consider the case when $(G,K_{1},K_{2})=(E_{6},Sp(4),SU(6)\cdot SU(2))$.
Table \ref{table:EI-II_2satake}
shows the double Satake diagram
for $(E_{6},Sp(4),SU(6)\cdot SU(2))$,
from which we obtain
$\Delta_{0}=\emptyset$, $\Delta_{\mathrm{im}}$
and $\Delta_{\mathrm{cpx}}$.
By a similar argument as in Example \ref{ex:EI-IV},
we have $\tilde{\Sigma}=\mathrm{pr}(\Delta)=F_{4}$.
We also get
$\Sigma\cap W
=\mathrm{pr}(\Delta_{\mathrm{cpx}})
=\{\text{the short roots in $F_{4}$}\}\simeq D_{4}$.
By Lemmas \ref{lem:CSTSYMM_SYMM'_CST'}
and \ref{lem:tildeSigma_SigmaW_determ_sufficient},
without loss of generalities,
we assume $(\tilde{\Sigma},\Sigma,W)=(\mbox{I-}F_{4})$.
For each short root $\lambda\in\tilde{\Sigma}$,
since $\{\alpha\in\Delta-\Delta_{0}\mid\mathrm{pr}(\alpha)=\lambda\}$ consists of two complex roots,
we have
$(m(\lambda),n(\lambda))=(1,1)$.
For each long root $\mu\in\tilde{\Sigma}$,
it follows from $\mu\in\Sigma$
that $\{\alpha\in\Delta-\Delta_{0}\mid \mathrm{pr}(\alpha)=\mu\}$
consists of one compact root.
Hence we get
$(m(\mu),n(\mu))=(1,0)$.
Thus, we have determined
$(\tilde{\Sigma},\Sigma,W;m,n)$.

\begin{table}[!!ht]
\renewcommand{\arraystretch}{1.5}
\centering
\caption{Double Satake diagram
$(S_{1},S_{2})$ of $(E_{6},Sp(4),SU(6)\cdot SU(2))$}\label{table:EI-II_2satake}
\begin{tabular}{cc}
\hline
\hline
Satake diagram $S_{1}$ & Satake diagram $S_{2}$ \\
\hline
\hline
\begin{xy}
\ar@{-}(-20,0)*++!D{\alpha_{6}}*{\circ}="a6";(-10,0)*++!D{\alpha_{5}}*{\circ}="a5"
\ar@{-}"a5";(0,0)*++!DL{\alpha_{4}}*{\circ}="a4"
\ar@{-}"a4";(10,0)*++!D{\alpha_{3}}*{\circ}="a3"
\ar@{-}"a3";(20,0)*++!D{\alpha_{1}}*{\circ}="a1"
\ar@{-}"a4";(0,10)*++!D{\alpha_{2}}*{\circ}="a2"
\end{xy}
&
\begin{xy}
\ar@{-}(-20,0)*++!D{\alpha_{6}}*{\circ}="a6";(-10,0)*++!D{\alpha_{5}}*{\circ}="a5"
\ar@{-}"a5";(0,0)*++!DL{\alpha_{4}}*{\circ}="a4"
\ar@{-}"a4";(10,0)*++!D{\alpha_{3}}*{\circ}="a3"
\ar@{-}"a3";(20,0)*++!D{\alpha_{1}}*{\circ}="a1"
\ar@{-}"a4";(0,10)*++!D{\alpha_{2}}*{\circ}="a2"
\ar@/^4mm/@{<->} "a3";"a5"
\ar@/_6mm/@{<->} "a6";"a1"
\end{xy}\\[4ex]
\hline
\hline
\end{tabular}
\end{table}
\end{ex}

In a similar manner,
we can determine
$(\tilde{\Sigma},\Sigma,W;m,n)$
with multiplicities
for $(G,\theta_{1},\theta_{2})$
such that $\tilde{\Sigma}$ is not of $BC$-type.

Next, let us consider the case when
$(G,\theta_{1},\theta_{2})$ satisfies that
$\tilde{\Sigma}$ is of $BC$-type.
Then, by the classification,
the type of
$(\tilde{\Sigma},\Sigma,W)$
becomes one of
$(\mbox{I-}BC_{r}\mbox{-}A_{1}^{r})$,
$(\mbox{I-}BC_{r}\mbox{-}B_{r})$,
$(\mbox{II-}BC_{r})$ or
$(\mbox{III-}BC_{r})$.
In what follows,
we demonstrate
our determination of
$(\tilde{\Sigma},\Sigma,W;m,n)$.

We first give an example
of $(G,\theta_{1},\theta_{2})$
satisfying
$(\tilde{\Sigma},\Sigma,W)=(\mbox{I-}BC_{r}\mbox{-}A_{1}^{r})$.

\begin{ex}\label{ex:SOUU_SYMMm}
Let us determine the symmetric triad
with multiplicities of $(G,K_{1},K_{2})=(SO(4m),U(2m),U(2m)')$,
which is a commutative compact symmetric triad
$(G,\theta,\theta')$
such that the fixed-point subgroups of $\theta$
and $\theta'$ are isomorphic to the unitary group $U(2m)$
and that $(G,\theta,\theta')\not\sim(G,\theta,\theta)$
holds.
Table \ref{table:SOUU_SYMMm}
shows the double Satake diagram
for $(SO(4m),U(2m),U(2m)')$,
from which we obtain $\Delta_{0}\simeq A_{1}^{m+1}$,
$\Delta_{\mathrm{im}}$ and $\Delta_{\mathrm{cpx}}$.
By \eqref{eqn:tildeSigma_description},
a direct calculation shows
$\tilde{\Sigma}\simeq BC_{r}$
($r=m-1$),
which we write $\tilde{\Sigma}=\{\pm e_{i}\pm e_{j}\mid
1\leq i<j\leq r\}\cup\{\pm e_{i},\pm 2e_{i}\mid 1\leq i\leq r\}$.
We also get $\Sigma\cap W=\mathrm{pr}(\Delta_{\mathrm{cpx}})=A_{1}^{r}$.
Without loss generalities,
we assume that $(\tilde{\Sigma},\Sigma,W)=(\mbox{I-}BC_{r}\mbox{-}A_{1}^{r})$
holds.
Since $\{\alpha\in\Delta\mid \mathrm{pr}(\alpha)=e_{i}\}$
consists of eight complex roots,
we have
$(m(e_{i}),n(e_{i}))=(4,4)$.
It follows from 
Proposition \ref{pro:cpt+black_cpt}
and $e_{i}\pm e_{j}\in\Sigma$
that 
$(m(e_{i}\pm e_{j}),n(e_{i}\pm e_{j}))=(4,0)$ holds
for $1\leq i<j\leq r$.
Since $2e_{i}\in\Sigma$ holds,
we have $(m(2e_{i}),n(2e_{i}))=(1,0)$
for $1\leq i\leq r$.

\begin{table}[!!ht]
\centering
\caption{Double Satake diagram $(S_{1},S_{2})$ of $(SO(4m),\theta,\theta')$ with $m\geq 3$}\label{table:SOUU_SYMMm}
\begin{tabular}{cc}
\hline
\hline
Satake diagram $S_{1}$ & Satake diagram $S_{2}$\\
\hline
\hline
\begin{xy}
\ar@{-}(0,0)*++!D{\alpha_{1}}*{\bullet};(10,0)*++!D{\alpha_{2}}*{\circ}="a2"
\ar@{-}"a2";(15,0)*{}
\ar@{.}(15,0)*{};(20,0)*{}
\ar@{-}(20,0)*{};(25,0)*++!D{\alpha_{2m-3}}*{\bullet}="a2m-3"
\ar@{-}"a2m-3";(35,0)*++!L{\alpha_{2m-2}}*{\circ}="a2m-2"
\ar@{-}"a2m-2";(40,8)*++!L{\alpha_{2m-1}}*{\bullet}
\ar@{-}"a2m-2";(40,-8)*++!L{\alpha_{2m}}*{\circ}
\end{xy}
&
\begin{xy}
\ar@{-}(0,0)*++!D{\alpha_{1}}*{\bullet};(10,0)*++!D{\alpha_{2}}*{\circ}="a2"
\ar@{-}"a2";(15,0)*{}
\ar@{.}(15,0)*{};(20,0)*{}
\ar@{-}(20,0)*{};(25,0)*++!D{\alpha_{2m-3}}*{\bullet}="a2m-3"
\ar@{-}"a2m-3";(35,0)*++!L{\alpha_{2m-2}}*{\circ}="a2m-2"
\ar@{-}"a2m-2";(40,8)*++!L{\alpha_{2m-1}}*{\circ}
\ar@{-}"a2m-2";(40,-8)*++!L{\alpha_{2m}}*{\bullet}
\end{xy}\\
\hline
\hline
\end{tabular}
\end{table}
\end{ex}

It is shown that
the other $(G,\theta_{1},\theta_{2})$
with $(\tilde{\Sigma},\Sigma,W)=(\mbox{I-}BC_{r}\mbox{-}A_{1}^{r})$
are $(SU(n),S(U(a)\times U(b)),S(U(c)\times U(d)))$
and $(Sp(n),Sp(a)\times Sp(b),Sp(c)\times Sp(d))$
($a<c\leq b<d$).
In fact, their symmetric triads with multiplicities
are determined
by a similar argument as in Example \ref{ex:SOUU_SYMMm}.

Second, we give examples of $(G,\theta_{1},\theta_{2})$
satisfying $(\tilde{\Sigma},\Sigma,W)=(\mbox{I-}BC_{r}\mbox{-}B_{r})$.

\begin{ex}\label{ex:EII-III_SYMMm}
Let us consider the case when
$(G,K_{1},K_{2})=(E_{6},SU(6)\cdot SU(2),SO(10)\cdot U(1))$.
Table \ref{table:EII-III_2satake}
shows the double Satake diagram for $(E_{6},SU(6)\cdot SU(2),SO(10)\cdot U(1))$,
from which we obtain $\Delta_{0}$,
$\Delta_{\mathrm{im}}$
and $\Delta_{\mathrm{cpx}}$.
By \eqref{eqn:tildeSigma_description},
a direct calculation shows
$\tilde{\Sigma}\simeq BC_{2}$,
which we write $\tilde{\Sigma}=\{\pm e_{1}\pm e_{2}\}\cup\{\pm e_{i},\pm 2e_{i}\mid i=1,2\}$.
We also have $\Sigma\cap W=\mathrm{pr}(\Delta_{\mathrm{cpx}})=B_{2}$.
Let us show that $2e_{i}\in \Sigma$ holds for $i=1,2$.
We put $e_{1}-e_{2}=\mathrm{pr}(\gamma)$
and $2e_{2}=\mathrm{pr}(\delta)$,
where
$\gamma=\alpha_{3}+\alpha_{4}+\alpha_{5}$ and
$\delta=\alpha_{1}+\alpha_{3}+\alpha_{4}+\alpha_{5}+\alpha_{6}$.
We get $\gamma, \delta\in\Delta_{\mathrm{im}}$.
Since $(m(e_{1}-e_{2}),n(e_{1}-e_{2}))=(m(e_{1}+e_{2}),n(e_{1}+e_{2}))$
holds,
$\gamma$ is compact if and only if so is $\gamma+\delta$.
Then $\delta$ must be compact (cf.~Proposition \ref{pro:cpt+cpt_cpt_etc}),
from which we get $2e_{2}\in\Sigma$.
This implies that $2e_{1}$ is in $\Sigma$.
Hence we have $(\tilde{\Sigma},\Sigma,W)=(\mbox{I-}BC_{2}\mbox{-}B_{2})$.
Since $\{\alpha\in\Delta\mid \mathrm{pr}(\alpha)=e_{i}\}$
consists of eight complex roots,
we have
$(m(e_{i}),n(e_{i}))=(4,4)$.
From the above,
$\{\alpha\in\Delta-\Delta_{0}\mid \mathrm{pr}(\alpha)=2e_{i}\}$ consists of one compact root,
from which
we have $(m(2e_{i}),n(2e_{i}))=(1,0)$.
By Lemma \ref{lem:acpt_-acpt}, (2),
it is verified that $(m(e_{1}\pm e_{2}),n(e_{1}\pm e_{2}))$
is equal to $(4,2)$ or $(2,4)$.
By the classification as in Example \ref{ex.i-bcr-br},
we may conclude that $(m(e_{1}\pm e_{2}),n(e_{1}\pm e_{2})=(4,2)$.
Thus, we have determined $(\tilde{\Sigma},\Sigma,W;m,n)$.

\begin{table}[!!ht]
\renewcommand{\arraystretch}{1.5}
\centering
\caption{Double Satake diagram
$(S_{1},S_{2})$ of $(E_{6},SU(6)\cdot SU(2),SU(10)\cdot U(1))$}\label{table:EII-III_2satake}
\begin{tabular}{cc}
\hline
\hline
Satake diagram $S_{1}$ & Satake diagram $S_{2}$ \\
\hline
\hline
\begin{xy}
\ar@{-}(-20,0)*++!D{\alpha_{6}}*{\circ}="a6";(-10,0)*++!D{\alpha_{5}}*{\circ}="a5"
\ar@{-}"a5";(0,0)*++!DL{\alpha_{4}}*{\circ}="a4"
\ar@{-}"a4";(10,0)*++!D{\alpha_{3}}*{\circ}="a3"
\ar@{-}"a3";(20,0)*++!D{\alpha_{1}}*{\circ}="a1"
\ar@{-}"a4";(0,10)*++!D{\alpha_{2}}*{\circ}="a2"
\ar@/^4mm/@{<->} "a3";"a5"
\ar@/_6mm/@{<->} "a6";"a1"
\end{xy}
&
\begin{xy}
\ar@{-}(-20,0)*++!D{\alpha_{6}}*{\circ}="a6";(-10,0)*++!D{\alpha_{5}}*{\bullet}="a5"
\ar@{-}"a5";(0,0)*++!DL{\alpha_{4}}*{\bullet}="a4"
\ar@{-}"a4";(10,0)*++!D{\alpha_{3}}*{\bullet}="a3"
\ar@{-}"a3";(20,0)*++!D{\alpha_{1}}*{\circ}="a1"
\ar@{-}"a4";(0,10)*++!D{\alpha_{2}}*{\circ}="a2"
\ar@/_6mm/@{<->} "a6";"a1"
\end{xy}\\[4ex]
\hline
\hline
\end{tabular}
\end{table}
\end{ex}

It is shown that the other $(G,\theta_{1},\theta_{2})$
with $(\tilde{\Sigma},\Sigma,W)=(\mbox{I-}BC_{r}\mbox{-}B_{r})$
are $(SO(2m),SO(a)\times SO(b),U(m))$ ($a$: even, $m>a$)
and $(E_{7},SO(12)\cdot SU(2),E_{6}\cdot U(1))$.
In order to determine their symmetric triads
with multiplicities,
we make use of the following proposition.

\begin{pro}\label{pro:delta_compact}
Let $\delta\in\Delta$.
Assume that there exist $\alpha,\beta\in\Delta$
satisfying the following conditions:
(i) $\delta=\alpha+(\beta+\sigma_{1}\sigma_{2}(\alpha))$
with $\beta+\sigma_{1}\sigma_{2}(\alpha)\in\Delta$;
(ii) $\alpha+\sigma_{1}\sigma_{2}(\alpha)\not\in\Delta$;
(iii) $\sigma_{1}(\beta)=\sigma_{2}(\beta)=-\beta$.
Then $\delta$ is a compact root.
\end{pro}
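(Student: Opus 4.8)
The plan is to show that $\theta_{1}\theta_{2}$ acts as the identity on the one‑dimensional root space $\mathfrak{g}(\mathfrak{t},\delta)$; by Definition \ref{dfn:cptroot_noncptroot} this is exactly the assertion that $\delta$ is compact. First I would record the two structural inputs. Since $\sigma_{1}(\beta)=\sigma_{2}(\beta)=-\beta$, Lemma \ref{lem:black_ki} applied to both $i=1$ and $i=2$ gives $\mathfrak{g}(\mathfrak{t},\beta)\subset(\mathfrak{k}_{1}\cap\mathfrak{k}_{2})^{\mathbb{C}}$, so every $X_{\beta}\in\mathfrak{g}(\mathfrak{t},\beta)$ is fixed by $\theta_{1}$ and by $\theta_{2}$, whence $\theta_{1}\theta_{2}(X_{\beta})=X_{\beta}$. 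Moreover, because $(G,\theta_{1},\theta_{2})$ is commutative, $\theta_{1}$ and $\theta_{2}$ are commuting involutions, so $\theta_{1}\theta_{2}$ is itself an involution of $\mathfrak{g}$; in particular $(\theta_{1}\theta_{2})^{2}=\mathrm{id}$, while $\theta_{1}\theta_{2}$ carries $\mathfrak{g}(\mathfrak{t},\gamma)$ onto $\mathfrak{g}(\mathfrak{t},\sigma_{1}\sigma_{2}\gamma)$ for every $\gamma$. This involutivity is precisely what trivializes the scalar bookkeeping below.

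Next I would set up the bracket realizing $\mathfrak{g}(\mathfrak{t},\delta)$. Fix nonzero $X_{\alpha}\in\mathfrak{g}(\mathfrak{t},\alpha)$ and $X_{\beta}\in\mathfrak{g}(\mathfrak{t},\beta)$ and put $Z=\theta_{1}\theta_{2}(X_{\alpha})\in\mathfrak{g}(\mathfrak{t},\sigma_{1}\sigma_{2}\alpha)$. Using hypothesis (i), namely $\delta=\alpha+(\beta+\sigma_{1}\sigma_{2}(\alpha))$ with $\beta+\sigma_{1}\sigma_{2}(\alpha)\in\Delta$, together with the fact that all root spaces of the semisimple $\mathfrak{g}^{\mathbb{C}}$ are one‑dimensional (so that $[\mathfrak{g}(\mathfrak{t},\mu),\mathfrak{g}(\mathfrak{t},\nu)]=\mathfrak{g}(\mathfrak{t},\mu+\nu)$ whenever $\mu+\nu\in\Delta$, and no bracket of nonzero vectors can vanish), the element $X_{\delta}:=[X_{\alpha},[X_{\beta},Z]]$ is a nonzero spanning vector of $\mathfrak{g}(\mathfrak{t},\delta)$.

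The computational heart is a single Jacobi identity. If $\alpha+\sigma_{1}\sigma_{2}(\alpha)=0$, then $\delta=\beta\in\Delta_{0}$ and the assertion is vacuous since compact roots lie in $\Delta-\Delta_{0}$; so I may assume $\alpha+\sigma_{1}\sigma_{2}(\alpha)\neq 0$, and then (ii) gives $\alpha+\sigma_{1}\sigma_{2}(\alpha)\notin\Delta\cup\{0\}$, forcing $[X_{\alpha},Z]\in\mathfrak{g}(\mathfrak{t},\alpha+\sigma_{1}\sigma_{2}\alpha)=\{0\}$. The Jacobi identity $[X_{\alpha},[X_{\beta},Z]]=[[X_{\alpha},X_{\beta}],Z]+[X_{\beta},[X_{\alpha},Z]]$ then collapses to $X_{\delta}=[[X_{\alpha},X_{\beta}],Z]=[Z,[X_{\beta},X_{\alpha}]]$. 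On the other hand, applying $\theta_{1}\theta_{2}$ and using $\theta_{1}\theta_{2}X_{\alpha}=Z$, $\theta_{1}\theta_{2}X_{\beta}=X_{\beta}$, and $\theta_{1}\theta_{2}Z=(\theta_{1}\theta_{2})^{2}X_{\alpha}=X_{\alpha}$ yields $\theta_{1}\theta_{2}(X_{\delta})=[Z,[X_{\beta},X_{\alpha}]]$. Comparing the two expressions gives $\theta_{1}\theta_{2}(X_{\delta})=X_{\delta}$, so $\mathfrak{g}(\mathfrak{t},\delta)\subset(\mathfrak{g}^{\theta_{1}\theta_{2}})^{\mathbb{C}}$ and $\delta$ is compact.

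I expect the only genuinely delicate points to be the two I flagged: verifying that $X_{\delta}$ is nonzero (handled by one‑dimensionality of the root spaces together with the chain of sums in (i) all being roots) and disposing of the degenerate possibility $\alpha+\sigma_{1}\sigma_{2}(\alpha)=0$; everything else reduces to the bracket identity, which works cleanly \emph{precisely because} commutativity makes $\theta_{1}\theta_{2}$ involutive. It is also worth noting in passing that $\delta$ is automatically imaginary, either directly from $\sigma_{1}\sigma_{2}(\delta)=\delta$ (using $\sigma_{1}\sigma_{2}\beta=\beta$ and $(\sigma_{1}\sigma_{2})^{2}\alpha=\alpha$) or a posteriori through Lemma \ref{lem:img_cptnoncpt}, which reconciles the conclusion with the compact/noncompact dichotomy.
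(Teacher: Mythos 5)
Your proof is correct and follows essentially the same route as the paper's: both reduce the claim to a single Jacobi-identity computation in which hypothesis (ii) kills $[X_{\alpha},\theta_{1}\theta_{2}(X_{\alpha})]$ and hypothesis (iii) (via Lemma \ref{lem:black_ki}) makes $X_{\beta}$ fixed by $\theta_{1}\theta_{2}$, yielding $\theta_{1}\theta_{2}(X_{\delta})=X_{\delta}$. Your explicit treatment of the degenerate possibility $\alpha+\sigma_{1}\sigma_{2}(\alpha)=0$ and of the nonvanishing of $X_{\delta}$ is a minor extra care the paper leaves implicit, but the argument is the same.
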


\begin{proof}
From the assumption (i),
we have $\mathfrak{g}(\mathfrak{t},\delta)=[\mathfrak{g}(\mathfrak{t},\alpha),[\mathfrak{g}(\mathfrak{t},\beta),\theta_{1}\theta_{2}\mathfrak{g}(\mathfrak{t},\alpha)]]$.
There exist $X_{\alpha}\in\mathfrak{g}(\mathfrak{t},\alpha)$
and $X_{\beta}\in\mathfrak{g}(\mathfrak{t},\beta)$
satisfying $[X_{\alpha},[X_{\beta},\theta_{1}\theta_{2}(X_{\alpha})]]\neq 0$.
The assumption (iii) yields $\theta_{1}(X_{\beta})=\theta_{2}(X_{\beta})=X_{\beta}$.
Then we have
\begin{align}
\theta_{1}\theta_{2}([X_{\alpha},[X_{\beta},\theta_{1}\theta_{2}(X_{\alpha})]])
&= [\theta_{1}\theta_{2}(X_{\alpha}),[X_{\beta},X_{\alpha}]]\\
&=-([X_{\beta},[X_{\alpha},\theta_{1}\theta_{2}(X_{\alpha})]]+[X_{\alpha},[\theta_{1}\theta_{2}(X_{\alpha}),X_{\beta}]])\\
&= [X_{\alpha},[X_{\beta},\theta_{1}\theta_{2}(X_{\alpha})]].
\end{align}
Here, we have used the assumption (ii)
in the last equality.
Thus, we have completed the proof.
\end{proof}

\begin{ex}\label{ex:E7_SO12SU2_E6U1_SYMM}
Let us consider the case when
$(G,K_{1},K_{2})=(E_{7},SO(12)\cdot SU(2),E_{6}\cdot U(1))$.
Table \ref{table:EVI-VII_2satake} shows
the double Satake diagram
for $(E_{7},SO(12)\cdot SU(2),E_{6}\cdot U(1))$,
from which we obtain $\Delta_{0}, \Delta_{\mathrm{im}}$
and $\Delta_{\mathrm{cpx}}$.
By \eqref{eqn:tildeSigma_description},
a direct calculation shows
$\tilde{\Sigma}\simeq BC_{2}$,
which we write $\tilde{\Sigma}=\{\pm e_{1}\pm e_{2}\}\cup\{\pm e_{i},\pm 2e_{i}\mid i=1,2\}$.
We also have $\Sigma\cap W=\mathrm{pr}(\Delta_{\mathrm{cpx}})=B_{2}$.
Let us show that $2e_{i}\in \Sigma$ holds for $i=1,2$.
We put $2e_{2}=\mathrm{pr}(\delta)$
with
$\delta=\alpha_{2}+\alpha_{3}+2\alpha_{4}+2\alpha_{5}+2\alpha_{6}+\alpha_{7}$.
Then we have $\delta\in\Delta_{\mathrm{cpt}}$
by Proposition \ref{pro:delta_compact}.
Indeed,
if we put $\alpha=\alpha_{6}$ and $\beta=\alpha_{5}$,
then $\delta, \alpha, \beta$ satisfy the assumption
of Proposition \ref{pro:delta_compact}.
Hence $\delta=\alpha+(\beta+\sigma_{1}\sigma_{2}(\alpha))$
is compact, so that $2e_{2}\in \Sigma$ holds.
Since this also yields $2e_{1}\in\Sigma$,
we have $(\tilde{\Sigma},\Sigma,W)=(\mbox{I-}BC_{2}\mbox{-}B_{2})$.
By a similar argument as in Example \ref{ex:EII-III_SYMMm},
we get $(m(e_{i}),n(e_{i}))=(8,8)$ and
$(m(2e_{i}),n(2e_{i}))=(1,0)$ for $i=1,2$.
In addition, we may conclude that
$(m(e_{1}\pm e_{2}),n(e_{1}\pm e_{2}))=(6,2)$
by the classification as in Example \ref{ex.i-bcr-br}.

\begin{table}[!!ht]
\renewcommand{\arraystretch}{1.5}
\centering
\caption{Double Satake diagram
$(S_{1},S_{2})$ of $(E_{7},SO(12)\cdot SU(2),E_{6}\cdot U(1))$}\label{table:EVI-VII_2satake}
\begin{tabular}{cc}
\hline
\hline
Satake diagram $S_{1}$ & Satake diagram $S_{2}$ \\
\hline
\hline
\begin{xy}
\ar@{-} (-30,0)*++!D{\alpha_{7}}*{\bullet}="a7";(-20,0)*++!D{\alpha_{6}}*{\circ}="a6"
\ar@{-}"a6";(-10,0)*++!D{\alpha_{5}}*{\bullet}="a5"
\ar@{-}"a5";(0,0)*++!DL{\alpha_{4}}*{\circ}="a4"
\ar@{-}"a4";(10,0)*++!D{\alpha_{3}}*{\circ}="a3"
\ar@{-}"a3";(20,0)*++!D{\alpha_{1}}*{\circ}="a1"
\ar@{-}"a4";(0,10)*++!D{\alpha_{2}}*{\bullet}="a2"
\end{xy}&
\begin{xy}
\ar@{-} (-30,0)*++!D{\alpha_{7}}*{\circ}="a7";(-20,0)*++!D{\alpha_{6}}*{\circ}="a6"
\ar@{-}"a6";(-10,0)*++!D{\alpha_{5}}*{\bullet}="a5"
\ar@{-}"a5";(0,0)*++!DL{\alpha_{4}}*{\bullet}="a4"
\ar@{-}"a4";(10,0)*++!D{\alpha_{3}}*{\bullet}="a3"
\ar@{-}"a3";(20,0)*++!D{\alpha_{1}}*{\circ}="a1"
\ar@{-}"a4";(0,10)*++!D{\alpha_{2}}*{\bullet}="a2"
\end{xy}\\[4ex]
\hline
\hline
\end{tabular}
\end{table}
\end{ex}

We can determine the corresponding $(\tilde{\Sigma},\Sigma,W;m,n)$
for $(SO(2m),SO(a)\times SO(b),U(m))$ ($a$: even, $m>a$)
by a similar argument as in Example \ref{ex:E7_SO12SU2_E6U1_SYMM}.

Third, we give an example of $(G,\theta_{1},\theta_{2})$
satisfying $(\tilde{\Sigma},\Sigma,W)=(\mbox{II-}BC_{r})$.
In this explanation,
we make use of the following proposition.

\begin{pro}\label{pro:delta_noncompact}
Let $\delta\in\Delta$.
If there exists $\alpha\in\Delta$ such that
\begin{equation}\label{eqn:delta_as12a}
\delta=\alpha+\sigma_{1}\sigma_{2}(\alpha),
\end{equation}
then $\delta$ is noncompact.
\end{pro}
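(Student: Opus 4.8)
The plan is to compute the action of $\theta_{1}\theta_{2}$ directly on the root space $\mathfrak{g}(\mathfrak{t},\delta)$ and to show that it acts as $-1$, which by Definition~\ref{dfn:cptroot_noncptroot} is exactly the statement that $\delta$ is noncompact. First I would record that $\delta$ is imaginary: since $\theta_{1}$ and $\theta_{2}$ commute we have $(\sigma_{1}\sigma_{2})^{2}=\mathrm{id}$, so
\[
\sigma_{1}\sigma_{2}(\delta)=\sigma_{1}\sigma_{2}(\alpha)+(\sigma_{1}\sigma_{2})^{2}(\alpha)=\sigma_{1}\sigma_{2}(\alpha)+\alpha=\delta.
\]
In particular $\mathfrak{g}(\mathfrak{t},\delta)$ is $\theta_{1}\theta_{2}$-invariant and one-dimensional, so, as in the proof of Lemma~\ref{lem:img_cptnoncpt}, $\theta_{1}\theta_{2}$ acts on it as a scalar which is $\pm 1$; the entire task is to pin this scalar down to $-1$.

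Next I would invoke the identity $\mathfrak{g}(\mathfrak{t},\sigma_{1}\sigma_{2}(\alpha))=\theta_{1}\theta_{2}\,\mathfrak{g}(\mathfrak{t},\alpha)$, which follows from $\mathfrak{g}(\mathfrak{t},\sigma_{i}(\alpha))=\overline{\theta_{i}\mathfrak{g}(\mathfrak{t},\alpha)}$ used in the proof of Lemma~\ref{lem:acpt_-acpt}, together with the root-space relation coming from \eqref{eqn:delta_as12a}, to write
\[
\mathfrak{g}(\mathfrak{t},\delta)=[\mathfrak{g}(\mathfrak{t},\alpha),\mathfrak{g}(\mathfrak{t},\sigma_{1}\sigma_{2}(\alpha))]=[\mathfrak{g}(\mathfrak{t},\alpha),\theta_{1}\theta_{2}\,\mathfrak{g}(\mathfrak{t},\alpha)].
\]
Fixing a nonzero $X\in\mathfrak{g}(\mathfrak{t},\alpha)$, the element $\theta_{1}\theta_{2}(X)$ spans the one-dimensional space $\mathfrak{g}(\mathfrak{t},\sigma_{1}\sigma_{2}(\alpha))$, and because $\delta\in\Delta$ the bracket $[X,\theta_{1}\theta_{2}(X)]$ is a nonzero generator of $\mathfrak{g}(\mathfrak{t},\delta)$.

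The key computation is then the short chain
\[
\theta_{1}\theta_{2}\bigl([X,\theta_{1}\theta_{2}(X)]\bigr)
=[\theta_{1}\theta_{2}(X),(\theta_{1}\theta_{2})^{2}(X)]
=[\theta_{1}\theta_{2}(X),X]
=-[X,\theta_{1}\theta_{2}(X)],
\]
where the middle equality again uses $(\theta_{1}\theta_{2})^{2}=\mathrm{id}$, i.e.\ the commutativity of $\theta_{1}$ and $\theta_{2}$, and the last uses antisymmetry of the bracket. This exhibits the generator of $\mathfrak{g}(\mathfrak{t},\delta)$ as a $(-1)$-eigenvector of $\theta_{1}\theta_{2}$, so $\mathfrak{g}(\mathfrak{t},\delta)\subset(\mathfrak{g}^{-\theta_{1}\theta_{2}})^{\mathbb{C}}$ and $\delta$ is noncompact.

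The argument is genuinely short, so there is no serious obstacle; the only points demanding care are to ensure that $[X,\theta_{1}\theta_{2}(X)]$ is actually nonzero, so that the scalar is being read off a true generator rather than concluded vacuously (this is guaranteed by $\delta\in\Delta$), and to recognize that the crucial input is $(\theta_{1}\theta_{2})^{2}=\mathrm{id}$. This last fact is precisely what separates the present situation from Proposition~\ref{pro:delta_compact}: there the additional nested bracket introduces a compensating sign and produces the $+1$-eigenvalue, whereas here the symmetric shape $\alpha+\sigma_{1}\sigma_{2}(\alpha)$ forces the $-1$-eigenvalue.
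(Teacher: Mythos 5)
Your proof is correct and follows essentially the same route as the paper's: both identify a generator of $\mathfrak{g}(\mathfrak{t},\delta)$ of the form $[X,\theta_{1}\theta_{2}(X)]$ with $X\in\mathfrak{g}(\mathfrak{t},\alpha)$ and read off the $(-1)$-eigenvalue from $\theta_{1}\theta_{2}[X,\theta_{1}\theta_{2}(X)]=[\theta_{1}\theta_{2}(X),X]=-[X,\theta_{1}\theta_{2}(X)]$. The extra remarks you include (that $\delta$ is imaginary and that the bracket is nonzero because $\delta\in\Delta$) are sound but the paper leaves them implicit.
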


\begin{proof}
It follows from \eqref{eqn:delta_as12a}
that there exists $X\in\mathfrak{g}(\mathfrak{t},\alpha)$
such that $\mathfrak{g}(\mathfrak{t},\delta)=\mathbb{C}[X,\theta_{1}\theta_{2}(X)]$ holds.
Then we have
$\theta_{1}\theta_{2}[X_{\alpha},\theta_{1}\theta_{2}(X_{\alpha})]
=[\theta_{1}\theta_{2}(X_{\alpha}),X_{\alpha}]=-[X_{\alpha},\theta_{1}\theta_{2}(X_{\alpha})]$,
from which $\mathfrak{g}(\mathfrak{t},\delta)$ is contained in $(\mathfrak{g}^{-\theta_{1}\theta_{2}})^{\mathbb{C}}$,
that is, $\delta$ is noncompact.
\end{proof}

Here, we emphasize that
the assumptions of Propositions \ref{pro:delta_compact} and \ref{pro:delta_noncompact}
are given by the action
of $(\sigma_{1},\sigma_{2})$
on $\Delta$,
although their conclusions
are represented by the action
of $(\theta_{1},\theta_{2})$ on $\mathfrak{g}$.

\begin{ex}\label{ex:EI-III_determ}
Let us consider the case when $(G,K_{1},K_{2})=(E_{6},Sp(4),SO(10)\cdot U(1))$.
Table \ref{table:EI-III_2satake}
shows the double Satake diagram for$(E_{6},Sp(4),SO(10)\cdot U(1))$,
from which we obtain $\Delta_{0}$,
$\Delta_{\mathrm{im}}$ and $\Delta_{\mathrm{cpx}}$.
By \eqref{eqn:tildeSigma_description},
a direct calculation shows
$\tilde{\Sigma}\simeq BC_{2}$,
which we write $\tilde{\Sigma}=\{\pm e_{1}\pm e_{2}\}\cup\{\pm e_{i},\pm 2e_{i}\mid i=1,2\}$.
We also have $\Sigma\cap W=\mathrm{pr}(\Delta_{\mathrm{cpx}})=B_{2}$.
Let us show that $2e_{i}\in W$ holds for $i=1,2$.
We put $2e_{2}=\mathrm{pr}(\delta)$
with
$\delta=\alpha_{1}+\alpha_{3}+\alpha_{4}+\alpha_{5}+\alpha_{6}$.
Then we have $\delta=\alpha_{1}+\sigma_{1}\sigma_{2}(\alpha_{1})$,
from which $\delta$ is a noncompact root
by Proposition \ref{pro:delta_noncompact}.
This also yields $2e_{1}\in W$.
Hence $(\tilde{\Sigma},\Sigma,W)=(\mbox{II-}BC_{2})$ holds.
From the above, we have
$(m(2e_{i}),n(2e_{i}))=(0,1)$ for $i=1,2$.
Since $\{\alpha\in\Delta\mid \mathrm{pr}(\alpha)=e_{i}\}$
consists of eight complex roots,
we have $(m(e_{i}),n(e_{i}))=(4,4)$.
Similarly,
we get $(m(e_{1}\pm e_{2}),n(e_{1}\pm e_{2}))=(3,3)$.
Thus, we have determined $(\tilde{\Sigma},\Sigma,W;m,n)$.

\begin{table}[!!ht]
\renewcommand{\arraystretch}{1.5}
\centering
\caption{Double Satake diagram
$(S_{1},S_{2})$ of $(E_{6},Sp(4),SO(10)\cdot U(1))$}\label{table:EI-III_2satake}
\begin{tabular}{cc}
\hline
\hline
Satake diagram $S_{1}$ & Satake diagram $S_{2}$ \\
\hline
\hline
\begin{xy}
\ar@{-}(-20,0)*++!D{\alpha_{6}}*{\circ}="a6";(-10,0)*++!D{\alpha_{5}}*{\circ}="a5"
\ar@{-}"a5";(0,0)*++!DL{\alpha_{4}}*{\circ}="a4"
\ar@{-}"a4";(10,0)*++!D{\alpha_{3}}*{\circ}="a3"
\ar@{-}"a3";(20,0)*++!D{\alpha_{1}}*{\circ}="a1"
\ar@{-}"a4";(0,10)*++!D{\alpha_{2}}*{\circ}="a2"
\end{xy}
&
\begin{xy}
\ar@{-}(-20,0)*++!D{\alpha_{6}}*{\circ}="a6";(-10,0)*++!D{\alpha_{5}}*{\bullet}="a5"
\ar@{-}"a5";(0,0)*++!DL{\alpha_{4}}*{\bullet}="a4"
\ar@{-}"a4";(10,0)*++!D{\alpha_{3}}*{\bullet}="a3"
\ar@{-}"a3";(20,0)*++!D{\alpha_{1}}*{\circ}="a1"
\ar@{-}"a4";(0,10)*++!D{\alpha_{2}}*{\circ}="a2"
\ar@/_6mm/@{<->} "a6";"a1"
\end{xy}\\[4ex]
\hline
\hline
\end{tabular}
\end{table}
\end{ex}

We can determine $(\tilde{\Sigma},\Sigma,W;m,n)=(\mbox{II-}BC_{r};m,n)$
for $(SU(2m),Sp(m),S(U(a)\times U(b)))$
($n>2a$),
$(SO(2m),SO(a)\times SO(b), U(m))$
($a$: odd, $m=a$)
by a similar argument as in Example \ref{ex:EI-III_determ}.

Finally, we explain
our determination of $(\tilde{\Sigma},\Sigma,W;m,n)=(\mbox{III-}BC_{r})$.
It is shown that
the symmetric triads
with multiplicities
becomes $(\mbox{III-}BC_{r})$-type
for the following commutative compact symmetric triads,
which is verified by using Lemma \ref{lem:tildeSigma_SigmaW_determ_sufficient}, (2):
\begin{enumerate}[C{a}se 1:]
\item $(SU(2m),Sp(m),S(U(a)\times U(b)))$ ($a$: odd, $m=a$),
$(Sp(n),U(n),Sp(a)\times Sp(b))$ ($n>2a$),
$(E_{6},SU(6)\cdot SU(2),F_{4})$,
$(F_{4},SU(2)\cdot Sp(3), SO(9))$;
\item $(SU(2m),Sp(m),S(U(a)\times U(b)))$ ($a$: even, $m>a$),
$(E_{6},SO(10)\cdot U(1),F_{4})$.
\end{enumerate}
We can determine their multiplicities
by Proposition \ref{pro:delta_compact} for Case 1
and by Proposition \ref{pro:delta_noncompact} for Case 2.
We omit the details of the determination of their multiplicities.

From the above argument,
we have the following theorem.

\begin{thm}\label{thm:determ_symmetrictriads}
Table \ref{table:symmtriext} exhibits explicit descriptions of the types of the symmetric triads with multiplicities
corresponding to commutative compact symmetric triads $(G,\theta_{1},\theta_{2})$
in the case when $G$ is simple and $\theta_{1}\not\sim\theta_{2}$.
\end{thm}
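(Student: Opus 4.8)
The plan is to prove Theorem~\ref{thm:determ_symmetrictriads} by an exhaustive case-by-case determination, running over every isomorphism class of commutative compact symmetric triad $(G,\theta_{1},\theta_{2})$ with $G$ simple and $\theta_{1}\not\sim\theta_{2}$. The classification of these classes with respect to $\sim$ is already available through the double Satake diagrams of \cite{BI}, so it suffices to attach to each double Satake diagram $(S_{1},S_{2})$ the isomorphism class of its symmetric triad with multiplicities and to check that the outcome matches the corresponding entry of Table~\ref{table:symmtriext}. For a fixed class I would first reconstruct the double $\sigma$-system $(\Delta,\sigma_{1},\sigma_{2})$ from $(S_{1},S_{2})$ and read off the subsets $\Delta_{0}$, $\Delta_{\mathrm{im}}$ and $\Delta_{\mathrm{cpx}}$, exactly as in Example~\ref{ex:EI-IV}.

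The unrestricted data are then immediate: $\tilde{\Sigma}=\mathrm{pr}(\Delta-\Delta_{0})$ by \eqref{eqn:tildeSigma_description}, together with the identity $\Sigma\cap W=\mathrm{pr}(\Delta_{\mathrm{cpx}})$ established for triads arising from commutative compact symmetric triads. Whenever $\tilde{\Sigma}$ is not of $BC$-type, Lemma~\ref{lem:tildeSigma_SigmaW_determ_sufficient}, (1) shows that $\tilde{\Sigma}$ together with $\Sigma\cap W$ already pins down the isomorphism class of the underlying $(\tilde{\Sigma},\Sigma,W)$, so that after replacing $(G,\theta_{1},\theta_{2})$ by a convenient $\sim$-equivalent representative via Lemma~\ref{lem:CSTSYMM_SYMM'_CST'} one may simply name the type; the degenerate subcase $\Delta_{\mathrm{im}}=\emptyset$ falls into type (III) with $m=n$ as recorded in \eqref{eqn:multi_im_empty}. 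The multiplicities are then computed from Proposition~\ref{pro:symm_determ_base} by counting compact, noncompact and complex roots lying over each $\lambda$, where the compactness of imaginary roots is propagated along root strings from a single root visible in the diagram using Propositions~\ref{pro:cpt+cpt_cpt_etc} and \ref{pro:cpt+black_cpt}.

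The main obstacle, and the place where this routine procedure genuinely fails, is the case $\tilde{\Sigma}=BC_{r}$ with $\Sigma\cap W$ of $B$-type, which is precisely the case excluded from Lemma~\ref{lem:tildeSigma_SigmaW_determ_sufficient}, (2). Here $\tilde{\Sigma}$ and $\Sigma\cap W$ do not determine whether the longest roots $\pm 2e_{i}$ lie in $\Sigma$ or in $W$, so the type of $(\tilde{\Sigma},\Sigma,W)$ --- whether $(\mbox{I-}BC_{r}\mbox{-}B_{r})$, $(\mbox{II-}BC_{r})$ or $(\mbox{III-}BC_{r})$ --- must be settled by hand. For this I would choose an imaginary root $\delta$ with $\mathrm{pr}(\delta)=2e_{i}$ and decide its compactness through the structural criteria: Proposition~\ref{pro:delta_compact} forces $\delta$ to be compact (hence $2e_{i}\in\Sigma$), while Proposition~\ref{pro:delta_noncompact} forces $\delta$ to be noncompact (hence $2e_{i}\in W$), as carried out in Examples~\ref{ex:EII-III_SYMMm}, \ref{ex:E7_SO12SU2_E6U1_SYMM} and \ref{ex:EI-III_determ}. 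I expect the bulk of the real work to concentrate in identifying, within each exceptional diagram, an explicit $\delta$ meeting the combinatorial hypotheses of one of these two propositions.

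Finally, even after the type of $(\tilde{\Sigma},\Sigma,W)$ is fixed, a residual ambiguity remains in the middle multiplicities $(m(e_{1}\pm e_{2}),n(e_{1}\pm e_{2}))$ for the $(\mbox{I-}BC_{r}\mbox{-}B_{r})$ cases, which Lemma~\ref{lem:acpt_-acpt}, (2) narrows only to an unordered pair of values. Since the two orderings yield $\sim$-isomorphic symmetric triads with multiplicities by the classification recorded in Example~\ref{ex.i-bcr-br}, one is free to select the representative listed in Table~\ref{table:symmtriext}. Carrying out these steps for each of the finitely many classes --- the non-$BC$ families, the four $BC$-type families $(\mbox{I-}BC_{r}\mbox{-}A_{1}^{r})$, $(\mbox{I-}BC_{r}\mbox{-}B_{r})$, $(\mbox{II-}BC_{r})$, $(\mbox{III-}BC_{r})$, and the remaining exceptional cases --- reproduces exactly the entries of Table~\ref{table:symmtriext}, which is the assertion of the theorem. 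As each individual determination is a finite computation of the kind fully demonstrated in the worked examples, the remaining verifications are routine and are only indicated rather than written out.
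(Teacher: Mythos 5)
Your proposal follows essentially the same route as the paper: a case-by-case determination over the $\sim$-classification via double Satake diagrams, using \eqref{eqn:tildeSigma_description} and $\Sigma\cap W=\mathrm{pr}(\Delta_{\mathrm{cpx}})$ to fix $\tilde{\Sigma}$ and $\Sigma\cap W$, Lemma \ref{lem:tildeSigma_SigmaW_determ_sufficient} to pin down the type outside the $BC$/$B$-type ambiguity, Propositions \ref{pro:delta_compact} and \ref{pro:delta_noncompact} to place the longest roots in $\Sigma$ or $W$, and Proposition \ref{pro:symm_determ_base} together with the abstract classification (Example \ref{ex.i-bcr-br}) to settle the multiplicities. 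This is exactly the paper's argument, carried out to the same level of detail.
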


\begin{table}[!!ht]
\centering
\caption{The symmetric triads $(\tilde{\Sigma},\Sigma,W;m,n)$
with multiplicities corresponding to commutative compact symmetric triads $(G,K_{1},K_{2})$ such that $G$ is simple and that $\theta_{1}\not\sim\theta_{2}$}\label{table:symmtriext}
\renewcommand\arraystretch{1.5}
\scalebox{0.77}{
\begin{tabular}{cccc}
\hline
\hline
$(G, K_{1},K_{2})$ & $(\tilde{\Sigma},\Sigma,W)$ & $m,n$ & Remark \\
\hline
\hline
$(SU(2m), SO(2m), Sp(m))$ & $(\mbox{III-}A_{m-1})$ & $2,2$ & \\
\hline
\multirow{3}{*}{$(SU(n), SO(n), S(U(a) \times U(b)))$} & $(\mbox{I-}C_{a})$ & $\begin{cases}1,1 & (\text{short})\\1,0 & (\text{long})\end{cases}$ & $n=2a$\\
& $(\mbox{II-}BC_{a})$ & $\begin{cases}n-2a,n-2a & (\text{shortest})\\1,1 & (\text{middle})\\0,1 & (\text{longest})\end{cases}$ & $n>2a$ \\
\hline
\multirow{5}{*}{$(SU(2m), Sp(m), S(U(a)\times U(b)))$} & $(\mbox{III-}C_{a/2})$ & $\begin{cases}
4,4 & (\text{short})\\
3,1 & (\text{long})
\end{cases}$ & $\begin{cases}a:\text{even},\\m=a\end{cases}$\\
& $(\mbox{III-}BC_{(a-1)/2})$ & $\begin{cases}4,4 & (\text{shortest})\\4,4 & (\text{middle})\\1,3 & (\text{longest})\end{cases}$ & $\begin{cases}a:\text{odd},\\m=a\end{cases}$ \\
& $(\mbox{III-}BC_{a/2})$ & $\begin{cases}4(m-a),4(m-a) & (\text{shortest})\\4,4 & (\text{middle})\\3,1 & (\text{longest})\end{cases}$ & $\begin{cases}a:\text{even},\\m>a\end{cases}$ \\
\hline
$(SU(n), S(U(a)\times U(b)),S(U(c) \times U(d)))$ & $(\mbox{I-}BC_{a}\mbox{-}A_{1}^{a})$ & $\begin{cases}2(d-a),2(c-a) & (\text{shortest})\\2,0 & (\text{middle})\\1,0 & (\text{longest})\end{cases}$ & $a<c \leq d<b$ \\
\hline
$(SO(n), SO(a)\times SO(b), SO(c) \times SO(d))$ & $(\mbox{I-}B_{a})$ & $\begin{cases}d-a,c-a & (\text{short})\\1,0 & (\text{long})\end{cases}$ &  $a<c \leq d<b$ \\
\hline
\multirow{5}{*}{$(SO(2m), SO(a)\times SO(b), U(m))$} & $(\mbox{I-}C_{a/2})$ & $\begin{cases}2,2 & (\text{short})\\1,0 & (\text{long})\end{cases}$ & $\begin{cases}a:\text{even},\\m=a\end{cases}$\\
& $(\mbox{II-}BC_{(a-1)/2})$ & $\begin{cases}2,2 & (\text{shortest})\\2,2 & (\text{middle})\\0,1 & (\text{longest})\end{cases}$ & $\begin{cases}a:\text{odd},\\m=a\end{cases}$ \\
& $(\mbox{I-}BC_{a/2}\mbox{-}B_{a/2})$ & $\begin{cases}2(m-a),2(m-a) & (\text{shortest})\\2,2 & (\text{middle})\\1,0 & (\text{longest})\end{cases}$ & $\begin{cases}a:\text{even},\\m>a\end{cases}$ \\
\hline
$(SO(4m), U(2m), U(2m)')$ & $(\mbox{I-}BC_{m-1}\mbox{-}A_{1}^{m-1})$ & $\begin{cases}4,4 & (\text{shortest})\\4,0 & (\text{middle})\\1,0 & (\text{longest})\end{cases}$ & \\
\hline
\multirow{3}{*}{$(Sp(n), U(n), Sp(a)\times Sp(b))$} & $(\mbox{III-}C_{a})$ & $\begin{cases}2,2 & (\text{short})\\2,1 & (\text{long})\end{cases}$ & $n=2a$\\
& $(\mbox{III-}BC_{a})$ & $\begin{cases}2(n-2a),2(n-2a) & (\text{shortest})\\2,2 & (\text{middle})\\1,2 & (\text{longest})\end{cases}$ & $n>2a$ \\
\hline
$(Sp(n), Sp(a)\times Sp(b), Sp(c)\times Sp(d))$ & $(\mbox{I-}BC_{a}\mbox{-}A_{1}^{a})$ & $\begin{cases}4(d-a),4(c-a) & (\text{shortest})\\4,0 & (\text{middle})\\3,0 & (\text{longest})\end{cases}$ & $a<c \leq d<b$ \\
\hline
\hline
\end{tabular}
}
\renewcommand\arraystretch{1.0}
\end{table}

\begin{table}[!!ht]
\centering
\contcaption{(continued)}
\renewcommand\arraystretch{1.5}
\begin{tabular}{ccc}
\hline
\hline
$(G,K_{1},K_{2})$ & $(\tilde{\Sigma},\Sigma,W)$ & $m,n$  \\
\hline
\hline
$(E_{6}, Sp(4), SU(6)\cdot SU(2))$ & $(\mbox{I-}F_{4})$ & $\begin{cases}1,1 & (\text{short})\\1,0 & (\text{long})\end{cases}$ \\
\hline
$(E_{6}, Sp(4), SO(10)\cdot U(1))$ & $(\mbox{II-}BC_{2})$ & $\begin{cases}4,4 & (\text{shortest})\\3,3 & (\text{middle})\\0,1 & (\text{longest})\end{cases}$  \\
\hline
$(E_{6}, Sp(4), F_{4})$ & $(\mbox{III-}A_{2})$ & $4,4$  \\
\hline
$(E_{6}, SU(6)\cdot SU(2), SO(10)\cdot U(1))$ & $(\mbox{I-}BC_{2}\mbox{-}B_{2})$ & $\begin{cases}4,4 & (\text{shortest})\\4,2 & (\text{middle})\\1,0 & (\text{longest})\end{cases}$ \\
\hline
$(E_{6}, SU(6)\cdot SU(2), F_{4})$ & $(\mbox{III-}BC_{1})$ & $\begin{cases}8,8 & (\text{short})\\3,5 & (\text{long})\end{cases}$ \\
\hline
$(E_{6}, SO(10)\cdot U(1), F_{4})$ & $(\mbox{III-}BC_{1})$ & $\begin{cases}8,8 & (\text{short})\\7,1 & (\text{long})\end{cases}$  \\
\hline
$(E_{7}, SU(8), SO(12)\cdot SU(2))$ & $(\mbox{I-}F_{4})$ & $\begin{cases}2,2 & (\text{short})\\1,0 & (\text{long})\end{cases}$  \\
\hline
$(E_{7}, SU(8), E_{6}\cdot U(1))$ & $(\mbox{I-}C_{3})$ & $\begin{cases}4,4 & (\text{short})\\1,0 & (\text{long})\end{cases}$  \\
\hline
$(E_{7}, SO(12)\cdot SU(2), E_{6}\cdot U(1))$ & $(\mbox{I-}BC_{2}\mbox{-}B_{2})$ & $\begin{cases}8,8 & (\text{shortest})\\6,2 & (\text{middle})\\1,0 & (\text{longest})\end{cases}$  \\
\hline
$(E_{8}, SO(16), E_{7}\cdot SU(2))$ & $(\mbox{I-}F_{4})$ & $\begin{cases}4,4 & (\text{short})\\1,0 & (\text{long})\end{cases}$  \\
\hline
$(F_{4}, SU(2)\cdot Sp(3), SO(9))$ & $(\mbox{III-}BC_{1})$ & $\begin{cases}4,4 & (\text{short})\\3,4 & (\text{long})\end{cases}$  \\
\hline
\hline
\end{tabular}
\renewcommand\arraystretch{1.0}
\end{table}

As an application of Theorem \ref{thm:determ_symmetrictriads},
we have the following corollary.

\begin{cor}\label{cor:simsim_simplyconnected}
Under the same settings as in Proposition \ref{pro:cstsim_symmsim},
if $(\tilde{\Sigma},\Sigma,W;m,n)\sim
(\tilde{\Sigma}',\Sigma',W';m',n')$,
then $(G,\theta_{1},\theta_{2})$
is isomorphic to $(G,\theta_{1}',\theta_{2}')$
or $(G,\theta_{2}',\theta_{1}')$
with respect to $\sim$ at the Lie algebra level.
Then, in the case when $G$ is simply-connected,
$(G,\theta_{1},\theta_{2})\sim(G,\theta_{1}',\theta_{2}')$
or $(G,\theta_{1}',\theta_{2}')$
holds.
\end{cor}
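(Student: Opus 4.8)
The plan is to establish the converse of Proposition \ref{pro:cstsim_symmsim} by using Table \ref{table:symmtriext} as a dictionary between local isomorphism classes of commutative compact symmetric triads and $\sim$-classes of symmetric triads with multiplicities. First I would reduce to the case where $G$ is simple, as in the setting of Theorem \ref{thm:determ_symmetrictriads}: since each $\tilde{\Sigma}$ is irreducible (Definition \ref{dfn:symmtriad}, (1)), the relevant data is governed by a single simple factor, and both $(\tilde{\Sigma},\Sigma,W;m,n)$ and $(\tilde{\Sigma}',\Sigma',W';m',n')$ then occur as entries of Table \ref{table:symmtriext}.

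The key step is to verify that the assignment sending a row $(G,K_{1},K_{2})$ of Table \ref{table:symmtriext} to the $\sim$-class of its symmetric triad with multiplicities is injective up to interchanging $K_{1}$ and $K_{2}$. Concretely, given the hypothesis $(\tilde{\Sigma},\Sigma,W;m,n)\sim(\tilde{\Sigma}',\Sigma',W';m',n')$, I would first compute the $\sim$-class of each entry by means of the classification in Theorem \ref{thm:class_symmI-III} (Examples \ref{ex:one-ele}--\ref{ex:III-BC_2}); this folds in the genuine ambiguities such as $(\mbox{I-}C_{r})\sim(\mbox{I'-}C_{r})$ and the swaps of $m$ and $n$ occurring for the $BC_{2}$- and $C_{r}$-types. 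I would then compare the two rows: entries of distinct abstract type $(\tilde{\Sigma},\Sigma,W)$ are immediately separated, while among rows sharing a common type --- for instance the three $(\mbox{I-}F_{4})$ rows or the three $(\mbox{III-}BC_{1})$ rows --- the multiplicity data $(m,n)$, preserved up to the $\sim$-bookkeeping, distinguishes them. Carrying out this comparison over every family shows that $(\tilde{\Sigma},\Sigma,W;m,n)\sim(\tilde{\Sigma}',\Sigma',W';m',n')$ forces the two rows to coincide, whence the unordered pairs of fixed-point subgroups $\{K_{1},K_{2}\}$ and $\{K_{1}',K_{2}'\}$ determine the same local isomorphism classes.

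Having matched the data, I would conclude as follows. Since the local isomorphism class of a commutative compact symmetric triad is determined by its fixed-point subgroups (cf.~\cite[Corollary 5.19]{BI}), equality of the unordered pairs yields an isomorphism $\varphi\in\operatorname{Aut}(\mathfrak{g})$ carrying $(d\theta_{1},d\theta_{2})$ either to $(d\theta_{1}',d\theta_{2}')$ or to $(d\theta_{2}',d\theta_{1}')$; this is precisely $(G,\theta_{1},\theta_{2})\equiv(G,\theta_{1}',\theta_{2}')$ or $\equiv(G,\theta_{2}',\theta_{1}')$ at the Lie algebra level, hence $\sim$ at that level by the remark following Definition \ref{dfn:CST_equiv}. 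The appearance of both orders reflects the fact that $(G,\theta_{1},\theta_{2})$ and $(G,\theta_{2},\theta_{1})$ induce the same symmetric triad with multiplicities, so the invariant cannot see the ordering. Finally, when $G$ is simply-connected every automorphism of $\mathfrak{g}$ integrates to one of $G$ and each $\theta_{i}$ is recovered from $d\theta_{i}$, so the Lie-algebra-level isomorphism lifts to $(G,\theta_{1},\theta_{2})\sim(G,\theta_{1}',\theta_{2}')$ or $(G,\theta_{2}',\theta_{1}')$.

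The main obstacle is the case-by-case bookkeeping of the middle paragraph: one must run through all families of Table \ref{table:symmtriext}, correctly incorporate the $\sim$-equivalences of Theorem \ref{thm:class_symmI-III}, and confirm that within each fixed abstract type the listed multiplicities genuinely separate the distinct rows. The delicate points are the $BC_{r}$-, $C_{r}$- and $F_{4}$-rows, where the primed/unprimed and $m\leftrightarrow n$ identifications must be applied \emph{before} any comparison is made, lest a spurious collision or a spurious separation be recorded.
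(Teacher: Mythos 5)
Your proposal takes essentially the same route as the paper: the paper's proof is the single remark that the corollary ``follows from the classification of commutative compact symmetric triads with respect to $\sim$ and Theorem \ref{thm:determ_symmetrictriads}'', which is exactly the row-by-row injectivity check of Table \ref{table:symmtriext} (modulo the $\sim$-identifications of Theorem \ref{thm:class_symmI-III} and the swap of $\theta_{1}$ and $\theta_{2}$) that you spell out, followed by the appeal to \cite[Corollary 5.19]{BI}. One imprecision in your last step: from equality of the unordered pairs $\{K_{1},K_{2}\}$ you claim a single $\varphi\in\mathrm{Aut}(\mathfrak{g})$ conjugating both involutions simultaneously, i.e.\ an isomorphism with respect to $\equiv$ at the Lie algebra level; this is too strong, since $\sim$ does not imply $\equiv$ even for triads with the same fixed-point subgroups (cf.\ Example \ref{ex:E6Sp4SU6SU2_equiv}, where $(\mbox{I-}F_{4};m,n)\sim(\mbox{I'-}F_{4};m',n')$ correspond to $\equiv$-inequivalent representatives). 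What \cite[Corollary 5.19]{BI} actually delivers is the $\sim$-isomorphism at the Lie algebra level, which is precisely the conclusion required, so the detour through $\equiv$ should simply be removed.
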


This corollary follows from the classification of commutative compact symmetric triads 
with respect to $\sim$
and Theorem \ref{thm:determ_symmetrictriads}

\begin{ex}
Let us consider the case when:
\begin{equation*}
\begin{cases}
(G,\theta_{1},\theta_{2})=(SO(8),SO(4)\times SO(4),U(4)),\\
(G,\theta_{1}',\theta_{2}')=(SO(8),SO(4)\times SO(4),SO(2)\times SO(6)).
\end{cases}
\end{equation*}
It follows from \cite[Corollary 5.23, (1)]{BI}
that
$(SO(8),SO(4)\times SO(4),U(4))$
and $(SO(8),SO(4)\times SO(4),SO(2)\times SO(6))$
are isomorphic with respect to $\sim$ at the Lie algebra level.
Furthermore,
from Example \ref{ex:stmSI}, (1),
we have
\[
(\tilde{\Sigma},\Sigma,W;m,n)=(\mbox{I-}C_{2};m,n)
\equiv(\mbox{I-}B_{2};m',n')=(\tilde{\Sigma}',\Sigma',W';m',n').
\]
However, 
$(SO(8),SO(4)\times SO(4),U(4))$
and $(SO(8),SO(4)\times SO(4),SO(2)\times SO(6))$
are \textit{not} isomorphic with respect to $\sim$
at the Lie group level.
Indeed, the centers of $U(4)$
and $SO(2)\times SO(6)$
are $SO(2)$ and $SO(2) \times \mathbb{Z}_{2}$,
respectively.
\end{ex}

\subsection{Symmetric triads of type (IV)
with multiplicities for commutative compact symmetric triads}\label{sec:typeIV_CST}

In this subsection,
we study the symmetric triads
of type (IV) with multiplicities
for commutative compact symmetric triads
$(G,\theta_{1},\theta_{2})$ with $\theta_{1}\sim\theta_{2}$.
We first explain a construction
of a symmetric triad of type (IV)
with multiplicities from $(G,\theta_{1},\theta_{2})$
in a similar manner
as in Section \ref{sec:ccst_symm}.
From $\theta_{1}\sim\theta_{2}$,
there exists $g\in G$ satisfying
$\theta_{2}=\tau_{g}\theta_{1}\tau_{g}^{-1}$.
Let $\mathfrak{a}$ be a maximal abelian
subspace of $\mathfrak{m}_{1}$.
We set $A=\exp(\mathfrak{a})\subset G$.
Since we have $G=K_{1}AK_{1}$,
there exist $k,k'\in K_{1}$
and $Y\in\mathfrak{a}$ satisfying
$g=k\exp(Y)k'$,
from which
$(G,\theta_{1},\theta_{2})\equiv
(G,\theta_{1},\tau_{\exp(Y)}\theta_{1}\tau_{\exp(Y)}^{-1})$ holds.
Let $(\bar{\Sigma};\bar{m})$
denote the restricted root system
with multiplicity for
$(G,\theta_{1})$ associated with $\mathfrak{a}$.
By the commutativity of $\theta_{1}$
and $\theta_{2}$,
$Y$ is an element of
the lattice $\Gamma$ for $\bar{\Sigma}$
as in \eqref{eqn:Gamma}.
Without loss of generalities,
we may assume that $\theta_{2}=\tau_{\exp(Y)}\theta_{1}\tau_{\exp(Y)}^{-1}$ for $Y\in\Gamma$.
Then we have $\mathfrak{k}_{2}=e^{\mathrm{ad}(Y)}\mathfrak{k}_{1}$
and $\mathfrak{m}_{2}=e^{\mathrm{ad}(Y)}\mathfrak{m}_{1}$.
This implies that $\mathfrak{a}$
is also a maximal abelian subspace of $\mathfrak{m}_{2}$.
In particular, $\mathfrak{a}$ gives a maximal abelian subspace of $\mathfrak{m}_{1}\cap\mathfrak{m}_{2}$.
As in Section \ref{sec:ccst_symm},
we can construct $(\tilde{\Sigma},\Sigma,W;m,n)$
from $(G,\theta_{1},\theta_{2})$ and $\mathfrak{a}$.

\begin{pro}\label{pro:symmiv_theta12sim}
Assume that $G$ is simple and
that $\theta_{1}\sim\theta_{2}$ holds.
Under the above settings,
the obtaining
$(\tilde{\Sigma},\Sigma,W;m,n)$
is the symmetric triad 
$(\bar{\Sigma},\Sigma_{Y},W_{Y};m,n)$
of type (IV) with multiplicities as in Definition
\ref{def:symiv}, that is,
\begin{equation}
\Sigma_{Y}=\{\lambda\in\bar{\Sigma}\mid
\INN{\lambda}{2Y}\in 2\pi\mathbb{Z}\},\quad
W_{Y}=\bar{\Sigma}-\Sigma_{Y},
\end{equation}
and $(m(\lambda),n(\lambda))=(\bar{m}_{\lambda},0)$
for $\lambda\in\Sigma_{Y}$;
$(m(\lambda),n(\lambda))=(0,\bar{m}_{\lambda})$
for $\lambda\in W_{Y}$.
In particular,
in the case when $Y=0$,
we have
$\theta_{1}=\theta_{2}$
and $(\tilde{\Sigma},\Sigma,W)=
(\bar{\Sigma},\bar{\Sigma},\emptyset)$.
\end{pro}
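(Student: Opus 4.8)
The plan is to reduce the entire statement to a single eigenvalue computation: I determine exactly how $\theta_{1}\theta_{2}$ acts on each root space $\mathfrak{g}(\mathfrak{a},\lambda)$. First I would record two preliminary facts. Since the joint root space decomposition $\mathfrak{g}^{\mathbb{C}}=\mathfrak{g}(\mathfrak{a},0)\oplus\sum_{\lambda}\mathfrak{g}(\mathfrak{a},\lambda)$ depends only on the adjoint action of $\mathfrak{a}$, and $\mathfrak{a}$ is maximal abelian in $\mathfrak{m}_{1}$ (as well as in $\mathfrak{m}_{2}$), the set $\tilde{\Sigma}$ coincides with the restricted root system $\bar{\Sigma}$ of $(G,\theta_{1})$ associated with $\mathfrak{a}$, and $\bar{m}_{\lambda}=\dim_{\mathbb{C}}\mathfrak{g}(\mathfrak{a},\lambda)=m(\lambda)+n(\lambda)$ for every $\lambda\in\tilde{\Sigma}$. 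Moreover, since $G$ is simple, $\tilde{\Sigma}=\bar{\Sigma}$ is irreducible, so condition (1) of Definition \ref{def:symiv} holds automatically.

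Next I would carry out the eigenvalue computation. From $\theta_{2}=\tau_{\exp(Y)}\theta_{1}\tau_{\exp(Y)}^{-1}$ we get $\theta_{2}=e^{\mathrm{ad}(Y)}\theta_{1}e^{-\mathrm{ad}(Y)}$ on $\mathfrak{g}^{\mathbb{C}}$, whence $\theta_{1}\theta_{2}=\theta_{1}e^{\mathrm{ad}(Y)}\theta_{1}e^{-\mathrm{ad}(Y)}$. For $X\in\mathfrak{g}(\mathfrak{a},\lambda)$ one has $\mathrm{ad}(Y)X=\sqrt{-1}\INN{\lambda}{Y}X$ because $Y\in\mathfrak{a}$, while $\theta_{1}X\in\mathfrak{g}(\mathfrak{a},-\lambda)$ because $\theta_{1}$ acts as $-1$ on $\mathfrak{a}$; tracking these two facts together with $\theta_{1}^{2}=1$ through the four composed factors yields
\[
\theta_{1}\theta_{2}X=e^{-\sqrt{-1}\INN{\lambda}{2Y}}X,\qquad X\in\mathfrak{g}(\mathfrak{a},\lambda).
\]
In other words, $\theta_{1}\theta_{2}$ acts on $\mathfrak{g}(\mathfrak{a},\lambda)$ by the scalar $e^{-\sqrt{-1}\INN{\lambda}{2Y}}$.

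To finish, I would invoke $Y\in\Gamma$: by \eqref{eqn:Gamma} we have $\INN{\lambda}{Y}\in(\pi/2)\mathbb{Z}$, so $\INN{\lambda}{2Y}\in\pi\mathbb{Z}$ and the scalar above lies in $\{\pm1\}$. Hence $\mathfrak{g}(\mathfrak{a},\lambda)$ is entirely a $(+1)$-eigenspace of $\theta_{1}\theta_{2}$ when $\INN{\lambda}{2Y}\in2\pi\mathbb{Z}$, and entirely a $(-1)$-eigenspace when $\INN{\lambda}{2Y}\in\pi+2\pi\mathbb{Z}$. Reading off $\Sigma$, $W$, $m$ and $n$ from the construction in Section \ref{sec:ccst_symm} then gives $\Sigma=\Sigma_{Y}$, $W=W_{Y}=\bar{\Sigma}-\Sigma_{Y}$, together with $(m(\lambda),n(\lambda))=(\bar{m}_{\lambda},0)$ on $\Sigma_{Y}$ and $(0,\bar{m}_{\lambda})$ on $W_{Y}$; in particular $\Sigma\cap W=\emptyset$, so $(\tilde{\Sigma},\Sigma,W;m,n)$ is the symmetric triad $(\bar{\Sigma},\Sigma_{Y},W_{Y};m,n)$ of type (IV). The case $Y=0$ is immediate: then $\theta_{2}=\theta_{1}$, every scalar equals $1$, and $(\tilde{\Sigma},\Sigma,W)=(\bar{\Sigma},\bar{\Sigma},\emptyset)$.

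The argument is short and essentially forced; the only point requiring care is the bookkeeping of the four composed maps in the displayed identity, where one must use that $\theta_{1}$ interchanges $\mathfrak{g}(\mathfrak{a},\lambda)$ and $\mathfrak{g}(\mathfrak{a},-\lambda)$ and that $\mathrm{ad}(Y)$ acts by opposite scalars on these two spaces. Once the scalar $e^{-\sqrt{-1}\INN{\lambda}{2Y}}$ is obtained, the membership $Y\in\Gamma$ does all the remaining work, so I do not anticipate a genuine obstacle beyond this routine verification.
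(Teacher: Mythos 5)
Your argument is correct. The eigenvalue computation is sound: since $Y\in\mathfrak{a}\subset\mathfrak{m}_{1}$ one has $\theta_{1}e^{\mathrm{ad}(Y)}\theta_{1}=e^{-\mathrm{ad}(Y)}$, hence $\theta_{1}\theta_{2}=e^{-2\,\mathrm{ad}(Y)}$, which acts on $\mathfrak{g}(\mathfrak{a},\lambda)$ by $e^{-\sqrt{-1}\INN{\lambda}{2Y}}\in\{\pm1\}$ for $Y\in\Gamma$; plugging this into the definitions of $\Sigma$, $W$, $m$, $n$ from Section \ref{sec:ccst_symm} gives exactly the claimed triad of type (IV), and your identification $\tilde{\Sigma}=\bar{\Sigma}$, $\bar{m}_{\lambda}=\dim_{\mathbb{C}}\mathfrak{g}(\mathfrak{a},\lambda)$ is justified because $\mathfrak{a}$ is maximal abelian in $\mathfrak{m}_{1}$ itself. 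Your route differs from the paper's: the paper works over $\mathbb{R}$, invoking Takeuchi's lemma (Lemma \ref{lem:Takeuchi_p.89}) to produce bases $S_{\alpha}\in\mathfrak{k}_{\lambda}$, $T_{\alpha}\in\mathfrak{m}_{\lambda}$ on which $e^{\mathrm{ad}(Y)}$ acts by rotation through the angle $\INN{\alpha}{Y}$, then computes $\mathfrak{k}_{2}=e^{\mathrm{ad}(Y)}\mathfrak{k}_{1}$ and $\mathfrak{m}_{2}=e^{\mathrm{ad}(Y)}\mathfrak{m}_{1}$ and reads off $\Sigma$ and $W$ from the four intersections $\mathfrak{k}_{1}\cap\mathfrak{k}_{2}$, $\mathfrak{m}_{1}\cap\mathfrak{m}_{2}$, $\mathfrak{k}_{1}\cap\mathfrak{m}_{2}$, $\mathfrak{m}_{1}\cap\mathfrak{k}_{2}$. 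Your version is shorter and works directly at the level of the complex eigenspaces $\mathfrak{g}(\mathfrak{a},\lambda,\epsilon)$ by which $\Sigma$, $W$, $m$, $n$ are defined, avoiding the real bases entirely; what you lose is the explicit description of the four intersection subspaces, which the paper obtains as a byproduct and which is of independent geometric interest (it identifies $\mathfrak{k}_{1}\cap\mathfrak{k}_{2}$ and $\mathfrak{m}_{1}\cap\mathfrak{m}_{2}$ concretely). Both proofs are complete and rest on the same underlying fact about the action of $e^{\mathrm{ad}(Y)}$ on restricted root spaces.
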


In order to prove it
we need some preparations.
For $\lambda\in\bar{\Sigma}$,
we define the subspace $\mathfrak{k}_{\lambda}$
of $\mathfrak{k}_{1}$
and $\mathfrak{m}_{\lambda}$
of $\mathfrak{m}_{1}$ as follows:
\begin{align}
\mathfrak{k}_{\lambda}
&=\{X\in\mathfrak{k}_{1}\mid
[H,[H,X]]=-\INN{\lambda}{H}^{2}X,H\in\mathfrak{a}\},\\
\mathfrak{m}_{\lambda}
&=\{X\in\mathfrak{m}_{1}\mid
[H,[H,X]]=-\INN{\lambda}{H}^{2}X,H\in\mathfrak{a}\}.
\end{align}
Then we have root space decompositions
of $\mathfrak{k}_{1}$
and $\mathfrak{m}_{1}$ as follows:
\begin{equation}
\mathfrak{k}_{1}
=\mathfrak{k}_{0}\oplus\sum_{\lambda\in\bar{\Sigma}^{+}}\mathfrak{k}_{\lambda},\quad
\mathfrak{m}_{1}
=\mathfrak{a}\oplus\sum_{\lambda\in\bar{\Sigma}^{+}}\mathfrak{m}_{\lambda},
\end{equation}
where $\mathfrak{k}_{0}=\{X\in\mathfrak{k}_{1}\mid
[\mathfrak{a},X]=\{0\}\}$ is a subalgebra of $\mathfrak{k}_{1}$
and $\bar{\Sigma}^{+}$
is the set of positive roots of $\bar{\Sigma}$
for an ordering on $\bar{\Sigma}$.
Let $\mathfrak{t}$ be a maximal abelian subalgebra
of $\mathfrak{g}$ containing $\mathfrak{a}$,
and $\Delta$ denote the root system of $\mathfrak{g}$
with respect to $\mathfrak{t}$.
Since $\mathfrak{t}$ is $\theta_{1}$-invariant,
$\sigma=-d\theta_{1}|_{\mathfrak{t}}$
gives an involutive linear isometry on $\mathfrak{t}$
satisfying $\sigma(\Delta)=\Delta$.
We set $\Delta_{0}=\{\alpha\in\Delta\mid
\INN{\alpha}{\mathfrak{a}}=\{0\}\}=\{\alpha\in\Delta\mid
\mathrm{pr}(\alpha)=0\}$,
where $\mathrm{pr}:\mathfrak{t}\to\mathfrak{a}$
denotes the orthogonal projection
of $\mathfrak{t}=(\mathfrak{t}\cap\mathfrak{k}_{1})\oplus\mathfrak{a}$.

\begin{lem}[{\cite[p.~89, Lemma 1]{Takeuchi}}]\label{lem:Takeuchi_p.89}
For each $\alpha\in\Delta-\Delta_{0}$,
there exist $S_{\alpha}\in\mathfrak{k}_{1}$
and $T_{\alpha}\in\mathfrak{m}_{1}$ such that
$\{S_{\alpha}\mid \alpha\in\Delta,\mathrm{pr}(\alpha)=\lambda\}$
and $\{T_{\alpha}\mid \alpha\in\Delta,\mathrm{pr}(\alpha)=\lambda\}$
are respectively orthogonal bases of $\mathfrak{k}_{\lambda}$
and $\mathfrak{m}_{\lambda}$ satisfying
the following relations:
\begin{equation}\label{eqn:ST_alpha}
[H,S_{\alpha}]
=\INN{\alpha}{H}T_{\alpha},\quad
[H,T_{\alpha}]
=-\INN{\alpha}{H}S_{\alpha},\quad
H\in\mathfrak{a},
\end{equation}
and $[S_{\alpha},T_{\alpha}]=\lambda$.
\end{lem}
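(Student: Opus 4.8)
The plan is to analyze the representation of the abelian algebra $\mathfrak{a}$ on $\mathfrak{k}_{1}\oplus\mathfrak{m}_{1}$ afforded by $\mathrm{ad}$, and to build the desired vectors intrinsically from this structure rather than from an explicit Weyl basis. First I would recall that for a fixed $\lambda\in\bar{\Sigma}^{+}$ the subspace $\mathfrak{k}_{\lambda}\oplus\mathfrak{m}_{\lambda}$ is $\mathrm{ad}(\mathfrak{a})$-invariant and that $\mathrm{ad}(H)^{2}=-\INN{\lambda}{H}^{2}\,\mathrm{id}$ on it for every $H\in\mathfrak{a}$. Fixing a regular $H_{0}\in\mathfrak{a}$ (so that $\INN{\lambda}{H_{0}}\neq0$), the operator $J:=\INN{\lambda}{H_{0}}^{-1}\mathrm{ad}(H_{0})$ satisfies $J^{2}=-\mathrm{id}$ and is skew-symmetric, hence orthogonal, with respect to the invariant inner product; since $\mathfrak{k}_{\lambda}$ and $\mathfrak{m}_{\lambda}$ are the $\theta_{1}$-eigenspaces of $\mathfrak{k}_{\lambda}\oplus\mathfrak{m}_{\lambda}$ and $\mathrm{ad}(H_{0})$ anti-commutes with $\theta_{1}$, the map $J$ interchanges $\mathfrak{k}_{\lambda}$ and $\mathfrak{m}_{\lambda}$.

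A key point I would isolate is that $\mathrm{ad}(H)$ is determined on $\mathfrak{k}_{\lambda}\oplus\mathfrak{m}_{\lambda}$ by its value at $H_{0}$: if $\INN{\lambda}{H'}=0$ then $\mathrm{ad}(H')$ is skew-symmetric with $\mathrm{ad}(H')^{2}=0$ there, whence $\mathrm{ad}(H')=0$ (a skew operator squaring to zero vanishes on a Euclidean space, since $\|\mathrm{ad}(H')X\|^{2}=-\INN{\mathrm{ad}(H')^{2}X}{X}=0$). Decomposing $H=\INN{\lambda}{H_{0}}^{-1}\INN{\lambda}{H}H_{0}+H'$ with $\INN{\lambda}{H'}=0$ then gives $\mathrm{ad}(H)=\INN{\lambda}{H}\,J$ on $\mathfrak{k}_{\lambda}\oplus\mathfrak{m}_{\lambda}$. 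Consequently, choosing an orthonormal basis $\{S_{\alpha}\}$ of $\mathfrak{k}_{\lambda}$ and setting $T_{\alpha}:=JS_{\alpha}$ produces an orthonormal basis of $\mathfrak{m}_{\lambda}$ (as $J$ is orthogonal) for which $[H,S_{\alpha}]=\INN{\lambda}{H}T_{\alpha}$ and $[H,T_{\alpha}]=J[H,S_{\alpha}]=-\INN{\lambda}{H}S_{\alpha}$ for all $H\in\mathfrak{a}$; because $\INN{\lambda}{H}=\INN{\alpha}{H}$ for $H\in\mathfrak{a}$ whenever $\mathrm{pr}(\alpha)=\lambda$, these are precisely the relations \eqref{eqn:ST_alpha}. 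Orthogonality of the full families across distinct $\lambda$ and between the $\mathfrak{k}$- and $\mathfrak{m}$-parts is immediate from the $\mathrm{ad}(\mathfrak{a})$-isotypic and $\theta_{1}$-eigenspace decompositions.

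To pin down $[S_{\alpha},T_{\alpha}]=\lambda$, I would first show the bracket lies in $\mathfrak{a}$. By the Jacobi identity together with the relations just established, $[H,[S_{\alpha},T_{\alpha}]]=[[H,S_{\alpha}],T_{\alpha}]+[S_{\alpha},[H,T_{\alpha}]]=\INN{\lambda}{H}[T_{\alpha},T_{\alpha}]-\INN{\lambda}{H}[S_{\alpha},S_{\alpha}]=0$ for all $H\in\mathfrak{a}$; thus $[S_{\alpha},T_{\alpha}]\in\mathfrak{m}_{1}$ centralizes $\mathfrak{a}$, and by the maximality of $\mathfrak{a}$ in $\mathfrak{m}_{1}$ it must lie in $\mathfrak{a}$. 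Invariance of the inner product then yields, for every $H\in\mathfrak{a}$, the identity $\INN{[S_{\alpha},T_{\alpha}]}{H}=\INN{T_{\alpha}}{[H,S_{\alpha}]}=\INN{\lambda}{H}\,\|T_{\alpha}\|^{2}$, so that $[S_{\alpha},T_{\alpha}]=\|T_{\alpha}\|^{2}\lambda=\lambda$ after the normalization $\|S_{\alpha}\|=1$, which forces $\|T_{\alpha}\|=1$ consistently since $J$ is orthogonal.

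The remaining, and to my mind principal, task is to match this intrinsic construction with the root indexing in the statement, i.e.\ to justify that the bases are labelled by $\{\alpha\in\Delta\mid\mathrm{pr}(\alpha)=\lambda\}$. For this I would pass to $\mathfrak{g}^{\mathbb{C}}$, use that $\theta_{1}$ carries $\mathfrak{g}(\mathfrak{t},\alpha)$ to $\mathfrak{g}(\mathfrak{t},-\sigma\alpha)$, and separate the roots over $\lambda$ into the \emph{real} type ($\sigma\alpha=\alpha$, a single root with $\mathrm{pr}(\alpha)=\alpha=\lambda$) and the \emph{complex} type ($\sigma\alpha\neq\alpha$, occurring in pairs $\{\alpha,\sigma\alpha\}$). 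A direct eigenspace count for $\theta_{1}$ on the span of the relevant root spaces then gives $\dim_{\mathbb{R}}\mathfrak{k}_{\lambda}=\dim_{\mathbb{R}}\mathfrak{m}_{\lambda}=\#\{\alpha\in\Delta\mid\mathrm{pr}(\alpha)=\lambda\}$, so that the orthonormal bases constructed above have exactly the right cardinality to be indexed by these roots, and the labelling reduces to fixing a bijection. I expect the complex-structure part of the argument to be routine, while the main obstacle is precisely this bookkeeping: verifying the dimension count uniformly across the real and complex cases and confirming that roots of $\Delta_{0}$ contribute nothing to $\mathfrak{k}_{\lambda}\oplus\mathfrak{m}_{\lambda}$.
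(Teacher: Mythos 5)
Your argument is correct, and it takes a genuinely different route from the source: the paper does not prove this lemma at all, but quotes it from Takeuchi's book, where the proof is the classical explicit one --- one chooses root vectors $E_{\alpha}\in\mathfrak{g}(\mathfrak{t},\alpha)$ normalized compatibly with the compact real form $\mathfrak{g}$ and defines $S_{\alpha}$, $T_{\alpha}$ as the $\mathfrak{k}_{1}$- and $\mathfrak{m}_{1}$-components of suitable real and imaginary combinations of $E_{\pm\alpha}$, the relations then being read off from the root space decomposition. Your intrinsic route replaces this by the observation that $\mathrm{ad}(H)^{2}=-\INN{\lambda}{H}^{2}\,\mathrm{id}$ on $V_{\lambda}=\mathfrak{k}_{\lambda}\oplus\mathfrak{m}_{\lambda}$, so that, by your (correct) vanishing argument $\|\mathrm{ad}(H')X\|^{2}=-\INN{\mathrm{ad}(H')^{2}X}{X}=0$ when $\INN{\lambda}{H'}=0$, one has $\mathrm{ad}(H)=\INN{\lambda}{H}\,J$ on $V_{\lambda}$ for a canonical complex structure $J$ interchanging $\mathfrak{k}_{\lambda}$ and $\mathfrak{m}_{\lambda}$; the relations \eqref{eqn:ST_alpha} for an arbitrary orthonormal basis with $T_{\alpha}=JS_{\alpha}$, the placement $[S_{\alpha},T_{\alpha}]\in\mathfrak{a}$ via maximality of $\mathfrak{a}$ in $\mathfrak{m}_{1}$, and $[S_{\alpha},T_{\alpha}]=\|S_{\alpha}\|^{2}\lambda=\lambda$ are all verified correctly (note $\INN{\alpha}{H}=\INN{\mathrm{pr}(\alpha)}{H}$ for $H\in\mathfrak{a}$, so the labelling indeed carries no constraint beyond cardinality; one small point: skew-symmetry alone does not make $J$ orthogonal --- you also need $J^{2}=-\mathrm{id}$, which you have). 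What Takeuchi's construction buys is that the indexing by $\{\alpha\in\Delta\mid\mathrm{pr}(\alpha)=\lambda\}$ is automatic; what yours buys is that the bracket relations, the equality of norms, and the orthogonality come for free, at the cost of the dimension count you flag as the main obstacle. That count, however, is lighter than your sketch suggests: you do not need the real/complex case analysis of the $\theta_{1}$-action on root spaces, since $(\mathfrak{k}_{\lambda}\oplus\mathfrak{m}_{\lambda})^{\mathbb{C}}=\mathfrak{g}(\mathfrak{a},\lambda)\oplus\mathfrak{g}(\mathfrak{a},-\lambda)=\sum_{\mathrm{pr}(\alpha)=\pm\lambda}\mathfrak{g}(\mathfrak{t},\alpha)$ gives $\dim_{\mathbb{R}}(\mathfrak{k}_{\lambda}\oplus\mathfrak{m}_{\lambda})=2\,\#\{\alpha\in\Delta\mid\mathrm{pr}(\alpha)=\lambda\}$, while your $J$ already gives $\dim_{\mathbb{R}}\mathfrak{k}_{\lambda}=\dim_{\mathbb{R}}\mathfrak{m}_{\lambda}$, whence $\dim_{\mathbb{R}}\mathfrak{k}_{\lambda}=\#\{\alpha\in\Delta\mid\mathrm{pr}(\alpha)=\lambda\}$ at once; and roots of $\Delta_{0}$ are excluded simply because $\mathrm{pr}(\alpha)=0\neq\lambda$.
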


We are ready to prove Proposition \ref{pro:symmiv_theta12sim}.

\begin{proof}[Proof of Proposition \ref{pro:symmiv_theta12sim}]
Clearly,
we have $\tilde{\Sigma}=\bar{\Sigma}$.
By using \eqref{eqn:ST_alpha},
we get
\begin{equation}
e^{\mathrm{ad}(Y)}S_{\alpha}
=\cos\INN{\alpha}{Y}S_{\alpha}+\sin\INN{\alpha}{Y}T_{\alpha},\quad
e^{\mathrm{ad}(Y)}T_{\alpha}
=-\sin\INN{\alpha}{Y}S_{\alpha}+\cos\INN{\alpha}{Y}T_{\alpha}.
\end{equation}
Then $\mathfrak{k}_{2}$ and $\mathfrak{m}_{2}$
have the following decompositions:
\begin{align}
&\mathfrak{k}_{2}=
e^{\mathrm{ad}(Y)}\mathfrak{k}_{1}=\mathfrak{k}_{0}\oplus
\sum_{\lambda\in\bar{\Sigma}^{+};\,\INN{\lambda}{2Y}\in 2\pi\mathbb{Z}}\mathfrak{k}_{\lambda}\oplus
\sum_{\lambda\in\bar{\Sigma}^{+};\,\INN{\lambda}{2Y}\in\pi+2\pi\mathbb{Z}}\mathfrak{m}_{\lambda},\\
&\mathfrak{m}_{2}=
e^{\mathrm{ad}(Y)}\mathfrak{m}_{1}=\mathfrak{a}\oplus
\sum_{\lambda\in\bar{\Sigma}^{+};\,\INN{\lambda}{2Y}\in 2\pi\mathbb{Z}}\mathfrak{m}_{\lambda}\oplus
\sum_{\lambda\in\bar{\Sigma}^{+};\,\INN{\lambda}{2Y}\in\pi+ 2\pi\mathbb{Z}}\mathfrak{k}_{\lambda}.
\end{align}
This yields the following decompositions:
\begin{equation}
\begin{array}{ll}
 \displaystyle{{\mathfrak k}_1\cap {\mathfrak k}_2={\mathfrak k}_0\oplus \sum_{\langle\lambda,2Y\rangle\in 2\pi\mathbb{Z}}{\mathfrak k}_\lambda},& 
 \displaystyle{{\mathfrak m}_1\cap {\mathfrak m}_2=\mathfrak a\oplus \sum_{\langle\lambda,2Y\rangle\in2\pi\mathbb{Z}}{\mathfrak m}_\lambda},
 \\
\displaystyle{{\mathfrak k}_1\cap {\mathfrak m}_2=\sum_{\langle\lambda,2Y\rangle\in\pi+2\pi\mathbb{Z}}{\mathfrak m}_\lambda}, & \displaystyle{{\mathfrak m}_1\cap {\mathfrak k}_2=\sum_{\langle\lambda,2Y\rangle\in\pi+2\pi\mathbb{Z}}{\mathfrak k}_\lambda}.
\end{array}
\end{equation}
From above,
we have the following descriptions of $\Sigma, W$:
\[
\begin{cases}
\Sigma=\{\lambda\in\bar{\Sigma}\mid \langle \lambda ,2Y\rangle \in 2\pi\mathbb{Z}\}=\Sigma_{Y},\\
W=\{\alpha\in\bar{\Sigma}\mid \langle\alpha ,2Y\rangle \in\pi+2\pi\mathbb{Z}\}=\bar{\Sigma}-\Sigma_{Y}.
\end{cases}
\]
Clearly,
the multiplicities $m,n$
are in the statement.
Thus, we have the assertion.
\end{proof}

\begin{rem}
The restricted root system of compact symmetric pairs $(G,\theta_{1})$ are given in \cite[TABLE VI, Chapter X]{Helgason} in the case when $G$ is simple.
From this, we can easily derive the explicit description of the
symmetric triad
$(\tilde{\Sigma},\Sigma,W;m,n)\sim(\bar{\Sigma},\bar{\Sigma},\emptyset;\bar{m},0)$
of type (IV) with multiplicities
corresponding to $(G,\theta_{1},\theta_{2})$ with $\theta_{1}\sim\theta_{2}$.
\end{rem}

We consider analogues
of Proposition
\ref{pro:cstsim_symmsim}
and Corollary \ref{cor:simsim_simplyconnected}.
Namely, we show the following theorem.

\begin{thm}
Let $(G,\theta_{1},\theta_{2})$
and $(G,\theta_{1}',\theta_{2}')$
be two commutative compact symmetric triads
with $\theta_{1}\sim\theta_{2}$
and $\theta_{1}'\sim\theta_{2}'$.
Let $(\tilde{\Sigma},\Sigma,W;m,n)$
and $(\tilde{\Sigma}',\Sigma',W';m',n')$
denote the symmetric triads of type (IV)
with multiplicities corresponding to
$(G,\theta_{1},\theta_{2})$
and $(G,\theta_{1}',\theta_{2}')$, respectively.
Then $(G,\theta_{1},\theta_{2})\sim(G,\theta_{1}',\theta_{2}')$ yields
$(\tilde{\Sigma},\Sigma,W;m,n)\sim(\tilde{\Sigma}',\Sigma',W';m',n')$.
Conversely,
if $(\tilde{\Sigma},\Sigma,W;m,n)\sim(\tilde{\Sigma}',\Sigma',W';m',n')$,
then $(G,\theta_{1},\theta_{2})$ is isomorphic to $(G,\theta_{1}',\theta_{2}')$
with respect to $\sim$ at the Lie algebra level.
Then, in the case when $G$ is simply-connected,
we have $(G,\theta_{1},\theta_{2})\sim(G,\theta_{1}',\theta_{2}')$.
\end{thm}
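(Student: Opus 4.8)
The plan is to reduce the whole statement to one elementary observation, namely that for triads with $\theta_{1}\sim\theta_{2}$ the Matsuki class of $(G,\theta_{1},\theta_{2})$ is governed entirely by the symmetric pair $(G,\theta_{1})$. First I would record the following claim: \emph{if $\theta_{1}\sim\theta_{2}$ and $\theta_{1}'\sim\theta_{2}'$, then $(G,\theta_{1},\theta_{2})\sim(G,\theta_{1}',\theta_{2}')$ holds if and only if $\theta_{1}'=\varphi\theta_{1}\varphi^{-1}$ for some $\varphi\in\mathrm{Aut}(G)$.} The ``only if'' part is immediate from Definition \ref{dfn:CST_sim}. For the ``if'' part, write $\theta_{2}=\tau_{g}\theta_{1}\tau_{g}^{-1}$ and $\theta_{2}'=\tau_{g'}\theta_{1}'\tau_{g'}^{-1}$; then $\varphi\theta_{2}\varphi^{-1}=\tau_{\varphi(g)}\theta_{1}'\tau_{\varphi(g)}^{-1}$, so that $\tau=\tau_{g'}\tau_{\varphi(g)}^{-1}\in\mathrm{Int}(G)$ satisfies $\theta_{2}'=\tau\varphi\theta_{2}\varphi^{-1}\tau^{-1}$, which is exactly \eqref{eqn:Matsuki_equiv}. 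The identical computation applies verbatim at the Lie algebra level.

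With this claim in hand, the forward implication is short. By Proposition \ref{pro:symmiv_theta12sim}, the base of the type (IV) symmetric triad with multiplicities attached to $(G,\theta_{1},\theta_{2})$ is the restricted root system with multiplicity $(\bar{\Sigma};\bar{m})$ of $(G,\theta_{1})$, and by Definition \ref{def:symivequiv} the relation $\sim$ between type (IV) symmetric triads with multiplicities is precisely the relation $\simeq$ between their bases. If $(G,\theta_{1},\theta_{2})\sim(G,\theta_{1}',\theta_{2}')$, then $\theta_{1}'=\varphi\theta_{1}\varphi^{-1}$ for some $\varphi\in\mathrm{Aut}(G)$, and $d\varphi$ carries a maximal abelian subspace of $\mathfrak{m}_{1}$ together with its restricted root space decomposition onto the corresponding data for $\mathfrak{m}_{1}'$; hence $(\bar{\Sigma};\bar{m})\simeq(\bar{\Sigma}';\bar{m}')$, which gives $(\tilde{\Sigma},\Sigma,W;m,n)\sim(\tilde{\Sigma}',\Sigma',W';m',n')$.

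For the converse I would run the same chain backwards. The hypothesis $(\tilde{\Sigma},\Sigma,W;m,n)\sim(\tilde{\Sigma}',\Sigma',W';m',n')$ gives $(\bar{\Sigma};\bar{m})\simeq(\bar{\Sigma}';\bar{m}')$ by Definition \ref{def:symivequiv}. The essential input is then the classical fact that, for $G$ simple, the restricted root system with multiplicities determines a compact symmetric pair up to local isomorphism: reading off \cite[TABLE VI, Chapter X]{Helgason}, no two non-conjugate involutions of $\mathfrak{g}$ share the same restricted root system with multiplicities, so $(\mathfrak{g},\theta_{1})\cong(\mathfrak{g},\theta_{1}')$ at the Lie algebra level. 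The claim above, applied at the Lie algebra level, then yields that $(G,\theta_{1},\theta_{2})$ is isomorphic to $(G,\theta_{1}',\theta_{2}')$ with respect to $\sim$ at the Lie algebra level. Finally, when $G$ is simply connected every automorphism of $\mathfrak{g}$ integrates to one of $G$ and inner automorphisms correspond, so the Lie algebra isomorphism lifts and $(G,\theta_{1},\theta_{2})\sim(G,\theta_{1}',\theta_{2}')$ holds at the group level.

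The main obstacle is the classification input used in the converse, i.e.\ confirming that the base $(\bar{\Sigma};\bar{m})$ pins down the local type of $(G,\theta_{1})$. This amounts to a finite inspection of the table of restricted root systems with multiplicities of simple compact symmetric pairs, checking that within each fixed simple $\mathfrak{g}$ there are no coincidences between distinct involutions; everything else, including the key claim and the forward implication, is purely formal. I note also that, in contrast to Corollary \ref{cor:simsim_simplyconnected}, no ``swapped'' alternative $(G,\theta_{2}',\theta_{1}')$ is needed here, since $\theta_{1}'\sim\theta_{2}'$ already forces $(G,\theta_{2}',\theta_{1}')\sim(G,\theta_{1}',\theta_{2}')$ by the key claim.
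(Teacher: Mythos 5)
Your proof is correct and follows essentially the same route as the paper: both directions reduce to the fact that the $\sim$-class of a type (IV) triad with multiplicities is exactly the isomorphism class of its base $(\bar{\Sigma};\bar{m})$, which in turn is a complete local invariant of the symmetric pair $(G,\theta_{1})$. Your ``key claim'' is just an explicit packaging of the chain $(G,\theta_{1},\theta_{2})\sim(G,\theta_{1},\theta_{1})\equiv(G,\theta_{1}',\theta_{1}')\sim(G,\theta_{1}',\theta_{2}')$ that the paper uses, so the two arguments coincide in substance.
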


Here, we remark that
if $\theta_{1}\sim\theta_{2}$,
then $(G,\theta_{1},\theta_{2})\sim (G,\theta_{2},\theta_{1})$ holds.

\begin{proof}
If $(G,\theta_{1},\theta_{2})\sim(G,\theta_{1}',\theta_{2}')$, then $(G,\theta_{1})$
is isomorphic to $(G,\theta_{1}')$
as compact symmetric pairs.
This implies
that the restricted root system
$(\bar{\Sigma}',\bar{m}')$
with multiplicity
for $(G,\theta_{1}')$
is isomorphic to $(\bar{\Sigma},\bar{m})$,
from which
$(\tilde{\Sigma}',\Sigma',W';m',n')$
is isomorphic to $(\tilde{\Sigma},\Sigma,W;m,n)$
with respect to $\sim$ (see, Definition \ref{def:symivequiv}).

Conversely, we suppose that
$(\tilde{\Sigma},\Sigma,W;m,n)\sim(\tilde{\Sigma}',\Sigma',W';m',n')$ holds.
Then $(\bar{\Sigma},\bar{m})$
is isomorphic to
$(\bar{\Sigma}',\bar{m}')$,
from which $(G,\theta_{1})$
is isomorphic to $(G,\theta_{1}')$
at the Lie algebra level.
The following argument is
valid at the Lie algebra level:
\begin{equation}
(G,\theta_{1},\theta_{2})
\sim(G,\theta_{1},\theta_{1})
\equiv(G,\theta_{1}',\theta_{1}')
\sim(G,\theta_{1}',\theta_{2}').
\end{equation}
Thus, we have the assertion.
\end{proof}

\subsection{The classification of commutative compact symmetric triads with respect to $\equiv$}\label{sec:cst_classify_equiv}

In this subsection,
we consider the classification
problem for commutative compact symmetric
triads with respect to $\equiv$.
Let $G$ be a compact connected
simple Lie group.
Let $(G,\theta_{1},\theta_{2})$
and $(G,\theta_{1}',\theta_{2}')$
be two commutative compact symmetric triads
and,
$(\tilde{\Sigma},\Sigma,W;m,n)$
and $(\tilde{\Sigma}',\Sigma',W';m',n')$
denote the symmetric triads
with multiplicities corresponding to
$(G,\theta_{1},\theta_{2})$
and $(G,\theta_{1}',\theta_{2}')$, respectively.
As shown in Lemma
\ref{lem:cstequiv_symmequiv},
$(G,\theta_{1},\theta_{2})\equiv
(G,\theta_{1}',\theta_{2}')$
yields
$(\tilde{\Sigma},\Sigma,W;m,n)\equiv
(\tilde{\Sigma}',\Sigma',W';m',n')$.
Here,
we note that
$(\tilde{\Sigma},\Sigma,W;m,n)$
and $(\tilde{\Sigma}',\Sigma',W';m',n')$
are maybe of type (IV)
since we do not assume that $\theta_{1}\not\sim\theta_{2}$.
Furthermore, 
it can be verified
that an analogue of Lemma
\ref{lem:cstequiv_symmequiv}
holds in the case when $\theta_{1}\sim\theta_{2}$.

Conversely,
we show that
the symmetric triads with multiplicities
determine commutative compact symmetric
triads at the Lie algebra level,
that is, the following theorem holds.

\begin{thm}\label{thm:equivequiv_ok}
Let $(G,\theta_{1},\theta_{2})$
and $(G,\theta_{1}',\theta_{2}')$
be two commutative compact symmetric triads.
Let $(\tilde{\Sigma},\Sigma,W;m,n)$
and $(\tilde{\Sigma}',\Sigma',W';m',n')$
denote
the symmetric triads
with multiplicities corresponding to
$(G,\theta_{1},\theta_{2})$
and $(G,\theta_{1}',\theta_{2}')$, respectively.
Then if $(\tilde{\Sigma},\Sigma,W;m,n)\equiv(\tilde{\Sigma}',\Sigma',W';m',n')$,
then $(G,\theta_{1},\theta_{2})$ is isomorphic to $(G,\theta_{1}',\theta_{2}')$
or $(G,\theta_{2}',\theta_{1}')$
with respect to $\equiv$ at the Lie algebra level.
Then, in the case when $G$ is simply-connected,
we have $(G,\theta_{1},\theta_{2})\equiv(G,\theta_{1}',\theta_{2}')$
or $(G,\theta_{2}',\theta_{1}')$.
\end{thm}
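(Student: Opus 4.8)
The plan is to derive the statement from its counterparts for the coarser relation $\sim$ and then to sharpen $\sim$ to $\equiv$ using the finer matching of multiplicities. Throughout I would argue at the Lie algebra level, replacing $\mathrm{Aut}(G)$ by $\mathrm{Aut}(\mathfrak{g})$; the simply-connected assertion then follows formally, because for simply connected $G$ the differential gives an isomorphism $\mathrm{Aut}(G)\cong\mathrm{Aut}(\mathfrak{g})$ and an automorphism of the connected group $G$ is determined by its differential, so any Lie-algebra-level $\equiv$ lifts to a genuine $\varphi\in\mathrm{Aut}(G)$ conjugating $(\theta_{1},\theta_{2})$ into $(\theta_{1}',\theta_{2}')$ (or the swap).

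First I would record that $\equiv$ is finer than $\sim$ for symmetric triads with multiplicities: taking $Y=0$ in Definition \ref{dfn:e-relation} makes $\langle\alpha,2Y\rangle\in 2\pi\mathbb{Z}$ for every $\alpha$, so \eqref{eqn:equiv-relation} and \eqref{eqn:equiv-relation-mn} collapse exactly to the conditions of Definition \ref{dfn:stm_equiv}. Hence the hypothesis gives $(\tilde{\Sigma},\Sigma,W;m,n)\sim(\tilde{\Sigma}',\Sigma',W';m',n')$. The root-system isomorphism realizing $\equiv$ carries $\Sigma\cap W$ to $\Sigma'\cap W'$, and $\Sigma\cap W=\emptyset$ characterizes the type (IV) case, i.e. $\theta_{1}\sim\theta_{2}$ (cf.~Proposition \ref{pro:symmiv_theta12sim}); so either both triads are of type (IV) or neither is. In the type (IV) case the theorem of the preceding subsection applies, and otherwise ($\theta_{1}\not\sim\theta_{2}$ and $\theta_{1}'\not\sim\theta_{2}'$) Corollary \ref{cor:simsim_simplyconnected} applies. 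In both cases I obtain $(G,\theta_{1},\theta_{2})\sim(G,\theta_{1}',\theta_{2}')$ or $(G,\theta_{1},\theta_{2})\sim(G,\theta_{2}',\theta_{1}')$ at the Lie algebra level. Since the symmetric triad with multiplicities of $(G,\theta_{2}',\theta_{1}')$ coincides with that of $(G,\theta_{1}',\theta_{2}')$, I may interchange $\theta_{1}'$ and $\theta_{2}'$ if necessary and assume $(G,\theta_{1},\theta_{2})\sim(G,\theta_{1}',\theta_{2}')$; the alternative $(G,\theta_{2}',\theta_{1}')$ in the conclusion records exactly this possible interchange.

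Next I would pass to the normal form used in the proof of Proposition \ref{pro:cstsim_symmsim}: there is no loss of generality in taking $\theta_{1}'=\theta_{1}$ and $\theta_{2}'=\tau_{\exp(Y)}\theta_{2}\tau_{\exp(Y)}^{-1}$ for some $Y\in\Gamma$, with $\mathfrak{a}'=\mathfrak{a}$ and $\tilde{\Sigma}'=\tilde{\Sigma}$. By that proposition the two triads are then related on $\tilde{\Sigma}$ by the $Y$-flip: writing $\tilde{\Sigma}=\Sigma_{Y}^{+}\cup\Sigma_{Y}^{-}$ with $\Sigma_{Y}^{+}=\{\lambda\mid\langle\lambda,2Y\rangle\in 2\pi\mathbb{Z}\}$ and $\Sigma_{Y}^{-}=\{\lambda\mid\langle\lambda,2Y\rangle\in\pi+2\pi\mathbb{Z}\}$, one has $(m',n')=(m,n)$ on $\Sigma_{Y}^{+}$ and $(m',n')=(n,m)$ on $\Sigma_{Y}^{-}$. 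The decisive task is to use the hypothesis that the triads are isomorphic for $\equiv$ — not merely $\sim$ — to cancel the conjugation $\tau_{\exp(Y)}$ by an automorphism that fixes $\theta_{1}$. The $\equiv$-isomorphism supplies $g\in\mathrm{Aut}(\tilde{\Sigma})$ with $g(\Sigma)=\Sigma'$, $g(W)=W'$ and $m=m'\circ g$, $n=n'\circ g$; I would realize $g$ by an automorphism $w\in\mathrm{Aut}(\mathfrak{g})$ that commutes with $\theta_{1}$, preserves $\mathfrak{a}$, and induces $g$ there, and then check that $w\circ\tau_{\exp(Y)}$ (or a suitable variant) fixes $\theta_{1}$ and carries $\theta_{2}$ to $\theta_{2}'$, producing the automorphism that implements $\equiv$.

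The main obstacle is exactly this realization step: turning the abstract root-system isomorphism $g$ into an honest automorphism of $\mathfrak{g}$ that is simultaneously compatible with both involutions. The difficulty is that $g$ may incorporate a genuine $\Sigma\leftrightarrow W$ interchange or an outer diagram symmetry — as in the triality required for $(SO(8),SO(4)\times SO(4),-)$ — so its geometric realization is not canonical and must be matched against the specific flip determined by $Y$. I expect to organize this along the families of Table \ref{table:symmtriext}: for types (I)--(III), Lemma \ref{lem:tildeSigma_SigmaW_determ_sufficient} reduces the combinatorial freedom (since $(\tilde{\Sigma},\Sigma\cap W)$ with the multiplicities already pins the $\sim$-class), while the explicit symmetries of symmetric triads recorded in the classification (Theorem \ref{thm:class_symmI-III}) exhibit the needed $g$ and let one choose the representative so that $w\circ\tau_{\exp(Y)}$ normalizes $(\mathfrak{g},\theta_{1})$. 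For the type (IV) family the conclusion is more direct from Proposition \ref{pro:symmiv_theta12sim}, because there the $\equiv$-class records both the restricted-root base $(\bar{\Sigma};\bar{m})$ of $(\mathfrak{g},\theta_{1})$ and the partition $\Sigma_{Y}\mid W_{Y}$, which together determine the $\mathrm{Aut}(\mathfrak{g})$-orbit of $(\theta_{1},\theta_{2})$ and hence its $\equiv$-class.
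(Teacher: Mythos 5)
Your first half matches the paper: derive $\sim$ of the symmetric triads from $\equiv$, invoke Corollary \ref{cor:simsim_simplyconnected} (together with its type-(IV) analogue) to get $(G,\theta_{1},\theta_{2})\sim(G,\theta_{1}',\theta_{2}')$ or $(G,\theta_{2}',\theta_{1}')$ at the Lie algebra level, interchange $\theta_{1}'$ and $\theta_{2}'$ if necessary, and normalize to $\theta_{1}'=\theta_{1}$, $\theta_{2}'=\tau_{\exp(Y)}\theta_{2}\tau_{\exp(Y)}^{-1}$ with $Y\in\Gamma$. Your careful remark that $\Sigma\cap W=\emptyset$ separates the type-(IV) case is a point the paper's proof leaves implicit.

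The second half, however, contains a genuine gap, and it is precisely the step you yourself flag as the ``main obstacle'': realizing the abstract root-system isomorphism $g$ of Definition \ref{dfn:stm_equiv} by an automorphism $w\in\mathrm{Aut}(\mathfrak{g})$ that commutes with $\theta_{1}$, preserves $\mathfrak{a}$, induces $g$ there, \emph{and} interacts correctly with $\tau_{\exp(Y)}$ off of $\mathfrak{a}$. You do not prove this lifting exists; you only propose to verify it case by case over Table \ref{table:symmtriext}, so as written the decisive implication is a program rather than an argument. Moreover, even granted such a $w$, inducing $g$ on $\mathfrak{a}$ does not by itself control what $w\circ\tau_{\exp(Y)}$ does to $\theta_{2}$ on the root spaces, so the claimed verification would need substantially more input. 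The paper closes the argument without constructing any automorphism at all: after the normalization, the symmetric triad of $(G,\theta_{1},\tau_{\exp(Y)}\theta_{2}\tau_{\exp(Y)}^{-1})$ computed on the \emph{same} $\mathfrak{a}$ is exactly the $Y$-flip $(\tilde{\Sigma},\Sigma_{Y},W_{Y};\cdot,\cdot)$ of $(\tilde{\Sigma},\Sigma,W;m,n)$ (this is the computation in the proof of Proposition \ref{pro:cstsim_symmsim}), and combining the hypothesis $(\tilde{\Sigma},\Sigma,W;m,n)\equiv(\tilde{\Sigma}',\Sigma',W';m',n')$ with Lemma \ref{lem:cstequiv_symmequiv} makes this flipped triad $\equiv$-isomorphic to the original; the paper concludes from this that $\INN{\alpha}{2Y}\in 2\pi\mathbb{Z}$ for every $\alpha\in\tilde{\Sigma}$, hence $e^{\mathrm{ad}(Y)}=\mathrm{id}_{\mathfrak{g}}$ and $\theta_{2}'=\theta_{2}$ at the Lie algebra level. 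In other words, the extra strength of $\equiv$ over $\sim$ is spent on forcing the conjugating element to act trivially, not on being realized by an automorphism; your route attacks a harder lifting problem than the theorem requires.
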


\begin{proof}
From the assumption,
we have $(\tilde{\Sigma},\Sigma,W;m,n)\sim(\tilde{\Sigma}',\Sigma',W';m',n')$.
Then, it follows from Corollary 
\ref{cor:simsim_simplyconnected}
that $(G,\theta_{1},\theta_{2})\sim
(G,\theta_{1}',\theta_{2}')$
or $(G,\theta_{2}',\theta_{1}')$
at the Lie algebra level.
For simplicity,
we may assume that
$(G,\theta_{1},\theta_{2})\sim
(G,\theta_{1}',\theta_{2}')$
at the Lie algebra level.
This implies that
there exists $Y\in\Gamma$
such that $(G,\theta_{1}',\theta_{2}')\equiv
(G,\theta_{1},\tau_{\exp(Y)}\theta_{2}\tau_{\exp(Y)}^{-1})$ at the Lie algebra level.
We denote by
$(\tilde{\Sigma}'',\Sigma'',W'';m'',n'')$
the symmetric triad with multiplicities
corresponding to
$(G,\theta_{1},\tau_{\exp(Y)}\theta_{2}\tau_{\exp(Y)}^{-1})$.
Clearly, we have
$(\tilde{\Sigma}'',\Sigma'',W'';m'',n'')
=(\tilde{\Sigma},\Sigma_{Y},W_{Y};m,n)\equiv
(\tilde{\Sigma},\Sigma,W;m,n)$,
from which we have $\INN{\alpha}{2Y}\in2\pi\mathbb{Z}$ for all $\alpha\in\tilde{\Sigma}$.
This implies that $e^{\mathrm{ad}(Y)}$
gives the identity transformation on $\mathfrak{g}$.
Thus, we have
$(G,\theta_{1},\tau_{\exp(Y)}\theta_{2}\tau_{\exp(Y)}^{-1})= (G,\theta_{1},\theta_{2})$
at the Lie algebra level.
From the above arguments,
we have completed the proof.
\end{proof}

Based on this theorem,
we can classify commutative compact symmetric triads
with respect to $\equiv$ by using
the classifications
for
commutative compact symmetric triads
with respect to $\sim$
and for abstract symmetric triads
with multiplicities with respect to $\equiv$.

\begin{ex}\label{ex:E6Sp4SU6SU2_equiv}
Let us consider the case when $(G,K_{1},K_{2})=(E_{6}, Sp(4), SU(6)\cdot SU(2))$.
As shown in Example \ref{ex:EI-IV},
the corresponding symmetric triad
with multiplicities
is isomorphic to $(\tilde{\Sigma},\Sigma,W;m,n)=(\mbox{I-}F_{4};m,n)$ with respect to $\sim$.
It follows from
Example \ref{ex:if4}
that the isomorphism class
of $(\tilde{\Sigma},\Sigma,W;m,n)$
with respect to $\sim$
consists of two elements
$(\mbox{I-}F_{4};m,n)$
and $(\mbox{I'-}F_{4};m',n')$.
Thus,
by Theorem \ref{thm:equivequiv_ok},
we find that
any commutative compact symmetric
triad $(G,\theta_{1},\theta_{2})$
with $K_{1}=Sp(4), K_{2}=SU(6)\cdot SU(2)$
is isomorphic to one of
commutative compact symmetric triads
whose 
$(\tilde{\Sigma},\Sigma,W;m,n)$ is $(\mbox{I-}F_{4};m,n)$
or $(\mbox{I'-}F_{4};m',n')$ with respect to $\equiv$.
\end{ex}

We can derive the classification of the isomorphism classes
of commutative compact symmetric triads with respect to $\equiv$
by applying a similar argument as in Example \ref{ex:E6Sp4SU6SU2_equiv} to that of 
commutative compact symmetric triads with respect to $\sim$.
There is an enormous number of the isomorphism classes for $\equiv$,
so their list is omitted in this paper.
On the other hand,
we have given the one-to-one correspondence between
the isomorphism classes of commutative compact symmetric triads
with respect to $\equiv$
and those of pseudo-Riemannian symmetric pairs in \cite{BIS}.
By characterizing the isomorphism classes of commutative compact symmetric triads
$(G,\theta_{1},\theta_{2})$
in terms of symmetric triads with multiplicities $(\tilde{\Sigma},\Sigma,W;m,n)$,
we can give a explicit description of this correspondence (see, \cite{BIS2} for the detail).
Furthermore, as discussed in \cite{BIS2},
we can determine the Lie algebras of
$G^{\theta_{1}\theta_{2}}$ and $K_{1}\cap K_{2}$
from $(\tilde{\Sigma},\Sigma,W;m,n)$.

\section{$\sigma$-actions and symmetric triads with multiplicities}\label{sec:sigma_action}

Let $U$ be a compact  connected semisimple Lie group
and $\sigma$ an involution.
The isometric action defined by
\begin{equation}
U\times U\to U;\,
(u,x)\mapsto ux\sigma(u)^{-1}.
\end{equation}
is called
a \textit{$\sigma$-action} on $U$.
It can be verified that
the $\sigma$-action
is isomorphic to the adjoint action of $U$
if and only if $\sigma$ is of inner-type.
The $\sigma$-actions
are realized as Hermann actions
on $U=(U\times U)/\triangle(U)$,
where $\triangle(U)=\{(g,g)\mid g\in U\}$.
We define a compact connected semisimple Lie group
by $G=U\times U$.
Let $\theta_{i}$ ($i=1,2$)
be the involution on $G$ defined by
\begin{equation}\label{eqn:2involution_sigma}
\theta_{1}(g,h)=(h,g),\quad
\theta_{2}(g,h)
=(\sigma^{-1}(h),\sigma(g))=(\sigma(h),\sigma(g)),\quad
g,h\in U.
\end{equation}
The fixed-point subgroup $G^{\theta_{i}}$
has the following expression:
\begin{equation}
K_{1}=\triangle(U),
\quad
K_{2}=\{(g,\sigma(g))\mid g\in U\}.
\end{equation}
Then it is shown that $G/K_{1}=(U\times U)/\triangle(U)$ is isomorphic to $U$
and that the $K_{2}$-action on $G/K_{1}$
is isomorphic to the $\sigma$-action on $U$.
From the view point of the geometry of Hermann actions,
it is essential to study $\sigma$-actions
in the case when $\sigma$ is of outer-type.
Since $\sigma$ is involutive,
$\theta_{1}$ and $\theta_{2}$ commute with each other.
The second author \cite{Ikawa2} constructed symmetric triads
with multiplicities from $(G,\theta_{1},\theta_{2})$,
and studied their properties.

The purpose of this section
is to study the corresponding symmetric triads
with multiplicities and double Satake diagrams
of $\sigma$-actions.
In the latter,
we will derive the relation
between the double Satake diagrams
and the Vogan diagrams for compact symmetric
pairs $(U,\sigma)$.
Based on it,
we will explain
the validity
of the terminologies
in Definitions \ref{dfn:imgroot_cpxroot}
and \ref{dfn:cptroot_noncptroot}.

\subsection{Symmetric triads with multiplicities for $\sigma$-actions}\label{sec:sact_symmtri}

Let $U$ be a compact connected semisimple Lie group
with Lie algebra $\mathfrak{u}$
and $\sigma$ be an involution on it.
The involution $\theta_{i}$ 
defined in \eqref{eqn:2involution_sigma}
induces an involution on $\mathfrak{g}$, which we write the same symbol $\theta_{i}$.
Then we have
\begin{equation}
\theta_{1}(X,Y)=(Y,X),
\quad
\theta_{2}(X,Y)=(\sigma(Y),\sigma(X)),
\quad
X,Y\in\mathfrak{u}.
\end{equation}
Let $\mathfrak{u}=\mathfrak{k}_{\sigma}\oplus\mathfrak{m}_{\sigma}$
be the canonical decomposition of $\mathfrak{u}$ for $\sigma$.
A direct calculation shows
\begin{align}
\mathfrak{k}_{1}\cap\mathfrak{k}_{2}
&=\{(X,X)\mid X\in\mathfrak{k}_{\sigma}\}, &
\mathfrak{m}_{1}\cap\mathfrak{m}_{2}
&=\{(X,-X)\mid X\in\mathfrak{k}_{\sigma}\},\\
\mathfrak{k}_{1}\cap\mathfrak{m}_{2}
&=\{(X,X)\mid X\in\mathfrak{m}_{\sigma}\}, &
\mathfrak{m}_{1}\cap\mathfrak{k}_{2}
&=\{(X,-X)\mid X\in\mathfrak{m}_{\sigma}\}.
\end{align}
By using 
the invariant inner product $\INN{\cdot}{\cdot}$
on $\mathfrak{u}$,
we define an invariant inner product on $\mathfrak{g}=\mathfrak{u}\oplus\mathfrak{u}$,
which we write the same symbol $\INN{\cdot}{\cdot}$,
as follows:
\begin{equation}
\INN{(X_{1},X_{2})}{(Y_{1},Y_{2})}
=\dfrac{1}{2}(\INN{X_{1}}{Y_{1}}+\INN{X_{2}}{Y_{2}}),
\quad X_{i},Y_{i}\in\mathfrak{u}.
\end{equation}
Then,
the linear isomorphism,
$\mathfrak{m}_{2} \to \mathfrak{m}_{\sigma};
\,(X,-X)\mapsto X$, becomes isometric with respect to the above inner products.
For a maximal abelian subalgebra $\bar{\mathfrak{a}}$
of $\mathfrak{k}_{\sigma}$,
\begin{equation}\label{eqn:am1m2_sigma}
\mathfrak{a}=\{(H,-H)\mid H\in\bar{\mathfrak{a}}\}\subset\mathfrak{m}_{1}\cap\mathfrak{m}_{2}
\end{equation}
is a maximal abelian subspace of $\mathfrak{m}_{1}\cap\mathfrak{m}_{2}$.
We can construct $(\tilde{\Sigma},\Sigma,W;m,n)$
of $\mathfrak{a}$ corresponding to $(G,\theta_{1},\theta_{2})=(U\times U,\theta_{1},\theta_{2})$
in a similar way as in Section \ref{sec:ccst_symm}.
Then the second author proved the following theorem.

\begin{thm}[{\cite[Theorem 1.14]{Ikawa2}}]
Assume that $U$ is simple and that $\sigma$ is of outer-type.
Then $(\tilde{\Sigma},\Sigma,W;m,n)$
satisfies the axiom
of symmetric triads with multiplicities stated in Definitions
\ref{dfn:symmtriad} and
\ref{dfn:multi}.
Furthermore,
we have $m(\lambda)=2, n(\alpha)=2$
for $\lambda\in\Sigma, \alpha\in W$.
\end{thm}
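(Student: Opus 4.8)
The plan is to compute the $\mathfrak{a}$-root space decomposition of $\mathfrak{g}^{\mathbb{C}}=\mathfrak{u}^{\mathbb{C}}\oplus\mathfrak{u}^{\mathbb{C}}$ explicitly and to read off $(\tilde{\Sigma},\Sigma,W;m,n)$ directly from the compact symmetric pair $(\mathfrak{u},\sigma)$. First I would identify $\mathfrak{a}$ with $\bar{\mathfrak{a}}$ through $(H,-H)\leftrightarrow H$ and record that $\theta_{1}\theta_{2}=\sigma\oplus\sigma$ on $\mathfrak{g}$. For $\xi\in\bar{\mathfrak{a}}$ put $\mathfrak{u}(\bar{\mathfrak{a}},\xi)=\{Z\in\mathfrak{u}^{\mathbb{C}}\mid [H,Z]=\sqrt{-1}\INN{\xi}{H}Z\ (H\in\bar{\mathfrak{a}})\}$ and let $\bar{\Delta}$ be the set of nonzero weights. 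A short computation with $\alpha=(\xi,-\xi)$ gives $\mathfrak{g}(\mathfrak{a},\alpha)=\mathfrak{u}(\bar{\mathfrak{a}},\xi)\oplus\mathfrak{u}(\bar{\mathfrak{a}},-\xi)$, so $\tilde{\Sigma}\cong\bar{\Delta}$. Since $\sigma$ fixes $\bar{\mathfrak{a}}$ pointwise it preserves each weight space $\mathfrak{u}(\bar{\mathfrak{a}},\xi)$, whence $\mathfrak{g}(\mathfrak{a},\alpha,\epsilon)=\mathfrak{u}(\bar{\mathfrak{a}},\xi)_{\epsilon}\oplus\mathfrak{u}(\bar{\mathfrak{a}},-\xi)_{\epsilon}$, the subscript denoting the $\sigma$-eigenspace. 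Conjugation of $\mathfrak{u}^{\mathbb{C}}$ commutes with $\sigma$ and interchanges the $\pm\xi$ summands, yielding
\[
m(\alpha)=2\dim_{\mathbb{C}}\bigl(\mathfrak{u}(\bar{\mathfrak{a}},\xi)\cap\mathfrak{k}_{\sigma}^{\mathbb{C}}\bigr),\qquad
n(\alpha)=2\dim_{\mathbb{C}}\bigl(\mathfrak{u}(\bar{\mathfrak{a}},\xi)\cap\mathfrak{m}_{\sigma}^{\mathbb{C}}\bigr).
\]
Consequently $\Sigma$ is the root system of $\mathfrak{k}_{\sigma}$ with respect to the Cartan subalgebra $\bar{\mathfrak{a}}$, while $W$ is the set of nonzero $\bar{\mathfrak{a}}$-weights of the isotropy module $\mathfrak{m}_{\sigma}^{\mathbb{C}}$, and the claimed factor $2$ is forced by the two conjugate summands.

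For the multiplicities, the $m$-part is automatic: $\bar{\mathfrak{a}}$ is maximal abelian in the compact reductive algebra $\mathfrak{k}_{\sigma}$, so $\bar{\mathfrak{a}}^{\mathbb{C}}$ is a Cartan subalgebra of $\mathfrak{k}_{\sigma}^{\mathbb{C}}$ and every nonzero weight space $\mathfrak{u}(\bar{\mathfrak{a}},\xi)\cap\mathfrak{k}_{\sigma}^{\mathbb{C}}$ is a root space, hence at most one-dimensional; thus $m\in\{0,2\}$ with $m\equiv 2$ on $\Sigma$. The entire content of the $n$-part then reduces to showing that every \emph{nonzero} $\bar{\mathfrak{a}}$-weight of the isotropy representation $\mathfrak{m}_{\sigma}^{\mathbb{C}}$ occurs with multiplicity at most one. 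This is the step I expect to be the main obstacle, because a general irreducible module need not have multiplicity-free nonzero weights, so the outer hypothesis on $\sigma$ must be used in an essential way.

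I would dispatch this obstacle through the classification of outer involutions of compact simple $\mathfrak{u}$: in each case the isotropy representation is one of the ``small'' modules ($\operatorname{Sym}^{2}_{0}$ of the vector representation of $\mathfrak{so}(n)$, $\Lambda^{2}_{0}$ of $\mathfrak{sp}(n)$, the vector representation of $\mathfrak{so}(2n-1)$, the $26$-dimensional representation of $\mathfrak{f}_{4}$, and the corresponding ones for $\mathfrak{e}_{6}$), all of whose nonzero weights are multiplicity-free; equivalently this data is exactly what is encoded by the Vogan diagram of $(U,\sigma)$, whose relation to the double Satake diagram is developed in the remainder of this section. (Since outer involutions give non-Hermitian symmetric spaces, $\mathfrak{m}_{\sigma}^{\mathbb{C}}$ is moreover irreducible, which streamlines the bookkeeping.) This simultaneously gives $n\equiv 2$ on $W$ and pins down $\Sigma\cap W=\operatorname{pr}$ of the short weights, so that the shortest-length condition in Definition \ref{dfn:symmtriad}(4) holds.

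Finally I would verify the axioms. Because $\Sigma$ is literally the root system of $\mathfrak{k}_{\sigma}$ and $\tilde{\Sigma}=\bar{\Delta}$ is irreducible (as $\mathfrak{u}$ is simple and $\bar{\mathfrak{a}}$ is a single flat), conditions (1)--(3) of Definition \ref{dfn:symmtriad} are immediate, and the parity conditions (5)--(6) follow from the $\mathfrak{sl}_{2}$-string structure of the weights of $\mathfrak{k}_{\sigma}^{\mathbb{C}}$ acting on $\mathfrak{m}_{\sigma}^{\mathbb{C}}$, with the requisite $W(\Sigma)$- and $W(\tilde{\Sigma})$-invariance coming from the action of $N_{K_{\sigma}}(\bar{\mathfrak{a}})$ permuting the weight spaces dimension-preservingly. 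The multiplicity axioms of Definition \ref{dfn:multi} then reduce to the constancy $m\equiv 2$ on $\Sigma$, $n\equiv 2$ on $W$ established above, which makes conditions (1)--(4) there purely set-theoretic statements about $\Sigma$, $W$ and the reflections $w_{\alpha}$, checked directly. This completes the verification that $(\tilde{\Sigma},\Sigma,W;m,n)$ is a symmetric triad with multiplicities with $m=n=2$.
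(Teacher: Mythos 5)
The paper itself gives no proof of this statement---it is quoted from \cite{Ikawa2}---but your reduction is exactly the framework the paper recalls in Section \ref{sec:sact_symmtri}: identifying $\mathfrak{g}(\mathfrak{a},(\xi,-\xi))$ with $\mathfrak{u}(\bar{\mathfrak{a}},\xi)\oplus\mathfrak{u}(\bar{\mathfrak{a}},-\xi)$ and splitting by $\theta_{1}\theta_{2}=\sigma\oplus\sigma$ is precisely what underlies the displayed decompositions of $\mathfrak{k}_{\sigma}$ and $\mathfrak{m}_{\sigma}$ into the two-dimensional pieces $\mathbb{R}F_{\lambda}\oplus\mathbb{R}G_{\lambda}$ and $\mathbb{R}X_{\alpha}\oplus\mathbb{R}Y_{\alpha}$, and the cited proof likewise settles the crucial multiplicity-one property of the nonzero $\bar{\mathfrak{a}}$-weights of $\mathfrak{m}_{\sigma}^{\mathbb{C}}$ by running through the classification of outer involutions (equivalently, the Vogan-diagram data of $(U,\sigma)$ discussed later in Section \ref{sec:validity_imgcpx_cptnoncpt}). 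Your proposal is correct and takes essentially this same route; the only small caveat is that your list of isotropy modules omits the family $(\mathfrak{so}(p+q),\mathfrak{so}(p)\oplus\mathfrak{so}(q))$ with $p,q\geq 3$ odd, whose module $\mathbb{R}^{p}\otimes\mathbb{R}^{q}$ is nevertheless also multiplicity-free on nonzero weights, so the conclusion stands.
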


We call $(\tilde{\Sigma},\Sigma,W;m,n)$
the \textit{symmetric triad with multiplicities}
of $\mathfrak{a}$ corresponding to $(G,\theta_{1},\theta_{2})$.

Next,
we give
an analogue of Proposition \ref{pro:cstsim_symmsim}
for $\sigma$-actions.
Namely,
we show the following proposition.

\begin{pro}\label{pro:simsim_sigma}
Let $\theta_{2}'$
be an involution on $G$ satisfying $\theta_{1}\theta_{2}'=\theta_{2}'\theta_{1}$.
We denote by $(\tilde{\Sigma}',\Sigma',W')$
the symmetric triad corresponding to $(G,\theta_{1},\theta_{2}')$.
If $(G,\theta_{1},\theta_{2})\sim(G,\theta_{1},\theta_{2}')$,
then $(\tilde{\Sigma},\Sigma,W)\sim(\tilde{\Sigma}',\Sigma',W')$ holds.
\end{pro}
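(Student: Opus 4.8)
The plan is to follow the proof of Proposition~\ref{pro:cstsim_symmsim} almost verbatim, the only structural differences being that here $G=U\times U$ is semisimple rather than simple and that the first involution $\theta_{1}$ literally coincides in the two triads. Under the standing assumption that $U$ is simple and $\sigma$ is of outer-type, the $\sigma$-action is not the adjoint action, so $\theta_{1}\not\sim\theta_{2}$; since conjugation preserves the swapping (outer) type of an involution of $U\times U$, the involution $\theta_{2}'$ is again of the form $(g,h)\mapsto(\sigma'(h),\sigma'(g))$ for an outer-type $\sigma'$, whence $\theta_{1}\not\sim\theta_{2}'$. Consequently \cite[Theorem~1.14]{Ikawa2} guarantees that both $(\tilde{\Sigma},\Sigma,W)$ and $(\tilde{\Sigma}',\Sigma',W')$ are genuine symmetric triads, so that the relation $\sim$ of Definition~\ref{dfn:e-relation} is meaningful between them and the eigenspace machinery of Section~\ref{sec:ccst_symm} is available even though $G$ is not simple.

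First I would unwind $(G,\theta_{1},\theta_{2})\sim(G,\theta_{1},\theta_{2}')$ via Definition~\ref{dfn:CST_sim}, obtaining $\varphi\in\mathrm{Aut}(G)$ and $\tau=\tau_{g}\in\mathrm{Int}(G)$ with $\theta_{1}=\varphi\theta_{1}\varphi^{-1}$ and $\theta_{2}'=\tau\varphi\theta_{2}\varphi^{-1}\tau^{-1}$. Because $\varphi$ fixes $\theta_{1}$, it realizes an $\equiv$-equivalence $(G,\theta_{1},\theta_{2})\equiv(G,\theta_{1},\varphi\theta_{2}\varphi^{-1})$, and the argument of Lemma~\ref{lem:cstequiv_symmequiv}, which never uses simplicity, shows that the corresponding symmetric triads are $\equiv$-isomorphic, hence isomorphic with respect to $\sim$. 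Replacing $\theta_{2}$ by $\varphi\theta_{2}\varphi^{-1}$, I am reduced to the case $\theta_{2}'=\tau_{g}\theta_{2}\tau_{g}^{-1}$. Next I would invoke Theorem~\ref{thm:Hermann} to write $g=k_{1}\exp(Y)k_{2}$ with $k_{i}\in K_{i}$ and $Y\in\mathfrak{a}$; the commutativity relations $\tau_{k_{i}}\theta_{i}=\theta_{i}\tau_{k_{i}}$ let me discard $k_{1},k_{2}$ and assume $\theta_{2}'=\tau_{\exp(Y)}\theta_{2}\tau_{\exp(Y)}^{-1}$, with the symmetric triad of $(G,\theta_{1},\theta_{2}')$ built from the same maximal abelian subspace $\mathfrak{a}'=e^{\mathrm{ad}(Y)}\mathfrak{a}=\mathfrak{a}$. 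Commutativity of $\theta_{1}$ with $\tau_{\exp(Y)}\theta_{2}\tau_{\exp(Y)}^{-1}$ then forces $\exp(4Y)$ into the center of $G$, so that $\mathrm{Ad}(\exp(4Y))=1$ and $Y$ lies in the lattice $\Gamma$ of \eqref{eqn:Gamma}.

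Finally, taking $f=\mathrm{id}_{\mathfrak{a}}$ together with $Y\in\Gamma$, the identity $\mathfrak{g}(\mathfrak{a}',\alpha,\epsilon)=\mathfrak{g}(\mathfrak{a},\alpha,e^{\sqrt{-1}\langle\alpha,2Y\rangle}\epsilon)$ shows that the splitting of $\tilde{\Sigma}=\tilde{\Sigma}'$ into $\Sigma'-W'$ and $W'-\Sigma'$ is precisely the one prescribed by \eqref{eqn:equiv-relation}, which yields $(\tilde{\Sigma},\Sigma,W)\sim(\tilde{\Sigma}',\Sigma',W')$ in the sense of Definition~\ref{dfn:e-relation}. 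Since every multiplicity equals $2$, the finer isomorphism of triads with multiplicities would follow automatically, but it is not required for the present statement. I expect the only delicate point to be confirming that the proof of Proposition~\ref{pro:cstsim_symmsim} never implicitly uses simplicity of $G$: its steps rely solely on Theorem~\ref{thm:Hermann}, on the lattice $\Gamma$, and on the eigenspace computation above, all of which remain valid for semisimple $G$, so the transfer is legitimate once \cite[Theorem~1.14]{Ikawa2} supplies the symmetric-triad axioms in place of the simple-$G$ result \cite[Theorem~4.33]{Ikawa} used in Section~\ref{sec:ccst_symm}.
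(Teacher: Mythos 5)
Your proof is correct and follows essentially the same route as the paper's: both reduce, via Theorem~\ref{thm:Hermann} and the commutativity constraint, to the case $\theta_{2}'=\tau_{\exp(Y)}\theta_{2}\tau_{\exp(Y)}^{-1}$ with $Y\in\Gamma$, and then track how the eigenspace decomposition transforms under $e^{\mathrm{ad}(Y)}$. The only cosmetic difference is that the paper carries out the final computation on $U$ itself, writing $Y=(\bar{H},-\bar{H})$ and using the real bases $F_{\lambda},G_{\lambda}$ and $X_{\alpha},Y_{\alpha}$ of $\mathfrak{k}_{\sigma}$ and $\mathfrak{m}_{\sigma}$ to exhibit $\mathfrak{k}_{\sigma'}$ and $\mathfrak{m}_{\sigma'}$ explicitly, whereas you invoke the complexified identity $\mathfrak{g}(\mathfrak{a}',\alpha,\epsilon)=\mathfrak{g}(\mathfrak{a},\alpha,e^{\sqrt{-1}\langle\alpha,2Y\rangle}\epsilon)$ from the proof of Proposition~\ref{pro:cstsim_symmsim} --- the same fact in different notation.
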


In order to prove this, we need some preparations.
By the definition,
we have
$[\bar{\mathfrak{a}},\mathfrak{k}_{\sigma}]\subset\mathfrak{k}_{\sigma}$
and $[\bar{\mathfrak{a}},\mathfrak{m}_{\sigma}]\subset\mathfrak{m}_{\sigma}$,
from which
$\bar{\mathfrak{a}}$ gives the adjoint representations
on $\mathfrak{k}_{\sigma}$
and $\mathfrak{m}_{\sigma}$.
We denote by $\bar{\Sigma}$
the set of nonzero weight of $\mathfrak{k}_{\sigma}$
with respect to $\bar{\mathfrak{a}}$,
by $\bar{W}$ that of $\mathfrak{m}_{\sigma}$.
In particular,
$\bar{\Sigma}$ satisfies the axiom of root system.
Then $\Sigma$ and $W$ are expressed as follows:
\begin{equation}
\Sigma=\{(\lambda,-\lambda)\mid \lambda\in\bar{\Sigma}\},
\quad
W=\{(\alpha,-\alpha)\mid \alpha\in\bar{W}\}.
\end{equation}
We write the root space decomposition of $\mathfrak{k}_{\sigma}$ with respect to $\bar{\mathfrak{a}}$
as
\begin{equation}
\mathfrak{k}_{\sigma}=\bar{\mathfrak{a}}
\oplus
\sum_{\lambda\in\bar{\Sigma}^{+}}(\mathbb{R}F_{\lambda}\oplus\mathbb{R}G_{\lambda}),
\end{equation}
where $\bar{\Sigma}^{+}$
denotes the set of positive roots of $\bar{\Sigma}$
with respect to some ordering,
and $F_{\lambda},G_{\lambda}\in\mathfrak{k}_{\sigma}$
are given by the following relations:
\begin{equation}\label{eqn:hfhg}
[H,F_{\lambda}]
=\INN{\lambda}{H}G_{\lambda},\quad
[H,G_{\lambda}]=-\INN{\lambda}{H}F_{\lambda},
\quad
H\in\bar{\mathfrak{a}}.
\end{equation}
We set $V(\mathfrak{m}_{\sigma})=\{X\in\mathfrak{m}_{\sigma}\mid [\bar{\mathfrak{a}},X]=\{0\}\}$.
In a similar manner,
we write the weight space decomposition
of $\mathfrak{m}_{\sigma}$ with respect to $\bar{\mathfrak{a}}$ as follows:
\begin{equation}
\mathfrak{m}_{\sigma}
=V(\mathfrak{m}_{\sigma})
\oplus\sum_{\alpha\in\bar{W}^{+}}(\mathbb{R}X_{\alpha}\oplus\mathbb{R}Y_{\alpha}),
\end{equation}
where $\bar{W}^{+}$
denotes the set of positive weights of $\bar{W}$
with respect to some ordering,
and $X_{\alpha},Y_{\alpha}\in\mathfrak{m}_{\sigma}$
are given by the following relations:
\begin{equation}\label{eqn:hxhy}
[H,X_{\alpha}]
=\INN{\alpha}{H}Y_{\alpha},\quad
[H,Y_{\alpha}]=-\INN{\alpha}{H}X_{\alpha},
\quad
H\in\bar{\mathfrak{a}}.
\end{equation}

We are ready to prove Proposition
\ref{pro:simsim_sigma}.

\begin{proof}[Proof of Proposition \ref{pro:simsim_sigma}]
From $(G,\theta_{1},\theta_{2})\sim(G,\theta_{1},\theta_{2}')$,
there exists $g\in G$ satisfying
$(G,\theta_{1},\theta_{2}')\equiv
(G,\theta_{1},\tau_{g}\theta_{2}\tau_{g}^{-1})$.
It follows from Theorem \ref{thm:Hermann}
that there exist $k_{i}\in K_{i}$
and $H\in\mathfrak{a}$
such that $g=k_{1}\exp(H)k_{2}$.
Then we have $(G,\theta_{1},\tau_{g}\theta_{2}\tau_{g}^{-1})
\equiv(G,\theta_{1},\tau_{\exp(H)}\theta_{2}\tau_{\exp(H)}^{-1})$.
The commutativity of $\theta_{1}$
and $\tau_{\exp(H)}\theta_{2}\tau_{\exp(H)}^{-1}$
implies that $H$ is an element in the lattice $\Gamma$
defined in \eqref{eqn:Gamma}.
Hence,
without loss of generalities,
we may assume that 
\begin{equation}\label{eqn:theta2'_sigma}
\theta_{2}'=\tau_{\exp(H)}\theta_{2}\tau_{\exp(H)}^{-1}
\end{equation}
for $H\in\Gamma$.

We write $H=(\bar{H},-\bar{H})$
for $\bar{H}\in\bar{\mathfrak{a}}$.
Since we have
\begin{equation}
\exp(H)
=\exp(\bar{H},-\bar{H})
=(\exp(\bar{H}),\exp(-\bar{H})),
\end{equation}
\eqref{eqn:theta2'_sigma}
is rewritten as follows
\begin{equation}
\theta_{2}'(g,h)
=(\tau_{\exp(\bar{H})}\sigma\tau_{\exp(\bar{H})}^{-1}(g),
\tau_{\exp(\bar{H})}\sigma\tau_{\exp(\bar{H})}^{-1}(h)),
\quad g,h\in U.
\end{equation}
In particular,
$\sigma'=\tau_{\exp(\bar{H})}\sigma\tau_{\exp(\bar{H})}^{-1}$
gives an involution on $U$.
Then we have
$d\sigma'=e^{\mathrm{ad}(\bar{H})}d\sigma e^{-\mathrm{ad}(\bar{H})}$.
In what follows,
we write the differential of $\sigma'$ as the same symbol $\sigma'$.
Let $\mathfrak{u}=\mathfrak{k}_{\sigma'}\oplus\mathfrak{m}_{\sigma'}$
be the canonical decomposition of $\mathfrak{u}$
for $\sigma'$.
Since $\mathfrak{m}_{1}\cap\mathfrak{m}_{2}'
=\{(X,-X)\mid X\in\mathfrak{k}_{\sigma'}\}$ holds,
$\mathfrak{a}$ is also a maximal abelian subspace of 
$\mathfrak{m}_{1}\cap\mathfrak{m}_{2}'$.
We denote by $(\tilde{\Sigma}',\Sigma',W')$
the symmetric triad of $\mathfrak{a}$ corresponding to $(G,\theta_{1},\theta_{2}')$.
Clearly, we get $\tilde{\Sigma}'=\tilde{\Sigma}$.
By using \eqref{eqn:hfhg},
it can be verified that,
\begin{equation}\label{eqn:FG_sigma}
e^{\mathrm{ad}(H)}F_{\lambda}
=\cos\INN{\lambda}{H}F_{\lambda}+\sin\INN{\lambda}{H}G_{\lambda},\quad
e^{\mathrm{ad}(H)}G_{\lambda}
=-\sin\INN{\lambda}{H}F_{\lambda}+\cos\INN{\lambda}{H}G_{\lambda}.
\end{equation}
From \eqref{eqn:hxhy}, we have:
\begin{equation}\label{eqn:XY_sigma}
e^{\mathrm{ad}(H)}X_{\alpha}
=\cos\INN{\alpha}{H}X_{\alpha}+\sin\INN{\alpha}{H}Y_{\alpha},\quad
e^{\mathrm{ad}(H)}Y_{\alpha}
=-\sin\INN{\alpha}{H}X_{\alpha}+\cos\INN{\alpha}{H}Y_{\alpha}.
\end{equation}
Then,
we obtain
\begin{align}
\mathfrak{k}_{\sigma'}
&=\bar{\mathfrak{a}}\oplus
\sum_{\lambda\in\bar{\Sigma}^{+};\,
\INN{\lambda}{2\bar{H}}\in2\pi\mathbb{Z}}(\mathbb{R}F_{\lambda}\oplus\mathbb{R}G_{\lambda})\oplus
\sum_{\alpha\in\bar{W}^{+};\,
\INN{\alpha}{2\bar{H}}\in\pi+2\pi\mathbb{Z}}
(\mathbb{R}X_{\alpha}\oplus\mathbb{R}Y_{\alpha}),\\
\mathfrak{m}_{\sigma'}
&=V(\mathfrak{m}_{\sigma})\oplus
\sum_{\lambda\in\bar{\Sigma}^{+};\,
\INN{\lambda}{2\bar{H}}\in\pi+2\pi\mathbb{Z}}(\mathbb{R}F_{\lambda}\oplus\mathbb{R}G_{\lambda})\oplus
\sum_{\alpha\in\bar{W}^{+};\,
\INN{\alpha}{2\bar{H}}\in2\pi\mathbb{Z}}
(\mathbb{R}X_{\alpha}\oplus\mathbb{R}Y_{\alpha}).
\end{align}
Hence we have:
\begin{align}
\Sigma'&=\{\lambda\in\Sigma\mid \INN{\lambda}{2H}\in2\pi\mathbb{Z}\}\cup\{\alpha\in W\mid
\INN{\alpha}{2H}\in\pi+2\pi\mathbb{Z}\},\\
W'&=\{\lambda\in\Sigma\mid \INN{\lambda}{2H}\in\pi+2\pi\mathbb{Z}\}\cup\{\alpha\in W\mid
\INN{\alpha}{2H}\in2\pi\mathbb{Z}\},
\end{align}
from which
$(\tilde{\Sigma},\Sigma,W)\sim(\tilde{\Sigma}',\Sigma',W')$ holds.
Thus, we have completed the proof.
\end{proof}

The second author (\cite{Ikawa2}) determined
$(\tilde{\Sigma},\Sigma,W)$
corresponding to $(G,\theta_{1},\theta_{2})$
such that $\sigma$ is of outer-type
by means of Vogan diagram for $(U,\sigma)$.
Thus, the converse of Proposition \ref{pro:simsim_sigma}
is true at the Lie algebra level.
Namely, we have the following corollary.

\begin{cor}
Under the same settings as in Proposition \ref{pro:simsim_sigma},
$(G,\theta_{1},\theta_{2})$
is isomorphic to $(G,\theta_{1}',\theta_{2}')$
with respect to $\sim$ at the Lie algebra level
if and only if
$(\tilde{\Sigma},\Sigma,W)\sim
(\tilde{\Sigma}',\Sigma',W')$.
Then, in the case when $G$ is simply-connected,
$(G,\theta_{1},\theta_{2})\sim(G,\theta_{1}',\theta_{2}')$
holds.
\end{cor}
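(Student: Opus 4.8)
The plan is to prove the two implications separately, noting first that in the $\sigma$-action setting $\theta_{1}'=\theta_{1}$ is the common swap involution, so that only the second involutions differ. The forward implication is essentially already in hand: Proposition~\ref{pro:simsim_sigma} shows that $(G,\theta_{1},\theta_{2})\sim(G,\theta_{1}',\theta_{2}')$ implies $(\tilde{\Sigma},\Sigma,W)\sim(\tilde{\Sigma}',\Sigma',W')$, and because the construction of the symmetric triad in Section~\ref{sec:sact_symmtri} depends only on the Lie algebra data $(\mathfrak{g},d\theta_{1},d\theta_{2})$ together with a choice of $\bar{\mathfrak{a}}$, the argument of that proposition goes through verbatim when $\sim$ is interpreted at the Lie algebra level. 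Thus for this direction I would only remark that Proposition~\ref{pro:simsim_sigma} remains valid at the Lie algebra level.

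For the converse I would invoke the second author's determination in \cite{Ikawa2}. There, for $\sigma$ of outer-type, the symmetric triad $(\tilde{\Sigma},\Sigma,W)$ of the $\sigma$-action is read off explicitly from the Vogan diagram of the compact symmetric pair $(U,\sigma)$, where $\sigma$ (resp.\ $\sigma'$) is the involution of $U$ attached to $\theta_{2}$ (resp.\ $\theta_{2}'$) through \eqref{eqn:2involution_sigma}. Inspecting that determination shows that the assignment sending the isomorphism class of $(U,\sigma)$ to the $\sim$-class of $(\tilde{\Sigma},\Sigma,W)$ is injective. Hence $(\tilde{\Sigma},\Sigma,W)\sim(\tilde{\Sigma}',\Sigma',W')$ forces $(U,\sigma)$ and $(U,\sigma')$ to be isomorphic as compact symmetric pairs at the Lie algebra level, i.e.\ there exists $\phi\in\mathrm{Aut}(\mathfrak{u})$ with $\phi\,d\sigma\,\phi^{-1}=d\sigma'$.

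It then remains to lift this isomorphism of symmetric pairs back to the triples. Setting $\Phi=\phi\times\phi\in\mathrm{Aut}(\mathfrak{g})$, I would check directly from \eqref{eqn:2involution_sigma} that $\Phi\,d\theta_{1}\,\Phi^{-1}=d\theta_{1}$ and $\Phi\,d\theta_{2}\,\Phi^{-1}=d\theta_{2}'$, so that $(G,\theta_{1},\theta_{2})\equiv(G,\theta_{1},\theta_{2}')$ and in particular $(G,\theta_{1},\theta_{2})\sim(G,\theta_{1}',\theta_{2}')$ at the Lie algebra level. For the final assertion, when $G$ is simply connected every automorphism of $\mathfrak{g}$ integrates uniquely to an automorphism of $G$, with inner automorphisms corresponding to inner ones and involutions being determined by their differentials on the connected group $G$; hence the Lie-algebra-level relation $\sim$ lifts to the group level and yields $(G,\theta_{1},\theta_{2})\sim(G,\theta_{1}',\theta_{2}')$.

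The main obstacle is the converse, and precisely the injectivity of $(U,\sigma)\mapsto(\tilde{\Sigma},\Sigma,W)$: this is not a conceptual argument but a verification against the classification of outer compact symmetric pairs carried out in \cite{Ikawa2}, confirming that no two non-isomorphic such pairs produce $\sim$-equivalent symmetric triads. By contrast, the forward direction, the explicit lift $\Phi=\phi\times\phi$, and the integration step in the simply connected case are all routine once that injectivity is granted.
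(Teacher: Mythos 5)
Your proposal follows essentially the same route as the paper: the forward implication is Proposition \ref{pro:simsim_sigma} (valid at the Lie algebra level), and the converse rests on the second author's determination in \cite{Ikawa2} of $(\tilde{\Sigma},\Sigma,W)$ from the Vogan diagram of $(U,\sigma)$, which yields the injectivity you need; your explicit lift $\Phi=\phi\times\phi$ and the integration step for simply connected $G$ merely spell out details the paper leaves implicit. This matches the paper's (very terse) argument, so no further comparison is needed.
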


\subsection{Vogan diagrams and double Satake diagrams for $\sigma$-action}\label{sec:validity_imgcpx_cptnoncpt}

Vogan diagram
provides us one of methods
to classify the noncompact semisimple
symmetric Lie algebras.
By using Cartan's duality,
we can classify compact symmetric pairs.
In this subsection,
we first observe the relation
between the Vogan diagram
for a compact symmetric pair $(U,\sigma)$
and the double Satake diagram
for the commutative compact symmetric triad $(G=U\times U,\theta_{1},\theta_{2})$
as in \eqref{eqn:2involution_sigma}.
Then we find the validity
of the terminologies
in Definitions \ref{dfn:imgroot_cpxroot}
and \ref{dfn:cptroot_noncptroot}.
Second, we reconstruct
the Vogan diagram
for $(U,\sigma)$
from 
the double Satake diagram
and the symmetric triad
for $(U\times U,\theta_{1},\theta_{2})$.
Then, we get an alternative method to determine
the symmetric triad corresponding to $(U\times U,\theta_{1},\theta_{2})$
by using the corresponding double Satake diagram.

\subsubsection{Vogan diagram for compact symmetric pair (Review)}\label{sec:vdi_review}

Although the original notion of Vogan
diagrams is defined for noncompact semisimple Lie groups, in order to clarify the relation to double Satake diagrams, our explanation of Vogan diagrams are given by compact symmetric pairs which are obtained from them via Cartan's duality.

Let $U$ be a compact connected semisimple Lie group with Lie algebra $\mathfrak{u}$
and $\sigma$ be an involution on $U$.
We write 
the canonical decomposition
of $\mathfrak{u}$ for $\sigma$
as $\mathfrak{u}=\mathfrak{k}_{\sigma}\oplus\mathfrak{m}_{\sigma}$.
Let $\bar{\mathfrak{a}}$
be a maximal abelian subalgebra of $\mathfrak{k}_{\sigma}$.
We set $V(\mathfrak{m}_{\sigma})=\{X\in\mathfrak{m}_{\sigma}\mid [\bar{\mathfrak{a}},X]=\{0\}\}$.
It is known that
$\bar{\mathfrak{t}}=\bar{\mathfrak{a}}\oplus V(\mathfrak{m}_{\sigma})$
is a maximal abelian subalgebra
of $\mathfrak{u}$
and $\bar{\mathfrak{t}}$ is $\sigma$-invariant (\cite[Proposition 6.60, Chapter VI, p.~386]{Knapp}).
We denote by $\bar{\Delta}$
the root system of $\mathfrak{u}$
with respect to $\bar{\mathfrak{t}}$.

\begin{dfn}[{\cite[p.~390]{Knapp}}]
A root $\alpha$ of $\bar{\Delta}$
is called an \textit{real root}
if $\INN{\alpha}{\bar{\mathfrak{a}}}=\{0\}$ and
an \textit{imaginary root}
if $\INN{\alpha}{V(\mathfrak{m}_{\sigma})}=\{0\}$.
Otherwise, $\alpha$
is called a \textit{complex root}.
\end{dfn}

By the maximality of $\bar{\mathfrak{a}}$ in $\mathfrak{k}_{\sigma}$,
there exist no real roots
(\cite[Proposition 6.70, Chapter VI]{Knapp}).
We denote by $\bar{\Delta}_{\mathrm{im}}$
the set of all imaginary roots
of $\bar{\Delta}$,
and by
$\bar{\Delta}_{\mathrm{cpx}}$
the set of all complex roots
of $\bar{\Delta}$.
Then
$\bar{\Delta}=\bar{\Delta}_{\mathrm{im}}\cup\bar{\Delta}_{\mathrm{cpx}}$
is a disjoint union.
For each $\alpha\in\bar{\Delta}$,
we denote by
$\mathfrak{u}(\bar{\mathfrak{t}},\alpha)$
the root space of $\mathfrak{u}^{\mathbb{C}}$
associated with $\alpha$.

\begin{dfn}[{\cite[p.~390]{Knapp}}]
A root $\alpha$ of $\bar{\Delta}$
is called a \textit{compact root}
if $\mathfrak{u}(\bar{\mathfrak{t}},\alpha)$
is contained in $\mathfrak{k}_{\sigma}^{\mathbb{C}}$;
a \textit{noncompact root}
if $\mathfrak{u}(\bar{\mathfrak{t}},\alpha)$
is contained in $\mathfrak{m}_{\sigma}^{\mathbb{C}}$.
We denote by $\bar{\Delta}_{\mathrm{cpt}}$
the set of all compact root of $\bar{\Delta}$
and by $\bar{\Delta}_{\mathrm{noncpt}}$
the set of all noncompact 
roots of $\bar{\Delta}$.
\end{dfn}

Let $\alpha\in\bar{\Delta}$.
The subspace $\mathfrak{u}(\bar{\mathfrak{t}},\alpha)$
has complex one dimension.
If $\alpha$ is imaginary,
then
$\mathfrak{u}(\bar{\mathfrak{t}},\alpha)$
is $\sigma$-invariant.
Any imaginary root is
either compact or noncompact.

We denote by $>$ the lexicographic ordering on $\bar{\Delta}$
defined by an ordered basis
$\{X_{j}\}\cup\{Y_{k}\}$
of $\bar{\mathfrak{t}}$
such that $\{X_{j}\}$
and $\{Y_{k}\}$
are ordered bases of $\bar{\mathfrak{a}}$ and $V(\mathfrak{m}_{\sigma})$,
respectively.
Let $\bar{\Pi}$ denote its fundamental system of $\bar{\Delta}$
and $\bar{\Delta}^{+}$
denote the set of positive roots
of $\bar{\Delta}$ with respect to $>$.
Since there exist no real roots of $\bar{\Delta}$,
$\sigma(\bar{\Delta}^{+})=\bar{\Delta}^{+}$ holds.
Hence $\sigma$ induces a permutation on $\bar{\Pi}$.
In particular,
the simple roots fixed by $\sigma$
are imaginary roots,
and the two-cyclic simple roots
are complex roots.
The \textit{Vogan diagram} of $(U,\sigma)$
associated with $\bar{\Pi}$
is described as follows:
In the Dynkin diagram
of $\bar{\Pi}$,
two complex roots
$\alpha,\alpha'$
in $\bar{\Pi}$ with $\alpha\neq\alpha'$ are connected by a curved arrow if $\sigma(\alpha)=\alpha'$, and any noncompact root is replaced from a white circle to a black circle.

\subsubsection{The validity
of the terminologies
in Definitions \ref{dfn:imgroot_cpxroot}
and \ref{dfn:cptroot_noncptroot}}

Let $(U,\sigma)$ be a compact symmetric pair
and
$(G=U\times U,\theta_{1},\theta_{2})$
be the commutative compact symmetric triad
as in \eqref{eqn:2involution_sigma}.
We give a maximal abelian subalgebra
of $\mathfrak{g}$ as in Lemma \ref{lem:mas_fundamental}.
Let $\mathfrak{a}$ be the maximal abelian subspace
of $\mathfrak{m}_{1}\cap\mathfrak{m}_{2}$
as in \eqref{eqn:am1m2_sigma}.
Since $V(\mathfrak{m}_{\sigma})=\{X\in\mathfrak{m}_{\sigma}\mid [\bar{\mathfrak{a}},X]=\{0\}\}$ is abelian,
\begin{equation}
\mathfrak{a}_{1}=\mathfrak{a}\oplus
\{(H,-H)\mid H\in V(\mathfrak{m}_{\sigma})\},\quad
\mathfrak{a}_{2}=\mathfrak{a}\oplus
\{(H,H)\mid H\in V(\mathfrak{m}_{\sigma})\}
\end{equation}
give maximal abelian subspaces of $\mathfrak{m}_{1}$
and $\mathfrak{m}_{2}$
containing $\mathfrak{a}$, respectively.
We note that $[\mathfrak{a}_{1},\mathfrak{a}_{2}]=\{0\}$ holds.
Here, we have the following expressions
of $\mathfrak{a}_{i}$ ($i=1,2$):
\begin{equation}
\mathfrak{a}_{1}=\{(H,-H)\mid H\in\bar{\mathfrak{t}}\},
\quad
\mathfrak{a}_{2}=\{(H,-\sigma(H))\mid H\in\bar{\mathfrak{t}}\},
\end{equation}
where we set $\bar{\mathfrak{t}}=\bar{\mathfrak{a}}\oplus V(\mathfrak{m}_{\sigma})$.
If we set $\mathfrak{b}=\{(H,H)\mid H\in\bar{\mathfrak{a}}\}(\subset\mathfrak{k}_{1}\cap\mathfrak{k}_{2})$,
then
\begin{equation}
\mathfrak{t}
=\mathfrak{a}
\oplus\{(H,H)\mid H\in V(\mathfrak{m}_{\sigma})\}
\oplus\{(H,-H)\mid H\in V(\mathfrak{m}_{\sigma})\}
\oplus\mathfrak{b}
=\bar{\mathfrak{t}}\oplus\bar{\mathfrak{t}}
\end{equation}
gives a maximal abelian subalgebra of $\mathfrak{g}$
containing $\mathfrak{a}_{1}$ and $\mathfrak{a}_{2}$.
Under the above settings,
$\mathfrak{t}$ satisfies
the conditions stated in Lemma \ref{lem:mas_fundamental},
so that we get
the root system $\Delta$
of $\mathfrak{g}$ with respect to $\mathfrak{t}$
and involutive automorphism $\sigma_{i}=-d\theta_{i}|_{\mathfrak{t}}$
on $\Delta$.

We give descriptions of $\Delta$ and $\sigma_{i}$
by means of the root system $\bar{\Delta}$ of $\mathfrak{u}$
with respect to $\bar{\mathfrak{t}}$.

\begin{lem}
Under the above settings, we have
\begin{equation}
\Delta=\{(\alpha,0),(0,\alpha)\mid \alpha\in\bar{\Delta}\}.
\end{equation}
Furthermore, for each $\alpha\in\bar{\Delta}$, we get
\begin{equation}
\sigma_{1}(\alpha,0)=(0,-\alpha),~
\sigma_{1}(0,\alpha)=(-\alpha,0),~
\sigma_{2}(\alpha,0)=(0,-\sigma(\alpha)),~
\sigma_{2}(0,\alpha)=(-\sigma(\alpha),0). 
\end{equation}
\end{lem}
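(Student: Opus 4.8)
The plan is to exploit that $\mathfrak{g}=\mathfrak{u}\oplus\mathfrak{u}$ is a direct product and that $\mathfrak{t}=\bar{\mathfrak{t}}\oplus\bar{\mathfrak{t}}$, so that $\mathrm{ad}(H)$ for $H=(H_{1},H_{2})\in\mathfrak{t}$ acts componentwise on $\mathfrak{g}^{\mathbb{C}}=\mathfrak{u}^{\mathbb{C}}\oplus\mathfrak{u}^{\mathbb{C}}$. First I would take the root space decomposition $\mathfrak{u}^{\mathbb{C}}=\bar{\mathfrak{t}}^{\mathbb{C}}\oplus\sum_{\alpha\in\bar{\Delta}}\mathfrak{u}(\bar{\mathfrak{t}},\alpha)$ and observe that, since the two copies of $\mathfrak{u}^{\mathbb{C}}$ are ideals bracketing to zero, every joint eigenvector of $\mathrm{ad}(\mathfrak{t})$ lies in one of the two factors. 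Hence the nonzero joint eigenvectors are exactly $(X_{\alpha},0)$ and $(0,X_{\alpha})$ with $X_{\alpha}\in\mathfrak{u}(\bar{\mathfrak{t}},\alpha)$ and $\alpha\in\bar{\Delta}$, while $\mathfrak{t}^{\mathbb{C}}=\bar{\mathfrak{t}}^{\mathbb{C}}\oplus\bar{\mathfrak{t}}^{\mathbb{C}}$ is the zero weight space. This already exhibits the full root space decomposition of $\mathfrak{g}^{\mathbb{C}}$ and identifies the roots up to labelling.

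To pin down the labels I would compute the eigenvalues: for $H=(H_{1},H_{2})$ one has $[H,(X_{\alpha},0)]=([H_{1},X_{\alpha}],0)=\sqrt{-1}\langle\alpha,H_{1}\rangle(X_{\alpha},0)$, so the associated root is the linear functional $H\mapsto\langle\alpha,H_{1}\rangle$ on $\mathfrak{t}$. Identifying this functional with a vector of $\mathfrak{t}$ through the invariant inner product on $\mathfrak{g}$ yields the root $(\alpha,0)$, and symmetrically $(0,X_{\alpha})$ gives $(0,\alpha)$; here the only point requiring attention is to keep track of the normalisation of $\langle\cdot,\cdot\rangle$ on $\mathfrak{g}$ relative to that on $\mathfrak{u}$. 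This proves $\Delta=\{(\alpha,0),(0,\alpha)\mid\alpha\in\bar{\Delta}\}$.

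For the formulas for $\sigma_{i}$ I would differentiate the defining relations $\theta_{1}(X,Y)=(Y,X)$ and $\theta_{2}(X,Y)=(\sigma(Y),\sigma(X))$, obtaining $d\theta_{1}(H_{1},H_{2})=(H_{2},H_{1})$ and $d\theta_{2}(H_{1},H_{2})=(\sigma(H_{2}),\sigma(H_{1}))$ on $\mathfrak{t}$, where $\bar{\mathfrak{t}}$ is $\sigma$-invariant so that $\sigma(H_{j})\in\bar{\mathfrak{t}}$. Since $\sigma_{i}=-d\theta_{i}|_{\mathfrak{t}}$ is a linear isometry of $\mathfrak{t}$, its induced action on $\Delta\subset\mathfrak{t}$ coincides with its action on the representing vectors; substituting $(\alpha,0)$ and $(0,\alpha)$ then gives $\sigma_{1}(\alpha,0)=(0,-\alpha)$, $\sigma_{1}(0,\alpha)=(-\alpha,0)$, $\sigma_{2}(\alpha,0)=(0,-\sigma(\alpha))$ and $\sigma_{2}(0,\alpha)=(-\sigma(\alpha),0)$, as claimed.

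The argument is essentially bookkeeping, so there is no serious obstacle; the subtlest points are the componentwise nature of the root space decomposition for a product Lie algebra, which makes the first claim transparent, and the remark that, because $\sigma_{i}$ is an isometry, acting on a root viewed as a vector agrees with the contragredient action on root functionals, so the four identities follow by direct substitution rather than by a dual computation.
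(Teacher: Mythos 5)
Your proof is correct, and the paper in fact states this lemma with no proof at all, so your componentwise eigenspace computation for the product algebra $\mathfrak{g}^{\mathbb{C}}=\mathfrak{u}^{\mathbb{C}}\oplus\mathfrak{u}^{\mathbb{C}}$ together with the differentiation of $\theta_{1},\theta_{2}$ is precisely the routine verification the authors leave to the reader. The one point you flag but do not settle, the normalisation, deserves a remark: with the paper's convention $\INN{(X_{1},X_{2})}{(Y_{1},Y_{2})}=\frac{1}{2}(\INN{X_{1}}{Y_{1}}+\INN{X_{2}}{Y_{2}})$ the metric dual of the functional $H\mapsto\INN{\alpha}{H_{1}}$ is literally $(2\alpha,0)$, so the identification $\Delta=\{(\alpha,0),(0,\alpha)\mid\alpha\in\bar{\Delta}\}$ is made up to this harmless rescaling (shared by the paper's own statement and irrelevant to every later use, since $\sigma_{1},\sigma_{2}$ and $\mathrm{pr}$ are linear), and your four formulas for $\sigma_{1},\sigma_{2}$, obtained from $\sigma_{i}=-d\theta_{i}|_{\mathfrak{t}}$ being an isometry preserving $\Delta$, are unaffected and correct.
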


Then we have the following descriptions
for $\Delta_{0}$,
$\Delta_{\mathrm{im}}$,
$\Delta_{\mathrm{cpx}}$,
$\Delta_{\mathrm{cpt}}$
and $\Delta_{\mathrm{noncpt}}$.

\begin{lem}\label{lem:0_im_im_cpx_cpx}
Under the above settings,
we have:
\begin{enumerate}[(1)]
\item $\Delta_{0}=\emptyset$.
\item $\Delta_{\mathrm{im}}=\{(\alpha,0),(0,\alpha)\mid
\alpha\in\bar{\Delta}_{\mathrm{im}}\}$.
\item $\Delta_{\mathrm{cpx}}=\{(\alpha,0),(0,\alpha)\mid
\alpha\in\bar{\Delta}_{\mathrm{cpx}}\}$.
\end{enumerate}
\end{lem}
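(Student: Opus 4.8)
The plan is to read off all three statements directly from the preceding lemma, which furnishes $\Delta=\{(\alpha,0),(0,\alpha)\mid\alpha\in\bar{\Delta}\}$ together with the explicit formulas for $\sigma_{1}$ and $\sigma_{2}$ on $\mathfrak{t}=\bar{\mathfrak{t}}\oplus\bar{\mathfrak{t}}$. The one structural fact I would invoke is that $(U,\sigma)$ has \emph{no real roots}, because $\bar{\mathfrak{a}}$ is maximal abelian in $\mathfrak{k}_{\sigma}$ (as recalled in Section~\ref{sec:vdi_review}, citing \cite[Proposition 6.70, Chapter VI]{Knapp}). Since $\sigma$ acts as $+1$ on $\bar{\mathfrak{a}}$ and as $-1$ on $V(\mathfrak{m}_{\sigma})$, for $\alpha\in\bar{\Delta}$ one has $\sigma(\alpha)=\alpha$ exactly when $\alpha$ is imaginary and $\sigma(\alpha)=-\alpha$ exactly when $\alpha$ is real; the absence of real roots therefore means $\sigma(\alpha)\neq-\alpha$ for every $\alpha\in\bar{\Delta}$.

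First I would compute the composition $\sigma_{1}\sigma_{2}$ on $\Delta$ from the formulas of the previous lemma, obtaining $\sigma_{1}\sigma_{2}(\alpha,0)=(\sigma(\alpha),0)$ and $\sigma_{1}\sigma_{2}(0,\alpha)=(0,\sigma(\alpha))$, so that $\sigma_{1}\sigma_{2}$ simply acts as $\sigma$ in each slot. Next I would evaluate the orthogonal projection $\mathrm{pr}=\tfrac14(\mathrm{id}+\sigma_{1}+\sigma_{2}+\sigma_{1}\sigma_{2})$ on the same root vectors; a short calculation gives $\mathrm{pr}(\alpha,0)=\tfrac14(\alpha+\sigma(\alpha),-(\alpha+\sigma(\alpha)))$ and the analogous expression for $(0,\alpha)$. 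Hence $\mathrm{pr}$ of a root vanishes if and only if $\alpha+\sigma(\alpha)=0$, that is, if and only if $\alpha$ is real.

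With these two computations in hand the three claims follow. For (1), since there are no real roots, $\mathrm{pr}(\alpha,0)\neq0$ and $\mathrm{pr}(0,\alpha)\neq0$ for all $\alpha\in\bar{\Delta}$, whence $\Delta_{0}=\emptyset$. For (2), because $\Delta_{0}=\emptyset$, a root $\beta\in\Delta$ is imaginary precisely when $\sigma_{1}\sigma_{2}(\beta)=\beta$; by the first computation this holds for $(\alpha,0)$ (resp.\ $(0,\alpha)$) exactly when $\sigma(\alpha)=\alpha$, i.e.\ when $\alpha\in\bar{\Delta}_{\mathrm{im}}$, giving the stated description. Statement (3) is then immediate from the disjoint decompositions $\Delta-\Delta_{0}=\Delta_{\mathrm{im}}\sqcup\Delta_{\mathrm{cpx}}$ and $\bar{\Delta}=\bar{\Delta}_{\mathrm{im}}\sqcup\bar{\Delta}_{\mathrm{cpx}}$ by taking complements. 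There is no serious obstacle here; the only point requiring care is to keep the two distinct dichotomies on $\bar{\Delta}$ straight — real/imaginary/complex as defined via the pairings with $\bar{\mathfrak{a}}$ and $V(\mathfrak{m}_{\sigma})$, versus the sign of $\sigma$ on root vectors — and to use the vanishing of real roots to rule out $\Delta_{0}$.
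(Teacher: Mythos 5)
Your proposal is correct and follows essentially the same route as the paper's proof: part (1) from the nonexistence of real roots in $\bar{\Delta}$, part (2) from the computation $\sigma_{1}\sigma_{2}(\alpha,0)=(\sigma(\alpha),0)$ together with the identification of $\sigma$-fixed roots with imaginary roots, and part (3) by complementation. Your version merely spells out the projection computation that the paper leaves implicit.
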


\begin{proof}
(1) This follows straightforward
by the nonexistence of real roots in $\bar{\Delta}$.
(2) Let $\alpha\in\bar{\Delta}$. Since we have
$\sigma_{1}\sigma_{2}(\alpha,0)=(\sigma(\alpha),0)$,
the root $(\alpha,0)$
of $\Delta$ is imaginary
if and only if so is $\alpha$ as an element in $\bar{\Delta}$.
The statement (3) is verified in a similar manner.
\end{proof}

\begin{lem}
We have:
\begin{enumerate}[(1)]
\item $\Delta_{\mathrm{cpt}}=\{(\alpha,0),(0,\alpha)\mid
\alpha\in\bar{\Delta}_{\mathrm{cpt}}\}$.
\item $\Delta_{\mathrm{noncpt}}=\{(\alpha,0),(0,\alpha)\mid
\alpha\in\bar{\Delta}_{\mathrm{noncpt}}\}$.
\end{enumerate}
\end{lem}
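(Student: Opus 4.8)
The plan is to reduce everything to the product structure $\mathfrak{g}^{\mathbb{C}} = \mathfrak{u}^{\mathbb{C}} \oplus \mathfrak{u}^{\mathbb{C}}$ together with $\mathfrak{t} = \bar{\mathfrak{t}} \oplus \bar{\mathfrak{t}}$, which identifies each root space of $(\mathfrak{g}^{\mathbb{C}},\mathfrak{t})$ with a root space of $(\mathfrak{u}^{\mathbb{C}},\bar{\mathfrak{t}})$ placed in one of the two factors. Concretely, I would first record that for $\alpha\in\bar{\Delta}$ one has $\mathfrak{g}(\mathfrak{t},(\alpha,0)) = \mathfrak{u}(\bar{\mathfrak{t}},\alpha)\oplus\{0\}$ and $\mathfrak{g}(\mathfrak{t},(0,\alpha)) = \{0\}\oplus\mathfrak{u}(\bar{\mathfrak{t}},\alpha)$; this is immediate from $[(H_{1},H_{2}),(A,B)] = ([H_{1},A],[H_{2},B])$ and the description $\Delta = \{(\alpha,0),(0,\alpha)\mid \alpha\in\bar{\Delta}\}$ obtained in the preceding lemma.

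Next I would translate the two ambient subalgebras appearing in Definition \ref{dfn:cptroot_noncptroot} into the $\sigma$-picture. Since $\theta_{1}\theta_{2}(X,Y) = (\sigma(X),\sigma(Y))$, the computations of $\mathfrak{k}_{1}\cap\mathfrak{k}_{2}$, $\mathfrak{m}_{1}\cap\mathfrak{m}_{2}$, $\mathfrak{k}_{1}\cap\mathfrak{m}_{2}$, $\mathfrak{m}_{1}\cap\mathfrak{k}_{2}$ recalled at the start of the section give
\[
(\mathfrak{g}^{\theta_{1}\theta_{2}})^{\mathbb{C}} = \mathfrak{k}_{\sigma}^{\mathbb{C}}\oplus\mathfrak{k}_{\sigma}^{\mathbb{C}}, \qquad
(\mathfrak{g}^{-\theta_{1}\theta_{2}})^{\mathbb{C}} = \mathfrak{m}_{\sigma}^{\mathbb{C}}\oplus\mathfrak{m}_{\sigma}^{\mathbb{C}}.
\]
With these two facts in hand the equivalences become purely formal: $\mathfrak{g}(\mathfrak{t},(\alpha,0)) = \mathfrak{u}(\bar{\mathfrak{t}},\alpha)\oplus\{0\}$ is contained in $\mathfrak{k}_{\sigma}^{\mathbb{C}}\oplus\mathfrak{k}_{\sigma}^{\mathbb{C}}$ if and only if $\mathfrak{u}(\bar{\mathfrak{t}},\alpha)\subset\mathfrak{k}_{\sigma}^{\mathbb{C}}$, i.e. if and only if $\alpha\in\bar{\Delta}_{\mathrm{cpt}}$, and likewise with $\mathfrak{m}_{\sigma}^{\mathbb{C}}$ in place of $\mathfrak{k}_{\sigma}^{\mathbb{C}}$ and for the roots $(0,\alpha)$. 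This yields the inclusions $\{(\alpha,0),(0,\alpha)\mid \alpha\in\bar{\Delta}_{\mathrm{cpt}}\}\subseteq\Delta_{\mathrm{cpt}}$ and $\{(\alpha,0),(0,\alpha)\mid \alpha\in\bar{\Delta}_{\mathrm{noncpt}}\}\subseteq\Delta_{\mathrm{noncpt}}$.

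To upgrade these inclusions to equalities I would invoke the two disjoint-union decompositions available on each side. On the $\mathfrak{g}$-side, Lemma \ref{lem:img_cptnoncpt} gives $\Delta_{\mathrm{im}} = \Delta_{\mathrm{cpt}}\sqcup\Delta_{\mathrm{noncpt}}$, while Lemma \ref{lem:0_im_im_cpx_cpx}, (2) identifies $\Delta_{\mathrm{im}} = \{(\alpha,0),(0,\alpha)\mid \alpha\in\bar{\Delta}_{\mathrm{im}}\}$. On the $\mathfrak{u}$-side every imaginary root of $\bar{\Delta}$ is either compact or noncompact, so $\bar{\Delta}_{\mathrm{im}} = \bar{\Delta}_{\mathrm{cpt}}\sqcup\bar{\Delta}_{\mathrm{noncpt}}$, and the assignment $\alpha\mapsto (\alpha,0),(0,\alpha)$ is injective. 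Since the two inclusions send the pieces of the $\bar{\Delta}$-partition into the corresponding pieces of a partition of the single set $\Delta_{\mathrm{im}}$, they must be equalities. I expect no genuine obstacle here: the only subtlety is that compactness versus noncompactness is a dichotomy valid solely on imaginary roots, which is precisely why the argument is routed through the matching splittings of $\Delta_{\mathrm{im}}$ and $\bar{\Delta}_{\mathrm{im}}$ rather than attempted root-by-root without that structure.
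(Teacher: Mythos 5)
Your proposal is correct and follows essentially the same route as the paper: the paper's proof likewise consists of computing $(\mathfrak{g}^{\pm\theta_{1}\theta_{2}})^{\mathbb{C}}$ as $\mathfrak{k}_{\sigma}^{\mathbb{C}}\oplus\mathfrak{k}_{\sigma}^{\mathbb{C}}$ and $\mathfrak{m}_{\sigma}^{\mathbb{C}}\oplus\mathfrak{m}_{\sigma}^{\mathbb{C}}$ and identifying $\mathfrak{g}(\mathfrak{t},(\alpha,0))$, $\mathfrak{g}(\mathfrak{t},(0,\alpha))$ with $\mathfrak{u}(\bar{\mathfrak{t}},\alpha)$ sitting in one factor, after which the assertion is immediate. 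Your closing detour through the partitions of $\Delta_{\mathrm{im}}$ and $\bar{\Delta}_{\mathrm{im}}$ is harmless but not needed, since the containment criterion already gives an equivalence root by root rather than a mere inclusion.
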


\begin{proof}
A direct calculation shows that
\begin{equation}
(\mathfrak{g}^{\theta_{1}\theta_{2}})^{\mathbb{C}}
=\{(X,Y)\mid X,Y\in\mathfrak{k}_{\sigma}^{\mathbb{C}}\},\quad
(\mathfrak{g}^{-\theta_{1}\theta_{2}})^{\mathbb{C}}
=\{(X,Y)\mid X,Y\in\mathfrak{m}_{\sigma}^{\mathbb{C}}\},
\end{equation}
and that, for $\alpha\in\bar{\Delta}$,
\begin{equation}
\mathfrak{g}(\mathfrak{t},(\alpha,0))
=\{(X,0)\mid X\in\mathfrak{u}(\bar{\mathfrak{t}},\alpha)\},\quad
\mathfrak{g}(\mathfrak{t},(0,\alpha))
=\{(0,X)\mid X\in\mathfrak{u}(\bar{\mathfrak{t}},\alpha)\}.
\end{equation}
It is straightforward to verify
the assertion from the above expressions.
\end{proof}

We note that
the validity
of the terminologies
in Definitions \ref{dfn:imgroot_cpxroot}
and \ref{dfn:cptroot_noncptroot}
is given by the above two lemmas.
From the Vogan diagram

\subsubsection{Reconstruction of Vogan diagram for $\sigma$-action}

We observe the relation between
the Vogan diagram for a compact symmetric pair $(U,\sigma)$
and the double Satake diagram for the commutative
compact symmetric triad $(G=U\times U,\theta_{1},\theta_{2})$
as in \eqref{eqn:2involution_sigma}.

Let $\Delta$ be the root system of $\mathfrak{g}$
with respect to $\mathfrak{t}=\bar{\mathfrak{t}}\oplus\bar{\mathfrak{t}}$.
We put $\sigma_{i}=-d\theta_{i}|_{\mathfrak{t}}$ 
($i=1,2$).
We first give a $(\sigma_{1},\sigma_{2})$-fundamental system
of $\Delta$ as follows:
Let $\{X_{j}\}$
and $\{Y_{k}\}$
be bases of $\bar{\mathfrak{a}}$
and $V(\mathfrak{m}_{\sigma})$,
respectively.
We denote by $>$ the lexicographic ordering on $\Delta$
defined by an ordered basis
$\mathcal{X}\cup
\mathcal{Y}\cup
\mathcal{Z}\cup
\mathcal{W}$
defined as follows:
\begin{equation}
\mathcal{X}
=\{(X_{j},-X_{j})\},\quad
\mathcal{Y}
=\{(Y_{k},Y_{k})\},\quad
\mathcal{Z}
=\{(Y_{k},-Y_{k})\},\quad
\mathcal{W}
=\{(X_{j},X_{j})\}.
\end{equation}
Then, it is shown that
the fundamental system
$\Pi$ of $\Delta$
for $>$
is a $(\sigma_{1},\sigma_{2})$-fundamental system.
We denote by $\Delta^{+}$
the corresponding set of positive roots of $\Delta$.

\begin{lem}
Let $\bar{\Pi},\bar{\Delta}^{+}$
as in Section \ref{sec:vdi_review}.
Then we have:
\begin{enumerate}[(1)]
\item $\Delta^{+}=
\{(\alpha,0),(0,-\alpha)\mid \alpha\in\bar{\Delta}^{+}\}$.
\item $\Pi=
\{(\alpha,0),(0,-\alpha)\mid \alpha\in\bar{\Pi}\}$.
\end{enumerate}
\end{lem}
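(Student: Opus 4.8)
The plan is to read both statements straight off the lexicographic ordering, using the explicit description $\Delta=\{(\alpha,0),(0,\alpha)\mid\alpha\in\bar{\Delta}\}$ from the preceding lemma together with the fact (recalled in Section~\ref{sec:vdi_review}) that $\bar{\Delta}$ contains no real roots. First I would check that $\mathcal{X}\cup\mathcal{Y}\cup\mathcal{Z}\cup\mathcal{W}$ really is an ordered basis of $\mathfrak{t}=\bar{\mathfrak{t}}\oplus\bar{\mathfrak{t}}$: writing $p=\dim\bar{\mathfrak{a}}$ and $q=\dim V(\mathfrak{m}_{\sigma})$, the sizes $p+q+q+p$ match $\dim\mathfrak{t}$, the pairs $(X_j,-X_j),(X_j,X_j)$ span $\bar{\mathfrak{a}}\oplus\bar{\mathfrak{a}}$ while $(Y_k,Y_k),(Y_k,-Y_k)$ span $V(\mathfrak{m}_{\sigma})\oplus V(\mathfrak{m}_{\sigma})$. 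After normalizing $\{X_j\},\{Y_k\}$ to be orthonormal the four blocks are mutually orthogonal, so positivity of a weight is decided by the sign of the first nonzero pairing taken against $\mathcal{X}$, then $\mathcal{Y}$, then $\mathcal{Z}$, then $\mathcal{W}$.

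For (1) I would compute, for $\alpha\in\bar{\Delta}$, the pairings of $(\alpha,0)$ and of $(0,-\alpha)$ against the leading block $\mathcal{X}$ using $\INN{(X_1,X_2)}{(Y_1,Y_2)}=\tfrac12(\INN{X_1}{Y_1}+\INN{X_2}{Y_2})$; both equal $\tfrac12\INN{\alpha}{X_j}$. The decisive point is the absence of real roots, which forces $\alpha|_{\bar{\mathfrak{a}}}\neq0$, i.e. some $\INN{\alpha}{X_j}\neq0$. Since $\mathcal{X}$ sits at the front of the ordering and reproduces exactly the $\bar{\mathfrak{a}}$-part of the ordering used to build $\bar{\Delta}^{+}$ in Section~\ref{sec:vdi_review} (in which, again because there are no real roots, the $X_j$-entries already determine the sign of every root), I conclude that $(\alpha,0)>0$ and $(0,-\alpha)>0$ hold if and only if $\alpha>0$ in $\bar{\Delta}$. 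The blocks $\mathcal{Y},\mathcal{Z},\mathcal{W}$ are never reached in the sign determination and serve only to complete the basis. This yields $\Delta^{+}=\{(\alpha,0),(0,-\alpha)\mid\alpha\in\bar{\Delta}^{+}\}$.

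For (2) I would use that $\Delta$ is the orthogonal union of the two mutually perpendicular subsystems $\bar{\Delta}\times\{0\}$ and $\{0\}\times\bar{\Delta}$, which do not interact since $(\alpha,0)+(0,\beta)=(\alpha,\beta)$ is never a root for $\alpha,\beta\neq0$ (reflecting $\mathfrak{g}=\mathfrak{u}\oplus\mathfrak{u}$ as ideals). Hence the fundamental system $\Pi$ of $\Delta^{+}$ is the union of the fundamental systems of the two induced positive subsystems. On the first factor $\alpha\mapsto(\alpha,0)$ is a root-system isomorphism carrying $\bar{\Delta}^{+}$ to the positive part, so its simple roots are $\{(\alpha,0)\mid\alpha\in\bar{\Pi}\}$. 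On the second factor the induced positive system is $\{(0,-\alpha)\mid\alpha\in\bar{\Delta}^{+}\}$, the image of $-\bar{\Delta}^{+}$ under $\beta\mapsto(0,\beta)$; since the simple system of $-\bar{\Delta}^{+}$ is $-\bar{\Pi}$, its simple roots are $\{(0,-\alpha)\mid\alpha\in\bar{\Pi}\}$. Combining gives $\Pi=\{(\alpha,0),(0,-\alpha)\mid\alpha\in\bar{\Pi}\}$.

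The main obstacle—essentially the only subtle point—is bookkeeping the sign flip in the second coordinate: one must verify that $(0,-\alpha)$, not $(0,\alpha)$, is the positive representative, and this hinges precisely on placing $\mathcal{X}$ first together with the nonexistence of real roots. That absence guarantees the common $\mathcal{X}$-block (identical for $(\alpha,0)$ and $(0,-\alpha)$) fixes the sign before the $\mathcal{Y}$-block—where the two would disagree, since there $(\alpha,0)$ gives $\tfrac12\INN{\alpha}{Y_k}$ and $(0,-\alpha)$ gives $-\tfrac12\INN{\alpha}{Y_k}$—can ever intervene. I would therefore state explicitly why no real roots forces $\alpha|_{\bar{\mathfrak{a}}}\neq0$ and why this makes the front $\mathcal{X}$-block here compatible with the construction of $\bar{\Pi}$ recalled in Section~\ref{sec:vdi_review}.
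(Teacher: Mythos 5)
Your proof is correct, and it is precisely the routine verification the paper has in mind (the lemma is stated there without proof): both $(\alpha,0)$ and $(0,-\alpha)$ pair with the leading block $\mathcal{X}$ to give $\tfrac12\INN{\alpha}{X_j}$, the absence of real roots in $\bar{\Delta}$ guarantees this block already decides the sign and does so compatibly with the ordering defining $\bar{\Delta}^{+}$, and part (2) then follows because $\Delta$ splits orthogonally into the two non-interacting subsystems $\bar{\Delta}\times\{0\}$ and $\{0\}\times\bar{\Delta}$, with the sign flip on the second factor correctly accounting for $-\bar{\Pi}$ being the simple system of $-\bar{\Delta}^{+}$. You correctly isolate the one subtle point, namely why $(0,-\alpha)$ rather than $(0,\alpha)$ is the positive representative.
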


We write the double Satake diagram
corresponding $(G,\theta_{1},\theta_{2})$
associated with $\Pi$
as
\begin{equation}
(S_{1},S_{2})
=(S(\Pi,\Pi_{1,0},p_{1}),S(\Pi,\Pi_{2,0},p_{2})).
\end{equation}
Then we have
$\INN{(\alpha,0)}{\mathfrak{a}_{i}}\neq\{0\}$
and $\INN{(0,-\alpha)}{\mathfrak{a}_{i}}\neq\{0\}$
for all $\alpha\in\bar{\Pi}$,
from which
$\Pi_{i,0}=\emptyset$ holds,
namely,
there exist no black circles
in the Satake diagram $S_{i}$.
Then,
the Satake involution $p_{i}$
gives a permutation on $\Pi$,
which is expressed as follows:
\begin{equation}\label{eqn:sigma_arrows}
\begin{cases}
p_{1}(\alpha,0)=(0,-\alpha),&
p_{1}(0,-\alpha,0)=(\alpha,0),\\
p_{2}(\alpha,0)=(0,-\sigma(\alpha)),&
p_{2}(0,-\alpha,0)=(\sigma(\alpha),0).
\end{cases}
\end{equation}
It is known that $\sigma$ gives a permutation of $\bar{\Pi}$ (cf.~\cite[p.~397]{Knapp}).
This permutation can be read off from the Vogan diagram of $(U,\sigma)$ associated with $\bar{\Pi}$,
from which the Satake involutions $p_{1}, p_{2}$ are determined.
Hence, we obtain 
the double Satake diagram of $(G,\theta_{1},\theta_{2})$ associated with $\Pi$.

We can determine
whether $\sigma$
is of inner-type or not
by means of the double Satake diagram
of $(G,\theta_{1},\theta_{2})$.

\begin{lem}
$\sigma$
is of inner-type
if and only if $(S_{1},S_{2})\sim
(S_{1},S_{1})$.
\end{lem}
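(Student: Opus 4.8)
The plan is to reduce the statement to the triad-level equivalence $(G,\theta_{1},\theta_{2})\sim(G,\theta_{1},\theta_{1})$ and then translate it into double Satake diagrams. First I would note that, with respect to the common $(\sigma_{1},\sigma_{2})$-fundamental system $\Pi$ used to construct $(S_{1},S_{2})$, the involution $\theta_{1}$ has Satake diagram $S_{1}$; hence the double Satake diagram of $(G,\theta_{1},\theta_{1})$ is exactly $(S_{1},S_{1})$. By the classification of \cite{BI}, the equivalence class of the double Satake diagram is a complete invariant of the $\sim$-class of a commutative compact symmetric triad, so the condition $(S_{1},S_{2})\sim(S_{1},S_{1})$ is equivalent to $(G,\theta_{1},\theta_{2})\sim(G,\theta_{1},\theta_{1})$. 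It therefore suffices to prove that $\sigma$ is of inner-type if and only if $(G,\theta_{1},\theta_{2})\sim(G,\theta_{1},\theta_{1})$.

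For the direction from inner-type, I would write $\sigma=\tau_{s}$ with $s\in U$; since $\sigma$ is involutive, $\tau_{s^{2}}=\mathrm{id}$, so $s^{2}\in Z(U)$. A direct check gives $\theta_{2}=\tau_{(s,s)}\theta_{1}$. I then conjugate $\theta_{1}$ by the inner automorphism $\tau_{a}$ with $a=(s,e)$: using the identity $\tau_{a}\theta_{1}\tau_{a}^{-1}=\tau_{a\theta_{1}(a)^{-1}}\theta_{1}$, one computes $a\theta_{1}(a)^{-1}=(s,s^{-1})$, and because $(e,s^{-2})$ lies in the center $Z(U)\times Z(U)$ of $G$ we obtain $\tau_{(s,s^{-1})}=\tau_{(s,s)}$. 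Hence $\tau_{a}\theta_{1}\tau_{a}^{-1}=\theta_{2}$, which exhibits $(G,\theta_{1},\theta_{1})\sim(G,\theta_{1},\theta_{2})$ via $\varphi=\mathrm{id}_{G}$ and $\tau=\tau_{a}$ in Definition \ref{dfn:CST_sim}. Applying the (easy) forward direction of \cite{BI} then yields $(S_{1},S_{2})\sim(S_{1},S_{1})$.

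For the converse I would assume $(S_{1},S_{2})\sim(S_{1},S_{1})$, so that $(G,\theta_{1},\theta_{2})\sim(G,\theta_{1},\theta_{1})$ by completeness of the invariant. Geometrically $\sim$ means that the two Hermann actions are isomorphic; but the Hermann action of $(G,\theta_{1},\theta_{2})$ is the $\sigma$-action on $U=G/K_{1}$, whereas that of $(G,\theta_{1},\theta_{1})$ is the isotropy action of $K_{1}=\triangle(U)$, i.e. the adjoint action of $U$. Thus the $\sigma$-action is isomorphic to the adjoint action, and by the fact recalled at the beginning of Section \ref{sec:sigma_action} this happens precisely when $\sigma$ is of inner-type, which closes the equivalence.

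The main obstacle is the converse implication $(S_{1},S_{2})\sim(S_{1},S_{1})\Rightarrow(G,\theta_{1},\theta_{2})\sim(G,\theta_{1},\theta_{1})$, which relies on the double Satake diagram being a \emph{complete} invariant for $\sim$; I would take care to apply it at the Lie-algebra level, since inner-type is a Lie-algebra condition and, as elsewhere in the paper, the sharp group-level statement requires $U$ to be simply connected. A self-contained alternative I would keep in reserve is to compare the reconstructed Satake involutions directly: by \eqref{eqn:sigma_arrows}, $p_{2}$ and $p_{1}$ differ exactly by the permutation of $\bar{\Pi}$ induced by $\sigma$, so $(S_{1},S_{2})\sim(S_{1},S_{1})$ amounts to an automorphism of $S_{1}$ carrying $p_{2}$ to $p_{1}$, which is equivalent to $\sigma$ inducing a trivial outer diagram automorphism, i.e. to $\sigma$ being of inner-type.
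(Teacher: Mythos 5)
Your primary route is not the paper's, and its converse direction has a gap. You reduce $(S_{1},S_{2})\sim(S_{1},S_{1})$ to $(G,\theta_{1},\theta_{2})\sim(G,\theta_{1},\theta_{1})$ by invoking the double Satake diagram as a \emph{complete} invariant of the $\sim$-class. But the classification of \cite{BI} that underlies this completeness is stated for $G$ \emph{simple}, whereas here $G=U\times U$ is only semisimple; nothing in the paper licenses the implication $(S_{1},S_{2})\sim(S_{1},S_{1})\Rightarrow(G,\theta_{1},\theta_{2})\sim(G,\theta_{1},\theta_{1})$ in this setting, so that step would need a separate argument. (Your worry about Lie-algebra versus group level is secondary; the real issue is simplicity of $G$.) The forward direction of your reduction is fine: the explicit computation $\tau_{(s,e)}\theta_{1}\tau_{(s,e)}^{-1}=\tau_{(s,s^{-1})}\theta_{1}=\tau_{(s,s)}\theta_{1}=\theta_{2}$ using $s^{2}\in Z(U)$ is correct and is even a clean group-level refinement, and only the easy direction of \cite{BI} (isomorphic triads give equivalent diagrams) is needed there.

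The ``self-contained alternative'' you keep in reserve is in fact exactly the paper's proof, and it is the right argument: by \eqref{eqn:sigma_arrows} the Satake involutions satisfy $p_{2}=p_{1}$ precisely when $\sigma$ fixes every element of $\bar{\Pi}$; an equivalence $(S_{1},S_{2})\sim(S_{1},S_{1})$ is a common automorphism $\psi$ of $\Pi$ with $\psi p_{1}\psi^{-1}=p_{1}$ and $\psi p_{2}\psi^{-1}=p_{1}$, which forces $p_{1}=p_{2}$; and $\sigma$ acting trivially on $\bar{\Pi}$ (no arrows in the Vogan diagram) is the known characterization of $\sigma$ being of inner type. This avoids any appeal to completeness of the diagram invariant and works entirely at the level of the fundamental system. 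I would promote that argument to the main proof and keep the triad-level reduction, if at all, only for the forward implication.
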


\begin{proof}
It is known that
$\sigma$ is of inner-type
if and only if
the Vogan diagram
corresponding to $(U,\sigma)$
has no arrows,
that is, $\bar{\Pi}\subset\bar{\Delta}_{\mathrm{im}}$.
Then we have $\sigma(\alpha)=\alpha$
for all $\alpha\in\bar{\Pi}$.
Hence $p_{1}=p_{2}$ holds,
from which $(S_{1},S_{2})
\sim (S_{1},S_{1})$.
Conversely,
we suppose that
$(S_{1},S_{2})\sim(S_{1},S_{1})$ holds.
Then there exists an automorphism $\psi$
on $\Pi$ satisfying
$\psi\circ p_{2}\circ \psi^{-1}=p_{1}$
and $\psi\circ p_{1}\circ \psi^{-1}=p_{1}$.
Then we have $p_{1}=p_{2}$,
so that we have $\Pi\subset\Delta_{\mathrm{im}}$.
Hence we have $\bar{\Pi}\subset\bar{\Delta}_{\mathrm{im}}$, that is,
$\sigma$ is of inner-type.
\end{proof}

Now, we explain
our method to reconstruct
the Vogan diagram
for $\bar{\Pi}$
from
$(S_{1},S_{2})$
and $(\tilde{\Sigma},\Sigma,W)$.
From \eqref{eqn:sigma_arrows},
the permutation
of $\sigma$ on $\bar{\Pi}$
is reconstructed
from the difference between
$p_{1}$ and $p_{2}$.
Then we have $\bar{\Delta}_{\mathrm{im}}$
and $\bar{\Delta}_{\mathrm{cpx}}$.
In order to determine
$\bar{\Delta}_{\mathrm{cpx}}$
and $\bar{\Delta}_{\mathrm{noncpt}}$,
we need to characterize $(G,\theta_{1},\theta_{2})$
up to $\equiv$,
which is given by
the double Satake diagram
$(S_{1},S_{2})$
and the corresponding symmetric triad
$(\tilde{\Sigma},\Sigma,W)$ as explained in the
following example.

\begin{ex}
Let us consider the case when $U=E_{6}$.
We determine
the Vogan diagram of $(E_{6},\sigma)$ from
$(S_{1},S_{2})$
and $(\tilde{\Sigma},\Sigma,W)$
corresponding to the commutative compact symmetric triad
$(E_{6}\times E_{6},\theta_{1},\theta_{2})$
as in \eqref{eqn:2involution_sigma}.
We first consider the case when
$(S_{1},S_{2}) \not\sim (S_{1},S_{1})$.
Table \ref{table:E6_outer}
shows the corresponding double
Satake diagram.
Here, we write $(\alpha_{i},0)$
as $\alpha_{i}'$
and $(0,-\alpha_{i})$
as $\alpha_{i}''$
for $i=1,\dotsc,6$ in this table.
Indeed, the difference between $p_{1}$
and $p_{2}$ is found in $\{\alpha_{1},\alpha_{6}\}$ and $\{\alpha_{3},\alpha_{5}\}$.
Hence the permutation on $\bar{\Pi}=\{\alpha_{1},\dotsc,\alpha_{6}\}$
induced from $\sigma$
is reconstructed as follows:
\[
\sigma:\bar{\Pi}\to\bar{\Pi};\,
\begin{cases}
\alpha_{1}\mapsto \alpha_{6},\quad
\alpha_{2}\mapsto \alpha_{2},\quad
\alpha_{3}\mapsto \alpha_{5},\\
\alpha_{4}\mapsto \alpha_{4},\quad
\alpha_{5}\mapsto \alpha_{3},\quad
\alpha_{6}\mapsto \alpha_{1}.
\end{cases}
\]
In particular, we find $\mathrm{rank}(K_{\sigma})=4$.
By using Propositions \ref{pro:cpt+cpt_cpt_etc}
and \ref{pro:delta_inductive2},
we find all elements of $\Delta_{\mathrm{cpt}}$
and $\Delta_{\mathrm{noncpt}}$
whether $\alpha_{2}$ and $\alpha_{4}$
are compact or noncompact.
By the first half of Proposition \ref{pro:symm_determ_base},
we have
$(\tilde{\Sigma},\Sigma,W)=(\mbox{I-}F_{4})=(F_{4},F_{4},W)$
in the case when $\alpha_{2}, \alpha_{4}$ are compact;
and
$(\tilde{\Sigma},\Sigma,W)=(\mbox{I'-}F_{4})=(F_{4},C_{4},W)$
otherwise.
Then,
the corresponding Vogan diagram
of $(U,\sigma)$
is 
that for $(E_{6}, F_{4})$
in the case when $(\tilde{\Sigma},\Sigma,W)=(\mbox{I-}F_{4})$;
that for $(E_{6},Sp(4))$ in the case when $(\tilde{\Sigma},\Sigma,W)=(\mbox{I'-}F_{4})$.

Second, we consider the case when $(S_{1},S_{2})\sim (S_{1},S_{1})$.
In particular, $\sigma$ acts trivially on $\bar{\Pi}$.
Then we get $\Delta=\Delta_{\mathrm{im}}$.
We also have $\mathrm{rank}(K_{\sigma})=6$.
A similar argument shows that
the corresponding Vogan diagram
is that for $(E_{6},SU(6)\cdot SU(2))$
and for $(E_{6},SO(10)\cdot U(1))$
in the case when $(\tilde{\Sigma},\Sigma,W)$
is $(\mbox{IV-}E_{6})=(E_{6},A_{1}\cup A_{5},W)$
and $(\mbox{IV'-}E_{6})=(E_{6},D_{5},W)$,
respectively.
Here, we except for the case when
$(\tilde{\Sigma},\Sigma,W)=(E_{6},E_{6},\emptyset)$,
since the corresponding involution $\sigma$
becomes the identity transformation on $E_{6}$.
\end{ex}

\begin{table}[!!ht]
\renewcommand{\arraystretch}{1.5}
\centering
\caption{Double Satake diagram
$(S_{1},S_{2})$ of $(E_{6}\times E_{6},\theta_{1},\theta_{2})$
with $(S_{1},S_{2})\not\sim(S_{1},S_{1})$
}\label{table:E6_outer}
\begin{tabular}{cc}
\hline
\hline
Satake diagram $S_{1}$ & Satake diagram $S_{2}$ \\
\hline
\hline
\begin{xy}
\ar@{-}(-20,0)*++!D{\alpha_{6}'}*{\circ}="a6";(-10,0)*++!D{\alpha_{5}'}*{\circ}="a5"
\ar@{-}"a5";(0,0)*++!DL{\alpha_{4}'}*{\circ}="a4"
\ar@{-}"a4";(10,0)*++!D{\alpha_{3}'}*{\circ}="a3"
\ar@{-}"a3";(20,0)*++!D{\alpha_{1}'}*{\circ}="a1"
\ar@{-}"a4";(0,10)*++!D{\alpha_{2}'}*{\circ}="a2"
\ar@{-}(-20,-20)*++!D{\alpha_{6}''}*{\circ}="a6'";(-10,-20)*++!D{\alpha_{5}''}*{\circ}="a5'"
\ar@{-}"a5'";(0,-20)*+!DL{\alpha_{4}''}*{\circ}="a4'"
\ar@{-}"a4'";(10,-20)*++!D{\alpha_{3}''}*{\circ}="a3'"
\ar@{-}"a3'";(20,-20)*++!D{\alpha_{1}''}*{\circ}="a1'"
\ar@{-}"a4'";(0,-10)*+!DL{\alpha_{2}''}*{\circ}="a2'"
\ar@/_4mm/@{<->} "a1";"a1'"
\ar@/_4mm/@{<->} "a2";"a2'"
\ar@/_4mm/@{<->} "a3";"a3'"
\ar@/_4mm/@{<->} "a4";"a4'"
\ar@/_4mm/@{<->} "a5";"a5'"
\ar@/_4mm/@{<->} "a6";"a6'"
\end{xy}
&
\begin{xy}
\ar@{-}(-20,0)*++!D{\alpha_{6}'}*{\circ}="a6";(-10,0)*++!D{\alpha_{5}'}*{\circ}="a5"
\ar@{-}"a5";(0,0)*++!DL{\alpha_{4}'}*{\circ}="a4"
\ar@{-}"a4";(10,0)*++!D{\alpha_{3}'}*{\circ}="a3"
\ar@{-}"a3";(20,0)*++!D{\alpha_{1}'}*{\circ}="a1"
\ar@{-}"a4";(0,10)*++!D{\alpha_{2}'}*{\circ}="a2"
\ar@{-}(-20,-20)*++!D{\alpha_{1}''}*{\circ}="a6'";(-10,-20)*++!D{\alpha_{3}''}*{\circ}="a5'"
\ar@{-}"a5'";(0,-20)*+!DL{\alpha_{4}''}*{\circ}="a4'"
\ar@{-}"a4'";(10,-20)*++!D{\alpha_{5}''}*{\circ}="a3'"
\ar@{-}"a3'";(20,-20)*++!D{\alpha_{6}''}*{\circ}="a1'"
\ar@{-}"a4'";(0,-10)*+!DL{\alpha_{2}''}*{\circ}="a2'"
\ar@/_4mm/@{<->} "a1";"a1'"
\ar@/_4mm/@{<->} "a2";"a2'"
\ar@/_4mm/@{<->} "a3";"a3'"
\ar@/_4mm/@{<->} "a4";"a4'"
\ar@/_4mm/@{<->} "a5";"a5'"
\ar@/_4mm/@{<->} "a6";"a6'"
\end{xy}
\\[13ex]
\hline
\hline
\end{tabular}
\end{table}


\subsubsection*{Future directions}
The motivation of symmetric triads with multiplicities
comes from the study of Hermann actions.
In the present paper, we have developed
the theory of symmetric triads with multiplicities
corresponding to compact symmetric triads
$(G,\theta_{1},\theta_{2})$ with $\theta_{1}\theta_{2}=\theta_{2}\theta_{1}$.
Then the following questions arises naturally:
(1) Is there a similar theory for abstract symmetric triads with multiplicities
corresponding to compact symmetric triads $(G,\theta_{1},\theta_{2})$
with $\theta_{1}\theta_{2}\neq\theta_{2}\theta_{1}$?
(2) As its applications, 
study the geometry of Hermann actions corresponding to
noncommutative compact symmetric triads.
Recently, we find
the study of
the weak reflectivity
in the sense of \cite{IST}
for orbits of Hermann actions
corresponding to noncommutative compact
symmetric triads due to Ohno \cite{Ohno23}.
(3) Classify
noncommutative compact symmetric triads
with respect to $\equiv$.

\medskip

\subsubsection*{Errata}
\textit{A note on symmetric triad and Hermann action, by O.~Ikawa,
Proceedings of the workshop on differential geometry of submanifolds
and its related topics, Saga, August 4--6 (2012),
220--229.
The following list should be added to the table on p.~228:}
\begin{center}
\begin{tabular}{c|c}
(II-$BC_{r}$) & $(SO(4r+2),U(2r+1),S(O(2r+1)\times O(2r+1)))$ \\
(III-$BC_{r}$) & $(SU(2(2r+1)),S(U(2r+1)\times U(2r+1)),Sp(2r+1))$
\end{tabular}
\end{center}


\begin{thebibliography}{99}
\bibitem{BI} K.~Baba and O.~Ikawa, Double Satake diagrams and canonical forms in compact symmetric triads,
Int.~Electron.~J.~Geom.~17 (2024), no. 2, 466--495.
\bibitem{BIS0} K.~Baba, O.~Ikawa and A.~Sasaki,
A duality between compact symmetric triads and semisimple
pseudo-Riemannian symmetric pairs with applications to geometry of Hermann type actions,
Hermitian-Grassmannian Submanifolds, 211--221, Springer Proc.~Math.~Stat.,
203, Springer, Singapore, 2017.
\bibitem{BIS} K.~Baba, O.~Ikawa and A.~Sasaki,
A duality between non-compact semisimple symmetric pairs
and commutative compact semisimple symmetric triads and its general theory,
Differential Geometry and its Applications, \textbf{76} (2021), 101751.
\bibitem{BIS2} K.~Baba, O.~Ikawa and A.~Sasaki, An alternative proof for Berger's classification of semisimple pseudo-Riemannian symmetric pairs from the view point of compact symmetric triads, in preparation.
\bibitem{Berger} M.~Berger, Les espaces symmetriques noncompacts, Ann. Sci. Ecole Norm. Sup., \textbf{74} (1957), 85-177.
\bibitem{Bo} N.~Bourbaki, Groupes et algebres de Lie, Hermann, Paris, 1978.
\bibitem{Conlon} L.~Conlon, Remarks on commuting involutions,
Proc.~Amer.~Math.~Soc. \textbf{22} (1969), 255--257.
\bibitem{HL} R.~Harvey and H.~B.~ Lawson,
Calibrated geometries
Acta Math. \textbf{148} (1982), 47--157.
\bibitem{HPTT} E.~Heintze, R.~S.~Palais, C.~Terng and G.~Thobergsson, Hyperpolar actions on symmetric spaces,
Geometry, topology and physics, Conf.~Proc.~Lecture Notes Geom.~Topology, IV,
Int.~Press, Cambridge, MA, 1995, pp. 214--245.
\bibitem{Helgason} S.~Helgason, Differential geometry, Lie groups, and symmetric spaces,  Academic Press, 1978.
\bibitem{Hermann} R.~Hermann, Totally geodesic orbits of groups of isometries, Nederl. Akad. Wetensch. Proc. Ser. A 65 (1962), 291--298.
\bibitem{Ikawa} O.~Ikawa, The geometry of symmetric triad and orbit spaces of Hermann actions, J.~Math.~Soc.~Japan, \textbf{63}, no.~1 (2011), 79--136.
\bibitem{Ikawa2} O.~Ikawa, $\sigma$-actions and symmetric triads, Tohoku Math J., \textbf{70}, no.~4 (2018), 547--565.
\bibitem{Ikawa3} O.~Ikawa, The geometry of orbits of Hermann type actions, Contemporary Perspectives in Differential Geometry and its Related Fields, (2018), 67--78.
\bibitem{IST} O.~Ikawa, T.~Sakai and H.~Tasaki, Weakly reflective submanifolds and austere submanifolds, J.~Math.~Soc.~Japan, \textbf{61} (2009), 437--481.
\bibitem{ITT} O.~Ikawa, M.~S.~Tanaka and H.~Tasaki, The fixed point set of a holomorphic isometry, the intersection of two real forms in a Hermitian symmetric space of compact type and symmetric triads, Journal, Int.~J.~Math. \textbf{26} (2015).
\bibitem{Klein} S.~Klein, Reconstructing the geometric structure of a Riemannian symmetric space from its Satake diagram, Geom Dedicata \textbf{138} (2009), 25--50.
\bibitem{Knapp} A.~W.~Knapp, Lie groups beyond an introduction second edition, Birkhauser.
\bibitem{KN} S.~Kobayashi and K.~Nomizu,
Fundations of differential geometry vol.~II,
John Wiley \& Sons, Inc., New York (1996).
\bibitem{Kollross} A.~Kollross, A classification of hyperpolar and cohomogeneity one actions, Trans.~Amer.~Math.~Soc. \textbf{354}, (2001), 571--612.
\bibitem{Leung} D.~S.~P.~Leung,
On the classification of reflective submanifolds of Riemannian symmetric spaces,
Indiana Univ. Math. J.  {\bf 24} (1974) 327--339.
Errata: On the classification of reflective submanifolds of Riemannian symmetric spaces,
Indiana Univ. Math. J.  {\bf 24} (1975) 1199.
\bibitem{Matsuki97} T.~Matsuki, Double Coset Decompositions of Reductive Lie Groups Arising from Two Involutions, J.~Algebra, \textbf{197} (1997), 49--91.
\bibitem{Matsuki} T.~Matsuki, Classification of two involutions on semisimple compact Lie groups and root systems, J.~Lie Theory, \textbf{12} (2002), 41--68.
\bibitem{Ohnita} Y.~Ohnita, On classification of minimal orbits of the Hermann action satisfying Koike's conditions (Joint work with Minoru Yoshida),
Proceedings of The 21st International Workshop on Hermitian Symmetric Spaces and Submanifolds, \textbf{21} (2017), 1--15.
\bibitem{Ohno} S.~Ohno, A sufficient condition for orbits of Hermann actions to be weakly reflective, Tokyo J.~Math. \textbf{39} (2016), 537--563.
\bibitem{Ohno23} S.~Ohno, Geometric Properties of Orbits of Hermann actions,
Tokyo J.~Math. \textbf{46} (2023), 63--91.
\bibitem{OSU} S.~Ohno, T.~Sakai and H.~Urakawa, Biharmonic homogeneous submanifolds in compact symmetric spaces and compact Lie groups, Hiroshima Math.~J.~\textbf{49} (2019), 47--115.
\bibitem{Takeuchi} M.~Takeuchi, Modern spherical functions, Translations of mathematical monographs, \textbf{135}, Amer. Math.~Soc. (1994).
\bibitem{Warner} G.~Warner, Harmonic analysis on semi-simple {L}ie groups. {I},
Springer-Verlag, New York-Heidelberg, 1972.
\end{thebibliography}
\end{document}